\renewcommand*{\@textcolor}[3]{
  \protect\leavevmode
  \begingroup
    \color#1{#2}#3
  \endgroup
}
\definecolor{niceblue}{rgb}{0, 0.125, 0.666}
\definecolor{nicebrown}{rgb}{0.4, 0.2, 0}
\newtheorem{thm}{Theorem}[section]
\newtheorem{cor}[thm]{Corollary}
\newtheorem{lem}[thm]{Lemma}
\newtheorem{prop}[thm]{Proposition}
\newtheorem*{thm*}{Theorem}
\newtheorem*{cor*}{Corollary}
\newtheorem*{lem*}{Lemma}
\newtheorem*{prop*}{Proposition}
\theoremstyle{definition}
\newtheorem{dfn}[thm]{Definition}
\newtheorem*{claim}{Claim}
\newtheorem*{def*}{Definition}
\newtheorem*{def/thm*}{Definition{/}Theorem}
\newcommand{\N}{\mathbb{N}}
\newcommand{\Z}{\mathbb{Z}}
\newcommand{\R}{\mathbb{R}}
\newcommand{\Tree}{\mathcal T} 
\newcommand{\zero}{\boldsymbol{0}}
\newcommand{\one}{\boldsymbol{1}}
\newcommand{\into}{\hookrightarrow}
\newcommand{\SPH}{\mathcal{S}}
\newcommand{\vsum}[2]{\, _{#1}\!{\bullet}_{#2}\,}
\DeclareMathOperator{\intr}{int} 
\DeclareMathOperator{\Db}{D} 
\newcommand{\real}{\mathfrak{Re}}
\renewcommand{\tilde}{\widetilde}
\numberwithin{figure}{section}
\date{\today}
\title{Canonical decompositions\\ and algorithmic recognition of spatial graphs}
\author[1]{Stefan Friedl}
\author[1]{Lars Munser}
\author[1]{José Pedro Quintanilha}
\author[2]{Yuri Santos Rego}
\affil[1]{\small Universität Regensburg\normalsize}
\affil[2]{\small Otto-von-Guericke-Universit\"at Magdeburg\normalsize}
\begin{document}

  \maketitle
  
    \begin{abstract}
	We prove that there exists an algorithm for determining whether two piecewise-linear spatial graphs are isomorphic. In its most general form, our theorem applies to spatial graphs furnished with vertex colorings, edge colorings and/or edge orientations.
	
	We first show that spatial graphs admit canonical decompositions into blocks, that is, spatial graphs that are non-separable  and have no cut vertices, in a suitable topological sense. Then we apply a result of Haken and Matveev in order to algorithmically distinguish these blocks.
    \end{abstract}
   
   \newpage
      \tableofcontents 
      
     {\small
     \subsection*{Acknowledgments} The first, second and third authors were supported by the CRC~1085 \emph{Higher Invariants} (Universit\"at Regensburg, funded by the DFG). We are grateful to Benjamin Ruppik and Claudius Zibrowius for helpful conversations, and to Sofia Amontova for bringing our attention to some relevant literature.}
     \newpage 
  
\section{Introduction}

\subsection{Our main result}

This article is concerned with spatial graphs, which are finite graphs embedded in oriented $3$-spheres -- not merely as subspaces, but with an explicit decomposition into vertices and edges. We give a precise definition using piecewise-linear (henceforth abbreviated as ``PL'') topology (Definition~\ref{dfn.spatialgraph}). The PL setting makes it easy to formalize the intuitive idea of what an isomorphism of spatial graphs should be: an orientation-preserving PL homeomorphism of the ambient $3$-spheres mapping vertices to vertices and edges to edges bijectively (Definition~\ref{dfn.spatialgraphiso}). The reader might be amused to try and decide which among the two pairs of spatial graphs depicted in Figure~\ref{fig.funexample} consist of isomorphic spatial graphs.

  \begin{figure}[h]
      \centering
      \def \svgwidth{0.9\linewidth}
      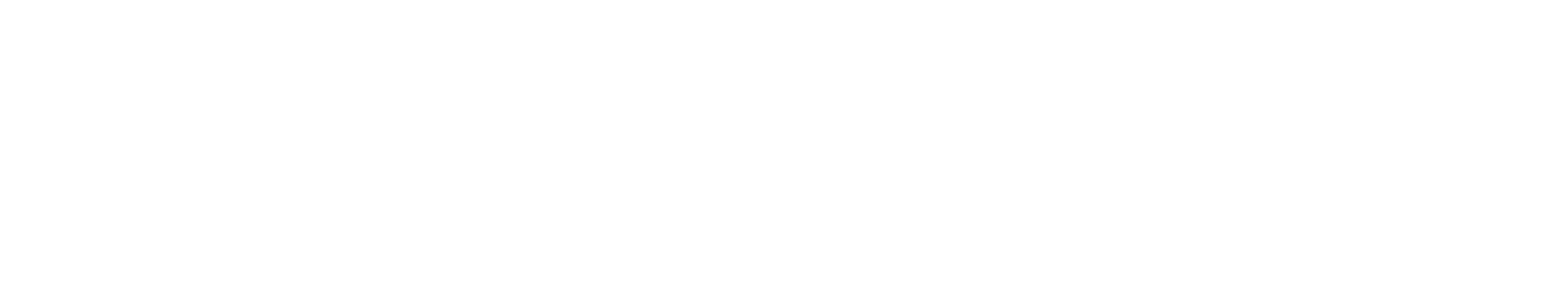
      \caption{Two pairs of (decorated) spatial graphs with the same combinatorial structure, but \textit{a priori} different topology.}
      \label{fig.funexample}
  \end{figure}

Our main result is the following:

\begin{thm}[Algorithmic detection of spatial graphs]\label{thm.algdetection}
There exists an algorithm for determining whether two spatial graphs are isomorphic.
\end{thm}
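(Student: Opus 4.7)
The plan is to reduce the isomorphism problem for spatial graphs to two finite subproblems, both of which are effectively solvable. From PL triangulations of the inputs $(\SPH_1,\Gamma_1)$ and $(\SPH_2,\Gamma_2)$ I would first algorithmically compute the canonical block decompositions constructed earlier in the paper, recording them as finite decorated trees $\mathcal{T}_1, \mathcal{T}_2$ whose nodes carry spatial-graph blocks and whose edges encode the gluings. By canonicity of the decomposition, $(\SPH_1,\Gamma_1) \cong (\SPH_2,\Gamma_2)$ holds if and only if $\mathcal{T}_1 \cong \mathcal{T}_2$ as decorated trees. Isomorphism of finite labelled trees is a standard combinatorial task once the labels can be compared, so the entire problem reduces to: given two blocks, decide whether they are isomorphic as decorated spatial graphs.

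For a block $(\SPH,B)$, consider the exterior $E(B) := \SPH \setminus \intr \snbh(B)$, a compact PL $3$-manifold with boundary. The block hypotheses (non-separability and absence of cut vertices, in the topological sense) are tailored to force $E(B)$ to be irreducible and boundary-incompressible, hence Haken, away from a short and explicit list of degenerate pieces (balls, unknotted arcs, elementary theta-curves and the like) that can be tabulated and recognized by inspection. I would then invoke the theorem of Haken and Matveev, which provides an algorithm deciding homeomorphism of Haken $3$-manifolds, and upgrade it to one that handles a boundary pattern $\lambda$ on $\partial E(B)$ encoding the meridian discs of edges, the vertex neighborhoods, and any prescribed vertex or edge colorings and edge orientations. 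Concretely, one enumerates the finite, algorithmically computable mapping class group of $E(B)$ produced as a byproduct of Haken--Matveev and tests whether some representative maps $\lambda_1$ to $\lambda_2$; alternatively, one feeds the decorated exteriors directly into the sutured or marked-boundary version of the recognition theorem.

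The main obstacle I anticipate is not the top-level architecture, which is essentially laid out in the abstract, but rather the careful bookkeeping required to match these ingredients. One must verify that an ambient isomorphism of spatial graphs corresponds bijectively to a decoration-preserving homeomorphism of exteriors, identify which PL homeomorphisms of $E(B)$ extend to orientation-preserving homeomorphisms of $\SPH$ taking $B$ to $B$ vertex- and edge-wise, and treat the low-complexity exceptional blocks separately, where spurious self-homeomorphisms of the exterior may arise that do not come from genuine isomorphisms of $(\SPH,B)$. Ensuring that this enhancement of Haken--Matveev behaves uniformly across all block types, and returns the correct answer when composed with the tree-level comparison, is where I expect the technical difficulty to lie.
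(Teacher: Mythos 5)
Your proposal follows essentially the same route as the paper: a canonical decomposition into blocks organized as labelled trees, a reduction of the tree-label comparison to an isomorphism test for individual blocks, and an application of the Haken--Matveev recognition theorem to exteriors of blocks equipped with a boundary pattern that encodes the vertex/edge structure, the decorations, and (crucially, since the recognition theorem is orientation-blind) the ambient orientation. The exceptional cases you anticipate are also the right ones, although the list is shorter than you fear: at the block level the only degenerate case is the one-edge graph, whose exterior is a $3$-ball and hence fails to be Haken; theta-curves and the like cause no trouble.

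Two steps that you assert do, however, need genuine additional input. First, canonicity of the decomposition does not by itself make the decomposition computable. To produce the tree of blocks from a triangulation one needs algorithms that decide (ir)reducibility and boundary-(ir)reducibility of the exterior and, in the reducible cases, actually \emph{construct} a reducing sphere or disc; these must then be converted into a separating sphere of the graph (splitting off disjoint-union summands) or a cut sphere (splitting off vertex summands). Such algorithms exist and are also due to Haken and Matveev, but they are a separate ingredient from the recognition theorem, and the decomposition genuinely has two layers: a general spatial graph is first split along spheres disjoint from it into non-separable pieces, and only the pieces admit tree-of-blocks decompositions along spheres meeting the graph in a single vertex. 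Second, the labels of your trees are not bare blocks but blocks together with a tuple of distinguished vertices (the gluing sites), and the recognition theorem compares manifolds with boundary pattern, not manifolds with boundary pattern plus a marked list of boundary components; one must first encode the distinguished vertices into the pattern, for instance by enlarging the vertex colouring so that the $l$-th marked vertex receives a colour determining both $l$ and its original colour. With those two mechanisms supplied, your outline matches the paper's proof.
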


Theorem~\ref{thm.algdetection} is restated and proved in a more general context as Theorem~\ref{thm.spatialgraphrecognition}, where we allow the spatial graphs to come equipped with additional decorations, such as colorings of vertices and/or edges, and orientations of the edges (as in the right hand side of Figure~\ref{fig.funexample}). All this additional data must of course be respected by isomorphisms.

We reduce the task of producing a computer program for testing the isomorphism type of spatial graphs, to the implementation of algorithmic results in Matveev's text on computational $3$-manifold topology \cite{Matveev}. The input for such a program would then be a pair of oriented $3$-spheres given as finite simplicial complexes, with the vertices and edges of the spatial graphs specified as subcomplexes, and also possibly the data of decorations. We make no claims about the efficiency of such an algorithm.

\subsection{Matveev's Recognition Theorem}

Our proof uses as a main ingredient a theorem of Matveev that extends work of Haken \cite[Thm.~6.1.6]{Matveev}, concerning PL $3$-manifolds equipped with a $1$-dimensional subcomplex of their boundary (called a ``boundary pattern'', see Definition~\ref{dfn.mfdwithbpattern}). Matveev's Theorem (reproduced below as Theorem~\ref{thm.matveev}) states that it is possible to algorithmically detect whether two such $3$-manifolds with boundary pattern are PL-homeomorphic (via a homeomorphism respecting the boundary patterns), provided that they are ``Haken'' (Definition~\ref{dfn.Haken}).

We construct a PL $3$-manifold with boundary pattern out of a spatial graph, its ``marked exterior'' (Definition~\ref{dfn.decext}), such that two spatial graphs are isomorphic precisely if their marked exteriors are PL-homeomorphic. The marked exterior is built in a two-step process by first removing a suitably chosen open neighborhood of the vertices, and then one of (what is left of) the edges. The boundary is then marked with a pattern that allows for easily reconstructing the spatial graph. Figures \ref{fig.orientedexterior}~and~\ref{fig.markedexterior} illustrate the general idea of the construction. We then apply Matveev's Theorem to the marked exteriors.

The main difficulty in using Matveev's Recognition Theorem to deduce Theorem~\ref{thm.algdetection} is in the fact that Matveev's Theorem applies only to $3$-manifolds with boundary pattern that are Haken. This property encompasses three conditions: one about existence of non-trivial embedded surfaces (being ``sufficiently large''), one about triviality of embedded $2$-spheres (``irreducibility''), and one about triviality of properly embedded discs (``boundary-irreducibility''). Whilst the ``sufficiently large'' requirement is easy to guarantee, we will see that the marked exterior of a spatial graph may very well fail to be irreducible and boundary-irreducible. This issue will take considerable effort to resolve. 

\subsection{Decomposition results}

For an outline of the strategy, consider first the irreducibility requirement. The starting point is the observation (Proposition~\ref{prop.sep<=>red}) that for a spatial graph~$\Gamma$, irreducibility of its marked exterior is equivalent to~$\Gamma$ not being the ``disjoint union'' $\Gamma_1 \sqcup \Gamma_2$ of non-empty spatial graphs $\Gamma_1, \Gamma_2$, where this disjoint union is the operation of placing $\Gamma_1, \Gamma_2$ ``next to one another'' in the same ambient $3$-sphere (see Definition~\ref{dfn.disjointunion} for the precise notion). A non-empty spatial graph that is not a non-trivial disjoint union is called a ``piece'' (Definition~\ref{dfn.septhings}). In Proposition~\ref{prop.uniquenessdisjunion}, we show that the decomposition of a spatial graph into pieces is canonical in a suitable sense. This reduces the task of determining whether two spatial graphs are isomorphic, to testing whether the pieces in their decompositions are pairwise isomorphic. As these pieces have irreducible marked exteriors, we are one step closer to being able to apply Matveev's theorem.

The next step is to find a decomposition of non-separable graphs into spatial graphs whose marked exteriors are moreover boundary-irreducible. The strategy is similar to the one in the previous paragraph, except that the role of the disjoint union operation is played by the operation of ``vertex sum'' (Definition~\ref{dfn.vsum}). Roughly, the vertex sum of two spatial graphs, each with a distinguished vertex, is obtained by ``gluing them'' along those vertices. For non-separable spatial graphs~$\Gamma$, there is a very close correspondence between $\Gamma$~having boundary-irreducible marked exterior, and $\Gamma$~being indecomposable as a non-trivial vertex sum (Propositions \ref{prop.cut=>bred}~and~\ref{prop.bred=>cut}). We will see that non-separable spatial graphs admit a canonical decomposition as an iterated vertex sum (Propositions \ref{prop.treeexists}~and~\ref{prop.uniquetree}) of non-separable spatial graphs without cut vertices (which we call ``blocks'', see Definition~\ref{dfn.cutthings}). This will reduce the comparison of the isomorphism type of two non-separable spatial graphs, to comparing the blocks in their decomposition. Except for one easy special case, these blocks have marked exteriors amenable to Matveev's algorithm.

We remark that the iterated vertex sums from the previous paragraph are allowed to be performed along different vertices, so the canonical decomposition must come bundled with the combinatorial data of which vertices from different blocks are glued to which. To package this information, we introduce the notion of a ``tree of spatial graphs'' (Definition~\ref{dfn.tree}), and in case the spatial graphs being glued are blocks, we call it a ``tree of blocks'' (Definition~\ref{dfn.cutthings}). Our main results on decompositions as iterated vertex are summarized in the following theorem (see Propositions \ref{prop.treeexists}~and~\ref{prop.uniquetree} for precise statements):

\begin{thm}[Canonical decomposition as a tree of blocks]\label{thm.treeofblocks}
Every non-separable spatial graph other than a one-point graph admits a unique decomposition as a tree of blocks.
\end{thm}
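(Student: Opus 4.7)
The plan is to prove existence and uniqueness separately, each by induction on the number of edges, leveraging the correspondence between cut vertices of a non-separable spatial graph and its decomposability as a non-trivial vertex sum (Propositions~\ref{prop.cut=>bred} and~\ref{prop.bred=>cut}).

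For existence, I induct on the edge count. If $\Gamma$ has no cut vertex, then $\Gamma$ is itself a block, and the one-node tree labelled by $\Gamma$ serves as its tree-of-blocks decomposition. Otherwise, pick a cut vertex $v$; the correspondence above yields a non-trivial vertex sum decomposition $\Gamma = \Gamma_1 \vsum{v_1}{v_2} \Gamma_2$, where $v_1 \in \Gamma_1$ and $v_2 \in \Gamma_2$ are distinguished vertices corresponding to $v$. Each summand $\Gamma_i$ is itself non-separable (any separation of $\Gamma_i$ would extend to one of $\Gamma$ by reattaching the complementary summand at $v$), is not a one-point graph (since a cut vertex forces at least one edge on each side), and has strictly fewer edges than $\Gamma$ because the edges are partitioned. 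The induction hypothesis then furnishes tree-of-blocks decompositions $\Tree_1, \Tree_2$ of $\Gamma_1, \Gamma_2$, and I assemble $\Tree$ by taking the disjoint union of $\Tree_1$ and $\Tree_2$ and adjoining a new edge joining the nodes of $\Tree_i$ that contain $v_i$, labelled so as to record the identification $v_1 \sim v_2$. Realizing $\Tree$ as an iterated vertex sum recovers $\Gamma$.

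For uniqueness, the goal is to show that every piece of the decomposition is dictated intrinsically by $\Gamma$. Given any tree-of-blocks decomposition $\Tree$ of $\Gamma$, I claim that a vertex $v$ of $\Gamma$ is a cut vertex precisely when it lies in at least two blocks of $\Tree$. The ``if'' direction follows because the tree structure forces the realization to disconnect after removal of $v$. The ``only if'' direction invokes Propositions~\ref{prop.cut=>bred} and~\ref{prop.bred=>cut}: if $v$ were interior to a single block $B$, then $B$ would itself admit a non-trivial vertex sum decomposition at $v$, contradicting the defining property of a block. It follows that the nodes of $\Tree$ are recovered as the maximal non-separable subgraphs of $\Gamma$ having no cut vertex of $\Gamma$ in their interior, and the edges of $\Tree$ correspond canonically to the pairs of such subgraphs sharing a cut vertex; the tree of blocks is therefore determined by $\Gamma$ up to the natural notion of isomorphism of trees of spatial graphs.

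The step I expect to be the main obstacle is the ``only if'' direction of the characterization of cut vertices within a tree-of-blocks decomposition, since a priori a cut vertex of $\Gamma$ might fail to manifest as a gluing vertex of $\Tree$; ruling this out requires the full strength of Propositions~\ref{prop.cut=>bred} and~\ref{prop.bred=>cut}, together with care about how separations of a block interact with the ambient vertex sums. A secondary technical point is confirming that the iterated-vertex-sum realization of an abstract tree of blocks is well-defined and that the block structure of its output is consistent with the combinatorial data recorded in $\Tree$, so that the existence and uniqueness arguments genuinely characterize the same object.
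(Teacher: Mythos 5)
Your overall strategy---induct on the number of edges, split at a cut vertex for existence, and characterize the blocks intrinsically for uniqueness---matches the paper's in outline, but both halves have a genuine gap at the same point: the combinatorics of a cut vertex shared by three or more blocks. In Definition~\ref{dfn.tree} a tree of blocks is a \emph{bipartite} tree on $I \sqcup J$, with one $J$-vertex per gluing locus, possibly of degree $\geq 3$. Your assembly step, ``adjoining a new edge joining the nodes of $\Tree_i$ that contain $v_i$,'' is not an operation on such objects, and it is ambiguous exactly when $v_i$ is already a cut vertex of $\Gamma_i$, since then several blocks of $\Tree_i$ contain $v_i$. The paper's proof of Proposition~\ref{prop.treeexists} devotes its entire case analysis to this: if $v$ is already cut in $\Gamma_k$ one must \emph{identify} the corresponding $J$-vertices $j_1,j_2$ rather than add an edge, and only otherwise does one adjoin a new $J$-vertex and edge. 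Correspondingly, in your uniqueness argument the prescription ``edges of $\Tree$ correspond to pairs of blocks sharing a cut vertex'' yields a triangle, not a tree, whenever three blocks meet at one cut vertex, so the object you claim to recover intrinsically is not a tree of blocks in the sense of the theorem.

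The second gap is in your characterization of cut vertices. The ``if'' direction cannot be justified by saying the realization ``disconnects after removal of $v$'': a cut vertex of a spatial graph is witnessed by a cut \emph{sphere}, not by disconnection of the underlying abstract graph, and Figure~\ref{fig.linkedhandcuffs} shows a non-separable spatial graph both of whose vertices disconnect $\langle\Gamma\rangle$ while neither is cut in $\Gamma$. The correct argument (Proposition~\ref{prop.cutvertices}) is that the realization is by construction a non-trivial vertex sum at each $v(j)$, which directly exhibits a cut sphere. For the ``only if'' direction---the step you flag as the main obstacle---the missing ingredient is not Propositions \ref{prop.cut=>bred} and \ref{prop.bred=>cut} (which concern marked exteriors) but Lemma~\ref{lem.sortedblocks}: a $2$-sphere meeting the support of a block in at most one vertex has that block entirely on one side. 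This forces all edges at a non-gluing vertex onto one side of any putative cut sphere, so the other side is a one-point graph; it is also what would pin the blocks of an arbitrary decomposition against the cut spheres, which your asserted maximality characterization needs but does not prove. Finally, note that Proposition~\ref{prop.uniquetree} is stronger than uniqueness for a fixed $\Gamma$: it states that \emph{every isomorphism} of realizations induces a compatible isomorphism of trees of blocks, and this equivariant form is what the recognition algorithm later relies on.
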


This theory of decompositions has analogues in the setting of abstract graphs \cite[Excercise~8.3.3]{Jun05}. In the topological setting, Suzuki \cite{Su70} has established a unique factorization result  with respect to a ``composition'' operation similar in spirit to our vertex sum, but only for connected $1$-subcomplexes of the $3$-sphere, and only up to a ``neighborhood congruence'' relation. 
Our Theorem~\ref{thm.treeofblocks} differs from Suzuki's in the following: our spatial graphs come with vertex/edge decompositions and possibly decorations, we broaden the connectedness assumption to non-separability, and we have no identification ``up to neighborhood congruence'', instead keeping track of the vertices along which to glue.

We also deduce a triviality result for spatial trees, that is, spatial graphs~$\Gamma$ for which the underlying abstract graph~$\langle \Gamma \rangle$ is a tree:

\begin{thm}[Uniqueness of spatial trees]
If $\Gamma_1, \Gamma_2$~are spatial trees, then every isomorphism $\langle \Gamma_1 \rangle \to \langle \Gamma_2\rangle$ of their underlying abstract graphs is induced by an isomorphism of spatial graphs $\Gamma_1 \to \Gamma_2$.
\end{thm}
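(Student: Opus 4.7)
The plan is to proceed by induction on the number $n$ of edges of the abstract tree $\langle \Gamma_1 \rangle$. In the base case $n = 0$, each $\Gamma_i$ is a single vertex $v_i$ in its ambient oriented PL $3$-sphere $\SPH_i$, and since the group of orientation-preserving PL self-homeomorphisms of $\Sph^3$ acts transitively on points, one obtains the desired spatial isomorphism by choosing any PL homeomorphism $\SPH_1 \to \SPH_2$ sending $v_1$ to $\phi(v_1) = v_2$.

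For the inductive step with $n \geq 1$, pick a leaf $v$ of $\langle \Gamma_1 \rangle$ with unique incident edge $e$ and other endpoint $w$. Under $\phi$ these correspond to a leaf $\phi(v)$, an incident edge $\phi(e)$, and an endpoint $\phi(w)$ of $\langle \Gamma_2 \rangle$. Removing $v$ together with the interior of $e$ from each spatial tree yields spatial trees $\Gamma_1', \Gamma_2'$ of smaller edge count, and the restriction of $\phi$ is an abstract isomorphism $\langle \Gamma_1' \rangle \to \langle \Gamma_2' \rangle$. By the inductive hypothesis, there exists an orientation-preserving PL homeomorphism $\Psi \colon \SPH_1 \to \SPH_2$ that realizes this restriction as a spatial isomorphism $\Gamma_1' \to \Gamma_2'$. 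To upgrade $\Psi$, observe that $\alpha := \Psi(e)$ and $\beta := \phi(e)$ are PL arcs in $\SPH_2$ both starting at $\phi(w)$, with interiors disjoint from $|\Gamma_2'|$. Since $|\Gamma_2'|$ is a PL tree in $\SPH_2 \cong \Sph^3$, any closed regular neighborhood of $|\Gamma_2'|$ is a $3$-ball and so is its complement; a standard ``PL unknotting of half-free arcs'' argument --- shrink each arc within a tubular neighborhood of itself to a tiny segment near $\phi(w)$, then match the resulting short segments via a local PL chart --- produces an orientation-preserving PL self-homeomorphism $h$ of $\SPH_2$ fixing $|\Gamma_2'|$ pointwise with $h(\alpha) = \beta$. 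Then $h \circ \Psi$ is the desired spatial isomorphism $\Gamma_1 \to \Gamma_2$ inducing $\phi$.

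The main technical obstacle is this last PL modification. One must verify that the self-homeomorphism $h$, constructed inside the open complement of $|\Gamma_2'|$, extends by the identity across a neighborhood of the tree without disturbing the rest of the spatial structure already realized by $\Psi$. This reduces to a careful analysis of PL arcs in a $3$-ball meeting the boundary at one prescribed point --- a classical but non-trivial PL topology exercise, made possible precisely by the freedom of the second endpoint. Alternatively, the whole argument can be packaged in terms of Theorem~\ref{thm.treeofblocks}: the blocks of a spatial tree are precisely its single-edge spatial subgraphs, all of which are mutually spatially isomorphic in a vertex-label-preserving manner, so $\Gamma_1$ and $\Gamma_2$ are reconstructible from one another by reassembling corresponding blocks along the tree-of-blocks structure prescribed by $\phi$.
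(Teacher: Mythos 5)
Your overall strategy --- strip off a leaf edge, apply induction to the smaller spatial tree, then reattach the last edge by an ambient homeomorphism --- is exactly the paper's (the paper inducts on vertices rather than edges, which is immaterial). The problem is that essentially all of the content of the theorem is concentrated in the step you label ``a classical but non-trivial PL topology exercise,'' and you assert it rather than prove it. Concretely, you need an orientation-preserving PL self-homeomorphism $h$ of $\SPH_2$ that fixes $|\Gamma_2'|$ \emph{pointwise} and carries $\alpha=\Psi(e)$ to $\beta$. The ``shrink each arc within a tubular neighborhood of itself'' move is not innocent: both arcs meet $|\Gamma_2'|$ at $\phi(w)$, so any neighborhood of $\alpha$ meets the tree near $\phi(w)$, and the shrinking isotopy must be arranged to fix the tree --- including the other edges of $\Gamma_2'$ incident to $\phi(w)$, which pierce the link sphere of $\phi(w)$ in finitely many points that your ``local PL chart'' matching must also respect. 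This is precisely the difficulty the paper spends Section~\ref{sec.vertexsum} resolving: the uniqueness of enclosing balls for \emph{pointed} spatial graphs (Proposition~\ref{prop.penclosingball}) is singled out in the introduction as one of the most technically demanding points of the whole program, and it is what makes the vertex sum well defined. Asserting the existence of $h$ is, in effect, asserting that proposition.

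The clean way to close the gap, within the paper's framework, is your own closing remark made precise. By Lemma~\ref{lem.leafsummand}, $\Gamma_k = \Gamma_k^0 \vsum{v_k}{v_k} \Lambda_k$ with $\Lambda_k$ the one-edge sub-graph at the chosen leaf; Lemma~\ref{lem.one-edge} --- whose proof, via the proposition on arcs in the interior of connected manifolds, is the rigorous version of your arc-unknotting --- gives an isomorphism $\Lambda_1\to\Lambda_2$ inducing $\phi$ on the one-edge pieces; and Lemma~\ref{lem.isovsum} assembles this with the inductively obtained isomorphism $\Gamma_1^0\to\Gamma_2^0$. That is the paper's proof. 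As written, your argument is a correct outline with the decisive lemma left unproved.
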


This result is restated in the text as Theorem~\ref{thm.trivialtrees}. It reduces the isomorphism test for spatial trees to an isomorphism test for abstract trees, bypassing the machinery of Matveev.

\subsection{The piecewise-linear setting}\label{sec.PL}

We have decided to work in the PL setting because it is the natural home for results in computational topology such as our main theorem, and it is the framework of the machinery developed by Matveev on which we rely. This is a standard framework in the field \cite{Su70,Kau89,Yam89,Bar15}, although a theory of smooth, rather than PL, spatial graphs has also been introduced by Herrmann and the first author \cite{StefanGerrit1, StefanGerrit3}.

The reader is referred to the textbook of Rourke and Sanderson \cite{RourkeSanderson} for the standard notions in PL topology. We will often give precise references for the results we import, but knowledge of basic concepts such as that of a polyhedron, or a PL manifold (possibly oriented, or with boundary) will be assumed. In particular, PL spaces (also called polyhedra) are by definition subspaces of some~$\R^n$ whose points admit a star neighborhood. The ambient space $\R^n$~is equipped with the metric induced from the $\ell^1$-norm, so by ``balls'' and ``spheres'' we always mean polyhedra that are PL-homeomorphic to cubes and their boundaries, respectively.

We will also make heavy usage of regular neighborhoods. If $X \subseteq P$ are polyhedra, with $X$~compact, then one may think of a regular neighborhood of~$X$ in~$P$ as a ``small, well-behaved neighborhood'' of~$X$ that deformation-retracts onto~$X$ \cite[Chapter~3]{RourkeSanderson}. If $P_0$~is a closed sub-polyhedron of~$P$, there is also the notion of a regular neighborhood~$N$ of~$X$ in the pair~$(P, P_0)$ \cite[p.~52]{RourkeSanderson}. In this case we use a lighter notation than the one in Rourke-Sanderson, who would instead have written that \emph{the pair}~$(N, N \cap P_0)$ is a regular neighborhood of \emph{the pair}~$(X, X \cap P_0)$ in~$(P, P_0)$.

The reader might be more familiar with the combinatorial definition of a PL space as (the topological realization of) an abstract simplicial complex. The two theories are equivalent: every polyhedron is a union of geometric simplices intersecting along faces \cite[Theorem~2.11]{RourkeSanderson}, and such a decomposition can be abstracted to a combinatorial setup \cite[Excercise~2.27(1)]{RourkeSanderson}. Moreover, PL maps between polyhedra can be expressed as simplicial maps between subdivisions of the abstract simplicial complexes that they realize \cite[Theorem~2.14]{RourkeSanderson}. (And although the notion of a simplicial subdivision is not combinatorial, the Alexander-Newman Theorem \cite[Theorem~4.5]{Li99} allows one to phrase 
purely combinatorially the property of two abstract simplicial complexes having PL-homeomorphic realizations.)

\subsection{Structure of the paper}

After laying out the most elementary terms in the theory of spatial graphs (Section~\ref{sec.preliminaries}), we describe the operation of disjoint union of spatial graphs (Section~\ref{sec.disjunion}), proving that the decomposition as a disjoint union of pieces is unique. 
This program is mirrored in Section~\ref{sec.vertexsum}, where we define the vertex sum operation on pointed spatial graphs, establish a framework for describing iterated vertex sums (as trees of spatial graphs), and show uniqueness of decompositions as trees of blocks, thus completing the proof of Theorem~\ref{thm.treeofblocks}. We point out that showing that the vertex sum operation is well-defined is one of the most technically demanding points of our program, with most of the hard work contained in the proof of Proposition~\ref{prop.penclosingball}.
Section~\ref{sec.decorations} is an addendum explaining how to adapt the theory developed so far to spatial graphs decorated with additional structure, namely edge orientations, vertex colorings, and edge colorings.

Section~\ref{sec.markedexterior} introduces one of the main characters of this paper, the marked exterior of a (decorated) spatial graph. We define it, explain how it encodes the spatial graph used to construct it, and translate indecomposability properties of spatial graphs into properties of their marked exteriors.
Finally, in Section~\ref{sec.algorithms} we import results from computational $3$-manifold topology in order to show that the canonical decompositions of Theorem~\ref{thm.treeofblocks} can be computed algorithmically, and apply Matveev's Theorem to marked exteriors of decorated blocks. Altogether, this culminates in the proof of our main result, Theorem~\ref{thm.algdetection}.
\section{Basic definitions} \label{sec.preliminaries}

In this short section, we formally define spatial graphs and introducing other basic terminology. We take a moment to remind the reader that all spaces considered are polyhedra: subspaces of some euclidean space~$\R^n$ having local cone neighborhoods at every point, and PL maps are defined as preserving this local cone structure \cite[Chapter~1]{RourkeSanderson}. Standard models of balls and spheres are defined using the $\ell^1$-norm, so they are effectively cubes and their boundaries. Orientations of PL manifolds are defined as PL isotopy classes of embeddings of balls \cite[pp.~43-46]{RourkeSanderson}.

\begin{dfn}\label{dfn.spatialgraph}
  A \textbf{spatial graph}~$\Gamma$ is a triple $(\SPH, V, E)$, where:
  \begin{itemize}
    \item $\SPH$~is an oriented $3$-sphere, called the \textbf{ambient sphere} of~$\Gamma$. We will occasionally say that ``$\Gamma$~is a spatial graph in~$\SPH$''.
    \item $V$~is a finite subset of~$\SPH$, whose elements are called \textbf{vertices} of~$\Gamma$, and
    \item $E$~is a finite set of subpolyhedra of~$\SPH$, called \textbf{edges} of~$\Gamma$, such that:
    \begin{itemize}
      \item each edge is PL-homeomorphic to an interval or to a PL $1$-sphere,
      \item each edge that is PL-homeomorphic to an arc intersects~$V$ precisely its endpoints,
      \item each edge that is PL-homeomorphic to a circle contains precisely one element of~$V$ (such edges are called \textbf{loops}),
      \item for every two distinct edges $e, e'$, we have $e\cap e' \subseteq V$.
    \end{itemize}
  \end{itemize}
  
  The \textbf{support} of~$\Gamma$ is the union
  \[|\Gamma| := V \cup \bigcup_{e\in E} e \subset \SPH.\]
  
  The \textbf{underlying graph} $\langle \Gamma \rangle$ of $\Gamma$ is the (undirected) abstract graph with vertex set~$V$ and edge set~$E$, where each edge is incident to the one or two elements of~$V$ that it contains. We will say that an edge of~$\Gamma$ is \textbf{incident} to a vertex if this is true in~$\langle \Gamma\rangle$. The \textbf{degree} of a vertex~$v$ is the same as its degree in~$\langle \Gamma \rangle$, that is, the number of edges incident to~$v$, with loops counting twice. A vertex of degree~$0$ is called an \textbf{isolated vertex}, and a vertex of degree~$1$ is called a \textbf{leaf}.
\end{dfn}

Observe that the two subsets $|\Gamma|$ and $V$ of~$\SPH$ determine the edge set, since there is a canonical bijection between~$E$ and $\pi_0 (|\Gamma| \setminus V )$.

\begin{dfn}
A \textbf{sub-graph} of a spatial graph~$\Gamma = (\SPH, V, E)$ is a spatial graph $\Gamma'=(\SPH, V', E')$, where $V'\subseteq V$ and $E' \subseteq E$.
\end{dfn}

\begin{dfn}\label{dfn.spatialgraphiso}
  Let $\Gamma_1=(\SPH_1, V_1, E_1)$ and $\Gamma_2 = (\SPH_2, V_2, E_2)$ be spatial graphs. An \textbf{isomorphism} $\Phi\colon \Gamma_1 \to \Gamma_2$ is a PL homeomorphism of triples $\Phi \colon (\SPH_1, |\Gamma_1|, V_1) \to (\SPH_2, |\Gamma_2|, V_2)$ respecting the orientation of the ambient spheres. If such~$\Phi$ exists, we say~$\Gamma_1, \Gamma_2$ are \textbf{isomorphic} and write $\Gamma_1 \cong \Gamma_2$.
\end{dfn}

By the characterization of the elements of~$E_1$ in terms of $|\Gamma_1|\setminus V_1$ (and similarly for~$E_2$), such~$\Phi$ also induces a bijection $E_1 \to E_2$, and we get an induced isomorphism of abstract graphs~$\langle \Phi \rangle \colon \langle \Gamma_1 \rangle \to \langle \Gamma_2 \rangle$.

We emphasize that there is stark loss of information in the passage from a spatial graph to its underlying graph.
In fact, one could claim that much of the field of knot theory is the study of the isomorphism classes of spatial graphs whose underlying graph is comprised of one vertex and one loop.

It will be convenient to loosen the notation by allowing ourselves to write an equality of spatial graphs ``$\Gamma_1 = \Gamma_2$'' whenever $\langle \Gamma_1 \rangle = \langle \Gamma_2 \rangle$ and there is an isomorphism $\Phi \colon \Gamma_1 \to \Gamma_2$ such that $\langle \Phi \rangle$~is the identity morphism.

Up to isomorphism, there is a unique spatial graph with no vertices (and hence also no edges), which we call the \textbf{empty spatial graph}, and denote by~$\zero$. Similarly, since the group of PL self-homeomorphisms of a $3$-sphere acts transitively on its points \cite[Lemma~3.33]{RourkeSanderson}, there is a unique spatial graph (up to isomorphism) with a single vertex and no edges. We call it a \textbf{one-point} spatial graph and denote it by $\one$.

\section{The disjoint union of spatial graphs}\label{sec.disjunion}

We want to define and establish properties of two operations on spatial graphs. For now, we focus on the disjoint union operation, and this will double as a warm-up for studying the more delicate vertex sum operation (Section~\ref{sec.vertexsum}). These operations implement constructions that are straightforward to define for abstract graphs, but in the setting of spatial graphs, a rigorous treatment requires some care.

\subsection{Assembling spatial graphs through disjoint unions}\label{sec.defdisjun}

In order to define the disjoint union of spatial graphs, we will need the following theorem:

\begin{thm}[Disc Theorem {\cite[Theorem~3.34]{RourkeSanderson}}]\label{thm.discthm}
  Every two orientation-preserving PL embeddings of an $n$-ball into the interior of a connected, oriented $n$-manifold~$M$ are PL-ambient-isotopic relative~$\partial M$.
\end{thm}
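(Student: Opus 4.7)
The plan is to reduce the statement to the identity case by a sequence of PL ambient isotopies of $M$ rel $\partial M$, using three ingredients: the cited Lemma~3.33 (transitivity of PL self-homeomorphisms of $M$ on interior points), the uniqueness of regular neighborhoods up to ambient isotopy, and an Alexander-type coning argument.

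Let $\phi_0, \phi_1 : B^n \to \mathrm{int}(M)$ be the given orientation-preserving PL embeddings, and let $c$ denote the center of $B^n$. First I invoke Lemma~3.33 to obtain a PL ambient isotopy of $M$ supported in $\mathrm{int}(M)$ (hence rel $\partial M$) carrying $\phi_0(c)$ to $\phi_1(c)$; after applying it, I may assume $\phi_0(c)=\phi_1(c)=:p$. The images $\phi_0(B^n)$ and $\phi_1(B^n)$ are then compact PL $n$-balls, each containing $p$ in its interior, and so each qualifies as a regular neighborhood of $\{p\}$ in $\mathrm{int}(M)$. Uniqueness of regular neighborhoods (rel $\partial M$) furnishes a second PL ambient isotopy after which $\phi_0(B^n)=\phi_1(B^n)=:D$ coincide as subsets of $M$.

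It then remains to realize the residual orientation-preserving PL self-homeomorphism $f:=\phi_1^{-1}\circ\phi_0$ of $B^n$ as the final stage of an ambient PL isotopy of $M$ rel $\partial M$. I choose a slightly larger PL ball $D^+\subset\mathrm{int}(M)$ with $D\subset\mathrm{int}(D^+)$, and write $D^+$ as the PL cone on $\partial D^+$ from an interior vertex. I extend $\phi_0\circ\phi_1^{-1}$ from $D$ to a PL self-homeomorphism $h$ of $M$ that is the identity outside $D^+$, by interpolating across the collar $D^+\setminus\mathrm{int}(D)\cong\partial D\times[0,1]$. Then the Alexander conical isotopy $h_t(x)=t\,h(x/t)$ in the chosen cone coordinates on $D^+$ produces a PL isotopy from $h$ to $\mathrm{id}_M$, supported inside $D^+$ and hence fixing $\partial M$; composing the three isotopies yields the desired ambient PL isotopy from $\phi_0$ to $\phi_1$ rel $\partial M$.

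The main obstacle will be the coning/interpolation step: in the PL category one must set up the collar and cone structures so that the interpolation from $f|_{\partial D}$ to $\mathrm{id}_{\partial D^+}$ is realized by a simplicial map on a suitable subdivision, and one must verify that Alexander's formula is PL-valid at the cone vertex. Orientation-preservation of $f$ is essential precisely here, since it is what rules out a reflection at the cone point and guarantees that the conical extension produces a genuine PL homeomorphism; without it, the final step would fail and the theorem would admit an obvious counterexample (a local reflection).
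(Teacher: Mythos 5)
The paper offers no proof of this statement: it is imported from Rourke--Sanderson (their Theorem~3.34), with only the remark that the rel-$\partial M$ refinement follows from inspecting their proof. Your proposal is therefore an attempt to reconstruct that argument, and its skeleton --- homogeneity to match centres, uniqueness of regular neighbourhoods to make the images coincide, then an Alexander-type argument to dispose of the residual orientation-preserving self-homeomorphism of the ball --- is the standard and correct one.

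There is, however, a genuine gap in the final step, and it sits exactly where you locate ``the main obstacle'' but is of a different nature than you suggest. To extend $\phi_0\circ\phi_1^{-1}$ across the collar $D^+\setminus\mathrm{int}(D)\cong\partial D\times[0,1]$ so that it becomes the identity on $\partial D^+$, you need a PL homeomorphism of $\partial D\times[0,1]$ equal to $g:=(\phi_0\circ\phi_1^{-1})|_{\partial D}$ on one end and to the identity on the other. Such an interpolation exists precisely when $g$ is PL-concordant (equivalently, for spheres, PL-isotopic) to the identity on the $(n-1)$-sphere $\partial D$; there is no canonical ``interpolation'' between two homeomorphisms of a sphere, so this is not a matter of choosing subdivisions. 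Proving that an orientation-preserving PL self-homeomorphism of $S^{n-1}$ is isotopic to the identity is essentially the theorem one dimension down (isotope $g$ to fix a ball in $S^{n-1}$ using the Disc Theorem in dimension $n-1$, then apply the Alexander trick to the complementary ball), so the argument must be organised as an induction on $n$; this is also where orientation-preservation genuinely enters. Your closing remark that orientation-preservation is what ``guarantees that the conical extension produces a genuine PL homeomorphism'' is not right: the cone of any PL self-homeomorphism of the sphere, orientation-reversing or not, is a perfectly good PL homeomorphism of the ball, and Alexander's formula is PL-valid at the cone vertex regardless. Orientation matters only for the existence of the concordance on the boundary sphere, not for the well-definedness of the cone.
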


The above reference does not state that the ambient isotopy fixes~$\partial M$, but a closer inspection reveals that the stronger conclusion does follow from the proof. Later, we also present a stronger version of the Disc Theorem, which does include the boundary condition (Theorem~\ref{thm.discpair}).

\begin{dfn}
  An \textbf{enclosing ball} for a spatial graph~$\Gamma$ in~$\SPH$, is a PL-embedded $3$-ball~$B \subset \SPH$ such that $|\Gamma|\subset \intr(B)$.
\end{dfn}

\begin{dfn}\label{dfn.disjointunion}
  For each $i \in \{1,2\}$, let $\Gamma_i = (\SPH_i, V_i, E_i)$~be a spatial graph, and let $B_i$~be an enclosing ball for~$\Gamma_i$. Moreover, let $f\colon \partial B_1 \to \partial B_2$ be an orientation-reversing PL homeomorphism. Then the spatial graph
  \[\Gamma_1 \sqcup_f \Gamma_2 := (B_1 \cup_f B_2, V_1 \sqcup V_2, E_1 \sqcup E_2), \]
  where $B_1 \cup_f B_2$ denotes the $3$-sphere obtained by attaching $B_1$~to~$B_2$ using~$f$, is said to be a \textbf{disjoint union} of $\Gamma_1$~and~$\Gamma_2$.
\end{dfn}

We remark that gluing polyhedra along sub-polyhedra is a valid operation in the PL setting \cite[Exercise~2.27~(2)]{RourkeSanderson}.

As one would expect, the underlying graph~$\langle \Gamma_1 \sqcup_f \Gamma_2 \rangle$ is the disjoint union $\langle \Gamma_1 \rangle \sqcup \langle \Gamma_2 \rangle$, as usually defined for abstract graphs.

The next lemma and proposition show that the isomorphism type of a disjoint union of two spatial graphs does not depend on the choice of enclosing balls~$B_1, B_2$, nor on the attaching map~$f$.

\begin{lem}[Uniqueness of enclosing balls]\label{lem.enclosingball}
  Let $\Gamma$~be a spatial graph in~$\SPH$, and let $B, B'$~be enclosing balls for~$\Gamma$. Then every orientation-preserving PL homeomorphism $\Phi_\partial \colon \partial B \to \partial B'$ extends to an orientation-preserving PL homeomorphism $\Phi_B \colon B \to B'$ that restricts to the identity on~$|\Gamma|$.
\end{lem}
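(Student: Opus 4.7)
The plan is to first produce \emph{some} PL homeomorphism $B\to B'$ that fixes $|\Gamma|$ pointwise, and then adjust it so that its restriction to $\partial B$ matches the prescribed $\Phi_\partial$.

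For the first step, pick a regular neighborhood $N$ of $|\Gamma|$ in $\SPH$ small enough to lie inside $\intr(B)\cap\intr(B')$; this is possible because $|\Gamma|$ is compact and contained in both open sets. Let $M:=\SPH\setminus\intr(N)$, a connected oriented PL $3$-manifold with boundary $\partial N$ (connectedness being standard, since $|\Gamma|$ has codimension at least~$2$ in $\SPH$). The complementary balls $B^c:=\SPH\setminus\intr(B)$ and $B'^c:=\SPH\setminus\intr(B')$ are PL $3$-balls contained in $\intr(M)$, since they are disjoint from $N$ and hence from $\partial M$. Applying the Disc Theorem~\ref{thm.discthm} to $M$, there is an ambient PL isotopy of $M$ fixing $\partial N$ pointwise whose time-$1$ map $H$ takes $B^c$ onto $B'^c$. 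Extending $H$ by the identity on $N$ yields an orientation-preserving PL self-homeomorphism $\tilde H$ of $\SPH$ that fixes $|\Gamma|$ pointwise and satisfies $\tilde H(B)=B'$.

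For the second step, set $\psi:=\Phi_\partial\circ(\tilde H|_{\partial B})^{-1}$, an orientation-preserving PL self-homeomorphism of $\partial B'$. Since $|\Gamma|\subset\intr(B')$ is compact and disjoint from $\partial B'$, one may choose a PL collar $c\colon\partial B'\times[0,1]\hookrightarrow B'$ whose image is disjoint from $|\Gamma|$. Because every orientation-preserving PL self-homeomorphism of $S^2$ is PL-isotopic to the identity (a classical fact about the PL mapping class group of the $2$-sphere), there is a PL isotopy $(\psi_t)_{t\in[0,1]}$ with $\psi_0=\psi$ and $\psi_1=\id$. Define $\Psi\colon B'\to B'$ by setting $\Psi(c(x,t)):=c(\psi_t(x),t)$ on the image of the collar, and $\Psi:=\id$ elsewhere. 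This is a well-defined orientation-preserving PL self-homeomorphism with $\Psi|_{\partial B'}=\psi$ and $\Psi|_{|\Gamma|}=\id$, so the composition $\Phi_B:=\Psi\circ\tilde H|_B\colon B\to B'$ satisfies all the required properties.

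The main technical ingredient is in Step~2, where we appeal to the classical triviality of PL-isotopy classes of orientation-preserving self-homeomorphisms of $S^2$. The rest of the argument is a clean application of the Disc Theorem to a well-chosen pair of complementary balls in the exterior of a regular neighborhood of $|\Gamma|$.
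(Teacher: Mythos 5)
Your proof is correct, and it shares the paper's skeleton (a regular neighborhood $N$ of $|\Gamma|$, the complementary balls in $M=\SPH\setminus\intr(N)$, and the Disc Theorem applied in $M$ rel $\partial N$), but it achieves the boundary condition by a different mechanism. The paper first extends $\Phi_\partial$ over the complementary ball to some orientation-preserving PL homeomorphism $\overline{B}\to\overline{B'}$ (boundary homeomorphisms of balls always extend, by coning), and only then invokes the Disc Theorem to compare the inclusion $\overline{B}\hookrightarrow M$ with that specific embedding; the final map of the ambient isotopy then automatically restricts to $\Phi_\partial$ on $\partial B$, and no further correction is needed. You instead apply the Disc Theorem to an arbitrary identification of the complementary balls and repair the resulting boundary discrepancy afterwards, via a collar of $\partial B'$ disjoint from $|\Gamma|$ together with the triviality of the orientation-preserving PL mapping class group of $S^2$. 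Both arguments work; the trade-off is that your route requires this extra (classical, and derivable from the Disc Theorem for $2$-balls in $S^2$ plus the PL Alexander trick, so consistent with the paper's toolkit) input, whereas the paper's ordering of steps renders it unnecessary. Your collar trick is also worth remembering, since it is exactly the kind of adjustment that becomes unavoidable in situations where one cannot prescribe the embedding fed into the Disc Theorem in advance.
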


\begin{proof}
  Fix a regular neighborhood~$N_\Gamma$ of~$|\Gamma|$ in~$\SPH$ that is disjoint from~$\partial B \cup \partial B'$. Since $N_\Gamma$~and~$\SPH$ are $3$-manifolds, the subspace $\overline{N_\Gamma} := \SPH \setminus \intr(N_\Gamma)$ is also a $3$-manifold \cite[Corollary~3.14]{RourkeSanderson}. Moreover,
  since the closure of the complement of a PL-embedded $n$-ball in an $n$-sphere is always an $n$-ball \cite[Corollary~3.13]{RourkeSanderson}, we see that $\overline B :=  \SPH \setminus \intr (B)$ is a $3$-ball contained in $\intr(\overline{N_\Gamma})$ (and similarly for $\overline{B'} := \SPH \setminus \intr(B')$).
  
  Since a PL homeomorphism between the boundaries of two balls always extend to a PL homeomorphism of their interior \cite[Lemma~1.10]{RourkeSanderson}, we may extend $\Phi_\partial$ to an orientation-preserving PL homeomorphism $\Phi_{\overline B} \colon \overline{B} \to \overline{B'}$. Then we apply the Disc Theorem (Theorem~\ref{thm.discthm}) to produce a PL ambient isotopy of~$\overline{N_\Gamma}$ taking the inclusion $\overline{B} \hookrightarrow \overline{N_\Gamma}$ to the composition
  \[ \overline B \overset{\Phi_{\overline B}}{\longrightarrow} \overline{B'} \into \overline{N_\Gamma}.\]
  Since this ambient isotopy keeps $\partial N_\Gamma$~fixed, its final homeomorphism $\Phi_{\overline{N_\Gamma}} \colon \overline{N_\Gamma} \to \overline{N_\Gamma}$ can be extended to all of~$\SPH$ by setting it to the identity on~$N_\Gamma$. This extension $\Phi_\SPH \colon \SPH \to\SPH$, when restricted to~$B$, is a PL homeomorphism~$\Phi_B \colon B \to B'$ satisfying the conclusion of the lemma.
\end{proof}

\begin{prop}[Well-definedness of the disjoint union]\label{prop.uniquenessdisjunion}
  For each $i \in \{1,2\}$, let $\Gamma_i$~be a spatial graph in~$\SPH_i$, and let $B_i, B_i'$~be two enclosing balls for~$\Gamma_i$. Moreover, let $f\colon \partial B_1 \to \partial B_2$ and $f'\colon \partial B_1' \to \partial B_2'$ be orientation-reversing PL homeomorphisms. Then there is an isomorphism $\Phi \colon \Gamma_1 \sqcup_f \Gamma_2 \cong \Gamma_1 \sqcup_{f'} \Gamma_2$ such that $\langle\Phi\rangle$~is the identity on~$\langle \Gamma_1 \rangle \sqcup \langle \Gamma_2 \rangle$.
\end{prop}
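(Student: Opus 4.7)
The plan is to build the desired isomorphism $\Phi$ piecewise on $B_1$ and $B_2$, arranging the boundary values so that the two gluings~$f$ and~$f'$ become intertwined. Concretely, any pair of maps~$\Phi_1 \colon B_1 \to B_1'$ and~$\Phi_2 \colon B_2 \to B_2'$ descends to a well-defined map on the quotients $B_1 \cup_f B_2 \to B_1' \cup_{f'} B_2'$ precisely when the compatibility $\Phi_2 \circ f = f' \circ \Phi_1$ holds on~$\partial B_1$, so the task reduces to producing $\Phi_1, \Phi_2$ with this property that are orientation-preserving PL homeomorphisms restricting to the identity on~$|\Gamma_1|$ and~$|\Gamma_2|$ respectively.

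To produce $\Phi_1$, I would pick an arbitrary orientation-preserving PL homeomorphism $\Phi_1^\partial \colon \partial B_1 \to \partial B_1'$ and invoke the Uniqueness of Enclosing Balls lemma (Lemma~\ref{lem.enclosingball}) to extend it to an orientation-preserving PL homeomorphism $\Phi_1 \colon B_1 \to B_1'$ that is the identity on~$|\Gamma_1|$. To force the compatibility relation, I would then \emph{not} make a second free choice, but instead define
\[\Phi_2^\partial := f' \circ \Phi_1^\partial \circ f^{-1} \colon \partial B_2 \to \partial B_2'.\]
Since~$f$ and~$f'$ are orientation-reversing while~$\Phi_1^\partial$ is orientation-preserving, the composition~$\Phi_2^\partial$ is orientation-preserving, so a second application of Lemma~\ref{lem.enclosingball} yields an orientation-preserving PL homeomorphism $\Phi_2 \colon B_2 \to B_2'$ restricting to the identity on~$|\Gamma_2|$.

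Gluing $\Phi_1$ and $\Phi_2$ along the boundary (which is legitimate because PL maps that agree on a common PL subspace glue to a PL map) produces a PL homeomorphism $\Phi \colon B_1 \cup_f B_2 \to B_1' \cup_{f'} B_2'$ of $3$-spheres. It preserves the ambient orientation because both summands do, and it restricts to the identity on $|\Gamma_1| \cup |\Gamma_2|$. In particular $\Phi$ fixes every vertex of $V_1 \sqcup V_2$ pointwise and sends each edge of $E_1 \sqcup E_2$ to itself setwise, so it is an isomorphism of spatial graphs whose induced map $\langle \Phi \rangle$ is the identity on $\langle \Gamma_1 \rangle \sqcup \langle \Gamma_2 \rangle$, as required.

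I do not expect a substantial obstacle: all the genuinely topological content is packaged into Lemma~\ref{lem.enclosingball} (and, behind it, the Disc Theorem). The only points that require care are the orientation bookkeeping in the composition defining $\Phi_2^\partial$, and the observation that $\Phi_2$ must be constrained on~$\partial B_2$ rather than chosen independently, which is exactly the device that makes the two gluings agree.
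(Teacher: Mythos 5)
Your proposal is correct and follows essentially the same route as the paper: both proofs apply Lemma~\ref{lem.enclosingball} to obtain $\Phi_1$, then constrain $\Phi_2$ on $\partial B_2$ to be $f' \circ \Phi_1|_{\partial B_1} \circ f^{-1}$ (noting this is orientation-preserving since $f, f'$ are orientation-reversing) before applying the lemma a second time, and finally glue. The only difference is cosmetic: you spell out the orientation bookkeeping and the quotient compatibility condition, which the paper leaves implicit.
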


Note that in the abbreviated notation introduced in Section~\ref{sec.preliminaries}, the conclusion of this proposition can be rephrased as ``$\Gamma_1 \sqcup_f \Gamma_2 = \Gamma_1 \sqcup_{f'} \Gamma_2$''.

\begin{proof}
By Lemma~\ref{lem.enclosingball}, there is an orientation-preserving PL homeomorphism $\Phi_1 \colon B_1 \to B_1'$ restricting to the identity on~$|\Gamma_1|$. Using the same lemma, let $\Phi_2 \colon B_2 \to B_2'$ be an orientation-preserving PL homeomorphism fixing~$|\Gamma_2|$ and whose restriction to~$\partial B_2$ is $f' \circ \Phi_1|_{\partial B_1} \circ f^{-1}$.
Then the maps~$\Phi_i$ assemble to a PL homeomorphism $\Phi \colon B_1 \sqcup_f B_2 \to B_1' \sqcup_{f'} B_2'$ giving the desired isomorphism between $\Gamma_1 \sqcup_f \Gamma_2$ and~$\Gamma_1\sqcup_{f'}\Gamma_2$. The fact that each~$\Phi_i$ restricts to the identity on $|\Gamma_i|$ implies that $\langle \Phi \rangle$~is the identity on~$\langle \Gamma_1 \rangle \sqcup \langle \Gamma_2 \rangle$.
\end{proof}

The disjoint union~$\Gamma_1 \sqcup_f \Gamma_2$ is thus well-defined without specifying enclosing balls~$B_1, B_2$ nor the attaching map~$f$, up to isomorphism of spatial graphs respecting the underlying combinatorial structure. Hence we will from now on most of the time suppress the $f$-subscript from the notation.

We now collect two elementary observations:

\begin{lem}[Disjoint union summands as sub-graphs]\label{lem.summandissubgraph}
Let $\Gamma = \Gamma_1 \sqcup \Gamma_2$ be a disjoint union of spatial graphs, and denote by~$\Gamma_1'$ the sub-graph of~$\Gamma$ obtained by discarding all vertices and edges of~$\Gamma_2$. Then $\Gamma_1' = \Gamma_1$.
\end{lem}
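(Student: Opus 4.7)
The idea is to unwind the definition and produce an explicit PL homeomorphism, using the obvious fact that the ``$\Gamma_2$ side'' of the ambient sphere can be identified with the closure of the complement of an enclosing ball of $\Gamma_1$ inside~$\SPH_1$. Write $\Gamma = \Gamma_1\sqcup_f\Gamma_2$ using enclosing balls $B_i\subset\SPH_i$ and an orientation-reversing gluing $f\colon\partial B_1\to\partial B_2$, so the ambient sphere of~$\Gamma$ is $\SPH = B_1\cup_f B_2$ and $\Gamma_1'$ is, by construction, the triple $(\SPH, V_1, E_1)$. The goal is then to exhibit an orientation-preserving PL homeomorphism
\[\Phi\colon(\SPH,|\Gamma_1|,V_1)\longrightarrow(\SPH_1,|\Gamma_1|,V_1)\]
which is the identity on $|\Gamma_1|$; such a map witnesses $\Gamma_1'=\Gamma_1$ in the abbreviated sense of Section~\ref{sec.preliminaries}, since it induces the identity on underlying graphs.

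I would define $\Phi$ piecewise. Let $\overline{B_1}:=\SPH_1\setminus\intr(B_1)$, which is a PL $3$-ball by \cite[Corollary~3.13]{RourkeSanderson}. On the $B_1$-part of~$\SPH$, set $\Phi$ to be the identity onto $B_1\subset\SPH_1$; in particular this already fixes~$|\Gamma_1|$ pointwise. Under the gluing identification $\partial B_1\equiv_f\partial B_2$ in~$\SPH$, the identity on $\partial B_1$ corresponds to the map $f^{-1}\colon\partial B_2\to\partial B_1=\partial\overline{B_1}$, and I would extend $f^{-1}$ to a PL homeomorphism $\Phi|_{B_2}\colon B_2\to\overline{B_1}$ using \cite[Lemma~1.10]{RourkeSanderson}. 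The two pieces then agree on the gluing locus and assemble into a PL homeomorphism $\Phi\colon\SPH\to\SPH_1$ fixing $|\Gamma_1|$, hence sending $V_1$ to~$V_1$ as required.

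The only mildly technical point is orientation bookkeeping. The assumption that $f$ is orientation-reversing for the boundary orientations induced by $B_1\subset\SPH_1$ and $B_2\subset\SPH_2$ is precisely the condition making the inclusions $B_1,B_2\hookrightarrow\SPH$ orientation-preserving. A parallel observation in~$\SPH_1$ is that the boundary orientations on $\partial B_1$ induced from $B_1$ and from $\overline{B_1}$ are opposite to one another. Combining these two facts, $f^{-1}\colon\partial B_2\to\partial\overline{B_1}$ preserves the induced boundary orientations. Since a PL homeomorphism of connected oriented PL manifolds with boundary is orientation-preserving iff its restriction to the boundary preserves the boundary orientations, any PL extension $\Phi|_{B_2}\colon B_2\to\overline{B_1}$ is automatically orientation-preserving, and the assembled $\Phi$ is orientation-preserving on all of~$\SPH$. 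This delivers the isomorphism $\Gamma_1'\cong\Gamma_1$ inducing the identity on underlying graphs, completing the proof. I expect the orientation computation to be the main (still minor) obstacle; the rest is a direct unwinding of Definition~\ref{dfn.disjointunion}.
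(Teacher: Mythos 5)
Your proof is correct and follows essentially the same route as the paper: set the map to be the identity on~$B_1$ and extend the attaching homeomorphism across the complementary ball via \cite[Lemma~1.10]{RourkeSanderson}. The extra orientation bookkeeping you supply is sound and is left implicit in the paper's own (shorter) argument.
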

\begin{proof}
For each $i \in \{1,2\}$, let $\SPH_i$~be the ambient sphere for~$\Gamma_i$, let $B_i \subset \SPH_i$ be the enclosing ball from which the disjoint union was formed, and let $f \colon \partial B_1 \to \partial B_2$ be the attaching map. Our task is to find a PL homeomorphism $\Phi \colon \SPH_1 \to B_1 \cup_f B_2$ that restricts to the identity on~$|\Gamma_1|$. Setting $\Phi$~as the identity map on~$B_1$, we are left to find a PL homeomorphism $\overline{B_1} \to B_2$ extending~$f$, where $\overline{B_1} := \SPH_1 \setminus \intr(B_1)$. Such an extension always exits \cite[Lemma~1.10]{RourkeSanderson}.
\end{proof}

When working with such a disjoint union, we will often refer to the summand~$\Gamma_1$ as a sub-graph of~$\Gamma$ without explicit mention of this lemma.

\begin{lem}[Disjoint union of isomorphisms]\label{lem.isodisjunion}
Let $\Phi_1 \colon \Gamma_1 \to \Gamma_1'$ and $\Phi_2 \colon \Gamma_2 \to \Gamma_2'$ be isomorphisms of spatial graphs. Then there exists an isomorphism
\[\Phi_1 \sqcup \Phi_2 \colon \Gamma_1 \sqcup \Gamma_2 \to \Gamma_1' \sqcup \Gamma_2'\]
such that for each $i \in \{1,2\}$ the underlying isomorphism of abstract graphs $\langle \Phi_1 \sqcup \Phi_2\rangle$ restricts to~$\langle \Phi_i \rangle$ on~$\langle \Gamma_i \rangle$.
\end{lem}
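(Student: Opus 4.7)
The plan is to transport the gluing construction from the domain to the codomain using $\Phi_1, \Phi_2$, and then invoke the well-definedness result (Proposition~\ref{prop.uniquenessdisjunion}) to identify the result with the ``standard'' disjoint union $\Gamma_1' \sqcup \Gamma_2'$.

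In detail, I would first fix enclosing balls $B_i \subset \SPH_i$ for $\Gamma_i$ and an orientation-reversing PL homeomorphism $f \colon \partial B_1 \to \partial B_2$, so that $\Gamma_1 \sqcup \Gamma_2 = \Gamma_1 \sqcup_f \Gamma_2 = (B_1 \cup_f B_2, V_1 \sqcup V_2, E_1 \sqcup E_2)$. Because each $\Phi_i \colon \SPH_i \to \SPH_i'$ is an orientation-preserving PL homeomorphism sending $|\Gamma_i|$ onto $|\Gamma_i'|$, the image $B_i' := \Phi_i(B_i) \subset \SPH_i'$ is again a PL-embedded $3$-ball containing $|\Gamma_i'|$ in its interior, i.e.\ an enclosing ball for $\Gamma_i'$. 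The map $\tilde f := \Phi_2|_{\partial B_2} \circ f \circ (\Phi_1|_{\partial B_1})^{-1} \colon \partial B_1' \to \partial B_2'$ is a PL homeomorphism, and is orientation-reversing because $f$ is orientation-reversing while $\Phi_1, \Phi_2$ preserve orientations. Hence $\Gamma_1' \sqcup_{\tilde f} \Gamma_2'$ is a legitimate disjoint union in the sense of Definition~\ref{dfn.disjointunion}.

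Next, the restrictions $\Phi_1|_{B_1} \colon B_1 \to B_1'$ and $\Phi_2|_{B_2} \colon B_2 \to B_2'$ agree along the gluing (precisely by the definition of $\tilde f$), so they assemble into a single PL homeomorphism
\[
  \Psi \colon B_1 \cup_f B_2 \longrightarrow B_1' \cup_{\tilde f} B_2'
\]
that preserves orientations, sends $V_1 \sqcup V_2$ to $V_1' \sqcup V_2'$ bijectively, and sends $|\Gamma_1 \sqcup_f \Gamma_2|$ to $|\Gamma_1' \sqcup_{\tilde f} \Gamma_2'|$. Thus $\Psi$ is an isomorphism of spatial graphs $\Gamma_1 \sqcup_f \Gamma_2 \to \Gamma_1' \sqcup_{\tilde f} \Gamma_2'$, and by construction $\langle \Psi \rangle$ restricts to $\langle \Phi_i \rangle$ on $\langle \Gamma_i \rangle$ for each $i$.

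Finally, applying Proposition~\ref{prop.uniquenessdisjunion} to the two choices of enclosing balls and attaching data that may be used to form the right-hand disjoint union, we obtain an isomorphism $\Gamma_1' \sqcup_{\tilde f} \Gamma_2' \to \Gamma_1' \sqcup \Gamma_2'$ whose underlying morphism is the identity on $\langle \Gamma_1' \rangle \sqcup \langle \Gamma_2' \rangle$. Composing with $\Psi$ defines $\Phi_1 \sqcup \Phi_2$ with the required property on underlying graphs. No serious obstacle is expected here: the argument is a direct application of the two previous results, and the only bookkeeping point deserving attention is verifying that orientation conventions are preserved when transporting $f$ via $\Phi_1, \Phi_2$, which follows immediately from all three maps being orientation-preserving (respectively reversing, for $f$).
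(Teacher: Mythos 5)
Your proof is correct and is essentially the paper's own argument: transport the enclosing balls and attaching map via $\Phi_1,\Phi_2$, assemble the restrictions $\Phi_i|_{B_i}$, and use Proposition~\ref{prop.uniquenessdisjunion} to identify the resulting disjoint union with $\Gamma_1'\sqcup\Gamma_2'$. The extra care you take with the orientation of $\tilde f$ and with the final identification step is welcome but does not change the route.
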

\begin{proof}
Form the disjoint union~$\Gamma_1 \sqcup_f \Gamma_2$ by using a suitable PL homeomorphism  $f \colon \partial B_1 \to \partial B_2$ between the boundaries of enclosing balls for~$\Gamma_1, \Gamma_2$. Writing $B_1':= \Phi_1(B_1)$, $B_2':= \Phi_2(B_2)$, and defining $f' \colon \partial B_1' \to \partial B_2'$ as $f' := \Phi_2|_{\partial B_2} \circ f\circ \Phi_1^{-1}|_{\partial B_1'}$, we can form the disjoint union  $\Gamma_1' \sqcup_{f'} \Gamma_2'$. The restrictions~$\Phi_i|_{B_i}$ then assemble to the desired isomorphism $\Phi_1 \sqcup \Phi_2$.
\end{proof}

Note that this lemma strongly depends on the fact that the ambient $3$-spheres of spatial graphs carry an orientation, which is preserved by isomorphisms. If we were to drop this requirement, then a spatial graph~$\Gamma$ comprised of one vertex and one edge in the shape of a trefoil would be isomorphic to its mirror-image~$\tilde \Gamma$. The spatial graphs $\Gamma \sqcup \Gamma$ and $\Gamma \sqcup \tilde \Gamma$ would however not be isomorphic.

We finish this subsection by recording basic algebraic properties of the disjoint union.

\begin{prop}[Properties of the disjoint union]\label{prop.disjunprop}
	Let $\Gamma_1, \Gamma_2, \Gamma_3$ be spatial graphs. Then:
  	\begin{itemize}
    \item \emph{identity element:} $\Gamma_1 \sqcup \zero = \Gamma_1$,
    \item \emph{commutativity:} $\Gamma_1 \sqcup \Gamma_2 = \Gamma_2 \sqcup \Gamma_1$,
    \item \emph{associativity:} $(\Gamma_1 \sqcup \Gamma_2) \sqcup \Gamma_3 = \Gamma_1 \sqcup (\Gamma_2 \sqcup \Gamma_3)$.
  	\end{itemize}
\end{prop}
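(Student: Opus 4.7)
Apply Lemma~\ref{lem.summandissubgraph} with $\Gamma_2 := \zero$: the sub-graph of $\Gamma_1 \sqcup \zero$ obtained by discarding the (empty) vertices and edges of $\zero$ is the whole spatial graph $\Gamma_1 \sqcup \zero$, and that lemma identifies this sub-graph with $\Gamma_1$.

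\textbf{Commutativity.} Pick enclosing balls $B_1, B_2$ for $\Gamma_1, \Gamma_2$ and an orientation-reversing PL homeomorphism $f \colon \partial B_1 \to \partial B_2$. Then $f^{-1}$ is also orientation-reversing, and the oriented $3$-sphere $B_1 \cup_f B_2$ coincides with $B_2 \cup_{f^{-1}} B_1$ as a quotient of $B_1 \sqcup B_2$, carrying the same orientation. Since the set-theoretic disjoint unions of vertices and of edges are symmetric, the triples $\Gamma_1 \sqcup_f \Gamma_2$ and $\Gamma_2 \sqcup_{f^{-1}} \Gamma_1$ are literally the same, and Proposition~\ref{prop.uniquenessdisjunion} removes the apparent dependence on $f$.

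\textbf{Associativity.} My plan is to realize both iterated disjoint unions as a single concrete spatial graph, exploiting the freedom granted by Proposition~\ref{prop.uniquenessdisjunion} when choosing enclosing balls and attaching maps. Fix enclosing balls $B_1, B_2, B_3$ for the $\Gamma_i$; pick a PL $3$-ball $D \subset \intr(B_2) \setminus |\Gamma_2|$ (say, a star neighborhood of a point); and pick orientation-reversing PL homeomorphisms $f \colon \partial B_1 \to \partial B_2$ and $g \colon \partial D \to \partial B_3$. Set
\[ \SPH^\star := B_1 \cup_f (B_2 \setminus \intr(D)) \cup_g B_3, \]
which is an oriented $3$-sphere by two applications of PL Schoenflies (first to $(B_1 \cup_f B_2) \setminus \intr(D)$, then to $\SPH^\star$ itself), and in which $|\Gamma_1|, |\Gamma_2|, |\Gamma_3|$ sit pairwise disjointly in the interiors of the three pieces. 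Thus $\SPH^\star$ with vertex/edge data $(V_1\sqcup V_2\sqcup V_3,\; E_1\sqcup E_2\sqcup E_3)$ is a spatial graph $\Gamma^\star$. One realization of $(\Gamma_1 \sqcup \Gamma_2) \sqcup \Gamma_3$ uses $(B_1, B_2, f)$ for the first step and then the enclosing ball $(B_1 \cup_f B_2) \setminus \intr(D)$ (a ball by Schoenflies) together with the attaching map $g$ for the second; by direct inspection, this realization is literally $\Gamma^\star$. The symmetric computation for $\Gamma_1 \sqcup (\Gamma_2 \sqcup \Gamma_3)$ produces a spatial graph $\Gamma^{\star\star}$ with ambient sphere of the form $B_3 \cup_{g'} (B_2 \setminus \intr(D')) \cup_{f'} B_1$, in which $B_1$ is attached to the ``inner'' boundary of $B_2 \setminus \intr(D')$ rather than the ``outer'' one.

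The main obstacle is the identification $\Gamma^\star = \Gamma^{\star\star}$: one must produce an orientation-preserving PL homeomorphism $\SPH^\star \to \SPH^{\star\star}$ restricting to the identity on $|\Gamma_1| \cup |\Gamma_2| \cup |\Gamma_3|$. The expected approach is a Lemma~\ref{lem.enclosingball}-style argument: fix a regular neighborhood of $|\Gamma_1| \cup |\Gamma_2| \cup |\Gamma_3|$ inside $\SPH^\star$, and use the Disc Theorem (Theorem~\ref{thm.discthm}) to ambient-isotope the enclosing ball of $|\Gamma_1|$ from its ``outer'' position in $\SPH^\star$ to the ``inner'' target position in $\SPH^{\star\star}$, rel the support of the graphs. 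Alternatively, one can sidestep the inner/outer asymmetry by first invoking the commutativity proved above to rewrite $\Gamma_1 \sqcup (\Gamma_2 \sqcup \Gamma_3) = (\Gamma_3 \sqcup \Gamma_2) \sqcup \Gamma_1$ and then applying the first computation with the roles of $\Gamma_1$ and $\Gamma_3$ swapped; this reduces the remaining work to the symmetric claim that $\Gamma^\star$ is unchanged under interchanging the roles of $\Gamma_1$ and $\Gamma_3$, again a direct Disc-Theorem exercise.
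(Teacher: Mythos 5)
Your identity and commutativity arguments are correct and coincide with the paper's. For associativity your configuration is essentially the right one, but as written there is a genuine gap: the identification $\Gamma^\star = \Gamma^{\star\star}$, which you yourself flag as ``the main obstacle'', is only sketched, and neither proposed repair is routine. Ambient-isotoping $B_1$ from the outer boundary sphere of $B_2\setminus\intr(D')$ to the inner one, relative to $|\Gamma_1|\cup|\Gamma_2|\cup|\Gamma_3|$, is a two-ball strengthening of Lemma~\ref{lem.enclosingball} that would need its own proof; and the reduction via commutativity merely trades the claim for the equally unproven statement that $\Gamma^\star$ is symmetric in $\Gamma_1$ and $\Gamma_3$.

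The gap is an artifact of a suboptimal choice, and the paper's proof shows how to make it vanish. When forming $\Gamma_2 \sqcup \Gamma_3$ in the second parenthesization, Proposition~\ref{prop.uniquenessdisjunion} lets you use \emph{any} enclosing ball for $\Gamma_2$: take it to be $\SPH_2 \setminus \intr(D)$, whose boundary is $\partial D$, so that your map $g$ itself serves as the attaching map. Then $\Gamma_2 \sqcup_g \Gamma_3$ has ambient sphere $(\SPH_2\setminus\intr(D))\cup_g B_3$, inside which $(B_2\setminus\intr(D))\cup_g B_3$ is an enclosing ball with boundary $\partial B_2$; gluing in $B_1$ along $f$ reproduces $\SPH^\star$ with the same vertex and edge data, so $\Gamma^{\star\star} = \Gamma^\star$ on the nose and no homeomorphism needs to be constructed. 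This is precisely the paper's device: it fixes two enclosing balls $B_{21}, B_{23}$ for $\Gamma_2$ with disjoint complements (your $B_2$ and $\SPH_2\setminus\intr(D)$) and attaches $B_1$ and $B_3$ along their respective boundaries, whereupon both parenthesizations are literally the same triple. I recommend replacing your $\Gamma^{\star\star}$ paragraph with this observation.
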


\begin{proof}
	The first statement follows from Lemma~\ref{lem.summandissubgraph}.

	Commutativity is straightforward if one uses the same enclosing balls for both disjoint unions, and mutually inverse attaching maps.
	
	For associativity, we need only a bit of care when choosing enclosing balls along which to perform the disjoint unions. The idea should be clear from the schematic in Figure~\ref{fig.associativity}, but we now supply a bit more detail. Denote the ambient $3$-sphere of each~$\Gamma_i$ by~$\SPH_i$. Let $B_1$, $B_3$ be enclosing balls for $\Gamma_1, \Gamma_3$, respectively, and let $B_{21}$, $B_{23}$ be enclosing balls for~$\Gamma_2$ such that $\intr(B_{21})\cup \intr(B_{23}) = \SPH_2$. Equivalently, the $3$-balls $\overline{B_{21}}:= \SPH \setminus \intr(B_{21})$ and $\overline{B_{23}}:= \SPH \setminus \intr(B_{23})$ should be disjoint. After fixing attaching maps $f_1\colon \partial B_1 \to \partial B_{21}$, $f_3 \colon \partial B_{23} \to \partial B_3$, it follows that $B_1 \cup_{f_1} (B_{21} \cap B_{23})$ is an enclosing ball for $\Gamma_1 \sqcup_{f_1} \Gamma_2$, and $(B_{21} \cap B_{23}) \cup_{f_3} B_3$ is an enclosing ball for $\Gamma_2 \sqcup_{f_3} \Gamma_3$. The spatial graphs $(\Gamma_1 \sqcup_{f_1} \Gamma_2) \sqcup_{f_3} \Gamma_3$ and $\Gamma_1 \sqcup_{f_1} (\Gamma_2 \sqcup_{f_3} \Gamma_3)$ are then the same on the nose. 
	\begin{figure}[h]
	      \centering
	      \def \svgwidth{0.75\linewidth}
	      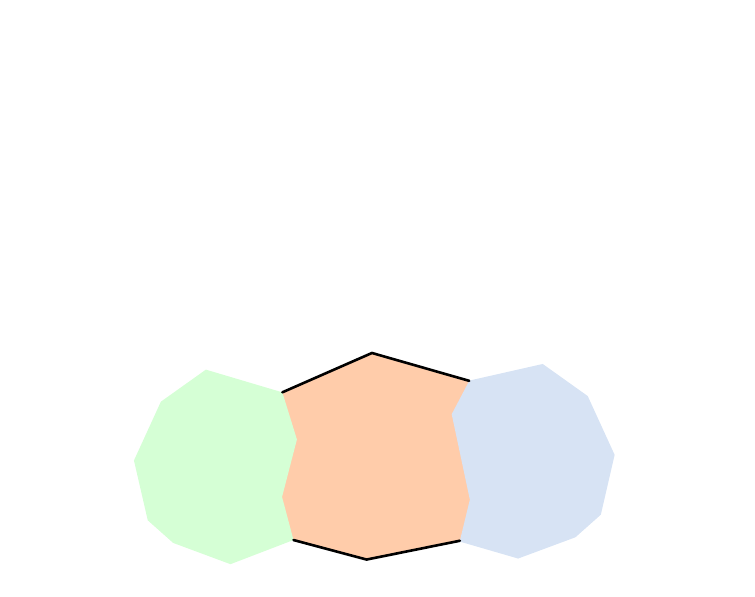
	      \caption{The proof of associativity of the disjoint union, with ambient spheres and enclosing balls depicted one dimension below. Top: the ambient spheres and enclosing balls for the spatial graphs~$\Gamma_i$. Bottom: the spatial graph $\Gamma_1 \sqcup_{f_1} \Gamma_2 \sqcup_{f_3} \Gamma_3$ in its ambient sphere $B_1 \cup_{f_1} (B_{21} \cap B_{23}) \cup_{f_3} B_3$.}
	      \label{fig.associativity}
	  \end{figure}
\end{proof}

Commutativity and associativity allow us to unambiguously write down iterated disjoint unions. More precisely, if $\{ \Gamma_i \}_{i \in I}$ is a collection of spatial graphs indexed by a finite set~$I$, then $\bigsqcup_{i\in I} \Gamma_i$ is well-defined up to isomorphism inducing the identity on~$\bigsqcup_{i \in I} \langle \Gamma_i \rangle$.

\subsection{Decomposing spatial graphs as disjoint unions}

We now start working towards proving that every spatial graph can be expressed as an iterated disjoint union in a canonical way. We will (often implicitly) use the fact that every PL-embedded $2$-sphere in a PL $3$-sphere decomposes it into two $3$-balls. Although the topological version of this statement is known to be false by the famous counter-example of the ``Alexander horned sphere'', it holds in the PL setting, also due to work of Alexander \cite{Alex24}.

We begin with a simple observation.

\begin{lem}[If it looks like a disjoint union, it is a disjoint union]\label{lem.spheresdontlie}
Let $\Gamma$~be a spatial graph in~$\SPH$ and $S\subset \SPH \setminus |\Gamma|$ a PL-embedded $2$-sphere. Denote the closures of the two components of~$\SPH \setminus S$ by $B_1$~and~$B_2$. For each $i \in \{1,2\}$, let $\Gamma_i$~be the sub-graph of~$\Gamma$ comprised of the vertices and edges that are contained in~$B_i$. Then $\Gamma = \Gamma_1 \sqcup \Gamma_2$.
\end{lem}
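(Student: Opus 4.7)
The lemma should really be a matter of bookkeeping: since $S \subset \SPH \setminus |\Gamma|$, the support $|\Gamma|$ is contained in $\SPH \setminus S = \intr(B_1) \cup \intr(B_2)$, a disjoint union of open sets. My first step is to observe that every vertex $v \in V$ lies in exactly one of $\intr(B_1), \intr(B_2)$, and every edge $e \in E$, being connected and disjoint from $S$, also lies entirely in exactly one of them. This immediately yields $V = V_1 \sqcup V_2$ and $E = E_1 \sqcup E_2$, together with the containments $|\Gamma_i| \subset \intr(B_i)$.

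Next I want to recognize $B_1, B_2$ as enclosing balls for $\Gamma_1, \Gamma_2$, where here I regard each $\Gamma_i = (\SPH, V_i, E_i)$ as a spatial graph in the ambient sphere $\SPH$. The previous paragraph already shows that $B_i \subset \SPH$ is a PL $3$-ball with $|\Gamma_i| \subset \intr(B_i)$, so this is automatic.

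To exhibit $\Gamma$ on the nose as a disjoint union, I take as the attaching map the identity $f := \id_S \colon \partial B_1 \to \partial B_2$. The orientations induced on $S$ as $\partial B_1$ and as $\partial B_2$ from the ambient orientation of $\SPH$ are opposite, so $f$ is orientation-reversing as required by Definition~\ref{dfn.disjointunion}. With this choice, $B_1 \cup_f B_2$ is literally the oriented sphere $\SPH$, and thus
\[\Gamma_1 \sqcup_f \Gamma_2 = (B_1 \cup_f B_2,\, V_1 \sqcup V_2,\, E_1 \sqcup E_2) = (\SPH, V, E) = \Gamma.\]
Finally, Proposition~\ref{prop.uniquenessdisjunion} ensures that the disjoint union is well-defined independently of the choices of enclosing balls and attaching map, so this justifies writing $\Gamma = \Gamma_1 \sqcup \Gamma_2$ in the abbreviated notation. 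There is no substantive obstacle here; the only thing to be careful about is the orientation convention that makes $f = \id_S$ orientation-reversing and therefore a legitimate attaching map.
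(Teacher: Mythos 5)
Your proof is correct and takes essentially the same route as the paper: the paper likewise takes $B_i$ as an enclosing ball for the sub-graph $\Gamma_i$ and uses the identity $S \to S$ as the attaching map, noting (via Lemma~\ref{lem.summandissubgraph}) that the summands are the given sub-graphs. Your extra remarks — that connectedness of edges forces the partition $V = V_1 \sqcup V_2$, $E = E_1 \sqcup E_2$, and that the two boundary orientations on $S$ are opposite so $\id_S$ is a legitimate orientation-reversing attaching map — are details the paper leaves implicit.
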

\begin{proof}
We use Lemma~\ref{lem.summandissubgraph} to regard each $\Gamma_i$ as a sub-graph of~$\Gamma_1 \sqcup \Gamma_2$, and take~$B_i$ as an enclosing ball for~$\Gamma_i$. If $f \colon S \to S$ is the identity map, then~$\Gamma = \Gamma_1 \sqcup_f \Gamma_2$.
\end{proof}

Of course by definition of the disjoint union, if $\Gamma= \Gamma_1 \sqcup \Gamma_2$, then there exists such a sphere~$S$.

\begin{dfn} \label{dfn.septhings}
Let $\Gamma$ be a spatial graph in~$\SPH$.
  \begin{itemize}
  	\item If $S\subset \SPH \setminus |\Gamma|$ is a $2$-sphere and $\Gamma_1, \Gamma_2$ are as in Lemma~\ref{lem.spheresdontlie}, we say that ``$S$ decomposes $\Gamma$ as $\Gamma_1 \sqcup \Gamma_2$''.
  
    \item $\Gamma$ is said to be \textbf{separable} if it is the disjoint union of two non-empty spatial graphs; otherwise it is \textbf{non-separable}.
    
    \item If $S$~is a $2$-sphere in~$\SPH$ decomposing $\Gamma$ as~$\Gamma_1 \sqcup \Gamma_2$ with $\Gamma_1, \Gamma_2$ non-empty, then $S$~is called a \textbf{separating sphere} for~$\Gamma$.
    
    \item We will call a spatial graph a \textbf{piece} if it is non-empty and non-separable. We will also say that a spatial graph~$\Lambda$ is a piece of~$\Gamma$ if $\Lambda$ is a piece and~$\Gamma = \Gamma' \sqcup \Lambda$ for some~$\Gamma'$.
  \end{itemize}
\end{dfn}

We use the word ``piece'', rather than ``component'', to avoid suggesting that for such~$\Lambda$, the support~$|\Lambda|$ (or equivalently the underlying graph~$\langle \Lambda \rangle$) would have to be connected. Indeed, a spatial graph with non-connected support may very well be non-separable. Take, for example, a spatial graph isotopic to a Hopf link, such as the one in Figure~\ref{fig.hopflink}.

  \begin{figure}[h]
      \centering
      \def \svgwidth{0.2\linewidth}
      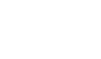
      \caption{The notion of non-separability of spatial graphs does not coincide with the notion of connectedness. The depicted spatial graph~$\Gamma$ is non-separable, but its support~$|\Gamma|$ and underlying graph~$\langle \Gamma\rangle$ are disconnected.}
      \label{fig.hopflink}
  \end{figure}

Every spatial graph~$\Gamma$ can be decomposed as a disjoint union of finitely many pieces: if $\Gamma = \zero$ we take the empty union, and if $\Gamma$~is itself a piece, we take a disjoint union indexed over a one-element set. If $\Gamma$ is non-empty and not a piece, then it can be expressed as a disjoint union of two non-empty graphs $\Gamma = \Gamma_1 \sqcup \Gamma_2$, each~$\Gamma_i$ thus having strictly fewer vertices than~$\Gamma$. By induction on the number of vertices, the~$\Gamma_i$ can be decomposed into pieces, and hence so can~$\Gamma$. 

We now work towards proving that such a decomposition is unique.

\begin{lem}[Spheres sort pieces]\label{lem.sortedpieces}
Let~$\Lambda$ be a piece in~$\SPH$, and let $S\subset \SPH \setminus |\Lambda|$ be a PL-embedded $2$-sphere. Denote the closures of the two components of~$\SPH \setminus S$ by~$B_1, B_2$. Then $|\Lambda|$ is contained in exactly one of the~$B_i$.
\end{lem}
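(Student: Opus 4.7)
The plan is to reduce the statement to Lemma~\ref{lem.spheresdontlie} combined with the defining property of a piece (non-separability). The idea is that the sphere~$S$ automatically induces a candidate disjoint-union decomposition of~$\Lambda$, and non-separability forces one side of the decomposition to be empty.

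First I would observe that $S$ decomposes~$\Lambda$ in the sense of Definition~\ref{dfn.septhings}. More precisely, each vertex of~$\Lambda$ lies in~$\SPH \setminus S$, hence in exactly one of the open balls~$\intr(B_1), \intr(B_2)$; and each edge of~$\Lambda$, being connected and disjoint from~$S$, is contained entirely in~$B_1$ or entirely in~$B_2$. Consequently, letting $\Lambda_i$~be the sub-graph of~$\Lambda$ comprised of the vertices and edges contained in~$B_i$ (for $i\in\{1,2\}$), we may apply Lemma~\ref{lem.spheresdontlie} to conclude that $\Lambda = \Lambda_1 \sqcup \Lambda_2$.

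Next I would invoke the hypothesis that~$\Lambda$ is a piece, i.e., non-empty and non-separable. Non-separability means that the disjoint-union decomposition $\Lambda = \Lambda_1 \sqcup \Lambda_2$ must be trivial, so one of $\Lambda_1, \Lambda_2$ is the empty spatial graph~$\zero$. Say $\Lambda_2 = \zero$. Then $|\Lambda_2| = \varnothing$ while $|\Lambda| = |\Lambda_1| \subseteq B_1$, establishing containment in \emph{at least} one of the~$B_i$.

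Finally, to see that the containment is in \emph{exactly} one, I would use that $\Lambda$~is non-empty, hence $|\Lambda| \neq \varnothing$, whereas $B_1 \cap B_2 = S$ is disjoint from $|\Lambda|$ by hypothesis. Therefore $|\Lambda|$ cannot be contained in both~$B_i$. I do not expect any genuine obstacle here; the only subtlety is the verification that edges respect the partition induced by~$S$, and that follows immediately from connectedness of each edge together with the fact that $S$~avoids~$|\Lambda|$.
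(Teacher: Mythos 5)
Your proof is correct and follows essentially the same route as the paper's: apply Lemma~\ref{lem.spheresdontlie} to get the decomposition $\Lambda = \Lambda_1 \sqcup \Lambda_2$ induced by~$S$, use non-separability to force one summand to be empty, and use non-emptiness of~$\Lambda$ (together with $S \cap |\Lambda| = \emptyset$) to rule out containment in both balls. The only cosmetic difference is that the paper invokes the identity-element part of Proposition~\ref{prop.disjunprop} to conclude $\Lambda_2 = \Lambda$, where you argue the containment of supports directly; both are fine.
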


\begin{proof}
Since $\Lambda \ne \zero$, it can certainly not be contained in both~$B_i$. For each $i \in \{1,2\}$, denote by~$\Lambda_i$ the sub-graph of~$\Lambda$ whose vertices and edges are contained in~$B_i$. Then by Lemma~\ref{lem.spheresdontlie} we have that $S$ decomposes~$\Lambda$ as $\Lambda_1 \sqcup \Lambda_2$. Since $\Lambda$~is non-separable, one of the summands, say~$\Lambda_1$, is empty. By the first part of Proposition~\ref{prop.disjunprop}, it follows that $\Lambda_2 = \Lambda$.
\end{proof}

\begin{prop}[Uniqueness of decomposition into pieces]\label{prop.uniquepieces}
Let $(\Lambda_i)_{i \in I_1}$ and $(\Lambda_i)_{i \in I_2}$ be collections of pieces indexed by finite sets $I_1, I_2$. Then for every isomorphism of spatial graphs $\Phi \colon \bigsqcup_{i \in I_1} \Lambda_i \to \bigsqcup_{i \in I_2} \Lambda_i$, there is a bijection $f \colon I_1 \to I_2$ such that for each $i \in I_1$, the PL homeomorphism~$\Phi$ is an isomorphism of the sub-graphs~$\Phi \colon \Lambda_i \to \Lambda_{f(i)}$.
\end{prop}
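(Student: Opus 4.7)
The plan is to induct on $|I_1|$. For the base case $|I_1|=0$, the source spatial graph is~$\zero$, so any isomorphism~$\Phi$ forces the target to have empty vertex (and edge) set; since each piece~$\Lambda_j$ is by definition non-empty, this forces $I_2 = \emptyset$, and the empty bijection satisfies the conclusion vacuously.

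For the inductive step, fix $i_0 \in I_1$. Using commutativity and associativity (Proposition~\ref{prop.disjunprop}), we rewrite the source as $\Lambda_{i_0} \sqcup \bigsqcup_{i \ne i_0} \Lambda_i$, so the definition of the disjoint union furnishes a PL-embedded $2$-sphere~$S$ in the source ambient sphere, disjoint from the total support, with one side enclosing~$|\Lambda_{i_0}|$ and the other side enclosing~$|\bigsqcup_{i \ne i_0} \Lambda_i|$. Its image~$\Phi(S)$ is then a PL $2$-sphere in the target ambient sphere, also disjoint from the target support. By Lemma~\ref{lem.sortedpieces}, each support~$|\Lambda_j|$ (for $j \in I_2$) lies entirely on one side of~$\Phi(S)$; define $I_2^{(0)} \subseteq I_2$ as the set of indices~$j$ for which~$|\Lambda_j|$ lies on the same side of~$\Phi(S)$ as~$\Phi(|\Lambda_{i_0}|)$.

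Two applications of Lemma~\ref{lem.spheresdontlie} (to~$S$ in the source and to~$\Phi(S)$ in the target), combined with Lemma~\ref{lem.summandissubgraph} to identify sub-graph summands with standalone spatial graphs, show that restricting~$\Phi$ to the $\Lambda_{i_0}$-side of~$S$ yields an isomorphism $\Lambda_{i_0} \cong \bigsqcup_{j \in I_2^{(0)}} \Lambda_j$. Since~$\Lambda_{i_0}$ is a piece (non-empty and non-separable) and each~$\Lambda_j$ is non-empty, we must have $|I_2^{(0)}| = 1$: any larger cardinality would make the right-hand side separable, while $|I_2^{(0)}| = 0$ would reduce it to~$\zero$. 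Let~$f(i_0)$ be the unique element of~$I_2^{(0)}$; then $\Phi$ restricts to an isomorphism of sub-graphs $\Lambda_{i_0} \to \Lambda_{f(i_0)}$, and simultaneously to an isomorphism $\bigsqcup_{i \in I_1 \setminus \{i_0\}} \Lambda_i \to \bigsqcup_{j \in I_2 \setminus \{f(i_0)\}} \Lambda_j$, to which the inductive hypothesis applies to complete the construction of~$f$.

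The main subtlety I anticipate is the notational bookkeeping that separates~$\Lambda_{i_0}$ viewed as a standalone piece (with its own ambient sphere) from $\Lambda_{i_0}$ viewed as a sub-graph of the source; Lemma~\ref{lem.summandissubgraph} is precisely what is needed to translate between these two perspectives at each step of the induction.
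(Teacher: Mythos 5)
Your proof is correct and follows essentially the same route as the paper: induct on $|I_1|$, transport a separating sphere through $\Phi$, sort the target pieces with Lemma~\ref{lem.sortedpieces}, and recurse. The only cosmetic difference is that you peel off a single piece $\Lambda_{i_0}$ (handling $|I_2^{(0)}|=1$ directly via non-separability), whereas the paper splits $I_1$ into an arbitrary non-trivial bipartition and lets the singleton case be absorbed into the base case of the induction.
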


\begin{proof}
Write $\Gamma_1:= \bigsqcup_{i \in I_1} \Lambda_i$ and $\Gamma_2:=\bigsqcup_{i \in I_2} \Lambda_i$. We induct on the cardinality of~$I_1$.

If $I_1 = \emptyset$ then $\Gamma_1 = \zero = \Gamma_2$, so since for all~$i \in I_2$ we know $\Lambda_i$~is non-empty, we conclude $I_2 = \emptyset$ and there is nothing left to show. If~$I_1$ contains only one element~$i_1$, then $\Gamma_1 = \Lambda_{i_1}$ is a piece. Hence $\Gamma_2$~is also a piece and therefore, again since the~$\Lambda_i$ are non-empty, we conclude $I_2=\{i_2\}$. We thus set $f(i_1) := i_2$.

If $I_1$~has more than one element, we partition it into two non-empty subsets $I_1 = I_1 ^+ \sqcup I_1^-$. Let $S_1$~be a $2$-sphere decomposing~$\Gamma_1$ as $\bigl(\bigsqcup_{i \in I_1^+} \Lambda_i\bigr) \sqcup \bigl(\bigsqcup_{i \in I_1^-} \Lambda_i\bigr)$, and write $\Gamma_1^+ := \bigsqcup_{i \in I_1^+} \Lambda_i$ and $\Gamma_1^- := \bigsqcup_{i \in I_1^-} \Lambda_i$.

Now $S_2 := \Phi(S_1)$ is a $2$-sphere in the ambient sphere of~$\Gamma_2$ disjoint from~$|\Gamma_2|$. One side of~$S_2$ corresponds to the ``$+$''-summand of~$\Gamma_1$, and the other to the ``$-$''-summand. By Lemma~\ref{lem.sortedpieces}, for each $i \in I_2$, we have $|\Lambda_i|$ contained in either the ``$+$''-side or the ``$-$''-side of~$S_2$. Partition $I_2$ accordingly as~$I_2 = I_2^+ \sqcup I_2^-$, and write $\Gamma_2^\pm := \bigsqcup_{i \in I_2^\pm} \Lambda_i$.

Since $\Phi$~maps the support $|\Gamma_1^+|$ into $|\Gamma_2^+|$, and similarly for ``$-$'', we conclude $\Phi$ doubles as a pair of isomorphisms of sub-graphs $\Phi^\pm \colon \Gamma_1^\pm \to \Gamma_2^\pm$. Both $I_1^\pm$ have fewer elements than~$I_1$, so by induction we obtain bijections $f^\pm \colon I_1^\pm \to I_2^\pm$, which assemble to the required $f \colon I_1 \to I_2$.
\end{proof}

By an iterated application of Lemma~\ref{lem.isodisjunion}, we see the converse direction of this proposition also holds, that is, any two such isomorphic collections of pieces assemble to isomorphic disjoint unions. Hence, if we are able to decompose spatial graphs $\Gamma_1, \Gamma_2$ as disjoint unions of pieces, then testing whether $\Gamma_1$~is isomorphic to~$\Gamma_2$ boils down to testing whether the pieces are pairwise isomorphic. However, our core machinery for testing isomorphisms of spatial graphs (Proposition~\ref{prop.blockrecognition}) still needs pieces to be further decomposed. Indeed, there is a further simplification of pieces that is canonical in much the same way as the decomposition of a general spatial graph into pieces. This decomposition is the subject of the next section.

\section{The vertex sum of spatial graphs}\label{sec.vertexsum}

We now define another operation, the ``vertex sum'', whose input data is a pair of spatial graphs with distinguished vertices. The relevant case of the construction is when they are non-separable, but we will nevertheless formulate our definitions and statements without this hypothesis until it becomes indispensable.
The overall structure of this section will be rather similar to that of the previous one, with many definitions and results having obvious counterparts.

\subsection{Defining the vertex sum}

\begin{dfn}
  A \textbf{pointed spatial graph} is a pair~$(\Gamma, v)$ of a spatial graph~$\Gamma$ and a vertex~$v$ of~$\Gamma$. The underlying graph of a pointed spatial graph is pointed with the same distinguished vertex: $\langle (\Gamma, v) \rangle := (\langle \Gamma \rangle, v)$. An isomorphism of pointed spatial graphs is an isomorphism of the spatial graphs that preserves the distinguished vertices.
\end{dfn}

\begin{dfn}
  An \textbf{enclosing ball} for a pointed spatial graph $(\Gamma, v)$ in~$\SPH$, is a PL-embedded $3$-ball $B \subset \SPH$ such that $|\Gamma|\subset B$ and~$|\Gamma | \cap \partial B = \{v\}$.
\end{dfn}

\begin{dfn}\label{dfn.vsum}
  For each $i \in \{1,2\}$, let $\Gamma_i = (\SPH_i, V_i, E_i)$~be a non-empty spatial graph, let $v_i \in V_i$, and let $B_i$~be an enclosing ball for~$(\Gamma_i, v_i)$. Moreover, let $f\colon \partial B_1 \to \partial B_2$ be an orientation-reversing PL homeomorphism mapping $v_1$~to~$v_2$. We consider the spatial graph
  \[\Gamma_1 \vsum{v_1}{v_2} \Gamma_2 := (B_1 \cup_f B_2, (V_1 \sqcup V_2) / v_1 \sim v_2, E_1 \sqcup E_2), \]
  where $B_1 \cup_f B_2$ denotes the PL $3$-sphere obtained by attaching $B_1$~to~$B_2$ using~$f$, and define the pointed spatial graph~$(\Gamma_1 \vsum{v_1}{v_2} \Gamma_2, v_1=v_2)$ to be a \textbf{vertex sum} of $(\Gamma_1, v_1)$~and~$(\Gamma_2, v_2)$.
\end{dfn}

We will use the same notation to denote the analogous operation on pointed abstract graphs. For every such vertex sum of pointed spatial graphs we thus have $\langle \Gamma_1 \vsum{v_1}{v_2} \Gamma_2 \rangle = \langle \Gamma_1 \rangle \vsum{v_1}{v_2} \langle \Gamma_2 \rangle$.

We retrace the steps taken when discussing the disjoint union of spatial graphs, our next goal being to show that the vertex sum of pointed spatial graphs is independent of the choice of enclosing spheres and the attaching map~$f$. The key is the following proposition.

\begin{prop}[Uniqueness of enclosing balls for pointed spatial graphs]\label{prop.penclosingball}
  Let $(\Gamma, v)$~be a pointed spatial graph in~$\SPH$, let $B, B'$~be enclosing balls for~$(\Gamma, v)$. Then every orientation-preserving PL homeomorphism $\Phi_\partial \colon (\partial B, v) \to (\partial B', v)$ extends to an orientation-preserving PL homeomorphism $\Phi_B \colon B \to B'$ restricting to the identity on~$|\Gamma|$.
\end{prop}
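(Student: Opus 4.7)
The plan is to reduce to the already-proved Lemma~\ref{lem.enclosingball}. The main obstacle is that $v \in \partial B \cap \partial B'$, so a regular neighborhood of~$|\Gamma|$ cannot be chosen disjoint from $\partial B \cup \partial B'$. My strategy is to first modify~$B$ locally near~$v$ so as to produce an enclosing ball that coincides with $B'$ on a neighborhood of~$v$, and along whose shared boundary $\Phi_\partial$ can be assumed to be the identity; the argument of Lemma~\ref{lem.enclosingball} then goes through outside this neighborhood.

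For the local step, I would choose a regular neighborhood~$N_v$ of~$v$ in~$\SPH$ that is a PL $3$-ball, small enough that $N_v \cap |\Gamma|$ is a cone $v \ast L$ on the finite set $L := \partial N_v \cap |\Gamma|$, and that each of $D := N_v \cap \partial B$ and $D' := N_v \cap \partial B'$ is a PL $2$-disk in~$\partial N_v$ with $v$ in its interior. Since $|\Gamma|$ meets $\partial B$ and $\partial B'$ only at~$v$, we have $L \subset \intr D \cap \intr D'$. Applying a pair version of the Disc Theorem (the forthcoming Theorem~\ref{thm.discpair}) to $\partial N_v$ relative~$L$, I would find a PL ambient isotopy of $\partial N_v$ rel~$L$ that takes $D$ onto~$D'$ and realizes~$\Phi_\partial|_D$ on~$D$. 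Coning this isotopy over~$v$ and extending by the identity outside~$N_v$ yields a PL self-homeomorphism $\Psi$ of~$\SPH$ fixing~$|\Gamma|$ with $\Psi|_D = \Phi_\partial|_D$ and $\Psi(B \cap N_v) = B' \cap N_v$. After replacing~$B$ by $\Psi(B)$ and $\Phi_\partial$ by its reinterpretation on $\partial \Psi(B)$, we may assume $B \cap N_v = B' \cap N_v$ and that $\Phi_\partial$ is the identity on the shared disk $D = D'$.

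For the global step, choose a regular neighborhood~$N$ of~$|\Gamma|$ in~$\SPH$ containing~$N_v$ such that $(N \setminus \intr N_v) \cap (\partial B \cup \partial B') = \emptyset$. The closures $\overline B := \SPH \setminus \intr B$ and $\overline{B'} := \SPH \setminus \intr B'$ are PL $3$-balls; by \cite[Lemma~1.10]{RourkeSanderson}, extend~$\Phi_\partial$ to a PL homeomorphism $\Phi_{\overline B} \colon \overline B \to \overline{B'}$ that is the identity on $\overline B \cap N_v$. Applying the Disc Theorem~\ref{thm.discthm} inside the $3$-manifold $\overline N := \SPH \setminus \intr N$ then produces a PL ambient isotopy, rel~$\partial \overline N$, taking the inclusion $\overline B \setminus \intr N \hookrightarrow \overline N$ to the composition $\overline B \setminus \intr N \xrightarrow{\Phi_{\overline B}} \overline{B'} \setminus \intr N \hookrightarrow \overline N$. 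Since this isotopy fixes~$N \supset |\Gamma|$, extending by the identity on~$N$ and restricting to~$B$ yields the desired~$\Phi_B$. The hardest step is the local one, which requires carefully applying the pair Disc Theorem on~$\partial N_v$ in a way simultaneously compatible with the cone structure of $|\Gamma|$ at~$v$ and with the prescribed boundary data~$\Phi_\partial|_D$.
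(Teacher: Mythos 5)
Your overall architecture parallels the paper's: isolate a cone neighbourhood of~$v$, normalize the two balls and the boundary map there, and run the argument of Lemma~\ref{lem.enclosingball} in the complement. The genuine gap is in the local step, which is exactly where the paper has to work hardest. First, a definitional clash: with $D := N_v \cap \partial B$, the set~$D$ is a disc through~$v$ properly embedded in~$N_v$, not a disc in~$\partial N_v$; and since $|\Gamma| \cap \partial B = \{v\}$ one has $L \cap D = \emptyset$, so $L \subset \intr(D)$ fails — you are conflating $N_v \cap \partial B$ with $\partial N_v \cap B$. More seriously, the proposed mechanism cannot deliver what you ask of it. If $N_v$ is a star neighbourhood of~$v$ in $(\SPH, \partial B \cup |\Gamma|)$, then $N_v \cap \partial B$ is the cone $v\gamma$ with $\gamma = \partial B \cap \partial N_v$, and any ambient isotopy of the $2$-sphere~$\partial N_v$ rel~$L$, coned at~$v$, restricts on~$v\gamma$ to the cone of a map $\gamma \to \partial N_v$; in particular it carries $v\gamma$ to a cone over a curve lying \emph{in}~$\partial N_v$. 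But $\Phi_\partial(v\gamma)$ is just some disc in~$\partial B'$ containing~$v$: the given~$\Phi_\partial$ is an arbitrary PL homeomorphism fixing~$v$, it need not respect any radial structure, and it need not carry~$\gamma$ into~$\partial N_v$ at all. So no coned isotopy of~$\partial N_v$ can ``realize $\Phi_\partial|_D$'', and after your replacement the new boundary map is not the identity on the shared disc. Repairing this is precisely the content of the paper's Claim (the homeomorphism~$\Psi$) together with the Interpolating Annulus Lemma~\ref{lem.annulus}: one must conically push the image disc $\Phi_\partial(v\gamma)$, which sits inside a larger cone neighbourhood but not on any cone over~$\partial N_v$, onto a disc of the form $v\tilde D$ with $\tilde D \subset \partial N_v$, while fixing~$|\Gamma|$. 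This is the technical heart of the proof and is absent from your proposal; the pair Disc Theorem on~$\partial N_v$ rel the finite set~$L$ is not the right tool for it.

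Two secondary issues in the global step. The ball $\overline B \setminus \intr(N)$ meets $\partial\overline N$ along the disc $\overline B \cap \partial N_v$, so it is not embedded in the \emph{interior} of~$\overline N$ and Theorem~\ref{thm.discthm} does not apply; you need the Disc Theorem at the boundary (Corollary~\ref{cor.boundarydisc}) applied to the pair $(\overline N, \partial N_v \cap \overline N)$. Moreover, that corollary yields an isotopy fixing only $\partial \overline N$ minus the interior of that boundary piece, so the final homeomorphism may move~$\partial N_v$; one must then extend over~$N_v$ by coning the induced self-map of~$\partial N_v$ (as the paper does), rather than ``by the identity on~$N$''. These are fixable; the local step, with the tools you cite, is not.
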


Proving this proposition will require substantially more work than its non-pointed counterpart, Lemma~\ref{lem.enclosingball}, as one should expect from the very particular behavior demanded of~$\Phi$ near~$v$. One of the necessary ingredients will be a generalization of the Disc Theorem (Theorem~\ref{thm.discthm}).

\begin{dfn}[{\cite[p.~50]{RourkeSanderson}}]
	An \textbf{unknotted ball pair}~$(B, B_0)$ is a pair of polyhedra PL-homeomorphic to a standard ball pair $([-1,1]^n, [-1,1]^m\times \{0\}^{n-m})$ (for some $n \ge m \ge 0$). A \textbf{PL manifold pair}~$(M,M_0)$ is a pair of polyhedra that are manifolds, such that $\partial M \cap M_0 = \partial M_0$ (``properness''), and  such that each point of~$M_0$ has a neighborhood in~$(M, M_0)$ PL-homeomorphic to an unknotted ball pair (``local flatness'')\footnote{The definition given by Rourke-Sanderson on p.~50 requires only that $M, M_0$~both be manifolds, but the remark on p.~51 adds the local flatness and properness conditions.}.
\end{dfn}

\begin{thm}[Disc Theorem for pairs {\cite[Theorem~4.20]{RourkeSanderson}}] \label{thm.discpair}
  Let~$(M, M_0)$ be a pair of connected, oriented PL manifolds, let $(B, B_0)$~be an unknotted ball pair with the same dimensions, and let $\iota_1, \iota_2\colon (B, B_0) \to (\intr(M), \intr(M_0))$ be PL embeddings that preserve the orientation on both components. Then there is a PL ambient isotopy of~$(M, M_0)$ relative~$\partial M$ that carries $\iota_1$ to~$\iota_2$.
\end{thm}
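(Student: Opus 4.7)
The plan is to reduce the paired statement to the single-manifold Disc Theorem (Theorem~\ref{thm.discthm}) and then to relative regular neighborhood uniqueness, proceeding in three stages.

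First, I would handle the ``lower-dimensional'' piece. Apply Theorem~\ref{thm.discthm} to the two PL embeddings $\iota_1|_{B_0}, \iota_2|_{B_0} \colon B_0 \to \intr(M_0)$, which are orientation-preserving into a connected oriented manifold. This yields a PL ambient isotopy $H^0 \colon M_0 \times [0,1] \to M_0$ relative $\partial M_0$ carrying $\iota_1|_{B_0}$ to $\iota_2|_{B_0}$. The next task is to promote $H^0$ to an ambient isotopy $H \colon M \times [0,1] \to M$ relative $\partial M$. Because $(M, M_0)$ is a locally flat PL manifold pair, $M_0$ admits a regular neighborhood $N$ in $M$ whose structure is ``bundle-like'': locally PL-homeomorphic to the product of a neighborhood in $M_0$ with a Euclidean disc of the appropriate codimension. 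Using this local product structure, $H^0$ extends fiberwise to an ambient isotopy of $N$ rel $\partial N$, and then we extend by the identity to the rest of $M$ to obtain $H$. After applying $H_1$, we may assume henceforth that $\iota_1|_{B_0} = \iota_2|_{B_0}$.

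Second, I would resolve the ``thickening'' problem. Set $B_0^* := \iota_1(B_0) = \iota_2(B_0) \subset \intr(M_0)$. Both $\iota_1(B)$ and $\iota_2(B)$ are then compact PL submanifolds of $M$ that meet $M_0$ exactly in $B_0^*$, are locally flat along $B_0^*$, and collapse onto $B_0^*$. By relative regular neighborhood theory for manifold pairs (the paired analogue of Rourke--Sanderson \cite[Ch.~3]{RourkeSanderson}), each $\iota_i(B)$ is a regular neighborhood of $B_0^*$ in the pair $(\intr(M), \intr(M_0))$, and such regular neighborhoods are unique up to PL ambient isotopy of $M$ that fixes $B_0^*$ pointwise, restricts to an ambient isotopy of $M_0$, and is the identity on $\partial M$. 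This produces an ambient isotopy $K$ of $(M, M_0)$ rel $\partial M$ and rel $B_0^*$ with $K_1(\iota_1(B)) = \iota_2(B)$.

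Third, I would align the embeddings themselves, not merely their images. The composite $h := \iota_2^{-1} \circ K_1 \circ \iota_1 \colon B \to B$ is a PL self-homeomorphism of the unknotted ball pair $(B, B_0)$ that preserves both orientations and fixes $B_0$ pointwise. A standard coning/Alexander-trick argument on an unknotted ball pair shows that any such self-homeomorphism is PL-isotopic rel $B_0 \cup \partial B$ to the identity (equivalently, $h$ extends conewise). Transferring this isotopy across $\iota_2$ and supporting it inside the collar of $\iota_2(B)$ in $M$ delivered by Step~2 gives a final ambient isotopy $L$ of $(M, M_0)$ rel $\partial M$ with $L_1 \circ K_1 \circ H_1 \circ \iota_1 = \iota_2$. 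The concatenation $H \ast K \ast L$ is the desired ambient isotopy.

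The subtle step is the second one: the extension argument in Step~1 and, especially, the uniqueness statement for regular neighborhoods in the pair $(M, M_0)$ are what really use local flatness, properness, and the hypothesis that $\iota_1, \iota_2$ preserve the orientations of \emph{both} factors (the latter is what allows one to match up the normal/transverse orientations needed to produce the relative ambient isotopy $K$ rather than merely an abstract PL homeomorphism of neighborhoods). Once this relative regular neighborhood uniqueness is in hand, the remaining work is the essentially combinatorial Alexander trick on an unknotted ball pair.
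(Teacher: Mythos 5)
This theorem is imported by the paper from Rourke--Sanderson \cite[Theorem~4.20]{RourkeSanderson} without proof, so your proposal can only be measured against the standard argument in the literature, not against anything in the text. Your overall architecture (align the cores, align the thickenings, kill the residual self-homeomorphism of the model pair by an Alexander trick) is reasonable, and Steps 1 and 3 are essentially repairable: Step 1 is really an instance of the PL isotopy extension theorem rather than a ``fiberwise'' extension over a product neighborhood (regular neighborhoods of positive-codimension submanifolds are block bundles, not products in general), and Step 3 needs the preliminary, inductive fact that an orientation-preserving self-homeomorphism of the unknotted \emph{sphere} pair $(\partial B, \partial B_0)$ fixing $\partial B_0$ is isotopic to the identity before one is allowed to cone.

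The genuine gap is Step 2. The assertion that $\iota_i(B)$ is a regular neighborhood of $B_0^* := \iota_i(B_0)$ in the pair $(\intr(M), \intr(M_0))$ is false whenever $\dim B_0 \ge 1$: a regular neighborhood must in particular be a topological neighborhood of its core, but the points of $\iota_i(\partial B_0)$ lie on $\iota_i(\partial B)$, which is the frontier of $\iota_i(B)$ in $M$, so $\iota_i(B)$ contains no $M$-neighborhood of those points. More seriously, even after shrinking the core to dodge this, the uniqueness statement you invoke --- an ambient isotopy of $(M,M_0)$ carrying $\iota_1(B)$ to $\iota_2(B)$ while fixing all of $B_0^*$ pointwise --- is not what regular neighborhood uniqueness delivers: that theorem fixes only the core, and $B_0^*$ cannot serve as the core since it meets the frontiers of the would-be neighborhoods. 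What you actually need is uniqueness, rel $B_0^*$, of the transverse ball-pair structure along $B_0^*$, and that is precisely the unknotting-of-ball-pairs content that Theorem~\ref{thm.discpair} encapsulates; as written, Step 2 assumes the crux of the theorem. The standard route avoids this: align a single interior point $p$ of the sub-ball using homogeneity of the locally flat pair, observe that $(\iota_i(B), \iota_i(B_0))$ \emph{is} a regular neighborhood of the pair $(\{p\},\{p\})$ in $(M, M_0)$ (there the neighborhood condition does hold, since $p \in \iota_i(\intr(B))$), apply the uniqueness of regular neighborhoods of pairs \cite[Theorem~4.11]{RourkeSanderson}, and only then run the Alexander-trick step, using orientation-preservation on both factors to rule out the transverse reflection.
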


The reason we need the Disc Theorem for pairs is because it has the following corollary:

\begin{cor}[Disc Theorem at the boundary] \label{cor.boundarydisc}
  Let $M$ be a connected, oriented PL $n$-manifold, let $N\subseteq \partial M$ be a connected PL-embedded $(n-1)$-manifold that is closed in~$\partial M$, let $B$~be a PL $n$-ball, and $D \subset \partial B$ a PL $(n-1)$-ball. For every two orientation-preserving PL embeddings $\iota_1, \iota_2\colon (B , D) \to (\intr(M) \cup \intr(N), \intr(N))$, there is a PL ambient isotopy of $(M, N)$ relative $\partial M \setminus \intr(N)$ carrying $\iota_1$ to~$\iota_2$.
\end{cor}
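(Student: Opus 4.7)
The plan is to reduce to Theorem~\ref{thm.discpair} (the Disc Theorem for pairs) by attaching external collars to both $M$ and $B$, pushing the boundary submanifolds $N$ and $D$ into interior positions so that the resulting pairs satisfy the hypotheses of that theorem.

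First I would form the collared manifold $M^{\star} := M \cup_N (N \times [0,1])$ by gluing $N \times \{0\}$ to $N \subset \partial M$. The submanifold $N$, identified with $N \times \{0\}$, now sits in the interior of $M^{\star}$ away from its own boundary, and $\partial M^{\star} = (\partial M \setminus \intr(N)) \cup (\partial N \times [0,1]) \cup (N \times \{1\})$. Using the local product structure of the collar, I would verify that $(M^{\star}, N)$ is a PL manifold pair in the required sense: properness holds since $\partial M^{\star} \cap N = \partial N$, and local flatness at points of~$\intr N$ follows from the product neighborhood $U \times (-\tfrac{1}{2}, \tfrac{1}{2})$ in $M^{\star}$, while at points of $\partial N$ the local model matches a corner of an unknotted ball pair. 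Analogously, I would set $(B^{\star}, D) := (B \cup_D D \times [0,1],\, D)$ and verify this is an unknotted ball pair by exhibiting a PL homeomorphism with the standard model $([-1,1]^n, [-1,1]^{n-1} \times \{0\})$, obtained by shifting coordinates so that $D$ becomes a slice. Each $\iota_j$ extends to an orientation-preserving PL embedding $\iota_j^{\star} \colon (B^{\star}, D) \to (\intr M^{\star}, \intr N)$ by setting $\iota_j^{\star}(x, t) := (\iota_j(x), t)$ on $D \times [0,1]$.

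Theorem~\ref{thm.discpair} then provides a PL ambient isotopy $H \colon M^{\star} \times I \to M^{\star}$ of the pair $(M^{\star}, N)$, relative to $\partial M^{\star}$, carrying $\iota_1^{\star}$ to $\iota_2^{\star}$. Since $\overline{\partial M \setminus \intr(N)} \subseteq \partial M^{\star}$, the isotopy $H$ is automatically identity on $\partial M \setminus \intr(N)$.

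The main obstacle is the final step: converting $H$ into an ambient isotopy of the original manifold $M$, since a priori $H$ may push points of $M$ into the attached collar. My plan is to first invoke the standard support-shrinking technique for PL ambient isotopies to arrange that $H$ has compact support contained in an arbitrarily small regular neighborhood $U$ of $\iota_1^{\star}(B^{\star}) \cup \iota_2^{\star}(B^{\star})$ inside $\intr M^{\star} \cup \intr N$. Then I would fix an internal collar $c \colon N \times [0, \veps) \hookrightarrow M$ of $N$ inside $M$, and appeal to the uniqueness of PL collars to produce a PL homeomorphism $\psi \colon M^{\star} \to M$ equal to the identity outside a bicollar neighborhood of $N$ in $M^{\star}$, which absorbs the external collar $N \times [0,1]$ into $c(N \times [0, \veps))$ and is compatible with $\iota_j^\star$ in the sense that $\psi \circ \iota_j^{\star} = \iota_j$ on~$B$. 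Conjugating $H$ by $\psi$ yields a PL ambient isotopy of $(M, N)$, relative~$\partial M \setminus \intr(N)$, carrying $\iota_1$ to $\iota_2$, as desired.
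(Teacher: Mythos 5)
Your overall strategy --- enlarge $M$ so that $N$ becomes an interior submanifold, apply Theorem~\ref{thm.discpair}, and transfer the resulting isotopy back to $M$ --- is the same as the paper's; the paper uses the double $\Db_N(M)$ where you use a one-sided external collar, and either enlargement works. (One small slip in the setup: with collars of equal length on $B$ and $M$, your extension $\iota_j^\star$ sends $D\times\{1\}$ into $N\times\{1\}\subset\partial M^\star$, so its image is not contained in $\intr(M^\star)$ as Theorem~\ref{thm.discpair} requires; a shorter collar on $B$ fixes this.)

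The genuine gap is in your final step. First, the support-shrinking claim is false: an ambient isotopy carrying $\iota_1^\star$ to $\iota_2^\star$ cannot in general be supported in an arbitrarily small regular neighborhood of $\iota_1^\star(B^\star)\cup\iota_2^\star(B^\star)$ --- that neighborhood may be disconnected, and a homeomorphism supported in a disconnected set preserves each of its components, so it cannot move $\iota_1^\star(B^\star)$ onto $\iota_2^\star(B^\star)$ when the two images lie in different components. Second, no homeomorphism $\psi\colon M^\star\to M$ absorbing the external collar can satisfy $\psi\circ\iota_j^\star=\iota_j$ on $B$: such a $\psi$ must carry $N\times\{1\}$ onto $N\subset\partial M$ and hence push $N=N\times\{0\}$ --- which contains $\iota_j(D)$ --- into $\intr(M)$, so $\psi$ moves $\iota_j(D)$ pointwise. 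Neither repair is needed, because the transfer back is much easier than you think: the ambient isotopy $H$ of the pair $(M^\star,N)$ satisfies $H_t(N)=N$ for all $t$, and $M^\star\setminus N$ has exactly two components, namely $M\setminus N$ and the punctured collar; since $H_0=\id$, continuity forces each $H_t$ to preserve these components, hence to preserve $M$. Restricting $H$ to $M$ already gives the desired ambient isotopy of $(M,N)$ relative $\partial M\setminus\intr(N)$, carrying $\iota_1=\iota_1^\star|_B$ to $\iota_2=\iota_2^\star|_B$. This is exactly the ``connectivity argument'' the paper applies to its double.
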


\begin{proof}
  We consider the double~$\Db_N(M)$ of $M$ along~$N$, which is a union of two copies of~$M$ glued along the identity map on~$N$, one of the copies with its orientation reversed. Using the fact that $N$~is closed in~$\partial M$ one sees that $(\Db_N(M), N)$~is a PL manifold pair, and its boundary is~$(\Db_{\partial N} (\partial M \setminus \intr(N)), \partial N)$. 
  Doubling also~$B$ along~$D$ yields an unknotted ball pair~$(\Db_D(B),D)$.
  
  Now, the maps $\iota_1, \iota_2$~extend to orientation-preserving PL embeddings 
  \[\Db(\iota_1), \Db(\iota_2)\colon(\Db_D(B), D) \to (\intr(\Db_N(M)), \intr(N)).\]
  Theorem~\ref{thm.discpair} yields a PL ambient isotopy of $(\Db_N(M), N)$ relative $\Db_{\partial N} (\partial M \setminus \intr(N))$ that carries $\Db(\iota_1)$ to $\Db(\iota_2)$. A connectivity argument shows that it restricts to an isotopy from $\iota_1$~to~$\iota_2$ relative~$\partial M \setminus \intr(N)$.
\end{proof}

We will also need the observation in Lemma~\ref{lem.annulus} below, but before stating it we remind the reader  of some standard terminology.

\begin{dfn}Let $P$~be a polyhedron in some~$\R^n$, and let $v \in \R^n$. We write~$vP$ to denote the polyhedron comprised of all points of the form~$t p + (1-t)v$, with $p \in P$ and $t \in [0,1]$. If each point of~$vP$ admits a unique such expression, we say $vP$~is a \textbf{cone} with base~$P$ and vertex~$v$.

Given two cones $vP, wQ$ with bases $P,Q$ and vertices $v,w$, respectively, and a PL map $f \colon P \to Q$, the \textbf{cone} of~$f$  (with respect to~$v, w$) \cite[Exercise~1.6(3)]{RourkeSanderson} is the PL map $vP \to wQ$ given by 
\[ tp + (1-t)v \mapsto tf(p) + (1-t)w.\]
\end{dfn}

\begin{lem}[Interpolating annulus]\label{lem.annulus}
Let $A_0$~be a PL annulus in some~$\R^n$, and let $v\in \R^n$ be such that $v A_0$~is a cone with base~$A_0$ and vertex~$v$. Denote the two boundary circles of~$A_0$ by~$\gamma_0, \delta_0$, and let $\gamma\subset v\gamma_0$ and $\delta \subset v\delta_0$ be PL circles such that $v \gamma,v \delta$ are cones with bases $\gamma, \delta$ respectively, and vertex~$v$. Then there exists a PL annulus $A \subset v A_0$ with $\partial A = \gamma \cup \delta$, such that $vA$ is a cone with base~$A$ and vertex~$v$.
\end{lem}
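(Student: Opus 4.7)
The plan is to parametrize $vA_0$ via cone coordinates, express $\gamma$ and $\delta$ as ``radial graphs'' over $\gamma_0, \delta_0$, and then build $A$ by linear interpolation between these graphs.

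First, I would fix a PL homeomorphism $h\colon S^1 \times [0,1] \to A_0$ sending $S^1 \times \{0\}$ to $\gamma_0$ and $S^1 \times \{1\}$ to $\delta_0$; such an $h$ exists because $A_0$ is a PL annulus. Using this and the hypothesis that $vA_0$ is a cone, define a PL map $\pi \colon S^1 \times [0,1] \times [0,1] \to vA_0$ by $\pi(\theta, s, t) := t \cdot h(\theta, s) + (1-t) \cdot v$. By the uniqueness clause in the definition of a cone, $\pi$ is surjective and its restriction to $\{t > 0\}$ is injective, and it sends $\pi(\cdot, 0, \cdot)$ onto $v\gamma_0$ and $\pi(\cdot, 1, \cdot)$ onto $v\delta_0$.

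Next, I would show that there exists a PL function $r_\gamma\colon S^1 \to (0,1]$ with $\gamma = \{\pi(\theta, 0, r_\gamma(\theta)) : \theta \in S^1\}$, and similarly an $r_\delta$ for $\delta$. Since $\gamma \subset v\gamma_0$, each point of $\gamma$ lies on a unique ``ray'' $\{\pi(\theta, 0, t) : t \in [0,1]\}$ issuing from $v$, giving a PL radial projection $\rho\colon \gamma \to \gamma_0$. The hypothesis that $v\gamma$ is a cone forces distinct points of $\gamma$ to sit on distinct rays (otherwise the same ray would provide two different expressions $tq + (1-t)v$ for some point of $v\gamma$), so $\rho$ is injective; being a PL injection from $S^1$ into $S^1$, it is a PL homeomorphism. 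Then $r_\gamma$ is simply the ``height'' coordinate along each ray, which is PL in $\theta$ since everything in sight is piecewise-linear.

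With these preparations in hand, I would define
\[ A := \bigl\{ \pi\bigl(\theta,\, s,\, (1-s)\, r_\gamma(\theta) + s\, r_\delta(\theta)\bigr) : \theta \in S^1,\ s \in [0,1]\bigr\}. \]
Since $r_\gamma, r_\delta > 0$, the third argument is strictly positive on the whole square, so the above description is the image of a PL map $\Psi\colon S^1 \times [0,1] \to vA_0$ that factors through the injective part $\{t>0\}$ of $\pi$; injectivity of $\Psi$ then follows because $\pi(\theta, s, t) = \pi(\theta', s', t')$ (with $t, t' > 0$) forces $(\theta, s) = (\theta', s')$. Hence $\Psi$ is a PL homeomorphism onto a PL annulus $A \subset vA_0$, and by inspection at $s = 0$ and $s = 1$ one has $\partial A = \gamma \cup \delta$. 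Finally, each point of $A$ lies on a unique ray from $v$ in $vA_0$, namely the one parametrized by its $(\theta, s)$, so distinct points of $A$ lie on distinct rays; this is exactly the uniqueness condition asserting that $vA$ is a cone. The main technical subtlety is the identification of $\gamma$ as a radial graph, which relies crucially on the cone hypothesis on $v\gamma$ to prevent $\gamma$ from meeting any ray twice, combined with the standard fact that a PL injection $S^1 \hookrightarrow S^1$ is a homeomorphism.
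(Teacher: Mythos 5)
The overall strategy---view $\gamma$ and $\delta$ as radial graphs over $\gamma_0$ and $\delta_0$ and interpolate the radial heights---is the right intuition, but it breaks down in the PL category at two related points. First, the parametrization $\pi(\theta,s,t)=t\cdot h(\theta,s)+(1-t)\cdot v$ is not a PL map: the expression is bilinear in $t$ and $h(\theta,s)$, and a bilinear function such as $(x,t)\mapsto tx$ is not affine on any $2$-dimensional polyhedron, so no subdivision of the domain makes $\pi$ simplicial. Second, and for the same reason, radial projection inside a cone is not PL: if $[a,b]$ is a straight segment of $\gamma$, then solving $t\theta+(1-t)v=a+s(b-a)$ makes $t$ a projective (rational), not piecewise-linear, function of $\theta$. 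So the claim that $r_\gamma$ ``is PL in $\theta$ since everything in sight is piecewise-linear'' is false, and consequently $\Psi$ is not PL. Worse, even as a set your $A$ is generally not a polyhedron: for fixed $\theta$, the curve $s\mapsto t(s)h(\theta,s)+(1-t(s))v$ with $t(s)=(1-s)r_\gamma(\theta)+s\,r_\delta(\theta)$ is quadratic in $s$, so the interpolated surface consists of curved patches rather than flat triangles. (Your argument that distinct points of $\gamma$ lie on distinct rays, and that a continuous injection of $S^1$ into $S^1$ is a homeomorphism, is fine---but only topologically, not piecewise-linearly.)

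The paper's proof is built precisely to dodge this obstruction: radial pushing is used only to transport \emph{finitely many} points between $\gamma$, $\gamma_0$, $C$, $\delta_0$ and $\delta$ (harmless, since no map has to be PL to move finitely many points), and the annulus is then assembled from genuine straight Euclidean triangles $T_i^\gamma$, $T_i^\delta$ spanned by consecutive transported points. The containment of these straight triangles in $vA_0$ is not automatic and is verified using the convexity of the cones $vR_i$ over the rectangles $R_i\subset A_0=C\times[0,1]$. To repair your argument you would have to discretize in the same way: subdivide $S^1$ finely enough that $\gamma$, $\delta$ and $C$ are straight on each piece, and replace interpolation in radial coordinates by straight-line interpolation in $\R^n$ between the chosen vertices---at which point you have reproduced the paper's proof.
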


This lemma is illustrated in Figure~\ref{fig.annulus}.

\begin{figure}[h]
   \centering
   \def \svgwidth{0.4\linewidth}
   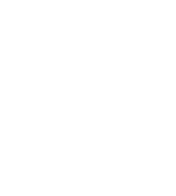
   \caption{The setup of Lemma~\ref{lem.annulus}. The PL annulus~$A$ ``interpolates'' between the PL circles $\gamma, \delta$.}
   \label{fig.annulus}
\end{figure}

\begin{proof}
We may assume without loss of generality that $A_0 = C \times [0,1] \subset \R^{n}$ for some PL circle~$C\subset \R^{n-1}$, with $\gamma_0 = C\times \{0\}$ and $\delta_0 = C \times \{1\}$, because for every PL homeomorphism $C \times [0,1] \to A_0$, the cone $v (C \times [0,1]) \to vA_0$ preserves cones at~$v$.

Choose a finite set of points in~$\gamma \subset v(C \times \{0\})$ subdividing~$\gamma$ into straight line segments (see Rourke-Sanderson for details \cite[Theorem~2.2]{RourkeSanderson}). Pushing these points radially onto~$\gamma_0 = C \times \{0\}$ and projecting onto~$C$ yields a finite set of points in~$C$ (note that since $v\gamma$~is a cone by assumption, no two points from~$\gamma$ get pushed onto the same point of~$\gamma_0$). Doing the same with~$\delta$ yields a second finite subset of~$C$. Finally, choose a third finite subset of~$C$ subdividing $C$~itself into straight line segments. We denote by $p_1, \ldots, p_k$ the points in the union of these three subsets, ordered cyclically around~$C$. The indices $1, \ldots, k$ should thus be interpreted as lying in~$\Z / k$. We now push the points $(p_1, 0), \ldots, (p_k,0) \in \gamma_0$ radially onto~$\gamma$ to obtain points $p_1^\gamma, \ldots, p_k^\gamma$. Similarly, pushing~$(p_1, 1), \ldots,  (p_k, 1)$ radially yields $p_1^\delta, \ldots, p_k^\delta$. 

Since the points~$p_i$ subdivide $C$~into straight line segments, we see that for each~$i \in \Z / k$, the points $(p_i, 0), (p_{i+1} , 0), (p_i, 1), (p_{i+1}, 1)$ are the vertices of a rectangle~$R_i$ contained in~$A_0$.
In particular, the cone~$vR_i\subset vA_0$ is convex. This is the crucial observation that will allow us to find the desired annulus~$A$.

For each $i \in \Z/k$, denote by~$T_i^\gamma$ the triangle spanned by the points $p_i^\gamma, p_{i+1}^\gamma, p_i^\delta$, and by~$T_i^\delta$ the one spanned by $p_i^\delta, p_{i+1}^\delta, p_{i+1}^\gamma$. By the previous observation, both of these triangles are contained in~$vR_i$. The union $A:=\bigcup_{i \in \Z / k} (T_i^\gamma \cup T_i^\delta)$ is then a PL annulus embedded in~$vA_0$, with $\partial A = \gamma \cup \delta$. It is also easy to see that each point of~$A$ lies in a unique ray from~$v$ through a point in~$A$, whence the cone condition on~$vA$ follows.
\end{proof}

Finally we are equipped to prove Proposition~\ref{prop.penclosingball}.

\begin{proof}[Proof of Proposition~\ref{prop.penclosingball}]
 As in the proof of Lemma~\ref{lem.enclosingball}, write $\overline{B} := \SPH \setminus \intr(B)$ and $\overline{B'}:=\SPH \setminus \intr(B')$, and choose any extension of~$\Phi_\partial$ to a PL homeomorphism $\Phi_{\overline B}\colon \overline B \to \overline{B'}$.  We will find an extension~$\Phi_\SPH \colon \SPH \to \SPH$ of~$\Phi_{\overline{B}}$ that fixes~$|\Gamma|$, and whose restriction~$\Phi_B$ to~$B$ will therefore satisfy the conclusion of the lemma. The construction of this extension~$\Phi_\SPH$ is somewhat intricate, so we need to introduce some notation, which we illustrate in Figure~\ref{fig.enclosingsphnbhds}.
 
   \begin{figure}[h]
       \centering
       \def \svgwidth{0.95\linewidth}
       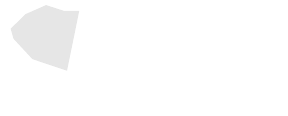
       \caption{The $3$-balls $\overline B$~and~$\overline{B'}$ in the setup of the proof of Lemma~\ref{prop.penclosingball}. We emphasize the action of~$\Phi_{\overline B}$ on~$vD$ as the cone of a PL homeomorphism~$D \to D'$.}
       \label{fig.enclosingsphnbhds}
   \end{figure}
 	 
	First, choose a star neighborhood~$N_0$ for~$v$ in the pair $(\SPH, \overline{B'} \cup |\Gamma|)$.
	More explicitly, $N_0$~is a $3$-ball such that the polyhedron $(\overline{B'} \cup |\Gamma|) \cap N_0$ is a cone with base its intersection with~$\partial N_0$, and with vertex~$v$ \cite[p.~50]{RourkeSanderson}. In particular, $D_0:= \overline{B'} \cap \partial N_0$ is a disc and $\overline{B'} \cap N_0$~is the cone~$vD_0$ with base~$D_0$ and vertex~$v$.
	
	 We then pick a smaller star neighborhood~$N_v \subset \intr(N_0)$ of~$v$ in~$(\SPH, \overline{B'} \cup |\Gamma|)$,
	 such that $N_v$~is also a star neighborhood of~$v$ in~$(\SPH, \overline B \cup |\Gamma|)$,
	 and $\overline B \cap N_v$ is mapped conically by~$\Phi_{\overline B}$ into~$\intr(N_0)$.
	 Denoting by~$D$ the disc~$\overline B \cap \partial N_v$, so $\overline B \cap N_v$~is a cone~$vD$ with base~$D$ and vertex~$v$, this means that $\Phi_{\overline B}(vD)$~is a cone~$vD'$ with base the disc $D':=  \Phi_{\overline B}(D)$ and vertex~$v$, and that $\Phi_{\overline B}|_{vD}\colon vD \to vD'$~is the cone of $\Phi_{\overline B}|_D\colon D \to D'$.
	 The existence of such~$N_v$ follows from the definitions of PL map and polyhedron, say, by taking $N_v$~to be a sufficiently small $\epsilon$-neighborhood of~$v$.
	 We will denote by~$\overline{N_v}$ the $3$-ball~$\SPH \setminus \intr(N_v)$.
	 
	 In order to apply the disc theorem at the boundary, we will first need to move~$\overline{B'}$ into a nicer configuration. More precisely, we will use the following fact, illustrated in Figure~\ref{fig.enclosingsphpsi}.
	 
	 \begin{claim}
	 There exists an orientation-preserving PL homeomorphism $\Psi\colon \SPH \to \SPH$ such that:
	 \begin{itemize}
	 \item $\Psi$~maps the pair $(\Phi_{\overline B}(\overline B \cap \overline{N_v}), D')$ into the pair $(\overline{N_v}, \partial N_v)$,
	 \item writing $\tilde D := \Psi(D')$, the map~$\Psi$ is given on~$vD'$ as the cone $v D' \to v\tilde D$ of the PL homeomorphism~$D' \to \tilde D$, and
	 \item $\Psi$ fixes~$|\Gamma|$.
	 \end{itemize}
	 \end{claim}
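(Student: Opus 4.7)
Plan: My plan is to construct $\Psi$ as a PL homeomorphism supported inside $vD_0 := \overline{B'} \cap N_0$ that ``expands'' the cone $vD'$ to fill the $3$-ball $\overline{B'} \cap N_v$. I would take $\tilde D := \overline{B'} \cap \partial N_v$ as the target disc, so that $v\tilde D = \overline{B'} \cap N_v$. Condition (i) is then immediate: the decomposition $\overline B = vD \cup (\overline B \cap \overline{N_v})$ with overlap $D$ gives $\Phi_{\overline B}(\overline B \cap \overline{N_v}) = \overline{B'} \setminus \intr(vD')$, which $\Psi$ maps to $\overline{B'} \setminus \intr(v\tilde D) = \overline{B'} \cap \overline{N_v} \subset \overline{N_v}$, while $\Psi(D') = \tilde D \subset \partial N_v$.

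To define the underlying PL homeomorphism $f \colon D' \to \tilde D$ whose cone will be $\Psi|_{vD'}$ (yielding condition (ii)), I would apply Lemma~\ref{lem.annulus}. Take $A_0 := (\partial B' \cap N_0) \setminus \intr(\partial B' \cap N_v)$, the annulus on $\partial B'$ between the two star-neighborhood traces, with boundary circles $\delta_0 = \partial D_0$ and $\gamma_0 = \partial \tilde D$. The cone $vA_0$ lies in $vD_0$ by the star structure of $N_0$, and both $v\partial D' \subset v\delta_0$ and $v\partial \tilde D = \partial B' \cap N_v \subset v\gamma_0$ are cones from $v$ ($v\partial D'$ by the hypothesis on $D'$, and $v\partial\tilde D$ trivially, as it coincides with the full cone disc above $\partial \tilde D$). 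The lemma then yields a PL annulus $A \subset vA_0$ with $\partial A = \partial \tilde D \cup \partial D'$ and $vA$ a cone from $v$. The cone structure shared by $vD'$, $vA$ and $v\tilde D$ now identifies corresponding points of $D'$ and $\tilde D$ along common rays from $v$, defining the PL homeomorphism~$f$.

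Finally, I would define $\Psi$ piecewise: $\Psi = \id$ outside $vD_0$ (so on $B'$ and on the complement of $N_0$); on $vD'$, $\Psi$ is the cone of $f$; and on the complementary $3$-ball in $vD_0$, $\Psi$ is extended to a PL homeomorphism onto $vD_0 \setminus \intr(v\tilde D)$ using the cone structure on $vA$ to interpolate and matching the identity on $\partial(vD_0) = D_0 \cup v\partial D_0$. Orientation preservation is inherited from the cone steps at each stage. For condition (iii), the enclosing ball property gives $|\Gamma| \cap \overline{B'} = \{v\}$, so $|\Gamma| \setminus \{v\}$ lies in $\intr(B') \cup (\SPH \setminus N_0)$ where $\Psi$ is the identity; and $\Psi(v) = v$ automatically, as $v$ is the common apex of every cone involved. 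The main obstacle is verifying that the cone extensions on $vD'$ and on the surrounding shell glue into a single PL homeomorphism of $\SPH$, and this is precisely what the cone property delivered by Lemma~\ref{lem.annulus} is designed to guarantee.
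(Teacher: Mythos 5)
There is a genuine gap, and it sits exactly where the paper's collar trick does its work. You declare $\Psi$ to be the identity outside $vD_0 = \overline{B'}\cap N_0$, hence on all of $B'$ and in particular on $\partial B'$. But the boundary of the $3$-ball $vD'$ is $D'\cup v\partial D'$, where $v\partial D' = \Phi_{\overline B}(\partial B\cap N_v)\subset \partial B'$; so your $\Psi$ is forced to be the identity on $v\partial D'$. On the other hand, if $\Psi|_{vD'}$ is the cone of a PL homeomorphism $D'\to\tilde D$, its restriction to $v\partial D'$ is the cone of $\partial D'\to\partial\tilde D$, which carries $v\partial D'$ onto $v\partial\tilde D$. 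Since $\partial D' = \Phi_\partial(\partial B\cap\partial N_v)$ has no reason to coincide with $\partial\tilde D = \partial B'\cap\partial N_v$, these two requirements are incompatible: the pieces of your $\Psi$ do not glue along $\partial B'$. The same collision reappears when you extend over the complementary shell, whose boundary carries the identity on the annulus $v\partial D_0\setminus\intr(v\partial D')$ and the map $f$ on $D'$, and these disagree along their common circle $\partial D'$. This is precisely why the paper enlarges the support of $\Psi$ to $vD_0^+$, with $D_0^+ = D_0\cup A_0$ and $A_0$ a collar of $\partial D_0$ pushed into $B'\cap\partial N_0$: then $v\partial D'$ lies in the interior of the support, Lemma~\ref{lem.annulus} is used to enlarge $D'$ to a disc $D'^+ = D'\cup A$ with $\partial D'^+ = \partial\tilde D^+\subset\partial(vD_0^+)$, and the gluing with the identity only has to be checked there, where it holds by construction. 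That extra idea is missing from your argument and cannot be avoided by a cleverer choice of $f$.

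Separately, your invocation of Lemma~\ref{lem.annulus} does not meet its hypotheses. Your $A_0 = (\partial B'\cap N_0)\setminus\intr(\partial B'\cap N_v)$ is a \emph{radial} annulus: for each $p\in\partial D_0$ it contains a whole subsegment of the ray from $v$ through $p$, so $vA_0$ equals the full cone disc $v\partial D_0$ and points near $v$ admit many expressions $ta+(1-t)v$; thus $vA_0$ is not a cone with base $A_0$. (The paper's $A_0$ lies in $\partial N_0$, transverse to the rays from $v$, which is what makes the lemma applicable.) In fact the lemma is not needed for your purpose: since $vD'$ and $v\tilde D$ are both cone sub-balls of $vD_0$ and $vD'$ is a neighborhood of $v$ in $\overline{B'}$, radial projection already defines a PL homeomorphism $D'\to\tilde D$. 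The obstruction is not the existence of $f$ but the boundary-gluing problem above.
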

	 
	\begin{figure}[h]
	  \centering
	  \def \svgwidth{0.95\linewidth}
	  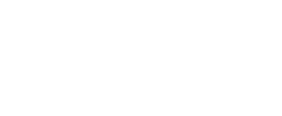
	  \caption{The $3$-ball $\overline{B'}$ and its $\Psi$-image~$\tilde B$. The disc~$D'$ is mapped by~$\Psi$ to a disc~$\tilde D$ in~$\partial N_v$, and $\Psi$~acts on~$vD'$ as the cone of this map.}
	  \label{fig.enclosingsphpsi}
	\end{figure}
	 
	 Assume for the moment that this claim holds, and let us see how to use the resulting~$\Psi$ to construct the desired extension~$\Phi_\SPH$ of~$\Phi_{\overline B}$.
	 
	 Let $\tilde B$~be the $3$-ball~$\Psi(\overline{B'})$, and choose a regular neighborhood~$N_\Gamma$ of~$|\Gamma| \cap \overline{N_v}$ in~$\overline{N_v}$, small enough to be disjoint from $\overline B$~and~$\tilde B$. Moreover, denote by~$M$ the closure in~$\overline{N_v}$ of~$\overline{N_v} \setminus N_\Gamma$, and consider the closed codimension-0 submanifold $N:= \partial N_v \cap M$ of~$\partial M$. 
	 
	 By construction of~$\Psi$, its restriction to~$\Phi_{\overline B}(\overline{B}\cap \overline {N_v})$ is a PL homeomorphism of pairs~$(\Phi_{\overline B}(\overline B \cap \overline{N_v}), D') \to (\tilde B \cap \overline{N_v}, \tilde D)$. We may thus apply the Disc Theorem at the boundary (Corollary~\ref{cor.boundarydisc}) to the inclusion $(\overline B \cap \overline{N_v},D) \into (M,N)$  and the composition
	 \[(\overline B \cap \overline{N_v}, D)
	\overset{\Phi_{\overline B}}{\longrightarrow} (\Phi_{\overline B}(\overline B \cap \overline{N_v}), D')
	\overset{\Psi}{\longrightarrow} (\tilde B \cap \overline N_v, \tilde D)
	\into (M,N),\]
	with the maps labeling the arrows appropriately restricted. This is illustrated in Figure~\ref{fig.discthmatboundary}.  

	\begin{figure}[h]
	 \centering
	 \def \svgwidth{0.95\linewidth}
	 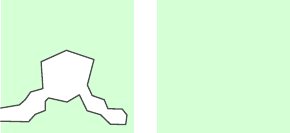
	 \caption{Applying the Disc Theorem at the boundary in order to ambiently isotope the pair $(\overline B \cap \overline{N_v}, D)$ onto $(\tilde B \cap \overline{N_v}, \tilde D)$ within $(M, N)$.}
	 \label{fig.discthmatboundary}
	 	\end{figure}

	The final PL homeomorphism $\tilde \Phi_M \colon M \to M$ of the resulting PL isotopy of~$M$ extends
		the composition $\Psi|_{\Phi_{\overline B}(\overline B \cap \overline {N_v})} \circ \Phi_{\overline B}|_{\overline B \cap \overline{N_v}}$ and fixes $\partial M \setminus \intr(N) = \partial M \cap N_\Gamma$. We may thus extend~$\tilde \Phi_M$ to a PL homeomorphism~$\tilde \Phi_{\overline{N_v}} \colon \overline{N_v} \to \overline{N_v}$ by setting it to be the identity on~$N_\Gamma$. In particular, $\tilde \Phi_{\overline{N_v}}$ fixes~$|\Gamma| \cap \overline{N_v}$.
	Finally, extend~$\tilde \Phi_{\overline{N_v}}$ to a PL homeomorphism $\tilde \Phi_\SPH \colon \SPH\to \SPH$ by defining it on~$N_v = v(\partial N_v)$ as the cone of the already prescribed PL homeomorphism $\partial N_v \to \partial N_v$.
	
	The restriction~$\tilde \Phi_\SPH|_{\overline B}$ is now the composition $\Psi|_{\overline B}\circ\Phi_{\overline B}$: indeed, we have already seen that the two maps agree on~$\overline B \cap \overline{N_v}$, and on~$\overline B \cap N_v = vD$ both are defined as the cone of $D \overset{\Phi_{\overline B}} \to D' \overset{\Psi} \to \tilde D$. Moreover, $\tilde \Phi_\SPH$~clearly fixes~$|\Gamma|$. Hence, the map~$\Phi_\SPH:= \Psi^{-1}\circ \tilde \Phi_\SPH$ extends~$\Phi_{\overline B}$ and fixes~$|\Gamma|$, as desired. We are only left to prove the above claim.
		
	 \begin{proof}[Proof of the claim]
	 Most of the work consists of adding enough detail to our pictures that the map can be made explicit. The construction is illustrated in Figure~\ref{fig.definingpsi}.
	 
	 	\begin{figure}[h]
	 	  \centering
	 	  \def \svgwidth{0.95\linewidth}
	 	  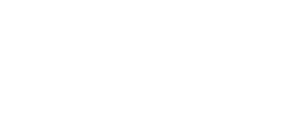
	 	  \caption{The construction of~$\Psi$. Outside the $3$-ball~$vD_0^+$, the map~$\Psi$ is defined as the identity. On~$D'^+$, it is an extension of the identity $\partial D'^+ \to \partial \tilde D^+$ to the interior of the discs, and this in turn is coned to a PL homeomorphism $vD'^+ \to v\tilde D^+$. On the remaining $3$-ball~$C$, we take any extension of the already prescribed map on~$\partial C$.}
	 	  \label{fig.definingpsi}
	 	\end{figure}

	 Choose a collar for~$\partial D_0$ in~$B' \cap \partial N_0$, that is, a PL embedding $c\colon \partial D_0 \times [0,1] \to B' \cap \partial N_0$ such that $c(-, 0)$~is the identity on~$\partial D_0$, and $c(\partial D_0 \times [0,1[)$~is an open neighborhood of~$\partial D_0$ in~$B' \cap \partial N_0$. We may also assume that the image~$A_0$ of~$c$ is disjoint from~$|\Gamma|$. See Rourke-Sanderson for a discussion on collars \cite[p.~24]{RourkeSanderson}.

	 Let $D_0^+$~be the ``enlarged disc''~$D_0 \cup A_0$, and consider the $3$-ball~$vD_0^+$, which is a cone with base~$D_0^+$ and vertex~$v$. We will define~$\Psi$ as the identity on~$\SPH \setminus \intr(v D_0^+)$, and then find a suitable extension of the identity on $\partial (vD_0^+)$ to all of~$vD_0^+$.
	 
	 Denote by~$\tilde D^+$ the disc $vD_0^+ \cap \partial N_v$ and consider the PL circles $\partial D'$~and~$\partial \tilde D^+$, each lying in the cone of a distinct component of~$\partial A_0$. Each of these circles is the base of a cone with vertex~$v$, so we can use Lemma~\ref{lem.annulus} to find an annulus~$A$ with $\partial A = \partial D' \cup \partial \tilde D^+$, such that $vA$~is a cone with base~$A$ and vertex~$v$. We will denote by~$D'^+$ the disc~$D' \cup A$. Notice that by construction, we have $\partial D'^+= \partial \tilde D^+$.
	 
	 To define~$\Psi$ inside $v D_0^+$, we first choose any extension of the identity map $\partial D'^+ \to \partial \tilde D^+$ to a PL homeomorphism $D'^+ \to \tilde D^+$.
	 Since both $vD'^+$~and~$v\tilde D^+$ are cones at~$v$, we can define~$\Psi$ on~$vD'^+$ as the cone of the above PL homeomorphism~$D'^+ \to \tilde D^+$. Note that this is consistent with the definition of~$\Psi$ as the identity on $\partial(vD_0^+)$.
	 
	 It remains only to define $\Psi$ on~$vD_0^+ \setminus vD'^+$, whose closure in~$\SPH$ is a $3$-ball~$C$ (because it is the complement in~$vD_0^+$ of an open regular neighborhood of a boundary point). Writing~$\tilde C$ to denote the closure in~$\SPH$ of~$vD_0^+ \setminus v\tilde D^+$, which is another $3$-ball, this amounts to choosing a PL homeomorphism $C \to \tilde C$ that agrees with the already prescribed map $\partial C \to \partial \tilde C$. We choose any extension, and this completes the definition of~$\Psi$. It is straightforward to verify that all required conditions on~$\Psi$ are satisfied. \phantom{\qedhere}
	 \end{proof}
	 
	 With the claim established, the proof is complete.
\end{proof}

We now collect the dividends from our work proving Proposition~\ref{prop.penclosingball}.

\begin{prop}[Well-definedness of the  vertex sum]
  Any two vertex sums of pointed graphs $(\Gamma_1,v_1),(\Gamma_2, v_2)$ are isomorphic via an isomorphism that induces the identity on~$(\langle \Gamma_1 \rangle \vsum{v_1}{v_2} \langle \Gamma_2 \rangle, v_1 = v_2)$.
\end{prop}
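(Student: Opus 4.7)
The plan is to adapt the argument used for Proposition~\ref{prop.uniquenessdisjunion} (well-definedness of the disjoint union) to the pointed setting, replacing the appeals to Lemma~\ref{lem.enclosingball} with appeals to its pointed counterpart, Proposition~\ref{prop.penclosingball}. The heavy lifting has already been done in establishing the latter, so what remains is mostly an assembly step.

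Concretely, suppose we have two vertex sums of $(\Gamma_1, v_1)$ and $(\Gamma_2, v_2)$, formed from enclosing balls $B_i, B_i'$ (for $i \in \{1,2\}$) and orientation-reversing attaching maps $f \colon \partial B_1 \to \partial B_2$ and $f' \colon \partial B_1' \to \partial B_2'$, each taking $v_1$ to $v_2$. First, I would apply Proposition~\ref{prop.penclosingball} to $(\Gamma_1, v_1)$ to obtain an orientation-preserving PL homeomorphism $\Phi_1 \colon B_1 \to B_1'$ that restricts to the identity on~$|\Gamma_1|$ (in particular it fixes~$v_1$). Next, note that the composition
\[
f' \circ \Phi_1|_{\partial B_1} \circ f^{-1} \colon \partial B_2 \to \partial B_2'
\]
is an orientation-preserving PL homeomorphism sending $v_2$ to $v_2$, because both $f$ and $f'$ send $v_1$ to $v_2$ while $\Phi_1$ fixes~$v_1$. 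Applying Proposition~\ref{prop.penclosingball} again, now to $(\Gamma_2, v_2)$, I can extend this boundary map to an orientation-preserving PL homeomorphism $\Phi_2 \colon B_2 \to B_2'$ which restricts to the identity on~$|\Gamma_2|$.

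By construction, $\Phi_1$ and $\Phi_2$ agree on the gluing locus in the sense that $\Phi_2 \circ f = f' \circ \Phi_1$ on~$\partial B_1$, so they assemble into a well-defined PL homeomorphism
\[
\Phi \colon B_1 \cup_f B_2 \longrightarrow B_1' \cup_{f'} B_2'
\]
of the ambient $3$-spheres of the two vertex sums. This $\Phi$ preserves orientations (since each $\Phi_i$ does), sends vertex sets to vertex sets, and carries each edge onto the corresponding edge (because $\Phi_i$ fixes $|\Gamma_i|$ pointwise, and in particular fixes each edge pointwise). Since the identifications $v_1 \sim v_2$ on both sides are respected by~$\Phi$, the induced map on underlying graphs is the identity on $\langle \Gamma_1 \rangle \vsum{v_1}{v_2} \langle \Gamma_2 \rangle$, with the pointed vertex $v_1 = v_2$ fixed. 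Thus $\Phi$ is the required isomorphism.

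The step I expect to be most delicate in writing out is verifying that the two maps $\Phi_1, \Phi_2$ really do assemble into a PL homeomorphism of the glued spheres (this uses the standard fact that PL gluings along full sub-polyhedra behave well, cf.\ \cite[Exercise~2.27~(2)]{RourkeSanderson}), and checking the compatibility of boundary restrictions with the attaching maps. The truly substantive difficulty has already been absorbed into Proposition~\ref{prop.penclosingball}: namely, producing an orientation-preserving PL self-homeomorphism of~$\SPH$ that interchanges two enclosing balls while pinning $|\Gamma|$ (and in particular the distinguished vertex) pointwise.
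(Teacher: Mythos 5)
Your proof is correct and follows exactly the route the paper takes: the paper's own proof simply says to copy the argument of Proposition~\ref{prop.uniquenessdisjunion} with Lemma~\ref{lem.enclosingball} replaced by Proposition~\ref{prop.penclosingball}, which is precisely what you have written out. The orientation and basepoint bookkeeping you check (that $f' \circ \Phi_1|_{\partial B_1} \circ f^{-1}$ is orientation-preserving and fixes $v_2$, so that Proposition~\ref{prop.penclosingball} applies a second time) is exactly the right verification.
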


\begin{proof}
The argument can be copied almost word-by-word from the proof of Proposition~\ref{prop.uniquenessdisjunion}, with the role of Lemma~\ref{lem.enclosingball} now of course being played by Proposition~\ref{prop.penclosingball}.
\end{proof}

We can now rest at ease knowing that the ambiguity about enclosing balls and attaching maps in the notation ``$\Gamma_1 \vsum{v_1}{v_2} \Gamma_2$'' is immaterial.

We remark that in the purely combinatorial setting of abstract graphs, we can just as well define the vertex sum along an ordered $k$-tuple of distinct vertices. For spatial graphs, however, we would need a generalization of the notion of an enclosing ball: a $3$-ball containing the support of the spatial graph, and whose boundary intersects it precisely at the $k$~distinguished vertices. But such balls could very well be non-unique in the sense of Proposition~\ref{prop.penclosingball}, as we illustrate in Figure~\ref{fig.knottedball}. Hence our efforts to define a vertex sum of spatial graphs along multiple vertices would necessarily fall short, unless we were willing to also encode the data for the enclosing balls into the operation.

    \begin{figure}[h]
        \centering
        \def \svgwidth{0.6\linewidth}
        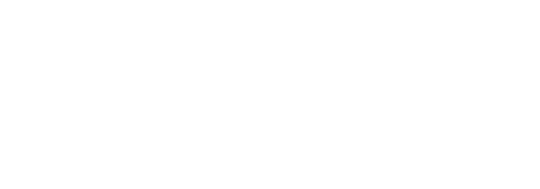
        \caption{An example of non-uniqueness of enclosing balls for spatial graphs with more than one distinguished vertex. We depict a spatial graph comprised of exactly two vertices and one edge connecting them. If both vertices are distinguished, one can find not only the obvious enclosing ball on the left, but also more complicated ones, such as the one on the right.}
        \label{fig.knottedball}
    \end{figure}

There are analogues of Lemmas \ref{lem.summandissubgraph}~and~\ref{lem.isodisjunion} for vertex sums, whose proofs are the same:

\begin{lem}[Vertex summands as sub-graphs]\label{lem.vsummandissubgraph}
Let $\Gamma = \Gamma_1 \vsum{v_1}{v_2} \Gamma_2$ be a vertex sum of pointed spatial graphs, and denote by~$\Gamma_1'$ the sub-graph of~$\Gamma$ obtained by discarding all vertices and edges that are not in~$\Gamma_1$. Then $\Gamma_1' = \Gamma_1$.
\end{lem}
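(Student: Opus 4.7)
The plan is to mimic the proof of Lemma~\ref{lem.summandissubgraph} essentially verbatim, taking advantage of the fact that, in the construction of the vertex sum, the distinguished vertex $v_1 = v_2$ already sits on the attaching sphere $\partial B_1 = \partial B_2$ and is therefore fixed ``for free'' once we impose the identity on $B_1$. Concretely, let $\SPH_1$ be the ambient sphere of $\Gamma_1$, let $B_i \subset \SPH_i$ be the enclosing balls used to assemble the vertex sum, and let $f \colon \partial B_1 \to \partial B_2$ be the attaching map. The goal is to produce an orientation-preserving PL homeomorphism $\Phi \colon \SPH_1 \to B_1 \cup_f B_2$ restricting to the identity on $|\Gamma_1|$; such a $\Phi$ is automatically an isomorphism $\Gamma_1 \to \Gamma_1'$ inducing the identity on $\langle \Gamma_1 \rangle$, which is precisely what the equality $\Gamma_1 = \Gamma_1'$ abbreviates.

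To construct $\Phi$, I would define it as the identity on $B_1$, which lies both in $\SPH_1$ and in $B_1 \cup_f B_2$. This already takes care of $|\Gamma_1| \subset B_1$, and in particular maps the vertex $v_1 \in \partial B_1$ to itself, which in $B_1 \cup_f B_2$ is exactly the identified point $v_1 = v_2$. It remains to define $\Phi$ on the complementary ball $\overline{B_1} := \SPH_1 \setminus \intr(B_1)$, which is a $3$-ball by \cite[Corollary~3.13]{RourkeSanderson}, as an extension of $f \colon \partial\overline{B_1} = \partial B_1 \to \partial B_2$ to a PL homeomorphism $\overline{B_1} \to B_2$. Such an extension exists by \cite[Lemma~1.10]{RourkeSanderson}, exactly as in the disjoint-union case.

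The only minor point that needs checking is orientation. The orientation of $B_1 \cup_f B_2$ is set so that the inclusion of $B_1$ is orientation-preserving; since $f$ is orientation-reversing with respect to the boundary orientations inherited from $B_1$ and $B_2$, it is orientation-preserving with respect to the boundary orientations of $\overline{B_1} \subset \SPH_1$ and $B_2 \subset B_1 \cup_f B_2$, so one may choose the extension $\overline{B_1} \to B_2$ to be orientation-preserving. I do not expect any real obstacle here: no delicate control near the distinguished vertex is required (unlike in Proposition~\ref{prop.penclosingball}), because $v_1 = v_2$ lies in the region where $\Phi$ is already the identity.
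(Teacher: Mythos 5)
Your proposal is correct and matches the paper's treatment: the paper states that the proof of Lemma~\ref{lem.vsummandissubgraph} is the same as that of Lemma~\ref{lem.summandissubgraph}, namely taking $\Phi$ to be the identity on $B_1$ and extending $f$ over $\overline{B_1} := \SPH_1 \setminus \intr(B_1)$ to a PL homeomorphism onto $B_2$ via \cite[Lemma~1.10]{RourkeSanderson}. Your added observation that the distinguished vertex $v_1 = v_2$ lies on $\partial B_1$ and is therefore fixed automatically is exactly the point that makes the disjoint-union argument carry over verbatim.
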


We take a brief moment to note how we have slightly extended our ongoing abuse of notation when writing ``$\Gamma_1' = \Gamma_1$''. Implicit in this statement is an equality between the vertex~$v_1$ of~$\Gamma_1$ and the vertex of $\Gamma_1 \vsum{v_1}{v_2} \Gamma_2$ obtained from the identification $v_1 \sim v_2$. We will allow ourselves to make such abuses in several harmless situations.

\begin{lem}[Vertex sum of isomorphisms]\label{lem.isovsum}
Let $\Phi_1 \colon (\Gamma_1, v_1) \to (\Gamma_1', v_1')$ and $\Phi_2 \colon (\Gamma_2, v_2) \to (\Gamma_2', v_2')$ be isomorphisms of pointed spatial graphs. Then there exists an isomorphism
\[\Phi_1 \vsum{v_1}{v_2} \Phi_2 \colon (\Gamma_1 \vsum{v_1}{v_2} \Gamma_2, v_1=v_2) \to (\Gamma_1' \vsum{v_1'}{v_2'}\Gamma_2', v_1' = v_2')\]
such that for each $i \in \{1,2\}$ the underlying isomorphism $\langle \Phi_1 \vsum{v_1}{v_2} \Phi_2\rangle$ restricts to~$\langle \Phi_i \rangle$ on~$\langle\Gamma_i\rangle$.
\end{lem}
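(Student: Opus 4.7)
The strategy is to imitate the proof of Lemma~\ref{lem.isodisjunion} almost verbatim, with enclosing balls for pointed spatial graphs replacing the ordinary enclosing balls. First, I would realize the vertex sum on the left as $\Gamma_1 \vsum{v_1}{v_2} \Gamma_2 = B_1 \cup_f B_2$ by choosing enclosing balls $B_i$ for $(\Gamma_i, v_i)$ and an orientation-reversing PL homeomorphism $f \colon \partial B_1 \to \partial B_2$ with $f(v_1) = v_2$.

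Next, I would transport the data via $\Phi_1, \Phi_2$. Setting $B_i' := \Phi_i(B_i)$, each $B_i'$ is a PL-embedded $3$-ball in the ambient sphere of $\Gamma_i'$ with $|\Gamma_i'| = \Phi_i(|\Gamma_i|) \subset B_i'$ and $|\Gamma_i'| \cap \partial B_i' = \Phi_i(|\Gamma_i| \cap \partial B_i) = \{v_i'\}$, so $B_i'$ is an enclosing ball for $(\Gamma_i', v_i')$. Define
\[
f' \;:=\; \Phi_2|_{\partial B_2} \circ f \circ (\Phi_1|_{\partial B_1})^{-1} \colon \partial B_1' \to \partial B_2'.
\]
Since each $\Phi_i$ is orientation-preserving and $f$ is orientation-reversing, $f'$ is orientation-reversing, and clearly $f'(v_1') = v_2'$, so $\Gamma_1' \vsum{v_1'}{v_2'} \Gamma_2'$ may be realized as $B_1' \cup_{f'} B_2'$.

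By the defining relation of $f'$, the PL homeomorphisms $\Phi_i|_{B_i} \colon B_i \to B_i'$ agree on the glueing locus: for $x \in \partial B_1$, we have $f'(\Phi_1(x)) = \Phi_2(f(x))$, so the two maps descend to a well-defined PL homeomorphism
\[
\Phi_1 \vsum{v_1}{v_2} \Phi_2 \colon B_1 \cup_f B_2 \;\to\; B_1' \cup_{f'} B_2'
\]
that is orientation-preserving and sends the identified vertex $v_1 = v_2$ to $v_1' = v_2'$. It carries $|\Gamma_1 \vsum{v_1}{v_2} \Gamma_2|$ bijectively to $|\Gamma_1' \vsum{v_1'}{v_2'} \Gamma_2'|$ and the common vertex set to its counterpart, so it is an isomorphism of pointed spatial graphs. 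The restriction statement on underlying abstract graphs is immediate from the construction, since $(\Phi_1 \vsum{v_1}{v_2} \Phi_2)|_{B_i} = \Phi_i|_{B_i}$.

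There is essentially no obstacle here beyond verifying that the attaching data is compatible; the genuine technical content of the vertex sum construction has already been absorbed into Proposition~\ref{prop.penclosingball} (via the well-definedness proposition that immediately precedes this lemma), so this lemma itself is a short bookkeeping argument parallel to the disjoint-union case.
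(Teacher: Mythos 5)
Your proof is correct and is exactly the argument the paper intends: the paper states that the proof of this lemma is the same as that of Lemma~\ref{lem.isodisjunion}, and you have carried out that transfer faithfully, with the only new checks being that $\Phi_i(B_i)$ is an enclosing ball for the \emph{pointed} graph and that $f'$ sends $v_1'$ to $v_2'$.
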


We also collect the following properties of the vertex sum, analogous to the ones given in Proposition~\ref{prop.disjunprop} for the disjoint union.

\begin{prop}[Properties of the vertex sum]\label{prop.propvsum}
  Let $(\Gamma_1,v_1), (\Gamma_2,v_2), (\Gamma_3,v_3)$ be pointed spatial graphs. Then:
  \begin{itemize}
  \item identity element: $(\Gamma_1 \vsum{v_1}{} \one, v_1) = (\Gamma_1, v_1)$,
  \item commutativity: $(\Gamma_1 \vsum{v_1}{v_2} \Gamma_2, v_1=v_2) = (\Gamma_2 \vsum{v_2}{v_1} \Gamma_1, v_1 = v_2)$,
  \item associativity: for $v_{21}$~and~$v_{23}$ (not necessarily distinct) vertices of~$\Gamma_2$, we have
  \[(\Gamma_1 \vsum{v_1}{v_{21}} \Gamma_2) \vsum{v_{23}}{v_3} \Gamma_3 = \Gamma_1 \vsum{v_1}{v_{21}} (\Gamma_2 \vsum{v_{23}}{v_3}\Gamma_3).\]
  \end{itemize}  
\end{prop}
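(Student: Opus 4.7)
The identity and commutativity statements are direct. For the identity, Lemma~\ref{lem.vsummandissubgraph} identifies $\Gamma_1$ with the sub-graph of $\Gamma_1 \vsum{v_1}{} \one$ obtained by discarding the unique vertex of $\one$ (already identified with $v_1$), which gives the claim. For commutativity, given an attaching map $f \colon \partial B_1 \to \partial B_2$ used to form $\Gamma_1 \vsum{v_1}{v_2} \Gamma_2$, its inverse $f^{-1}$ is an orientation-reversing PL homeomorphism $\partial B_2 \to \partial B_1$ sending $v_2$ to $v_1$, and $B_2 \cup_{f^{-1}} B_1$ is literally the same oriented $3$-sphere as $B_1 \cup_f B_2$. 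Since the vertex set $(V_1 \sqcup V_2)/(v_1 \sim v_2)$ and edge set $E_1 \sqcup E_2$ are symmetric in the two summands, the two spatial graphs coincide on the nose, and well-definedness of the vertex sum finishes this case.

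Associativity requires more care and is the main obstacle. We imitate the strategy for disjoint unions (Figure~\ref{fig.associativity}): denote the ambient sphere of each $\Gamma_i$ by $\SPH_i$, fix enclosing balls $B_1$ for $(\Gamma_1, v_1)$ and $B_3$ for $(\Gamma_3, v_3)$, and choose enclosing balls $B_{21}$ for $(\Gamma_2, v_{21})$ and $B_{23}$ for $(\Gamma_2, v_{23})$ whose complementary $3$-balls $\overline{B_{2i}} := \SPH_2 \setminus \intr(B_{2i})$ overlap minimally: $\overline{B_{21}} \cap \overline{B_{23}} = \leer$ when $v_{21} \ne v_{23}$, and $\overline{B_{21}} \cap \overline{B_{23}} = \{v_2\}$ when $v_{21} = v_{23} = v_2$. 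The first case is a direct adaptation of the disjoint union construction. In the second case, we fix a star neighborhood $N_0$ of $v_2$ in $(\SPH_2, |\Gamma_2|)$ and pick two disjoint closed PL discs $D_1, D_2 \subset \partial N_0$, both disjoint from the finite set $|\Gamma_2| \cap \partial N_0$, and set $\overline{B_{2i}} := v_2 D_i$, the cone from $v_2$ over $D_i$. One verifies that these are PL $3$-balls giving rise to valid enclosing balls for $(\Gamma_2, v_2)$ meeting only at the point $v_2$.

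After fixing orientation-reversing PL homeomorphisms $f_1 \colon \partial B_1 \to \partial B_{21}$ and $f_3 \colon \partial B_{23} \to \partial B_3$ sending distinguished vertices to distinguished vertices, both iterated vertex sums are realized inside the common oriented $3$-sphere
\[
B_1 \cup_{f_1} (B_{21} \cap B_{23}) \cup_{f_3} B_3.
\]
Indeed, for the left-hand side, $B_1 \cup_{f_1} (B_{21} \cap B_{23})$ is an enclosing ball at $v_{23}$ for $\Gamma_1 \vsum{v_1}{v_{21}} \Gamma_2$ (sitting inside $B_1 \cup_{f_1} B_{21}$), to which $B_3$ is then attached via $f_3$; the right-hand side is symmetric. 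The main technical point, where I expect the chief difficulty to lie, is to check that in the equal-vertex case the resulting space is still a PL $3$-sphere, since the middle region $B_{21} \cap B_{23}$ has a boundary pinch at $v_2$ where $\partial B_{21}$ and $\partial B_{23}$ touch. This follows because a neighborhood of $v_2$ in $\SPH_2$ already decomposes as $\overline{B_{21}} \cup (B_{21} \cap B_{23}) \cup \overline{B_{23}}$ -- a local partition of a PL $3$-ball into three PL pieces glued along matching boundary $2$-spheres meeting at $v_2$ -- and replacing $\overline{B_{21}}, \overline{B_{23}}$ by $B_1, B_3$ along the boundary PL homeomorphisms $f_1, f_3$ yields another PL $3$-ball neighborhood of $v_2$. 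With this point settled, the vertex and edge sets clearly match on both sides of the associativity equation: the identifications $v_1 \sim v_{21}$ and $v_{23} \sim v_3$ produce the same partition of $V_1 \sqcup V_2 \sqcup V_3$ regardless of the order in which they are performed, and the edge set is $E_1 \sqcup E_2 \sqcup E_3$ either way. The desired equality follows.
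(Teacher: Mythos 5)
Your proof is correct and follows essentially the same route as the paper, whose own argument simply ports the proof of Proposition~\ref{prop.disjunprop} with exactly the two modifications you identify (Lemma~\ref{lem.vsummandissubgraph} in place of Lemma~\ref{lem.summandissubgraph} for the identity, and the requirement $\overline{B_{21}} \cap \overline{B_{23}} = \{v_{21}\}$ for associativity when $v_{21}=v_{23}$). The ``pinch'' issue you flag is genuine but resolves even more directly than your local-replacement argument suggests: $B_1 \cup_{f_1}(B_{21}\cap B_{23})$ is the complement of $\intr(\overline{B_{23}})$ in the $3$-sphere $B_1\cup_{f_1}B_{21}$, hence a $3$-ball and in fact an enclosing ball for $(\Gamma_1 \vsum{v_1}{v_{21}}\Gamma_2, v_{23})$, so attaching $B_3$ along $f_3$ is itself an instance of the vertex sum construction and automatically yields a PL $3$-sphere.
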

\begin{proof}
The proofs are identical to their counterparts in Proposition~\ref{prop.disjunprop}, save for the following obvious modifications:
\begin{itemize}
\item The first item relies on vertex summands being sub-graphs (Lemma~\ref{lem.vsummandissubgraph}), rather than disjoint union summands being sub-graphs (Lemma~\ref{lem.summandissubgraph}).
\item For proving associativity in the case $v_{21} = v_{23}$, the requirement on the enclosing balls is that $\intr(B_1)\cup \intr(B_2) = \SPH \setminus\{v_{21}\}$ (equivalently, $\overline{B_{21}} \cap \overline{B_{23}} = \{v_{21}\}$).\qedhere
\end{itemize}
\end{proof}

\subsection{Iterated vertex sums and trees of spatial graphs}

At this juncture, we would like to make a claim about how the basic algebraic properties in Proposition~\ref{prop.propvsum} allow us to write down iterated vertex sums without keeping track of the order in which they are performed (as in the last paragraph of Subsection~\ref{sec.defdisjun}). We will indeed establish such a statement, but since we wish to allow for several vertices in each summand to be used (as in the associative property), we need to introduce terminology that allows us to package the more involved combinatorics.

Before doing that, however, let us spend a moment on the comparatively easy case where only one vertex of each summand is used. We denote by $(\bigstar_{i \in I} (\Gamma_i, v_i), v)$ the vertex sum of a collection of pointed spatial graphs~$(\Gamma_i, v_i)$ indexed by a finite set~$I$ (with $\one$~being the vertex sum over the empty set). If $V_i$ is the vertex set of~$\Gamma_i$, then the vertex set of $\bigstar_{i \in I} (\Gamma_i, v_i)$ is $\left( \bigsqcup_{i \in I} V_i \right) / \sim$, where $v_i \sim v_{i'}$ for all $i, i' \in I$. The distinguished vertex~$v$ of this vertex sum is the one obtained from identifying all the~$v_i$. Here it is clear from commutativity and the ``$v_{21} = v_{23}$'' case of associativity in Proposition~\ref{prop.propvsum} that the omission of parentheses or an ordering of~$I$ is immaterial -- all choices yield pointed spatial graphs that are isomorphic via maps that induce the identity on the underlying pointed graph $(\bigstar_{i \in I}(\langle \Gamma_i \rangle, v_i), v)$.

To formalize iterated vertex sums where the vertices along which to sum are allowed to vary, we use the following notion:

\begin{dfn}\label{dfn.tree}
A \textbf{tree of spatial graphs} is a tuple $(T, I, J, L, (\Gamma_i)_{i \in I}, (v(l))_{l \in L})$, where:
\begin{itemize}
\item $T$ is an abstract finite tree with vertex set $I \sqcup J$ and edge set~$L$. 
\item The partition of the vertex set of~$T$ into $I$~and~$J$ is a bipartition of~$T$, that is, each edge~$l \in L$ has one of its endpoints in~$I$, and the other in~$J$. We will write $i(l), j(l)$, respectively, to denote the endpoints of~$l$ in $I$ and~$J$.
\item Each vertex in~$J$ is adjacent to at least two edges of~$T$. Equivalently, all degree-one vertices of~$T$ are in~$I$, and $T$~is not comprised of only one vertex in~$J$.
\item The $\Gamma_i$ are spatial graphs indexed by elements of $I$.
\item For each $l \in L$, $v(l)$ is a vertex of the spatial graph $\Gamma_{i(l)}$.
\item If two different edges $l, l' \in L$ satisfy $i(l)=i(l')$, then~$v(l)\ne v(l')$.
\end{itemize}
\end{dfn}

One should think of such~$\Tree= (T, I, J, L, (\Gamma_i)_{i \in I}, (v(l))_{l \in L})$ as a blueprint for assembling a spatial graph~$[\Tree]$, called its \textbf{realization}, out of the~$\Gamma_i$ through iterated vertex sums. Roughly, when two distinct edges~$l, l' \in L$ satisfy $j(l)=j(l')$ (and hence $i(l)\ne i(l')$), we understand this as an instruction to perform the vertex sum of~$\Gamma_i, \Gamma_{i'}$ along~$v_l, v_{l'}$. Before making this more precise, we invite the reader to study the example in Figure~\ref{fig.treerealization}.
  
  \begin{figure}[h]
      \centering
      \def \svgwidth{0.95\linewidth}
      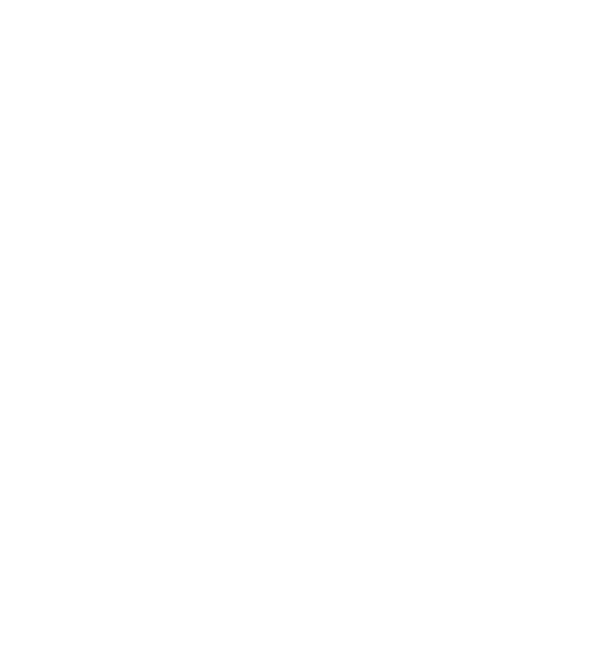
      \caption{From a tree of spatial graphs $\Tree= (T, I, J, L, (\Gamma_i)_{i \in I}, (v(l))_{l \in L})$, we assemble its realization, the \emph{spatial} graph~$[\Tree]$.}
      \label{fig.treerealization}
  \end{figure}

We will use an inductive argument to define, given a tree of spatial graphs~$\Tree$ as above, its realization~$[\Tree]$, and show that the underlying graph~$\langle [\Tree]\rangle$ is what one expects: 
\begin{itemize}
\item the vertex set of~$\langle [\Tree]\rangle$ is $\left(\bigsqcup_{i \in I} V_i \right) / \sim$, where $V_i$~is the vertex set of~$\Gamma_i$ and $v(l)\sim v(l')$ whenever $j(l) = j(l')$,
\item the edge set of~$\langle [\Tree]\rangle$ is~$\bigsqcup_{i \in I}E_i$, where $E_i$~is the edge set of~$\Gamma_i$, each edge being incident to the one or two vertices it contains.
\end{itemize} 

We induct on the cardinality of~$J$. If $J = \emptyset$, then either $T$~is the empty tree, in which case we set $[\Tree] := \zero$, or $T$~has a single vertex~$i_0 \in I$, in which case $[\Tree] := \Gamma_{i_0}$. Either way, $\langle [\Tree]\rangle$~is as claimed.

For the inductive step, we first introduce the following notation: for each edge~$l \in L$, the sub-graph of~$T$ obtained by removing~$l$ has precisely two connected components, each containing one endpoint of~$l$. We will designate by~$T_l$ the component that contains $i(l)$. Moreover, we denote by $I_l, J_l, L_l$, respectively, the subsets of $I, J, L$ comprised of vertices/edges in~$T_l$. This allows us to define a new tree of spatial graphs $\Tree_l := (T_l, I_l, J_l, L_l, (\Gamma_i)_{i \in I_l}, (v(l'))_{l' \in L_l})$.

Now, if $J$~contains at least one vertex $j_0$ (whose choice we will soon show to be immaterial) let $L_0 \subseteq L$ be the set of edges incident to~$j_0$. For each $l \in L_0$, note that $J_l$~has strictly fewer elements than~$J$. Hence we have by induction constructed realizations~$[\Tree_l]$, whose underlying graphs~$\langle [\Tree_l]\rangle$ are as described above. In particular, $\langle[\Tree_l]\rangle$~has $v(l)$~as a vertex, and hence so does~$[\Tree_l]$. We define
\[ [\Tree] := \underset {l \in L_0} \bigstar ([\Tree_l], v(l)), \]
and call it a realization of~$\Tree$.

Showing that $\langle[\Tree]\rangle$~is as claimed is now a matter of bookkeeping. One way of seeing it is to compare the claimed description of~$\langle[\Tree]\rangle$ above with $\bigsqcup_{l \in L_0} \langle [ \Tree_l] \rangle$: these graphs differ only in that the vertices~$v(l)$ with $l \in L_0$ are identified in the former, but not in the latter. This identification is exactly what one obtains from the vertex sum $\bigstar_{l \in L_0} (\langle[\Tree_l]\rangle, v(l))$.

This finishes \emph{an} inductive construction of the realization~$[\Tree]$ with $\langle [\Tree]\rangle$ independent of choices. Next we show that $[\Tree]$~itself is independent of the choice of vertex~$j_0$.

\begin{lem}[Well-definedness of the realization]\label{lem.welldefrealization}
Let $\Tree = (T, I, J, L, (\Gamma_i)_{i \in I}, (v(l))_{l \in L})$ be a tree of spatial graphs. Then any two realizations of~$\Tree$ are isomorphic via an isomorphism that induces the identity on~$\langle [ \Tree ] \rangle$.
\end{lem}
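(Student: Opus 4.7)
The plan is to proceed by induction on $|J|$. When $|J| = 0$, the tree $T$ is either empty or a single vertex in~$I$, and $[\Tree]$ is defined without any choice, so there is nothing to show. For the inductive step, there are two sources of ambiguity in a realization: the choice of initial $J$-vertex $j_0$, and the choices involved when inductively realizing the sub-trees $\Tree_l$ for $l \in L_0$.

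First I would dispose of the second source by fixing $j_0$ and applying the inductive hypothesis to each $\Tree_l$ (whose index set $J_l$ is a strict subset of $J$): this gives, for any two realizations of each $[\Tree_l]$, an isomorphism inducing the identity on $\langle [\Tree_l] \rangle$. Assembling these via iterated use of Lemma~\ref{lem.isovsum} (vertex sum of isomorphisms) yields the desired isomorphism between any two realizations of~$[\Tree]$ built from the \emph{same} initial vertex~$j_0$.

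It then remains to compare realizations built from two distinct choices $j_0, j_0' \in J$. Let $l^* \in L_0$ be the unique edge at $j_0$ such that $j_0'$ lies in the sub-tree~$T_{l^*}$. By the inductive hypothesis applied to $\Tree_{l^*}$, the spatial graph $[\Tree_{l^*}]$ may be computed either as prescribed in the $j_0$-construction, or, equivalently, as the iterated vertex sum over the edges of~$L_{l^*}$ incident to~$j_0'$. Substituting the latter expression into the $j_0$-realization of $[\Tree]$ and applying commutativity and associativity of the vertex sum (Proposition~\ref{prop.propvsum})—using Lemma~\ref{lem.vsummandissubgraph} to identify the distinguished vertices as we pass inside each summand—I can rebracket the resulting nested iterated vertex sum into the shape of the $j_0'$-realization. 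The main obstacle is careful bookkeeping: one must verify that the vertex $v(l^*)$ along which the outer sum at~$j_0$ attaches $[\Tree_{l^*}]$ is distinct from all vertices~$v(l)$ along which sums are performed at~$j_0'$ inside $[\Tree_{l^*}]$, so that the ``$v_{21} \ne v_{23}$'' form of associativity applies cleanly. This distinctness follows from the last bullet point of Definition~\ref{dfn.tree}, since $l^*$ and the edges at~$j_0'$ cannot share both endpoints. Finally, since every basic isomorphism invoked (from Lemma~\ref{lem.isovsum} and Proposition~\ref{prop.propvsum}) is arranged to induce the identity on the underlying abstract graph, the composite isomorphism between the two realizations induces the identity on~$\langle [\Tree] \rangle$, as required.
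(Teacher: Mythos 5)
Your proposal is correct and follows essentially the same route as the paper: induction on $|J|$, reduction to comparing the realizations built from two initial vertices $j_0\neq j_0'$, and a rebracketing via the ``$v_{21}\neq v_{23}$'' case of associativity after re-expressing $[\Tree_{l^*}]$ by splitting it at $j_0'$. The paper merely makes explicit the intermediate object your substitution produces, namely the tree $\dot T = T_{l^*}\cap T_{l'}$ (the intersection of the two branches pointing at each other), which is the middle summand in the associativity step.
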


\begin{proof}
We again proceed by induction on the cardinality of~$J$. When $J$~has at most one element, no choices are made in defining~$[\Tree]$, so there is nothing to show.

Suppose then that $J$~contains two elements~$j_1 \neq j_2$. For each $k \in \{1,2\}$, denote by~$[\Tree ]_k$ the realization of~$\Tree$ constructed by splitting~$T$ at~$j_k$. Moreover, let $L_k \subset L$ be the set of edges incident with~$j_k$, and consider, for each $l \in L_k$, the tree of spatial graphs $\Tree_l := (T_l, I_l, J_l, L_l, (\Gamma_i)_{i \in I_l}, (v(l'))_{l' \in L_l})$ defined as before.

Now, there is exactly one edge $l_1 \in L_1$ such that the tree~$T_{l_1}$ contains the vertex~$j_2$, and one edge~$l_2 \in L_2$ such that $T_{l_2}$~contains~$j_1$. Since the intersection of two sub-trees of a tree is always itself a tree, we see that $\dot T:=T_{l_1} \cap T_{l_2}$ is a tree, and indeed we have a tree of spatial graphs
\[ \dot{\Tree} := (\dot T, \dot I , \dot J, \dot L, (\Gamma_i)_{i \in \dot I}, (v(l'))_{l' \in \dot L}), \]
where $\dot I := I_{l_1} \cap I_{l_2}, \dot J := J_{l_1} \cap J_{l_2}$, and $\dot L := L_{l_1} \cap L_{l_2}$. The tree~$\dot T$ is illustrated in Figure~\ref{fig.middletree}.

  \begin{figure}[h]
      \centering
      \def \svgwidth{0.7\linewidth}
      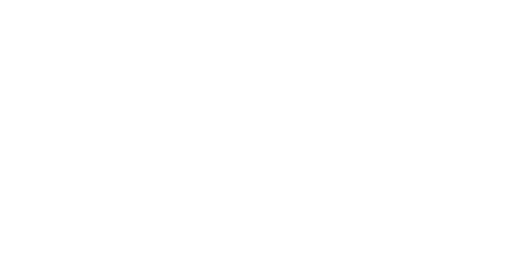
      \caption{The setup in the proof of Lemma~\ref{lem.welldefrealization}. We illustrate an example tree~$T$ together with the relevant sub-trees $T_{l_1}, T_{l_2}$, and their intersection~$\dot T$. Larger vertices represent elements of~$I$, and smaller ones elements of~$J$.}
      \label{fig.middletree}
  \end{figure}

By inductive hypothesis, for each $k \in \{1, 2\}$ the realization~$[\Tree_{l_k}]$ is well-defined. One then easily checks that
\begin{align*}
[\Tree_{l_1}] &= [\dot{\Tree}] \vsum{v(l_2)}{v_2} \underset {l \in L_2 \setminus \{l_2\}} \bigstar ([ \Tree_l],v(l)),\\
[\Tree_{l_2}] &= [\dot{\Tree}] \vsum{v(l_1)}{v_1} \underset {l \in L_1 \setminus \{l_1\}} \bigstar ([ \Tree_l],v(l)),
\end{align*}
where $v_k$~denotes the result of identifying the vertices~$v(l)$ with $l \in L_k \setminus \{l_k\}$.

Having established all the notation, we are ready to wrap up the proof and see that it boils down to an application of the ``$v_{21} \neq v_{23}$'' case of the associative property in Proposition~\ref{prop.propvsum}:
\begin{align*}
[\Tree]_1 & = [\Tree _{l_1}] \vsum{v(l_1)}{v_1}  \bigg(\underset {l \in L_1 \setminus \{l_1\}} \bigstar ([ \Tree_l],v(l))\bigg)\\
& = \left( \bigg(\underset {l \in L_2 \setminus \{l_2\}} \bigstar ([ \Tree_l],v(l))\bigg)  \vsum{v_2}{v(l_2)} [\dot{\Tree}] \right) \vsum{v(l_1)}{v_1}  \bigg( \underset {l \in L_1 \setminus \{l_1\}} \bigstar ([ \Tree_l],v(l))\bigg)\\
& =  \bigg(\underset {l \in L_2 \setminus \{l_2\}} \bigstar ([ \Tree_l],v(l))\bigg)  \vsum{v_2}{v(l_2)} \left( [\dot{\Tree}]  \vsum{v(l_1)}{v_1} \bigg( \underset {l \in L_1 \setminus \{l_1\}} \bigstar ([ \Tree_l],v(l)) \bigg) \right)\\
& =  \bigg(\underset {l \in L_2 \setminus \{l_2\}} \bigstar ([ \Tree_l],v(l))\bigg)  \vsum{v_2}{v(l_2)} [\Tree_{l_2}]\\
& = [\Tree]_2. \qedhere
\end{align*}
\end{proof}

Since realizations of trees are constructed by iterated vertex sums, the observations in Lemmas \ref{lem.vsummandissubgraph}~and~\ref{lem.isovsum} have the following straightforward generalizations.

\begin{lem}[Sub-graphs of the realization of a tree of spatial graphs]\label{lem.treesubgraph}
Let $\Tree = (T, I, J, L, (\Gamma_i)_{i \in I}, (v(l))_{l \in L})$~be a tree of spatial graphs, and for each $i \in I$, let~$\Gamma_i'$ be the sub-graph of~$[\Tree]$ comprised of the vertices and edges of~$\Gamma_i$. Then $\Gamma_i' = \Gamma_i$.
\end{lem}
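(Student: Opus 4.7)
My plan is to induct on the cardinality of $J$, following the inductive construction of the realization $[\Tree]$.

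For the base case $|J| = 0$, either $T$ is empty (in which case $[\Tree] = \zero$ and the claim is vacuous), or $T$ consists of the single vertex $i$, in which case $[\Tree] = \Gamma_i$ and the claim is immediate.

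For the inductive step, pick some $j_0 \in J$ and let $L_0 \subseteq L$ be the set of edges incident to $j_0$. By construction,
\[ [\Tree] = \underset{l \in L_0}\bigstar ([\Tree_l], v(l)), \]
and each $\Tree_l$ has $|J_l| < |J|$. The sets $I_l$ for $l \in L_0$ partition $I$, so for a given $i \in I$ there is a unique $l_0 \in L_0$ with $i \in I_{l_0}$. By the inductive hypothesis, the sub-graph of $[\Tree_{l_0}]$ comprised of the vertices and edges of $\Gamma_i$ equals $\Gamma_i$. It then suffices to show that $[\Tree_{l_0}]$, viewed as the sub-graph of $[\Tree]$ comprised of the vertices and edges of the $\Gamma_{i'}$ with $i' \in I_{l_0}$, equals $[\Tree_{l_0}]$, for then composing the two sub-graph inclusions yields the conclusion.

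This last assertion is an iterated version of Lemma~\ref{lem.vsummandissubgraph} for vertex sums performed at a single vertex: given pointed spatial graphs $(\Lambda_1, v_1), \dots, (\Lambda_k, v_k)$, each $\Lambda_m$ is a sub-graph of $\bigstar_{m=1}^k (\Lambda_m, v_m)$ via an identification inducing the identity on the underlying abstract graphs. This in turn follows by induction on $k$: the case $k = 2$ is exactly Lemma~\ref{lem.vsummandissubgraph}, and the inductive step uses the associativity statement in Proposition~\ref{prop.propvsum} together with the case $k=2$ to peel off one summand at a time. The main subtlety is purely bookkeeping — tracking which vertices of $[\Tree_{l_0}]$ get identified with vertices from other $[\Tree_l]$ under the outer vertex sum, which is harmless since only the distinguished vertex $v(l_0) \in [\Tree_{l_0}]$ is affected, and this identification is precisely what is prescribed by the combinatorics of $\langle [\Tree]\rangle$. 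No further geometric input is needed beyond Lemma~\ref{lem.vsummandissubgraph} and the algebraic properties of $\bigstar$.
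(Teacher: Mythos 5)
Your proof is correct and is exactly the argument the paper has in mind: the paper states this lemma without proof, remarking only that it is a "straightforward generalization" of Lemma~\ref{lem.vsummandissubgraph} obtained by following the iterated vertex-sum construction of $[\Tree]$, which is precisely the induction on $|J|$ (and on the number of summands at a single vertex, via associativity) that you carry out.
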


\begin{lem}[Trees of isomorphisms]\label{lem.treeofisos}
For each~$k \in \{1,2\}$, let $\Tree_k=(T_k, I_k, J_k,  L_k, (\Gamma_i)_{i \in I_k}, (v(l))_{l \in L_k})$ be trees of spatial graphs. Fix also the data of:
\begin{itemize}
\item an isomorphism of trees $f\colon T_1 \to T_2$ such that $f(I_1) = I_2$ (hence also $f(J_1) = J_2$),
\item for each $i \in I_1$, an isomorphism of spatial graphs $\Phi_i \colon \Gamma_i \to \Gamma_{f(i)}$, such that the collection $(\Phi_i)_{i \in I_1}$ respects the assignments~$l \mapsto v(l)$ on $L_1$~and~$L_2$, that is, for every $l \in L_1$, we have $\Phi_{i(l)}(v(l)) = v(f(l))$.
\end{itemize}
Then there is an isomorphism $\Phi \colon [\Tree_1]\to [\Tree_2]$ such that for every~$i \in I_1$, the underlying isomorphism~$\langle \Phi \rangle$ restricts to $\langle \Phi_i \rangle$ on the sub-graph~$\langle \Gamma_i\rangle$ of~$\langle [\Tree_1]\rangle$.
\end{lem}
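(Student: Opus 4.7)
The plan is to induct on the cardinality of $J_1$ (equivalently $J_2$, since $f$ restricts to a bijection $J_1 \to J_2$), mimicking the inductive definition of the realization given just before Lemma~\ref{lem.welldefrealization}. The base case $J_1 = \emptyset$ is immediate: if $T_1$ is empty then $[\Tree_1] = [\Tree_2] = \zero$ and we take $\Phi$ to be the (unique) identity; otherwise $T_1$ consists of a single vertex $i_0 \in I_1$, so that $[\Tree_1] = \Gamma_{i_0}$ and $[\Tree_2] = \Gamma_{f(i_0)}$, and we set $\Phi := \Phi_{i_0}$.

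For the inductive step, fix some $j_0 \in J_1$ and let $j_0' := f(j_0) \in J_2$. Let $L_0 \subseteq L_1$ be the set of edges of $T_1$ incident to $j_0$, and note that $f(L_0) \subseteq L_2$ is the set of edges of $T_2$ incident to $j_0'$. For each $l \in L_0$, the restriction of $f$ to the component $T_{1,l}$ of $T_1 \setminus \{l\}$ containing $i(l)$ is an isomorphism onto the component $T_{2,f(l)}$ of $T_2 \setminus \{f(l)\}$ containing $i(f(l))$, and $f$ maps $I_{1,l}$ bijectively onto $I_{2,f(l)}$. Together with the restrictions of the $\Phi_i$ to indices $i \in I_{1,l}$, this data satisfies the hypotheses of the lemma for the smaller trees $\Tree_{1,l}$ and $\Tree_{2,f(l)}$ (which have strictly fewer $J$-vertices). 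By induction, we obtain isomorphisms $\Phi_l \colon [\Tree_{1,l}] \to [\Tree_{2,f(l)}]$ such that $\langle \Phi_l \rangle$ restricts to $\langle \Phi_i \rangle$ on each $\langle \Gamma_i \rangle$ with $i \in I_{1,l}$.

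In particular, using Lemma~\ref{lem.treesubgraph} to view $\Gamma_{i(l)}$ as a sub-graph of $[\Tree_{1,l}]$, the restriction of $\Phi_l$ to $\Gamma_{i(l)}$ is $\Phi_{i(l)}$, so $\Phi_l(v(l)) = \Phi_{i(l)}(v(l)) = v(f(l))$. Thus each $\Phi_l$ is an isomorphism of pointed spatial graphs $([\Tree_{1,l}], v(l)) \to ([\Tree_{2,f(l)}], v(f(l)))$. By the inductive construction of the realization (applied in parallel to $\Tree_1$ split at $j_0$ and to $\Tree_2$ split at $j_0'$, and comparing with the independence result Lemma~\ref{lem.welldefrealization}), we have
\[
[\Tree_1] = \underset{l \in L_0}{\bigstar}\, ([\Tree_{1,l}], v(l)) \quad \text{and} \quad [\Tree_2] = \underset{l \in L_0}{\bigstar}\, ([\Tree_{2,f(l)}], v(f(l))).
\]
Iterating Lemma~\ref{lem.isovsum} (which combines pointed isomorphisms of two summands into a pointed isomorphism of their vertex sum), we assemble the family $(\Phi_l)_{l \in L_0}$ into the desired isomorphism $\Phi \colon [\Tree_1] \to [\Tree_2]$. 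The condition on the underlying isomorphism is inherited from the inductive hypothesis: each $\langle \Gamma_i \rangle$ sits inside exactly one $\langle [\Tree_{1,l}] \rangle$, on which $\langle \Phi \rangle$ restricts to $\langle \Phi_l \rangle$, which in turn restricts to $\langle \Phi_i \rangle$ on $\langle \Gamma_i \rangle$.

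There is no real obstacle here; the proof is essentially bookkeeping to match the inductive scaffolding of the realization. The only point requiring care is the compatibility of the $\Phi_l$ with distinguished vertices (so that the vertex sum of Lemma~\ref{lem.isovsum} may be applied), and this is ensured by the assumption $\Phi_{i(l)}(v(l)) = v(f(l))$ together with Lemma~\ref{lem.treesubgraph}.
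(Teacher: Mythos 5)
Your proof is correct and is exactly the argument the paper has in mind: the paper states Lemma~\ref{lem.treeofisos} without proof, calling it a ``straightforward generalization'' of Lemma~\ref{lem.isovsum} obtained because realizations are iterated vertex sums, and your induction on $|J_1|$ following the inductive definition of $[\Tree]$ (with Lemma~\ref{lem.treesubgraph} guaranteeing the pointed compatibility $\Phi_l(v(l)) = v(f(l))$ needed to apply Lemma~\ref{lem.isovsum}) is precisely that generalization written out. No gaps.
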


\subsection{Decomposing pieces as trees of blocks}

As a first step towards establishing the existence of a canonical expression of a piece as the realization of a tree of spatial graphs, we give an analogue of Lemma~\ref{lem.spheresdontlie}, whose proof is essentially the same:

\begin{lem}[If it looks like a vertex sum, it is a vertex sum]\label{lem.vspheresdontlie}
Let $\Gamma$~be a spatial graph in~$\SPH$ and $S\subset \SPH$ a PL-embedded $2$-sphere that intersects~$|\Gamma|$ precisely at one vertex~$v$ of~$\Gamma$. Denote the closures of the two components of~$\SPH \setminus S$ by $B_1$~and~$B_2$. For each $i \in \{1,2\}$, let $\Gamma_i$ be the sub-graph of~$\Gamma$ comprised of the vertices and edges that are contained in~$B_i$. Then $\Gamma = \Gamma_1 \vsum{v}{v} \Gamma_2$.
\end{lem}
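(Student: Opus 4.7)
The plan is to mirror the proof of Lemma~\ref{lem.spheresdontlie} essentially verbatim, adapted to the vertex sum setting. First I would verify that $B_1$ and $B_2$ are enclosing balls for the pointed spatial graphs $(\Gamma_1, v)$ and $(\Gamma_2, v)$, respectively. Note that $v \in S = B_1 \cap B_2$, so $v$ is a vertex of both $\Gamma_1$ and $\Gamma_2$. By construction $|\Gamma_i| \subseteq B_i$, and since $\partial B_i = S$ and $S \cap |\Gamma| = \{v\}$, we get $|\Gamma_i| \cap \partial B_i = \{v\}$, matching the definition of an enclosing ball for a pointed spatial graph.

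Next I would check that every vertex and every edge of $\Gamma$ belongs to exactly one $\Gamma_i$ (with the sole exception of~$v$, which belongs to both). A vertex $w \neq v$ is not on $S$, so it lies in precisely one $\intr(B_i)$. For an edge~$e$, the subspace $e \setminus \{v\}$ is connected (even if $e$ is a loop at~$v$, or an arc with $v$ as an endpoint) and disjoint from~$S$, so $e \setminus \{v\}$ lies in one $\intr(B_i)$, and hence $e \subseteq B_i$. Thus the vertex/edge sets of~$\Gamma$ are partitioned among~$\Gamma_1$ and~$\Gamma_2$ in the manner required to identify~$|\Gamma|$ with $|\Gamma_1| \cup_v |\Gamma_2|$.

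Finally, I would invoke Lemma~\ref{lem.vsummandissubgraph} to treat each $\Gamma_i$ as a sub-graph of the vertex sum, and take the identity map $f \colon S \to S$ as the attaching map. Orienting $B_1, B_2$ as sub-balls of the oriented sphere~$\SPH$, the identity map on $S = \partial B_1 = \partial B_2$ is orientation-reversing with respect to the induced boundary orientations, as required by Definition~\ref{dfn.vsum}. Then $B_1 \cup_f B_2 = \SPH$, and the resulting spatial graph $\Gamma_1 \vsum{v}{v} \Gamma_2$ agrees with~$\Gamma$ on the nose, giving the desired equality (in the abbreviated sense introduced in Section~\ref{sec.preliminaries}).

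I do not anticipate a genuine obstacle: the only point that requires any thought beyond bookkeeping is the connectedness argument showing that edges containing~$v$ lie entirely in one~$B_i$, and this follows immediately from the fact that removing a single point from an arc or a circle leaves a connected space. By the well-definedness of the vertex sum (established in Proposition~\ref{prop.penclosingball} and its corollary), the conclusion is independent of the particular choice of attaching map.
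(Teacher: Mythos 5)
Your proposal is correct and matches the paper's approach: the paper simply states that the proof is "essentially the same" as that of Lemma~\ref{lem.spheresdontlie}, namely regarding each $\Gamma_i$ as a sub-graph with $B_i$ as an enclosing ball for $(\Gamma_i, v)$ and taking the identity on $S$ as the attaching map. Your additional verifications (the enclosing-ball condition for pointed graphs and the connectedness of $e \setminus \{v\}$ forcing each edge into a single $B_i$) are exactly the bookkeeping the paper leaves implicit.
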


We draw the reader's attention to the fact that from now on several statements will include a non-separability assumption on spatial graphs. Incidentally, we collect the following observation:

\begin{lem}[Vertex sum preserves non-separability]\label{lem.sepvsum}
Let $(\Gamma_1, v_1), (\Gamma_2, v_2)$ be pointed spatial graphs. Then $\Gamma_1 \vsum{v_1}{v_2} \Gamma_2$ is non-separable if and only if both $\Gamma_1, \Gamma_2$ are non-separable.
\end{lem}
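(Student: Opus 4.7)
Write $\Gamma := \Gamma_1 \vsum{v_1}{v_2} \Gamma_2$ with ambient sphere $\SPH = B_1 \cup_f B_2$ and distinguished vertex $v = v_1 = v_2$.

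For one direction I prove the contrapositive: if $\Gamma_1$ (say) is separable then so is $\Gamma$. Writing $\Gamma_1 = \Lambda_1 \sqcup \Lambda_2$ with both $\Lambda_k$ non-empty and $v_1 \in \Lambda_1$ (using commutativity of $\sqcup$ in Proposition~\ref{prop.disjunprop}), by definition of the disjoint union there is a ball $C_2 \subset \SPH_1$ with $|\Lambda_2| \subset \intr C_2$ and $C_2$ disjoint from $|\Lambda_1| \cup \{v_1\}$. Invoking well-definedness of the vertex sum, I may replace the enclosing ball $B_1$ of $(\Gamma_1, v_1)$ by one satisfying $C_2 \subset \intr B_1$; such a ball is built by choosing a star neighborhood $N$ of $v_1$ in $(\SPH_1, |\Gamma_1|)$ small enough to be disjoint from $C_2$, picking a $3$-ball $\overline B \subset N$ with $\overline B \cap |\Gamma_1| = \{v_1\}$ and $v_1 \in \partial \overline B$, and setting $B_1 := \SPH_1 \setminus \intr \overline B$. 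Then the sphere $\partial C_2 \subset \intr B_1 \subset \SPH$ is disjoint from $|\Gamma|$ (it misses $|\Gamma_1|$ by the choice of $C_2$ and misses $|\Gamma_2| \subset B_2$ since $\intr B_1 \cap B_2 = \emptyset$) and bounds balls containing $|\Lambda_2|$ and $|\Lambda_1| \cup |\Gamma_2|$ respectively. By Lemma~\ref{lem.spheresdontlie} this yields $\Gamma = \Lambda_2 \sqcup (\Lambda_1 \vsum{v_1}{v_2} \Gamma_2)$, so $\Gamma$ is separable.

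For the other direction, suppose $\Gamma$ is separable via a sphere $S \subset \SPH \setminus |\Gamma|$. The plan is to modify $S$ until it is disjoint from $\partial B_1$, keeping it disjoint from $|\Gamma|$ and still separating $\Gamma$. Crucially, $\partial B_1 \cap |\Gamma| = \{v\}$, so $\partial B_1 \setminus \{v\}$ is disjoint from $|\Gamma|$. After putting $S$ and $\partial B_1$ in general position, $S \cap \partial B_1$ is a finite disjoint union of circles, and a standard innermost disc argument eliminates these one by one: any such circle bounds a disc on $\partial B_1$ not containing $v$, hence contained in $\SPH \setminus |\Gamma|$, and pushing an innermost disc of $S$ across this disc reduces the number of intersection circles without touching $|\Gamma|$ and without destroying the separating property. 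After finitely many steps $S$ lies in $\intr B_1$ or $\intr B_2$; say $S \subset \intr B_1 \subset \SPH_1$. The ball $D'$ bounded by $S$ inside $\intr B_1$ contains a non-empty part of $|\Gamma|$ (since $S$ separates $\Gamma$ non-trivially and $v$ lies on the other side), and this content must lie in $|\Gamma_1|$ because $D' \cap |\Gamma_2| \subset \intr B_1 \cap B_2 = \emptyset$; meanwhile the complementary ball in $\SPH_1$ contains $v_1$. Hence $S$ is a separating sphere for $\Gamma_1$ in $\SPH_1$, and $\Gamma_1$ is separable.

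The main obstacle is the innermost disc argument in the converse, but the setup is unusually clean: since $\partial B_1$ intersects $|\Gamma|$ in the single point $v$, every circle of $S \cap \partial B_1$ bounds a disc on $\partial B_1$ avoiding $v$ and hence disjoint from $|\Gamma|$, so the successive surgeries can always be arranged entirely within $\SPH \setminus |\Gamma|$.
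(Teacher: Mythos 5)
Your ``only if'' direction (a separable summand forces the sum to be separable) is essentially the paper's argument: enlarge the enclosing ball $B_1$ until it contains a separating sphere for $\Gamma_1$ in its interior, and invoke well-definedness of the vertex sum; that part is fine. The converse is where you diverge, and the innermost-disc step has a genuine gap. You want to remove the circles of $S \cap \partial B_1$ by ``pushing an innermost disc of $S$ across'' a disc $E \subset \partial B_1$ with $v \notin E$, but these two requirements are in tension: if $c$ is chosen innermost on $\partial B_1$, the disc $E$ with $\intr(E) \cap S = \emptyset$ may well contain $v$; if instead you insist that $E$ avoid $v$, then $\intr(E)$ may meet $S$ in further circles, and pushing a disc of $S$ across $E$ is then not an embedding-preserving isotopy. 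Even when $E$ is innermost and misses $v$, the region between the chosen disc of $S$ and $E$ can contain pieces of $|\Gamma|$, so no isotopy through it is available; the correct move is surgery, replacing $S$ by one of the two spheres $D_1 \cup E$, $D_2 \cup E$. At that point ``without destroying the separating property'' is precisely the claim requiring proof: one must check that, since $E$ is disjoint from $|\Gamma|$ and $|\Gamma|$ meets both sides of $S$, at least one of the two surgered spheres again has $|\Gamma|$ on both sides (and one must also note that among the innermost discs of $S\cap\partial B_1$ on $\partial B_1$, of which there are at least two, at least one avoids $v$). All of this is repairable, but as written the step does not go through.

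More importantly, the whole reduction to $S \subset \intr(B_1)$ is unnecessary. The paper's converse takes two lines: regard $\Gamma_1$ and $\Gamma_2$ as sub-graphs of the vertex sum via Lemma~\ref{lem.vsummandissubgraph}, and observe that $v$ is a vertex of \emph{both} of them. The component of $\SPH \setminus S$ not containing $v$ meets $|\Gamma_k|$ for some $k$, while the component containing $v$ meets that same $|\Gamma_k|$ automatically; so by Lemma~\ref{lem.spheresdontlie} the sphere $S$ is already a separating sphere for $\Gamma_k$, with no need to move it off $\partial B_1$ at all.
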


\begin{proof}
If one of the vertex summands, say~$\Gamma_1$, is separable, denote by~$\SPH_1$ its ambient sphere and let $S\subset \SPH_1$~be a separating sphere. Choose an enclosing ball~$B_1$ for~$(\Gamma_1, v_1)$ that contains~$S$ in its interior. Then if we use~$B_1$ to form the vertex sum, $S$~will be contained in the ambient sphere~$\SPH$ of~$\Gamma_1 \vsum{v_1}{v_2} \Gamma_2$, with both sides of~$S$ intersecting~$|\Gamma_1 \vsum{v_1}{v_2} \Gamma_2|$. Hence $S$~will be a separating sphere for~$\Gamma_1 \vsum{v_1}{v_2} \Gamma_2$.

Conversely, suppose $S$~is a separating sphere for~$\Gamma_1 \vsum{v_1}{v_2} \Gamma_2$. The component of~$\SPH \setminus S$ that does not contain the vertex $v_1= v_2$ has non-empty intersection with the support of one of the summands, say~$|\Gamma_1|$. Then both components of~$\SPH \setminus S$ intersect~$|\Gamma_1|$ and so, regarding $\Gamma_1$~as a sub-graph of~$\Gamma_1 \vsum{v_1}{v_2} \Gamma_2$, we see $S$ is a separating sphere for~$\Gamma_1$.
\end{proof}

Since realizations of trees of spatial graphs are constructed by iterated vertex sums, we have, more generally:

\begin{cor}[Realizations of trees of spatial graphs preserve non-separability.]\label{cor.septrees}
Let $\Tree = (T,I, J, L, (\Gamma_i)_{i \in I}, (v(l))_{l \in L})$ be a tree of spatial graphs. Then $[\Tree]$ is non-separable if and only if all the $\Gamma_i$ are non-separable.
\end{cor}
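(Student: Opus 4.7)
The plan is to prove the equivalence by induction on $|J|$, reducing to the binary case handled by Lemma~\ref{lem.sepvsum}.

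For the base case $|J|=0$, since $T$ is a connected bipartite tree with every edge joining $I$ and $J$, emptiness of $J$ forces $T$ to have no edges, hence at most one vertex. Either $T$ is empty (so $[\Tree]=\zero$ and $I=\emptyset$), or $T$ consists of a single $i_0\in I$ (so $[\Tree]=\Gamma_{i_0}$). In both cases the equivalence is trivial (noting that $\zero$ is vacuously non-separable, since it cannot be written as a disjoint union of two non-empty spatial graphs).

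For the inductive step with $|J|\ge 1$, I would fix any $j_0\in J$ and let $L_0\subseteq L$ be the set of edges incident to~$j_0$. Using the definition of $[\Tree]$ given above Lemma~\ref{lem.welldefrealization},
\[
[\Tree] \;=\; \underset{l\in L_0}\bigstar\,([\Tree_l],v(l)),
\]
and each sub-tree $\Tree_l$ satisfies $|J_l|<|J|$. By the induction hypothesis, $[\Tree_l]$ is non-separable if and only if every $\Gamma_i$ with $i\in I_l$ is non-separable. Moreover, since $T$ is a tree, removing $j_0$ partitions $I$ as $I=\bigsqcup_{l\in L_0} I_l$, so the family $(\Gamma_i)_{i\in I}$ is the disjoint union of the families $(\Gamma_i)_{i\in I_l}$ for $l\in L_0$.

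It therefore suffices to show that an iterated vertex sum of the form $\bigstar_{l\in L_0}(\Lambda_l,u_l)$ is non-separable if and only if each summand $\Lambda_l$ is. This follows by a short induction on $|L_0|$: writing the iterated vertex sum as a binary vertex sum $\Lambda_{l_1}\vsum{u_{l_1}}{u}\bigstar_{l\in L_0\setminus\{l_1\}}(\Lambda_l,u_l)$ (where $u$ is the identified distinguished vertex), Lemma~\ref{lem.sepvsum} gives the equivalence in one step. No serious obstacle arises; the only mild subtlety is keeping careful track of how the tree decomposes at $j_0$ and checking that the empty and single-vertex base cases are covered (in particular that $\zero$ counts as non-separable), both of which are immediate from the definitions.
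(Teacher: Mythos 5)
Your proof is correct and follows exactly the route the paper intends: the paper states the corollary as an immediate consequence of Lemma~\ref{lem.sepvsum} ("since realizations of trees of spatial graphs are constructed by iterated vertex sums"), and your induction on $|J|$ together with the inner induction on the number of summands at $j_0$ is just the careful write-up of that argument. The base cases (including $\zero$ being vacuously non-separable) and the non-emptiness of each $[\Tree_l]$ needed to apply the lemma are handled correctly.
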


\begin{dfn} \label{dfn.cutthings}
	Let $\Gamma$ be a spatial graph in~$\SPH$.
  \begin{itemize}
  	\item If $S\subset \SPH$ is a PL-embedded $2$-sphere as in Lemma~\ref{lem.vspheresdontlie}, we say that ``$S$ decomposes $\Gamma$ as $\Gamma_1 \vsum{v}{v} \Gamma_2$''.
  	   
    \item Suppose $\Gamma$~is non-separable. If $S$~is a $2$-sphere decomposing~$\Gamma$ as $\Gamma_1 \vsum{v}{v} \Gamma_2$, with $\Gamma_1, \Gamma_2$~both non-isomorphic to~$\one$, then $v$~is called a \textbf{cut vertex} of~$\Gamma$ and $S$~is a \textbf{cut sphere} of~$\Gamma$.
    
    \item $\Gamma$ is called a \textbf{block} if it is a piece that has no cut vertices and is not isomorphic to~$\one$.
    
    \item A tree of spatial graphs $\Tree = (T, I, J, L, (\Lambda_i)_{i \in I}, (v(l))_{l \in L})$ where each~$\Lambda_i$ is a block is called a \textbf{tree of blocks}. In that case we also say that $\Tree$~is a tree of blocks for~$[\Tree]$.
  \end{itemize}
\end{dfn}

There is also a standard notion of cut vertex for a connected abstract graph~$G$ that is similar in spirit: a vertex~$v$ in an abstract graph~$G$ is \textbf{cut} if $G$~is the union of two sub-graphs~$G_1, G_2$ intersecting precisely at~$v$, with neither~$G_i$ comprised only of a single vertex. We should however note that it is possible for a vertex of a spatial graph~$\Gamma$ to be cut in~$\langle \Gamma \rangle$ but not in~$\Gamma$, as exemplified in Figure~\ref{fig.linkedhandcuffs}.

\begin{figure}[h]
  \centering
  \def \svgwidth{0.2\linewidth}
  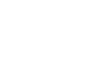
  \caption{The notion of cut vertex for spatial graphs does not coincide with the one for abstract graphs. Both vertices of the depicted graph~$\Gamma$ are cut in~$\langle \Gamma\rangle$, but not in~$\Gamma$.}
  \label{fig.linkedhandcuffs}
\end{figure}

One of the reasons one should care about having a spatial graph expressed as the realization of a tree of blocks is because cut vertices are easily read-off. In order to make this connection precise, we need an analogue of Lemma~\ref{lem.sortedpieces}:

\begin{lem}[Spheres sort blocks]\label{lem.sortedblocks}
Let~$\Lambda$ be a block in~$\SPH$, and let $S\subset \SPH$ be a PL-embedded $2$-sphere that intersects~$|\Lambda|$ either at a single vertex of~$\Lambda$, or not at all. Denote the closures of the two components of~$\SPH \setminus S$ by $B_1, B_2$. Then $|\Lambda|$ is contained in exactly one of the~$B_i$.
\end{lem}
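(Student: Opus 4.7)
The plan is to case-split on whether or not $S$ meets $|\Lambda|$. If $S \cap |\Lambda| = \emptyset$, then since every block is in particular a piece, Lemma~\ref{lem.sortedpieces} applies directly and yields the conclusion.

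For the case $S \cap |\Lambda| = \{v\}$ with $v$ a vertex of $\Lambda$, the first step is to verify that every vertex and every edge of $\Lambda$ is contained in one of the $B_i$. Vertices distinct from $v$ lie in $|\Lambda|\setminus\{v\} \subseteq \SPH\setminus S$, so each lands in one of the open components $\intr(B_1), \intr(B_2)$. For an edge $e$ of $\Lambda$ incident to $v$, the set $e\setminus\{v\}$ is connected (an open arc, regardless of whether $e$ is an arc-edge or a loop) and disjoint from $S$, so it lies in one open component, and hence $e$ lies in the corresponding $B_i$. Letting $\Lambda_i$ denote the sub-graph of $\Lambda$ comprised of the vertices and edges contained in $B_i$, Lemma~\ref{lem.vspheresdontlie} then identifies $\Lambda = \Lambda_1 \vsum{v}{v} \Lambda_2$.

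Since $\Lambda$ is a block, $v$ is not a cut vertex, so by Definition~\ref{dfn.cutthings} at least one of the summands is isomorphic to $\one$; say $\Lambda_1 \cong \one$. Then $\Lambda_1$ consists of the single vertex $v$ with no edges, and the identity property of the vertex sum (Proposition~\ref{prop.propvsum}) gives $\Lambda = \Lambda_2$. In particular $|\Lambda| \subseteq B_2$. For uniqueness, note that $\Lambda \ne \one$ (blocks are by definition not one-point), so $\Lambda$ contains either some vertex $w \ne v$ or some edge, and any such object has a point in $\intr(B_2) = B_2\setminus B_1$, which forbids $|\Lambda|\subseteq B_1$.

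The main obstacle is really bookkeeping: one has to rule out the possibility that a loop of $\Lambda$ based at $v$ is sliced by $S$, so that every edge is cleanly assigned to a single side. This is handled by the hypothesis $S\cap|\Lambda|=\{v\}$ together with the observation that removing a single point from a loop leaves a connected open arc. Once the sorting is in place, the absence of cut vertices in a block, combined with the identity property of the vertex sum, delivers the conclusion immediately.
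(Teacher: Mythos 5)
Your proof is correct and follows essentially the same route as the paper: the empty-intersection case via Lemma~\ref{lem.sortedpieces}, and the one-vertex case via Lemma~\ref{lem.vspheresdontlie} followed by the observation that, since a block has no cut vertices, one vertex summand must be isomorphic to $\one$. The extra details you supply (the edge-sorting check, which is already subsumed in Lemma~\ref{lem.vspheresdontlie}, and the explicit uniqueness verification) are harmless elaborations of steps the paper leaves implicit.
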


\begin{proof}
Since~$\Lambda$ is a piece, the case where $S \cap |\Lambda| = \emptyset$ follows from Lemma~\ref{lem.sortedpieces}. 

If $S \cap |\Lambda|$ is comprised precisely of one vertex~$v$ of~$\Lambda$, then since $\Lambda \not \cong \one$, certainly $\Lambda$ cannot be contained in both~$B_i$. By Lemma~\ref{lem.vspheresdontlie}, $S$ decomposes $\Lambda$ as~$\Lambda_1 \vsum{v}{v} \Lambda_2$. Since $\Lambda$~has no cut vertices, one of the summands, say $\Gamma_1$, is isomorphic to~$\one$. This means $\Lambda = \Lambda_2$.
\end{proof}

\begin{prop}[Cut vertices in the realization of a tree of blocks]\label{prop.cutvertices}
Let $\Tree = (T, I, J, L, (\Lambda_i)_{i \in I}, (v(l))_{l \in L})$ be a tree of blocks. For each $j \in J$, denote by $v(j)$ the vertex of~$[\Tree]$ that results from identifying all~$v(l)$ with $l$~incident to~$j$. Then the correspondence $j \mapsto v(j)$ is a bijection between~$J$ and the set of cut vertices of~$[\Tree]$.
\end{prop}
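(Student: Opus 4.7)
My plan is to verify three things: (1) each $v(j)$ is a cut vertex of $[\Tree]$, (2) the assignment $j \mapsto v(j)$ is injective, and (3) every cut vertex of $[\Tree]$ arises in this way. Parts (1) and (2) will follow routinely from the construction of the realization and the axioms in Definition~\ref{dfn.tree}, while part (3) is the conceptual heart of the argument and uses Lemma~\ref{lem.sortedblocks} together with a tree-connectedness argument.

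For (1), fix $j \in J$ and let $L_j \subseteq L$ be the set of edges incident to~$j$. Splitting $T$ at $j$ and using the inductive construction of $[\Tree]$ (and the well-definedness from Lemma~\ref{lem.welldefrealization}), I can write $[\Tree] = \bigstar_{l \in L_j}([\Tree_l], v(l))$. Since $|L_j| \ge 2$, I partition $L_j = L_j^+ \sqcup L_j^-$ into two non-empty subsets and group the sum to get $[\Tree] = \Gamma^+ \vsum{v(j)}{v(j)} \Gamma^-$. Each $\Gamma^\pm$ contains a block $\Lambda_{i(l)}$ as a sub-graph by Lemma~\ref{lem.treesubgraph}, and blocks are not isomorphic to~$\one$, so neither summand is~$\one$. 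Hence $v(j)$ is a cut vertex. For (2), suppose $v(j_1) = v(j_2)$ in $[\Tree]$. Tracing identifications in the realization, two vertices from (possibly different) blocks are identified only when both are of the form $v(l)$, $v(l')$ with $j(l)=j(l')$; combined with the Definition~\ref{dfn.tree} axiom that different edges with the same $I$-endpoint have distinct $v$-assignments, a case analysis forces $j_1 = j_2$.

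The main obstacle is (3): showing that every cut vertex of $[\Tree]$ has the form $v(j)$. Suppose $v$ is a cut vertex with cut sphere $S$ and $v \ne v(j)$ for every $j \in J$; then $v$ is a vertex of exactly one block $\Lambda_{i_0}$, and is not identified with any vertex from another block. By definition of a cut sphere, $S \cap |[\Tree]| = \{v\}$, so $S$ meets $|\Lambda_{i_0}|$ only at~$v$ and is disjoint from $|\Lambda_{i'}|$ for all $i' \ne i_0$. Letting $B_1, B_2$ denote the closed sides of~$S$, Lemma~\ref{lem.sortedblocks} places each $|\Lambda_{i'}|$ in exactly one~$B_k$; define $I_k := \{i' \in I : |\Lambda_{i'}| \subseteq B_k\}$, and say $i_0 \in I_1$.

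Now comes the key connectivity argument. For every $j \in J$, the vertex $v(j)$ is not equal to~$v$ and hence lies in $\intr(B_1) \cup \intr(B_2)$; consequently every block containing $v(j)$ (i.e., every $\Lambda_{i'}$ with $i'$ adjacent to~$j$ in~$T$) must lie on the same side of~$S$. This says the partition $I = I_1 \sqcup I_2$ is invariant under passage through any $j \in J$ in the bipartite tree~$T$. Since $T$ is connected and every edge of~$T$ has one endpoint in each of $I, J$, the only such partition is the trivial one, forcing $I_2 = \emptyset$. But then $|[\Tree]| \cap B_2 = \{v\}$, so the summand of $[\Tree]$ on the $B_2$-side is~$\one$, contradicting $v$ being a cut vertex. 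This completes the proof.
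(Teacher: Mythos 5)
Your proof is correct, and parts (1) and (2) are essentially the paper's argument (the paper verifies that the summands are not $\one$ by noting each block has an edge, you by noting a block is a sub-graph that cannot sit inside $\one$ --- same content). Where you genuinely diverge is in part (3). The paper also starts from a sphere $S$ through $v$ and applies Lemma~\ref{lem.sortedblocks}, but only to the single block $\Lambda_{i_0}$ containing the edges incident to~$v$: this forces $v$ to be an isolated vertex of the summand $\Gamma_2$ on the other side, and then the paper invokes non-separability of $\Gamma_2$ (via Corollary~\ref{cor.septrees} and Lemma~\ref{lem.sepvsum}) to conclude $\Gamma_2 \cong \one$. You instead apply Lemma~\ref{lem.sortedblocks} to \emph{every} block and propagate the resulting partition $I = I_1 \sqcup I_2$ through the bipartite tree: since each $v(j)$ lies off $S$, all blocks adjacent to a given $j$ land on the same side, and connectedness of $T$ collapses the partition, so the far side of $S$ meets $|[\Tree]|$ only in $\{v\}$ and the summand is $\one$ outright. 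Your route trades the non-separability machinery (Lemma~\ref{lem.sepvsum}, Corollary~\ref{cor.septrees}) for a global combinatorial argument on $T$; it is slightly longer but self-contained modulo Lemma~\ref{lem.sortedblocks}, whereas the paper's is more local and leans on results already established. Both are valid.
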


\begin{proof}
To see that each~$v(j)$ is cut: by definition of a tree of spatial graphs, $j$~has degree at least~$2$, so if $L_0 \subseteq L$~is the set of edges incident to~$j$, one may write some non-trivial partition~$L_0 = L_1 \sqcup L_2$. For each $k\in\{1,2\}$, choose some $l_k \in L_k$. The spatial graph~$\Lambda_{i(l_k)}$, being a block, has an edge, and hence also~$[\Tree_{l_k}]$ has an edge. Thus each vertex summand in
\[[\Tree] = \biggl( \underset {l \in L_1}\bigstar([\Tree_l], v(l)) \biggr) \vsum {v(l_1)}{v(l_2)} \biggl( \underset {l \in L_2} \bigstar([\Tree_l], v(l)) \biggr),\]
has an edge and so is not isomorphic to~$\one$. Thus $v(j) = v(l_1)=v(l_2)$ is cut.

It is clear from the vertex set of~$[\Tree]$, as given by the description of~$\langle[\Tree]\rangle$, that the assignment $j \mapsto v(j)$ is injective.

Conversely, suppose $v$~is a vertex of $[\Tree]$ that does not result from such an identification, and consider a PL-embedded $2$-sphere~$S$ in the ambient sphere~$\SPH$ of~$[\Tree]$ intersecting~$|[\Tree]|$ precisely at~$v$. Say $S$~decomposes $[\Tree]$ as $\Gamma_1 \vsum{v}{v} \Gamma_2$ -- we aim  to show that one of the~$\Gamma_i$ is isomorphic to~$\one$. Our assumption on~$v$ implies that the edges of~$[\Tree]$ incident to~$v$ all come from the same block~$\Lambda_i$. Using Lemma~\ref{lem.treesubgraph} to regard~$\Lambda_i$ as a sub-graph of~$[\Tree]$, we see from Lemma~\ref{lem.sortedblocks} that all edges of~$[\Tree]$ incident to~$v$ are in one of the~$\Gamma_i$, say in~$\Gamma_1$. Hence $v$~is a vertex of~$\Gamma_2$ without incident edges. All we need is to show that $\Gamma_2$ is non-separable, and it will follow that $\Gamma_2 \cong \one$. But since $[\Tree]$~is non-separable by Corollary~\ref{cor.septrees}, non-separability of~$\Gamma_2$ follows from Lemma~\ref{lem.sepvsum}.
\end{proof}

Having at least somewhat motivated the usefulness of trees of blocks, we now establish their existence for non-separable graphs (with the exception of~$\one$).

\begin{prop}[Existence of trees of blocks]\label{prop.treeexists}
Every non-separable spatial graph~$\Gamma \not \cong \one$ is the realization of some tree of blocks.
\end{prop}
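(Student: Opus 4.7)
My plan is to induct on the number of edges of~$\Gamma$. For the base case, if $\Gamma$ has no cut vertex then, being non-separable and distinct from~$\one$ by hypothesis, it is itself a block; taking $\Tree$ to consist of a single element $i_0 \in I$ with $\Lambda_{i_0} := \Gamma$ and $J = L = \emptyset$ gives $[\Tree] = \Gamma$. This covers in particular all spatial graphs with zero or one edge, which cannot have a cut vertex.

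For the inductive step, suppose $\Gamma$ has a cut vertex~$v$. By definition (and Lemma~\ref{lem.vspheresdontlie}) there is a cut sphere~$S$ decomposing $\Gamma$ as $\Gamma_1 \vsum{v}{v} \Gamma_2$ with neither~$\Gamma_i \cong \one$. By Lemma~\ref{lem.sepvsum} both~$\Gamma_i$ are non-separable; each contains~$v$ so is non-empty, and each being a piece distinct from~$\one$ contains at least one edge, hence strictly fewer edges than~$\Gamma$. The inductive hypothesis then provides trees of blocks~$\Tree_i = (T_i, I_i, J_i, L_i, (\Lambda^i_k)_{k \in I_i}, (v(l))_{l \in L_i})$ with $[\Tree_i] = \Gamma_i$.

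The remaining task is to glue $\Tree_1$ and~$\Tree_2$ along~$v$ into a single tree of blocks~$\Tree$ with $[\Tree] = \Gamma$. By Proposition~\ref{prop.cutvertices}, combined with the description of the vertex set of the realization, for each~$i \in \{1,2\}$ the vertex~$v$ of~$\Gamma_i$ either is a cut vertex, in which case it corresponds to a unique $j_i \in J_i$, or belongs to a unique block~$\Lambda^i_{k(i)}$ of~$\Tree_i$. I then define~$\Tree$ by cases: if $v$~is non-cut in both~$\Gamma_i$, take the disjoint union of $T_1$~and~$T_2$ together with a new vertex $j \in J$ joined to $k(1)$ and $k(2)$ by two new edges $l_1, l_2$, each labeled $v(l_i) := v$; if $v$ is cut in~$\Gamma_1$ but not in~$\Gamma_2$, take the disjoint union and adjoin a single new edge from~$k(2)$ to~$j_1$ labeled by~$v$; if $v$~is cut in both, take the disjoint union and identify~$j_1$ with~$j_2$. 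In every case the bipartition is preserved, every element of~$J$ has degree at least two in the combined tree, and the underlying graph is a tree (two trees joined at a single point), so $\Tree$ is a tree of blocks.

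The main point that needs care is verifying $[\Tree] = \Gamma$. This follows by tracing through the inductive definition of realization: decomposing $\Tree$ around the $J$-vertex that represents~$v$ writes $[\Tree]$ as the vertex sum over the edges incident to this vertex of the realizations of the resulting sub-trees, which reassemble (by associativity and commutativity of the vertex sum, Proposition~\ref{prop.propvsum}) into $[\Tree_1] \vsum{v}{v} [\Tree_2] = \Gamma_1 \vsum{v}{v} \Gamma_2 = \Gamma$. The main obstacle is the case-by-case bookkeeping needed to simultaneously check that the combinatorial conditions of Definition~\ref{dfn.tree} hold and that the realization computation goes through; both are straightforward once the three cases are untangled.
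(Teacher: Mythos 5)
Your proof is correct and follows essentially the same route as the paper's: induction on the number of edges, splitting at a cut vertex via a cut sphere, and gluing the two inductively obtained trees of blocks at the $J$-vertex corresponding to~$v$ (your three cases are exactly the paper's unified construction via the modified trees~$T_k'$, and the verification $[\Tree]=\Gamma$ is the same associativity computation). The only quibble is the base case: the empty spatial graph~$\zero$ is non-separable, not isomorphic to~$\one$, and has no cut vertices, yet it is \emph{not} a block (blocks are by definition non-empty pieces), so it must be handled separately by taking the empty tree.
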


\begin{proof}
We use induction on the number of edges in~$\Gamma$ to produce a tree of blocks $\Tree = (T, I, J, L, (\Lambda_i)_{i \in I}, (v(l))_{l \in L})$ realizing~$\Gamma$. If $\Gamma$~has no edges, then since~$\Gamma$ is non-separable we either have $\Gamma = \zero$ or $\Gamma \cong \one$. The second case is ruled out by assumption, and in the first one we take $T$~to be the empty tree.

Now suppose~$\Gamma$ has at least one edge. If $\Gamma$~is a block, then we are done by taking $T$~to be a tree with a single vertex $i_0 \in I$ and $\Lambda_{i_0} := \Gamma$. If $\Gamma$~is not a block, then it can be expressed as $\Gamma_1 \vsum{v}{v} \Gamma_2$ with each~$\Gamma_i$ not isomorphic to~$\one$, and also non-separable by Lemma~\ref{lem.sepvsum}. Hence each of~$\Gamma_1, \Gamma_2$ has at least one edge, and thus both have fewer edges than~$\Gamma$ and our induction hypothesis applies to them.

For each~$k\in \{1,2\}$, let $\Tree_k = (T_k, I_k, J_k, L_k, (\Lambda_i)_{i \in I_k}, (v(l))_{l \in L_k})$ be a tree of blocks for~$\Gamma_k$. We will construct~$T$ as illustrated in Figure~\ref{fig.findtree} from modified versions~$T_k'$ of the~$T_k$, according to the following two cases:
\begin{itemize}
\item If $v$~is not a cut vertex of~$\Gamma_k$, so by Proposition~\ref{prop.cutvertices} there is no edge~$l_k \in L_k$ with $v(l_k) = v$, construct a new tree~$T_k'$ from $T_k$ by adding a new vertex~$j_k$ and a new edge~$l_k$ connecting~$j_k$ to the vertex~$i_k \in I$ such that $\Lambda_{i_k}$ contains~$v$. We also write 
\[J_k':= J_k \sqcup \{j_k\},\quad  L'_k:= L_k \sqcup \{ l_k \}, \quad L^0_k:=\{l_k\},\]
and set~$v(l_k):= v$. Note that in this case, $T_k'$~with its vertex set partitioned as $I_k \sqcup J_k'$ is no longer admissible as the tree in a tree of spatial graphs, since $j_k$~is a leaf. 

\item If $v$~is a cut vertex of~$\Gamma_k$, then there is a corresponding $j_k \in J_k$, whose set of incident edges we denote by~$L_k^0$. In $\Gamma_k$, the vertices~$v(l)$ with $l \in L_k^0$ are identified into the vertex~$v$. For convenience, we write~$T_k':= T_k, J_k':= J_k, L_k':=L_k$.
\end{itemize}

\begin{figure}[h]
  \centering
  \def \svgwidth{0.7\linewidth}
  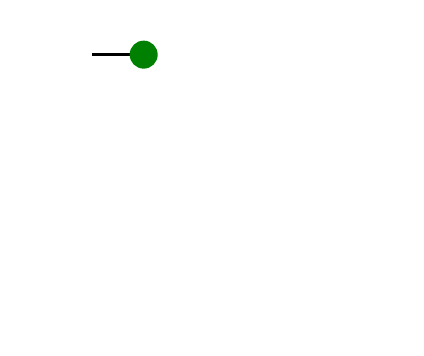
  \caption{Constructing $T$ from $T_1$~and~$T_2$. In the depicted trees, large vertices represent elements of~$I_1, I_2$, and small vertices represent elements of~$J_1, J_2$. We also indicate the elements of~$I_1, I_2$ whose corresponding blocks contain the vertex~$v$. In this example, $v$~is not a cut vertex in~$\Gamma_1$, but it is in~$\Gamma_2$ (where it corresponds to~$j_2$). Accordingly, $T_1'$~is obtained from~$T_1$ by adding a vertex~$j_1$ and an edge~$l_1$, whereas $T_2' = T_2$. $T$~is then obtained from $T_1'$~and~$T_2'$ by identifying $j_1$~with~$j_2$.}
  \label{fig.findtree}
\end{figure}

We define
\begin{align*}
T&:= T_1' \vsum{j_1}{j_2} T_2', & I&:= I_1 \sqcup I_2,\\
J&:= (J_1' \sqcup J_2')/j_1 \sim j_2, & L&:= L_1' \sqcup L_2',
\end{align*}
and this turns $\Tree$~into a tree of spatial graphs whose realization is~$\Gamma$:
\begin{align*}
[\Tree] & = \underset {\substack{l \in L\\ j(l)=j_1=j_2}} \bigstar ([\Tree_l], v(l))\\
& = \bigg( \underset {l \in L_1^0} \bigstar ([\Tree_l], v(l))\bigg) \vsum{v}{v}  \bigg(\underset {l \in L_2^0} \bigstar ([\Tree_l], v(l))\bigg) \\
& = [\Tree_1] \vsum{v}{v} [\Tree_2]\\
& = \Gamma_1 \vsum{v}{v} \Gamma_2. \qedhere
\end{align*}
\end{proof}

We now give a uniqueness statement for the expression of non-separable spatial graphs as the realization of a tree of blocks.

\begin{prop}[Uniqueness of decomposition into a tree of blocks]\label{prop.uniquetree}
For each $k \in \{1,2\}$, let $\Tree_k = (T_k, I_k, J_k, L_k, (\Lambda_i)_{i \in I_k}, (v(l))_{l \in L_k})$ be a tree of blocks, and let $\Phi \colon [\Tree_1] \to [\Tree_2]$ be an isomorphism. Then there is an isomorphism of trees $f \colon T_1 \to T_2$ satisfying $f(I_1) = I_2$, such that:
\begin{itemize}
\item for each $i \in I_1$, the map $\Phi$ is an isomorphism of sub-graphs $\Lambda_i \to \Lambda_{f(i)}$, and
\item for each $l \in L_1$, we have $\Phi(v(l)) = v(f(l))$.
\end{itemize}
\end{prop}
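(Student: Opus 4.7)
The plan is to build $f$ in two stages: first define $f|_{J_1}\colon J_1\to J_2$ by matching cut vertices, then define $f|_{I_1}\colon I_1\to I_2$ inductively by ``peeling leaves'' of $T_1$. Both stages rely on Proposition~\ref{prop.cutvertices} (cut vertices of $[\mathcal{T}_k]$ are in bijection with $J_k$) and Lemma~\ref{lem.sortedblocks} (spheres sort blocks onto sides).

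For $f|_{J_1}$: by Proposition~\ref{prop.cutvertices}, the assignment $j\mapsto v(j)$ identifies $J_k$ with the cut vertices of $[\mathcal{T}_k]$. Since $\Phi$ is a spatial-graph isomorphism, it carries cut vertices to cut vertices bijectively, so define $f|_{J_1}$ by the rule $v(f(j)) = \Phi(v(j))$.

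For $f|_{I_1}$, I induct on $|I_1|$. The cases $|I_1|\leq 1$ are immediate: either both realizations are $\zero$, or each $\mathcal{T}_k$ consists of a single block and $\Phi$ itself is the sought block isomorphism. For $|I_1|\geq 2$, pick a leaf $i_1^0 \in I_1$ of $T_1$ with incident edge $l_1^0$ and $j_1^0 := j(l_1^0)$. The iterated-vertex-sum construction of $[\mathcal{T}_1]$ provides a PL $2$-sphere $S_1$ through $v(l_1^0)$ with $S_1\cap |[\mathcal{T}_1]| = \{v(l_1^0)\}$ such that one side of $S_1$ meets $|[\mathcal{T}_1]|$ in precisely $|\Lambda_{i_1^0}|$ (the boundary of the enclosing ball used to attach $\Lambda_{i_1^0}$). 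Set $S_2:=\Phi(S_1)$: a cut sphere of $[\mathcal{T}_2]$ through $v(f(j_1^0))$, one side of which contains $\Phi(|\Lambda_{i_1^0}|)$. By Lemma~\ref{lem.sortedblocks} each $\Lambda_i\subseteq[\mathcal{T}_2]$ lies entirely on one side of $S_2$, and because any two blocks sharing a cut vertex $\neq v(f(j_1^0))$ must lie on the same side (else that cut vertex would lie on $S_2$, contradicting $S_2\cap|[\mathcal{T}_2]|=\{v(f(j_1^0))\}$), the blocks on the $\Phi(\Lambda_{i_1^0})$-side form a sub-tree of $\mathcal{T}_2$ hanging off $f(j_1^0)$. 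Since $\Phi(\Lambda_{i_1^0})\cong\Lambda_{i_1^0}$ is a block, applying Proposition~\ref{prop.cutvertices} to this sub-tree forces it to have no $J$-vertex, so it collapses to a single $I$-vertex $i_2^0$, which is necessarily a leaf of $T_2$ adjacent to $f(j_1^0)$. Set $f(i_1^0):=i_2^0$ and $f(l_1^0):=$ the unique edge of $T_2$ incident to $i_2^0$.

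To close the induction, trim each $\mathcal{T}_k$ by removing $i_k^0$ and $l_k^0$ (and $j_k^0$ if its degree drops below $2$, re-identifying the orphaned vertex $v(l_k^0)$ in the lone remaining branch) to obtain smaller trees of blocks $\mathcal{T}_k^-$. The main obstacle is showing that $\Phi$ induces a spatial-graph isomorphism $[\mathcal{T}_1^-]\to[\mathcal{T}_2^-]$: morally one restricts $\Phi$ to the opposite side of $S_1$ (a $3$-ball) and extends arbitrarily across a capping ball, then uses Proposition~\ref{prop.penclosingball} together with the associativity of vertex sum (Proposition~\ref{prop.propvsum}) to identify this capped sphere with the ambient sphere of $[\mathcal{T}_k^-]$ as constructed abstractly. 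With this isomorphism in hand, the inductive hypothesis furnishes a tree isomorphism $T_1^-\to T_2^-$ that combines with the leaf assignment $i_1^0\mapsto i_2^0$, $l_1^0\mapsto l_2^0$ to produce $f$. The two bulleted conditions follow by construction: the second item holds at the leaf edge $l_1^0$ because $v(f(l_1^0))=v(f(j_1^0))=\Phi(v(l_1^0))$, and inductively for the other edges; the first item holds because $\Phi$ maps $|\Lambda_{i_1^0}|$ bijectively to $|\Lambda_{i_2^0}|$ and preserves the vertex/edge decomposition as an isomorphism of spatial graphs.
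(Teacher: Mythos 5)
Your argument is correct and rests on the same pillars as the paper's proof: induction on the number of blocks, a cut sphere at a $J$-vertex, its $\Phi$-image, Lemma~\ref{lem.sortedblocks}, and Proposition~\ref{prop.cutvertices}. The organizational difference is real, though: the paper picks an arbitrary $j_1\in J_1$, splits $[\Tree_1]$ two-sidedly into $\Gamma_1^+\vsum{v_1}{v_1}\Gamma_1^-$, applies induction to \emph{both} sides, and then glues $f^+$ and $f^-$ via a three-case analysis; you instead isolate a single leaf block, identify its image block directly (so only one side needs the inductive hypothesis), and reattach. This buys you a lighter gluing step at the cost of a slightly fussier trimming operation (the degree-two case where $j_k^0$ must be deleted). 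Two points that you assert but that genuinely require proof — and that the paper proves explicitly — are worth flagging. First, your appeal to Proposition~\ref{prop.cutvertices} to conclude that the $\Phi(\Lambda_{i_1^0})$-side collapses to a single leaf block presupposes that the sub-graph supported on a side of $S_2$ \emph{is} the realization of the corresponding union of branches of $T_2$ at $f(j_1^0)$; this is exactly the paper's claim ``this is the same as the decomposition given by $S_k$,'' proved using Corollary~\ref{cor.septrees} and the fact that $v(l)$ is not a cut vertex of $[(\Tree_k)_l]$. Second, in the degree-two case your reassembly needs $f^-$ to send the surviving edge at $j_1^0$ to the surviving edge at $j_2^0$; this does follow (the inductive hypothesis gives $\Phi(|\Lambda_{i(l)}|)=|\Lambda_{f^-(i(l))}|$, and $i(l')$ is the unique element of $I_2^-$ whose block contains $v_2$), and is the analogue of the paper's verification that $f^-(i(l_1^-))=i(l_2^-)$. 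Neither omission breaks the argument, but both should be written out.
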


Note that the second item implies that $f$~respects the bijective correspondence given by Proposition~\ref{prop.cutvertices} between the~$J_k$ and the set of cut vertices of~$[\Tree_k]$. In other words, for each $j \in J_1$ we have $\Phi(v(j)) = v(f(j))$.

\begin{proof}
We will proceed by induction on the cardinality of~$I_1$. If $I_1 = \emptyset$, then $[\Tree_1] = \zero = [\Tree_2]$, so $T_2$ is the empty tree and there is nothing to show. If $I_1$~is comprised of a single element~$i_1$, and hence $J_1 = \emptyset$, then $[\Tree_1] =  \Lambda_{i_1}$ is a block, so $[\Tree_2]$~is a block. In particular, $[\Tree_2]$~has no cut vertices and so by Proposition~\ref{prop.cutvertices} we conclude $J_2 = \emptyset$. Hence $I_2$~contains exactly one element~$i_2$, with $[\Tree_2] = \Lambda_{i_2}$. We are thus done with this case by setting $f(i_1):=i_2$.

Assume now that $I_1$ contains at least two elements, and so $J_1 \neq \emptyset$. Choose $j_1 \in J_1$, write $v_1:=v(j_1)$, and let $S_1$~be a cut sphere for~$[\Tree_1]$ decomposing it as $[\Tree_1] = \Gamma_1^+ \vsum{v_1}{v_1} \Gamma_1^-$, so $\Gamma_1^+, \Gamma_1^-$ are pieces non-isomorphic to~$\one$.
Similarly, we have that $v_2:= \Phi(v_1)$ is a cut vertex for~$[\Tree_2]$, so let $j_2 \in J_2$ be the corresponding element. The sphere $S_2 := \Phi(S_1)$ is now a cut sphere for~$[\Tree_2]$ decomposing it as $[\Tree_2] = \Gamma_2^+ \vsum{v_2}{v_2} \Gamma_2^-$, with the map $\Phi$~giving a pair of isomorphisms of sub-graphs $\Phi^\epsilon \colon \Gamma_1^\epsilon \to \Gamma_2^\epsilon$, for each~$\epsilon \in \{ +,-\}$.

Let $k \in \{1,2\}$. Our goal is to extract from each $\Tree_k$ a description of the spatial graphs~$\Gamma_k^+, \Gamma_k^-$ as realizations of trees of blocks, to which we will then apply the induction hypothesis. The procedure is entirely analogous for all four spatial graphs, so let us also fix a sign~$\epsilon \in \{+, -\}$. 

Denote by~$L_k^0 \subseteq L_k$ the set of edges incident to~$j_k$, and recall that Lemma~\ref{lem.sortedblocks} tells us that for each $i \in I_k$, the block~$\Lambda_i$ is a sub-graph of exactly one among~$\Gamma_k^+, \Gamma_k^-$. We consider the partition $L_k^0 = L_k^{0+} \sqcup L_k^{0-}$, where an edge~$l\in L_k^0$ is in $L_k^{0\epsilon}$ if and only if $\Lambda_{i(l)}$~is a sub-graph of~$\Gamma_k ^\epsilon$.

Consider the decomposition of~$[\Tree_k]$ as
\[ [\Tree_k] = \bigg( \underset {l \in L_k^{0+}} \bigstar ([(\Tree_k)_l], v(l))\bigg) \vsum{v_k}{v_k}  \bigg(\underset {l \in L_k^{0-}} \bigstar ([(\Tree_k)_l], v(l))\bigg).\]
We claim that this is the same as the decomposition given by~$S_k$, that is, $\Gamma_k^\epsilon = \bigstar_{l \in L_k^{0\epsilon}} ([(\Tree_k)_l], v(l))$.

To see this, first notice that since all $\Lambda_i$ are non-separable, Corollary~\ref{cor.septrees} tells us that each $[(\Tree_k)_l]$~is non-separable. Now, for each~$l \in L_k^0$, it follows from Proposition~\ref{prop.cutvertices} that $v(l)$~is not a cut vertex of~$[(\Tree_k)_l]$. Therefore, $S_k$~decomposes~$[(\Tree_k)_l]$ as a trivial vertex sum $[(\Tree_k)_l]\vsum{v(l)}{}\one$. In other words, $|[(\Tree_k)_l]|$~is entirely contained in one side of~$S_k$, which must of course be the same as~$|\Lambda_{i(l)}|$, since $\Lambda_{i(l)}$~is a sub-graph of~$|[(\Tree_k)_l]|$. From here, we get that each $\bigstar_{l \in L_k^{0\epsilon}} ([(\Tree_k)_l], v(l))$ is a sub-graph of~$\Gamma_k^\epsilon$, whence the desired equalities follow.

Next, we write down an explicit tree of blocks~$\Tree_k^\epsilon$ for~$\bigstar_{l \in L_k^{0\epsilon}} ([(\Tree_k)_l], v(l))$. If $L_k^{0 \epsilon}$ has only one element~$l_k^\epsilon$, put $\Tree_k^\epsilon := (\Tree_k)_{l_k^\epsilon}$. Otherwise, recover the notation introduced when defining the realization of a tree of spatial graphs
\[(\Tree_k)_l = ((T_k)_l, (I_k)_l, (J_k)_l, (L_k)_l, (\Lambda_i)_{i \in (I_k)_l}, (v(l'))_{l' \in (L_k)_l} ),\]
and set $\Tree_k^\epsilon := (T_k^\epsilon, I_k^\epsilon, J_k^\epsilon, L_k^\epsilon, (\Lambda_i)_{i \in I_k^\epsilon},(v(l))_{l \in L_k^\epsilon})$ to be the tree of blocks comprised of the branches of $\Tree_k$ at~$j_k$ that stem from edges in~$L_k^{0\epsilon}$. Explicitly, $T_k^\epsilon$~is the sub-tree of~$T_k$ with vertex and edge sets given by
\[I_k^\epsilon := \bigsqcup_{l \in L_k^{0\epsilon}} (I_k)_l, \quad  J_k^\epsilon:= \{ j_k \} \sqcup \bigsqcup_{l \in L_k^{0\epsilon}} (J_k)_l, \quad L_k^\epsilon:= {L_k^{0\epsilon}} \sqcup  \bigsqcup_{l \in L_k^{0\epsilon}} (J_k)_l.\]

Observe that in the first case $[\Tree_k^\epsilon]$~does not have $v_k$~as a cut vertex, and in the second case it does, with $j_k$~being the corresponding element of~$J_k^\epsilon$.

It is now clear that, in either case, $[\Tree_k^\epsilon] = \bigstar_{l \in L_k^{0\epsilon}} ([(\Tree_k)_l], v(l)) = \Gamma_i^\epsilon$. By induction hypothesis, the spatial graph isomorphisms~$\Phi^\epsilon \colon [\Tree_1^\epsilon] \to [\Tree_2^\epsilon]$ yield tree isomorphisms $f^\epsilon \colon T_1^\epsilon \to T_2 ^\epsilon$, which we now want to assemble to the desired $f \colon T_1 \to T_2$. On each sub-tree $T_1^\epsilon$ of~$T_1$, we want to set $f = f^\epsilon$, but we have to ensure that $f^+$~and~$f^-$ agree where they overlap, and we must also define~$f$ on the vertices and edges of~$T_1$ that are not in one of the~$T_1^\epsilon$.

Fix~$\epsilon \in \{+,-\}$ for this paragraph. The isomorphism~$\Phi^\epsilon$ ensures that $v_1$~is a cut vertex of~$[\Tree_1^\epsilon]$ if and only if $v_2$~is a cut vertex of~$[\Tree_2^\epsilon]$. If this is the case, then for both~$k \in \{1,2\}$, the vertex~$j_k$ of~$T_k$ is in~$T_k^\epsilon$, along with the edges in~$L_k^{0 \epsilon}$. Moreover, in this situation we have in $\Tree_2^\epsilon$ that~$v(j_2) = v_2 = \Phi^\epsilon (v_1) = \Phi^\epsilon (v(j_1)) = v(f^\epsilon(j_1))$, whence by injectivity of $j \mapsto v(j)$ it follows that $j_2 = f^\epsilon(j_1)$.
On the other hand, if one (hence both)~$v_k$ is not cut in~$[\Tree_k^\epsilon]$, then the corresponding~$j_k$ and the unique edge~$l_k^\epsilon$ in~$L_k^{0\epsilon}$ are not in~$T_k^\epsilon$. In this situation, $i(l_k^\epsilon)$~is the only element of $I_k^\epsilon$ whose corresponding block~$\Lambda_{i(l_k^\epsilon)}$ contains~$v_k$ as a vertex.

We now consider the following three cases:
\begin{itemize}
\item If for one (hence both)~$k\in \{1,2\}$ the vertex~$v_k$~is cut in $[\Tree_k^+]$~and~$[\Tree_k^-]$, then the two sub-trees~$T_k^+, T_k^-$ jointly cover all of~$T_k$, and they overlap precisely at the vertex~$j_k$.  As we have seen that $f^+(j_1) = j_2 = f^-(j_1)$, we are allowed to glue together the~$f^\epsilon$ into the desired $f \colon T_1 \to T_2$.

\item Suppose for both $k \in \{1,2\}$, the vertex~$v_k$ is cut in $[\Tree_k^+]$ but not in~$[\Tree_k^-]$ (the reverse situation being analogous). Then $T_k^+$~and~$T_k^-$ do not overlap, and they jointly they cover all of~$T_k$ except for the edge $l_k^-$ described above.
In this case, we extend the definition of $f^+, f^-$ to all of~$T_1$ by setting $f(l_1^-) := l_2^-$. This respects the endpoints of the edge: we have seen that $f^-(j_1) = j_2$, and the characterization of $i(l_k^\epsilon)$ given above, together with the fact that $\Phi^-(v_1)=v_2$, shows that $f^-(i(l_1^-)) = i(l_2^-)$.

\item If for both $k \in \{1,2\}$ the vertex~$v_k$ is not cut in~$[\Tree_k^+]$ nor in~$[\Tree_k^-]$, then the trees $T_k^+, T_k^-$ are disjoint and cover all of~$T_k$ except for the vertex~$j_k$ and its only two incident edges $j_k^+, j_k^-$. We extend $f^+, f^-$ by putting $f(j_1):=j_2$ and, for each $\epsilon \in \{+,-\}$, setting $f(l_1^\epsilon):=l_2^\epsilon$. This clearly respects the incidence of each~$l_1^\epsilon$ at the endpoint~$j_1$, and for the other endpoint we argue exactly as in the previous item.
\end{itemize}

Having defined the isomorphism $f: T_1 \to T_2$, almost all stated properties are directly inherited from the~$f^\epsilon$. We are only left to check that, in the second and third cases above, the definition of~$f$ on the new edge(s)~$l_1^\epsilon$ satisfies $\Phi(v(l_1^\epsilon)) = v(f(l_1^\epsilon))$. And indeed it does: $\Phi(v(l_1^\epsilon)) = \Phi(v_1) = v_2 = v(l_2^\epsilon) = v(f(l_1^\epsilon))$.
\end{proof}

This proposition is meant to work in tandem with the result about uniqueness of decomposition into pieces (Proposition~\ref{prop.uniquepieces}): as explained earlier, Proposition~\ref{prop.uniquepieces} reduces the isomorphism problem for spatial graphs to decomposing them as disjoint unions of pieces, and comparing the pieces. Now given the task of testing whether two pieces are isomorphic, we further decompose each of them as the realization of a tree of blocks (the details of how this is done algorithmically are given later, in Lemma~\ref{lem.findtree}). Then Proposition~\ref{prop.uniquetree} guarantees that two pieces are isomorphic if and only if the blocks in the decompositions are pairwise isomorphic, via isomorphisms respecting the overall combinatorial structure of the tree. Though this might seem like little gain, comparing the isomorphism type of blocks is now within our reach using Matveev's Recognition Theorem (Theorem~\ref{thm.matveev}). Before explaining how to accomplish this, we make a brief detour to discuss a few extensions of our notion of spatial graphs, and the decomposition results presented thus far.

\section{Extension to decorated graphs}\label{sec.decorations}

The theory developed so far can be easily generalized to spatial graphs equipped with additional structure. Three natural extensions are directed spatial graphs, and spatial graphs with colorings of the edges and/or vertices. In this section, we formalize these concepts and comment on how the operations and decomposition results given so far are adapted to these other settings. All main proofs will however carry over with no need for additional insight, so we will omit them.

\begin{dfn}
A \textbf{directed} spatial graph is spatial graph~$\Gamma$ together with a choice of orientation of each edge. If $e$~is a non-loop edge of~$\Gamma$ and $h\colon [-1,1]\to e$ is a PL homeomorphism orienting~$e$, we say the vertex~$h(-1)$ is the \textbf{source} of~$e$, and $h(1)$~is its \textbf{target}. When $e$~is a loop, the only vertex of~$\Gamma$ contained in~$e$ is simultaneously the source and the target. We denote the source and target of an edge~$e$ by $s(e)$~and~$t(e)$, respectively.

An isomorphism of directed spatial graphs is an isomorphism of the underlying spatial graphs such that the induced PL homeomorphisms between the edges are all orientation-preserving.
\end{dfn}

\begin{dfn}Let $\Gamma = (\SPH, V, E)$ be a spatial graph.
A \textbf{vertex coloring} of~$\Gamma$ is a function $f \colon V \to \N$ from the vertex set to the non-negative integers. For each vertex~$v\in V$, we refer to~$f(v)$ as ``the color of~$v$''. Given two spatial graphs $\Gamma_1 = (\SPH_1, V_1, E_1), \Gamma_2 = (\SPH_2, V_2, E_2)$ with vertex colorings~$f_1, f_2$, an isomorphism $\Phi \colon \Gamma_1\to \Gamma_2$ is said to preserve the vertex coloring if the induced bijection of vertices $\Phi|_{V_1} \colon V_1 \to V_2$ satisfies $f_1 = f_2 \circ \Phi|_{V_1}$.

In an entirely similar fashion, we define an \textbf{edge coloring} $g\colon E \to \N$, and what it means for an isomorphism of spatial graphs to preserve the edge coloring.
\end{dfn}

One may consider spatial graphs with any (possibly empty) combination of these three types of structure, and we will broadly refer to such spatial graphs as \textbf{decorated}. By two spatial graphs carrying a decoration ``of the same type'', we mean that the combination of additional structures is the same.

Most of the notions we have discussed until now transfer without difficulty to the setting of decorated graphs. For example, an isomorphism of spatial graphs with decorations of the same type is an isomorphism of the corresponding undecorated spatial graphs that respects all additional pieces of structure. Sub-graphs of decorated spatial graphs inherit a decoration of the same type in an obvious way, and also the underlying abstract graph of a decorated spatial graph inherits a decoration:
\begin{itemize}
	\item If $\Gamma$~is a directed spatial graph, then the source and target functions on the edge set make $\langle \Gamma \rangle$~a directed abstract graph.
	\item Vertex and edge colorings of abstract graphs are defined in exactly the same way as for spatial graphs, and if $\Gamma$~is decorated with a vertex and/or edge coloring, then so is~$\langle \Gamma \rangle$, in an obvious manner.
\end{itemize}

The induced decoration on~$\langle \Gamma \rangle$ determines the decoration of~$\Gamma$, except for one ambiguity: if $\Gamma$~is directed, the orientation of a loop~$e$ cannot be inferred from $s(e)$~and~$t(e)$. We record the following straightforward consequence:
\begin{lem}[Probing compatibility with decorations through underlying graphs]\label{lem.undrlyingdecor}
Let $\Gamma_1, \Gamma_2$~be spatial graphs with decorations of the same type, and let $\Phi \colon \Gamma_1^- \to \Gamma_2^-$ be an isomorphism between the corresponding undecorated spatial graphs. Assume moreover that the~$\Gamma_i$ have no loops, or are not directed. Then $\Phi$~respects the decorations on the~$\Gamma_i$ if and only if $\langle \Phi\rangle \colon \langle \Gamma_1^- \rangle \to \langle \Gamma_2^- \rangle$ respects the decorations on the~$\langle \Gamma_i\rangle$.
\end{lem}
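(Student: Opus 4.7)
The forward implication is immediate from the definitions: the underlying abstract graph isomorphism $\langle\Phi\rangle$ is obtained from $\Phi$ by restricting to the vertex sets and by the induced bijection on edge sets, and any decoration respected by $\Phi$ is automatically respected by these restrictions. The content of the lemma is the converse, which I will handle decoration-by-decoration.

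For vertex colorings, compatibility of $\Phi$ with the coloring amounts by definition to the equation $f_1 = f_2 \circ \Phi|_{V_1}$, and this is exactly the statement that $\langle\Phi\rangle$ preserves the vertex coloring on the underlying abstract graphs, since $\langle\Phi\rangle$ and $\Phi$ agree on the vertex set. For edge colorings, the argument is analogous: the observation following Definition~\ref{dfn.spatialgraph} identifies $E_i$ with $\pi_0(|\Gamma_i|\setminus V_i)$, so the bijection $E_1 \to E_2$ induced by $\Phi$ coincides with the edge-level component of~$\langle\Phi\rangle$, and compatibility with the edge coloring is the same condition whether phrased for $\Phi$ or for $\langle\Phi\rangle$.

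For edge orientations, recall that $\Phi$ respects the orientations iff for every $e \in E_1$, the restriction $\Phi|_e \colon e \to \Phi(e)$ is an orientation-preserving PL homeomorphism of oriented $1$-manifolds. If $e$ is a non-loop edge, then $s(e) \ne t(e)$, and by the standard characterization of orientation on an interval, $\Phi|_e$ is orientation-preserving iff it maps $s(e)$ to $s(\Phi(e))$ (equivalently, $t(e)$ to $t(\Phi(e))$). This last condition is precisely what $\langle\Phi\rangle$ preserving source/target assignments records. Under our hypothesis — either the~$\Gamma_i$ are not directed, or they have no loops — this accounts for every directed edge, and so orientation-compatibility of $\Phi$ is equivalent to directedness-compatibility of $\langle\Phi\rangle$.

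The only obstruction to the lemma in full generality is the loop case in the directed setting: on a loop~$e$, one has $s(e) = t(e)$, so the abstract directed structure records no information distinguishing the two orientations of the circle~$e$, whereas $\Phi|_e$ might be orientation-reversing. The hypothesis is tailored exactly to exclude this one configuration, and combining the three cases above gives the claim.
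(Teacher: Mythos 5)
Your proof is correct and matches the paper's intent: the paper omits an explicit proof, stating the lemma as a "straightforward consequence" of the immediately preceding observation that the induced decoration on $\langle\Gamma\rangle$ determines the decoration of $\Gamma$ except for the orientation of loops, which is exactly the case-by-case argument you spell out. Nothing is missing.
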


We now summarize the adaptations of the main definitions and statements regarding the operations of disjoint union and vertex sum:

\begin{itemize}
\item The disjoint union of spatial graphs with decorations of the same type is defined as the disjoint union of the underlying spatial graphs, and it carries a decoration of the same type in the obvious way.

\item The vertex sum of two pointed spatial graphs with decorations of the same type is similarly defined provided that, in case a vertex coloring is part of the decoration, the basepoints are of the same color.

\item Isomorphisms between decorated spatial graphs can be assembled along disjoint unions and vertex sums, in the sense of Lemmas \ref{lem.isodisjunion}~and~\ref{lem.isovsum}.

\item In the setting of decorated spatial graphs, there is still a well-defined identity element~$\zero$ for the disjoint union, but if a vertex coloring is part of the decoration, there is one isomorphism type~$\one_c$ of one-point spatial graph for each color $c\in \N$.

\item The properties of disjoint union and vertex sum listed in Propositions \ref{prop.disjunprop}~and~\ref{prop.propvsum} hold for spatial graphs with decorations of the same type. If a vertex coloring is part of the decoration, the occurrence of~$\one$ in the first item of Proposition~\ref{prop.propvsum} should be read as~$\one_c$, where $c$~is the color of~$v_1$.

\item The definitions of separating sphere, separable spatial graph, and piece remain unchanged in the decorated setting (Definition~\ref{dfn.septhings}). Every decorated spatial graph can be expressed as an iterated disjoint union of (decorated) pieces, in a way that is unique in the sense of Proposition~\ref{prop.uniquepieces}.

\item In the definition of a tree of spatial graphs (Definition~\ref{dfn.tree}), all~$\Gamma_i$ should have a decoration of the same type. Moreover, if a vertex coloring is part of the decoration, we require that for each $j \in J$, the vertices~$v(l)$ with $l$~adjacent to~$j$ all be of the same color. Realizations of trees of decorated spatial graphs are then well-defined in the sense of Lemma~\ref{lem.welldefrealization}, carrying a canonical decoration of the same type.

\item In the definitions of cut vertex, cut sphere and block (Definition~\ref{dfn.cutthings}), if we are in the colored vertex setting, occurrences of the expression ``not isomorphic to~$\one$'' should be read as ``not isomorphic to any~$\one_c$''. The definition of a tree of blocks remains unchanged.

\item Propositions \ref{prop.treeexists}~and~\ref{prop.uniquetree} apply to decorated graphs: every non-separable decorated spatial graph that is not isomorphic to a one-point graph is the realization of a tree of decorated blocks in a unique way.
\end{itemize}

We finish this section by remarking that vertex colorings will be used in an essential way for proving our main algorithmic recognition result for pieces (Proposition~\ref{prop.piecerecognition}), even when the pieces do not come with vertex colorings (explicitly, we use vertex colorings in the proof of Lemma~\ref{lem.multipointedblocs}).
\section{The marked exterior of a spatial graph}\label{sec.markedexterior}

In this section we will construct the ``marked exterior'' of a (decorated) spatial graph, which turns out to be a ``manifold with boundary pattern''. We will explain how it encodes the spatial graph used to construct it, and translate indecomposability properties of spatial graphs into properties of their marked exteriors.

\subsection{Construction and faithfulness}\label{sec.defmarkedexterior}
We will first introduce a simpler variant of the marked exterior of a spatial graph $\Gamma$, which we will call the ``oriented marked exterior''. This will be a pair $(X_\Gamma^\circ,P_\Gamma^\circ)$, where $X_\Gamma^\circ$ is an oriented PL 3-manifold and $P_\Gamma^\circ$ is an oriented one-dimensional submanifold of the boundary $\partial X_\Gamma^\circ$. In case the support $|\Gamma|$ is a union of circles, so it can be thought of as a link, the manifold $X_\Gamma^\circ$ will be homeomorphic to the exterior of that link.

Let $\Gamma=(\SPH,V,E)$ be a spatial graph. First, choose a regular neighborhood $N_V$ of $V$ in the pair $(\SPH,|\Gamma|)$.
Since $V$ is a finite disjoint union of points, $N_V$ will be a disjoint union of 3-balls \cite[Corollary~3.12]{RourkeSanderson}, each containing exactly one vertex $v\in V$. We will denote that ball by $N_v$.
Further, we will denote by $X_V$ the compact PL 3-manifold $\SPH\setminus \intr(N_V)$.

For the next step, choose a regular neighborhood $N_E$ of $|\Gamma|\cap X_V$ in $X_V$. As $|\Gamma|\cap X_V$ is the disjoint union of all properly embedded arcs $e\cap X_V$, with $e\in E$, the regular neighborhood $N_e$ of each $e\cap X_V$ is a 3-ball and $(N_e,e\cap X_V)$ is an unknotted ball pair \cite[Corollary~3.27]{RourkeSanderson} (this reference states only that $N_e$ is a 3-ball, but inspecting the proof of Rourke-Sanderson's Theorem~3.26 in our particular case reveals that the ball pair $(N_e,e\cap X_V)$ is unknotted).

Clearly, $N_V\cup N_E$ is a disjoint union of handlebodies, thus $X_\Gamma^\circ := \SPH\setminus \intr(N_V\cup N_E)$ is a compact PL 3-manifold. The boundary of $X_\Gamma^\circ$ is contained in $\partial X_V\cup \partial N_E$. For each vertex $v\in V$, we denote by $R_v$ the \textbf{vertex region} of~$v$, which is the part of $\partial X_\Gamma^\circ$ contained in $N_v$. To be precise $R_v:=\partial N_v \cap \partial X_\Gamma^\circ$. Similarly, we call $R_e:=\partial N_e \cap \partial X_\Gamma^\circ$ the \textbf{edge region} of $e\in E$. Notice that $R_e$ is always an annulus, and if $v$ is a vertex of degree $d$, $R_v$ is a 2-sphere with the interiors of $d$ disjoint discs removed. 

The next step is to define $P_\Gamma^\circ$. Notice that the vertex regions and edge regions of $\partial X_\Gamma^\circ$ intersect in circles, which we will sometimes refer to as ``junctures''. We set $P_\Gamma^\circ:=\left(\bigcup_{v\in V}R_v\right) \cap \left(\bigcup_{e\in E} R_e\right)$ as the union of these junctures. Then $P_\Gamma^\circ$ separates $\partial X_\Gamma^\circ$ into the vertex and edge regions. As $X_\Gamma^\circ$ inherits an orientation from the ambient sphere $\SPH$, also $\partial X_\Gamma^\circ$ has a canonical orientation. Now we orient $P_\Gamma^\circ = \partial \left(\bigcup_{v\in V}R_v\right)$ as the boundary of the union of all vertex regions. Notice that instead viewing $P_\Gamma^\circ$ as the boundary of the union of the edge regions would induce the opposite orientation on $P_\Gamma^\circ$ -- in other words, the orientation of~$P_\Gamma^\circ$ determines which regions of $\partial X_\Gamma^\circ$ are vertex regions. As each edge is incident to at least one vertex, each vertex or edge region of $\partial X_\Gamma^\circ$ that has no boundary has to be a vertex region (corresponding to an isolated vertex).

An example of such a pair $(X_\Gamma^\circ,P_\Gamma^\circ)$ is illustrated in Figure~\ref{fig.orientedexterior}.

\begin{dfn}\label{dfn.orext}
  An \textbf{oriented marked exterior} of a spatial graph $\Gamma$ is a pair $(X_\Gamma^\circ, P_\Gamma^\circ)$, where $X_\Gamma^\circ$ and $P_\Gamma^\circ$ are oriented PL manifolds obtained by the above construction.
\end{dfn}
\begin{figure}[h]
  \centering
  \def \svgwidth{0.7\linewidth}
  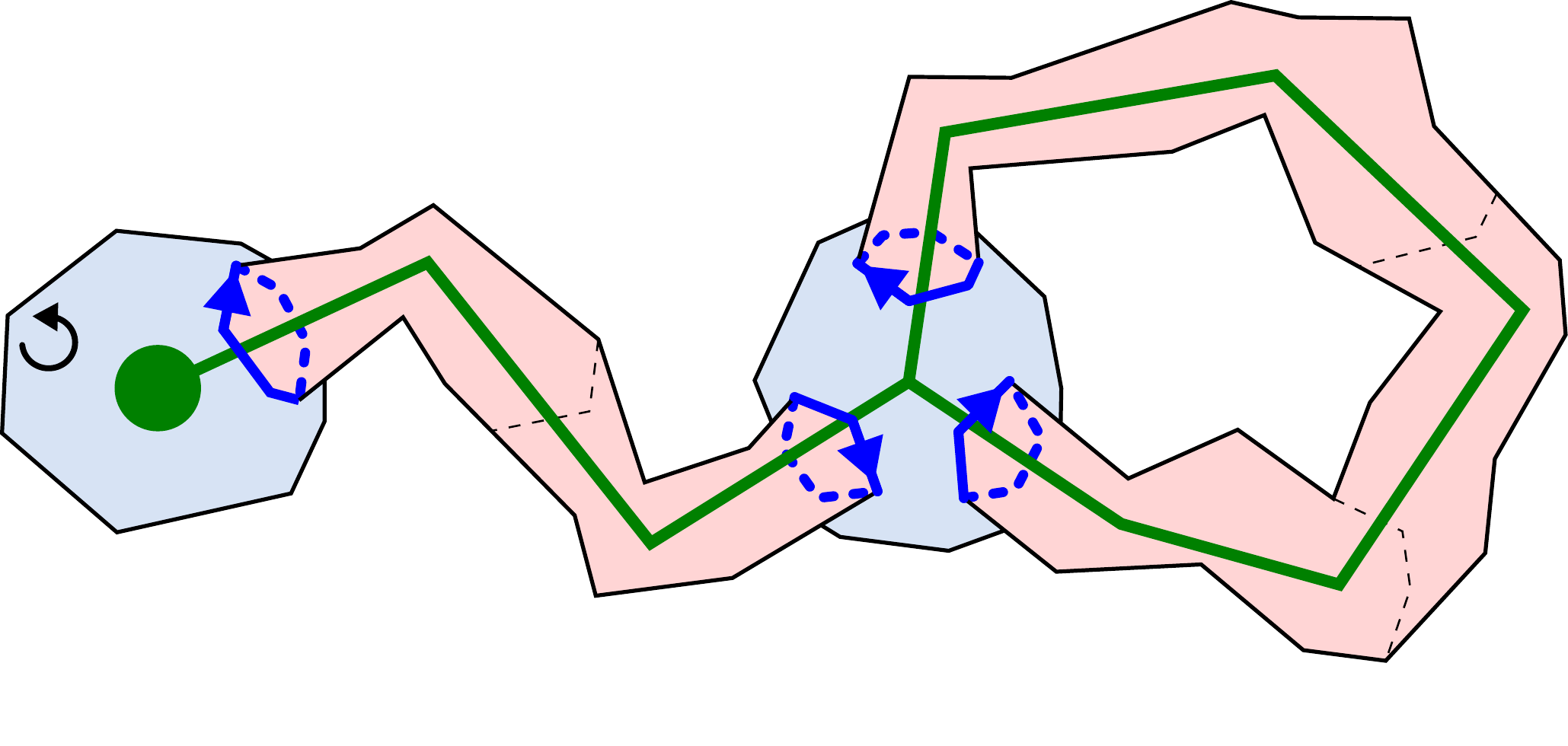
  \caption{The oriented marked exterior of a spatial graph. We indicate the orientation of the boundary and of the junctures by arrows. We also highlight the vertex and edge neighborhoods $N_v, N_e$ used in its construction, as well as the corresponding regions of the boundary.}
  \label{fig.orientedexterior}
\end{figure}

The next two propositions will show that the oriented marked exterior is indeed a useful notion for studying spatial graphs.

\begin{prop}[Well-definedness of oriented marked exteriors]\label{prop.orext_welldef}
  Let $\Gamma$ be a spatial graph and consider two oriented marked exteriors $(X_k^\circ,P_k^\circ)$ for $k\in\{1,2\}$. Then there exists an orientation-preserving homeomorphism of pairs $\Phi\colon (X_1^\circ,P_1^\circ)\to (X_2^\circ,P_2^\circ)$ such that for each vertex $v$ of $\Gamma$, the map~$\Phi$ sends the corresponding vertex region $R_{v,1}$ to $R_{v,2}$, and similarly for edge regions.
\end{prop}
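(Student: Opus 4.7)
The plan is to leverage the standard uniqueness of regular neighborhoods in PL topology, applied separately to the two choices $N_V$ and $N_E$ made during the construction. Write $N_V^k$ and $N_E^k$ for the neighborhoods used in constructing $(X_k^\circ, P_k^\circ)$. First I would invoke uniqueness of regular neighborhoods of the compact sub-polyhedron $V$ in the pair $(\SPH, |\Gamma|)$ to obtain a PL ambient isotopy of $(\SPH, |\Gamma|)$, relative to $V$, whose final homeomorphism $\Phi_V \colon \SPH \to \SPH$ carries $N_V^1$ onto $N_V^2$. Being ambient-isotopic to $\id_\SPH$ inside an oriented $3$-sphere, $\Phi_V$ preserves orientation; connectedness, combined with the fact that the isotopy fixes each vertex, then forces $\Phi_V(N_v^1) = N_v^2$ for every $v \in V$. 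Moreover, $\Phi_V$ maps $|\Gamma| \cap X_V^1$ homeomorphically onto $|\Gamma| \cap X_V^2$, so it carries regular neighborhoods of the former in $X_V^1$ to regular neighborhoods of the latter in $X_V^2$.

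Next, I would apply the same uniqueness statement on the level of the edges: $\Phi_V(N_E^1)$ and $N_E^2$ are both regular neighborhoods of $|\Gamma| \cap X_V^2$ in the pair $(X_V^2, |\Gamma| \cap X_V^2)$, so uniqueness produces an ambient isotopy of $X_V^2$ relative to $|\Gamma| \cap X_V^2$ (and, after adjusting, to $\partial X_V^2 \setminus \intr(\bigcup_e R_e^2)$) whose final map $\Phi_E$ takes the former onto the latter. Extending $\Phi_E$ by the identity across $N_V^2$ yields an orientation-preserving PL self-homeomorphism of $\SPH$, and component-wise connectedness then gives $\Phi_E(\Phi_V(N_e^1)) = N_e^2$ for every edge $e$.

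Setting $\Phi := \Phi_E \circ \Phi_V$, this composition is an orientation-preserving PL self-homeomorphism of $\SPH$ mapping $N_V^1 \cup N_E^1$ onto $N_V^2 \cup N_E^2$, and hence restricting to an orientation-preserving PL homeomorphism $X_1^\circ \to X_2^\circ$ that sends each vertex region $R_{v,1} = \partial N_v^1 \cap \partial X_1^\circ$ to $R_{v,2}$, and each edge region $R_{e,1}$ to $R_{e,2}$. It follows that $\Phi(P_1^\circ) = P_2^\circ$, and the orientation of $P_\Gamma^\circ$ is preserved because it is determined by the orientations of the vertex regions, which are induced from the ambient $\SPH$ and hence respected by $\Phi$.

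I anticipate that the main technical point is the second step: after applying $\Phi_V$, one must verify that $\Phi_V(N_E^1)$ is genuinely a regular neighborhood of $|\Gamma| \cap X_V^2$ in the correct pair, and that the subsequent ambient isotopy of $X_V^2$ can be arranged to be supported away from $\partial N_V^2$, so that extending by the identity across $N_V^2$ gives a globally PL homeomorphism that still maps the individual neighborhoods $N_e^1$ to their counterparts $N_e^2$. Both facts follow from standard properties of regular neighborhoods in the PL category (Rourke--Sanderson, Chapter~3), but pinning down the precise pair-versions of the uniqueness theorems is the place where care is needed.
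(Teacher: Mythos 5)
Your argument is essentially the paper's: two applications of the uniqueness of regular neighbourhoods, first to the vertex neighbourhoods in the pair $(\SPH,|\Gamma|)$ and then to the edge neighbourhoods inside $X_V^2$, with the matching of vertex/edge regions and the orientation statements following from connectedness and the fact that the isotopies preserve $|\Gamma|$.

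The one step that does not work as written is extending $\Phi_E$ by the identity across $N_V^2$. The neighbourhood $N_E^2$ meets $\partial X_V^2$ in a collection of discs around the points of $|\Gamma|\cap\partial X_V^2$, and $\Phi_V(N_E^1)$ meets it in a generally different collection of discs; any ambient isotopy of $X_V^2$ carrying one neighbourhood onto the other must therefore move $\partial X_V^2$, so its final map $\Phi_E$ is not the identity on $\partial N_V^2$ and the proposed extension is not even well defined as a continuous map. Likewise, the isotopy cannot be ``arranged to be supported away from $\partial N_V^2$''. This is harmless, because the extension is unnecessary: the proposition only asks for a homeomorphism of pairs $(X_1^\circ,P_1^\circ)\to(X_2^\circ,P_2^\circ)$, which you already obtain by restricting $\Phi_E\circ\Phi_V$ to $X_1^\circ$ -- this is exactly what the paper does. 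If you do want a global self-homeomorphism of $\SPH$, extend $\Phi_E$ over each $3$-ball $N_v^2$ by coning its boundary values at $v$ rather than by the identity.
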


\begin{proof}
  Let $N_{V,k}$ and $N_{E,k}$ be the regular neighborhoods that appear in the construction of $X_k^\circ$. Similarly, we will use the corresponding notation $N_{v,k}, N_{e,k}$ for the components corresponding to single vertices and edges, as well as $X_{V,k}$ for $\SPH\setminus\intr(N_{V,k})$.
  
  By the Regular Neighborhood Theorem for pairs \cite[Theorem~4.11]{RourkeSanderson}, there is a PL isotopy of $\SPH$ that carries $N_{V,1}$ to $N_{V,2}$, fixing $|\Gamma|$. This induces an orientation-preserving PL homeomorphism $\Phi_V\colon N_{V,1}\to N_{V,2}$. As $|\Gamma|$ is fixed during the isotopy, $\Phi_V$ maps $N_{v,1}$ to $N_{v,2}$ for each vertex $v$ of $\Gamma$ and fixes the edges as well. 
  
  Since PL homeomorphisms take regular neighborhoods to regular neighborhoods, $\Phi_V(N_{E,1})$ and $N_{E,2}$ are two  regular neighborhoods of $|\Gamma|\cap X_{V,2}$ in~$X_{V,2}$. Using the Regular Neighborhood Theorem again, we find a PL isotopy of~$X_{V,2}$ that carries $\Phi_V(N_{E,1})$ to~$N_{E,2}$.
  This isotopy carries $\Phi_V(X_1^\circ)$ to~$X_2^\circ$. This shows that the two marked exteriors are in fact PL-homeomorphic via an orientation-preserving PL homeomorphism $\Phi\colon X_1^\circ\to X_2^\circ$. As these isotopies fix $|\Gamma|$, each vertex and edge region of the boundary gets mapped to the corresponding vertex respectively edge region, thus mapping $P_1^\circ$ to~$P_2^\circ$. As $\Phi$ is orientation-preserving and the orientation of the junctures is determined by the orientation of $X_k$, $\Phi$ also preserves the orientation of the~$P_k^\circ$. 
\end{proof}

As we will often only use the oriented PL homeomorphism type of the oriented marked exterior, which is indeed unique by the above proposition, we will frequently refer to it as \emph{the} oriented marked exterior of the spatial graph.
We now show that the oriented marked exterior $(X_\Gamma^\circ,P_\Gamma^\circ)$ fully encodes the isomorphism type of $\Gamma$.

\begin{prop}[Faithfulness of oriented marked exteriors]\label{prop.orext_uniqueness}
  Let $\Gamma_1, \Gamma_2$ be two spatial graphs, $(X_1^\circ,P_1^\circ)$ and $(X_2^\circ,P_2^\circ)$ their oriented marked exteriors, and $\Phi_X\colon (X_1^\circ,P_1^\circ)\to (X_2^\circ,P_2^\circ)$ a PL homeomorphism preserving the orientation of both factors. Then $\Phi_X$ extends to an isomorphism $\Phi\colon \Gamma_1\to\Gamma_2$ of spatial graphs, where $\langle \Phi\rangle\colon \langle\Gamma_1\rangle \to \langle\Gamma_2\rangle$ is the induced map of underlying graphs.
\end{prop}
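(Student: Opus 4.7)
The strategy is to extend $\Phi_X$ to a PL homeomorphism $\Phi\colon \SPH_1 \to \SPH_2$ by defining it piecewise on the complementary pieces $N_{V,k} \cup N_{E,k}$. First I would extract the combinatorial data. Because the orientation of $P_k^\circ$ distinguishes vertex regions from edge regions (as remarked after Definition~\ref{dfn.orext}) and $\Phi_X$ preserves this orientation, it maps vertex regions to vertex regions and edge regions to edge regions. Since each vertex (resp.\ edge) region is a single connected component of its type, this induces bijections $f \colon V_1 \to V_2$ and $g \colon E_1 \to E_2$. Incidence is preserved: a vertex $v$ is incident to an edge $e$ precisely when $R_v \cap R_e$ contains a juncture circle, and $\Phi_X$ respects junctures.

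Next I would extend $\Phi_X$ inward in three coordinated stages, taking care that pieces meeting along shared boundary are treated consistently. Stage one: extend across each ``end disc'' $D_{v,e} := \partial N_{v,1} \cap \partial N_{e,1}$, whose boundary is a juncture on which $\Phi_X$ is already defined. Since every PL self-homeomorphism of $S^1$ extends to~$D^2$ \cite[Lemma~1.10]{RourkeSanderson}, this is straightforward and yields a PL homeomorphism $\Phi_1 \colon \partial N_{V,1} \cup \partial N_{E,1} \to \partial N_{V,2} \cup \partial N_{E,2}$ matching boundaries of $N_{v,1}$ with those of $N_{f(v),2}$, and similarly for edges. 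Stage two: for each vertex $v$, extend $\Phi_1|_{\partial N_{v,1}}$ (a PL homeomorphism of $2$-spheres) to the $3$-ball $N_{v,1} \to N_{f(v),2}$, using the same lemma, and adjust by an interior isotopy so that $v \mapsto f(v)$.

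Stage three is the main technical step: extend $\Phi_1|_{\partial N_{e,1}}$ to a PL homeomorphism of pairs $(N_{e,1}, e_1) \to (N_{g(e),2}, g(e)_2)$. Both are unknotted ball pairs (by construction of $N_E$), so I would first pick any PL homeomorphism of pairs $\Psi$ between them, and then correct $\Psi|_{\partial N_{g(e),2}}^{-1} \circ \Phi_1|_{\partial N_{e,1}}$ to a self-homeomorphism of the target pair via the Disc Theorem for pairs (Theorem~\ref{thm.discpair}) applied to the inclusion and to $\Psi$, obtaining an ambient isotopy of $(N_{g(e),2}, g(e)_2)$ that realigns the boundaries. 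The hard point here is ensuring the boundary matching respects the endpoints of the arcs, which is automatic provided the end-disc extensions in stage one are chosen to send arc endpoints to arc endpoints — this can be arranged since each end disc contains exactly one endpoint in its interior and the analogous statement for discs lets us prescribe the image of an interior point.

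Finally, all the extensions assemble into a PL homeomorphism $\Phi \colon \SPH_1 \to \SPH_2$ because stage one served as a common boundary value for stages two and three. By construction $\Phi$ restricts to a bijection $V_1 \to V_2$ and maps edges to edges, so it is a PL homeomorphism of triples $(\SPH_1, |\Gamma_1|, V_1) \to (\SPH_2, |\Gamma_2|, V_2)$; orientation-preservation of the ambient sphere is inherited from $\Phi_X$. Hence $\Phi$ is an isomorphism of spatial graphs extending $\Phi_X$, with $\langle \Phi \rangle$ given by $(f, g)$ as required.
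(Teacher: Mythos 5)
Your decomposition of the extension problem (end discs, then vertex balls, then edge-neighbourhood ball pairs) is the same as the paper's, and stages one and three are in substance correct. One remark on stage three: the clean reference is that any PL homeomorphism of the boundary of an unknotted ball pair extends to the interior \cite[Theorem~4.4]{RourkeSanderson}, which is what the paper cites; the Disc Theorem for pairs concerns isotoping embeddings into the \emph{interior} of a manifold pair relative to $\partial M$, so it cannot be used to adjust a map defined on the boundary of the ball pair itself. That is a citation problem rather than a mathematical one.

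The genuine gap is in stage two. You treat $N_{v,1}$ as though it met $|\Gamma_1|$ only in the point $v$, but $N_v$ is a regular neighbourhood of $v$ in the \emph{pair} $(\SPH,|\Gamma|)$, so it contains an initial arc of every edge incident to $v$ (namely the cone $v\cdot(|\Gamma|\cap\partial N_v)$). An arbitrary extension of the sphere homeomorphism $\partial N_{v,1}\to\partial N_{f(v),2}$ to the $3$-balls, followed by an interior isotopy moving a single point to arrange $v\mapsto f(v)$, gives no control over where these arcs go: their images are arcs from $f(v)$ to the correct boundary points, but they need not coincide with — or even be ambient isotopic rel boundary to — the corresponding arcs of $|\Gamma_2|\cap N_{f(v),2}$, since an arc with one endpoint in the interior of a ball can be locally knotted. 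So the assembled map need not carry $|\Gamma_1|$ onto $|\Gamma_2|$. The fix is the one the paper uses: $(N_{v,k},\,|\Gamma_k|\cap N_{v,k})$ is a cone pair over $(\partial N_{v,k},\,|\Gamma_k|\cap\partial N_{v,k})$ with cone point $v$, so one extends the already-constructed boundary map by coning at $v$; this simultaneously forces $v\mapsto f(v)$ and carries the arcs of $|\Gamma_1|$ inside $N_{v,1}$ onto those of $|\Gamma_2|$ inside $N_{f(v),2}$. (The same coning observation is what legitimately underlies your stage-one claim that the end-disc extensions can be chosen to match the single intersection point of each end disc with the edge.)
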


\begin{proof}
  Recall that the orientation of~$P_\Gamma^\circ$ determines which regions of~$\partial X_\Gamma^\circ$ are vertex regions. Thus, preserving the orientation of~$P_k^\circ$ is equivalent to mapping vertex regions to vertex regions (and edge regions to edge regions). We need only to show that an orientation-preserving PL homeomorphism $\Phi_X\colon X_1^\circ\to X_2^\circ$ respecting vertex/edge regions of the boundary extends to an isomorphism of spatial graphs $\Gamma_1\to \Gamma_2$.
  
  We will denote the regular neighborhoods that are used in the construction of~$X_k$ by $N_{V,k}$ and $N_{E,k}$.
  The discs $N_{E,k}\cap \partial N_{V,k}$ are bounded by the junctures in $P_k^\circ$, and each disc intersects $|\Gamma_k|$ in exactly one point in its interior.
  Since discs are always PL-homeomorphic to cones over any of their interior points, we can use the cone construction to extend~$\Phi_X$ to a PL homeomorphism $\Phi_{X^+}\colon X_1^\circ\cup (N_{E,1}\cap \partial N_{V,1})\to X_2^\circ \cup (N_{E,2}\cap \partial N_{V,2})$ mapping the intersection points of $|\Gamma_k|$ with $N_{E,k}\cap \partial N_{V,k}$ to each other.
  
  Note that each $(N_{e,k},e\cap X_{V,k})$ is an unknotted ball pair. As any PL homeomorphism of the boundary of an unknotted ball pair extends to the interior \cite[Theorem~4.4]{RourkeSanderson}, we can extend $\Phi_{X^+}$ to a PL homeomorphism $\Phi_{X_V}\colon X_{V,1}\to X_{V,2}$ that maps $|\Gamma_1|\cap X_{V,1}$ to $|\Gamma_2|\cap X_{V,2}$.
  
  Since each $N_{v,k}$ is PL-homeomorphic to a cone with base a 2-sphere containing $R_{v,k}$ and cone point $v$, such that $|\Gamma|\cap N_{v,k}$ corresponds to the cone over $|\Gamma|\cap \partial N_{v,k}$, we may cone the already defined map on each $\partial N_{v,1}$. This finishes the extension of $\Phi_X$ to the whole ambient sphere of~$\Gamma_1$, giving the desired isomorphism of graphs.
\end{proof}

We have shown how to fully encode a spatial graph as its oriented exterior, but
as Matveev's Recognition Theorem is insensitive to orientations, we need to refine our construction. Thus, we will introduce further markings on the boundary, which encode the orientation of the manifold and possibly decorations of the spatial graph.

\begin{dfn}[{\cite[Definition~3.3.9]{Matveev}}]\label{dfn.mfdwithbpattern}
  A \textbf{manifold with boundary pattern} $(M,P)$ is a PL 3-manifold $M$ together with a 1-dimensional subpolyhedron $P\subset\partial M$ containing no isolated points.
  
  A \textbf{homeomorphism} of manifolds with boundary pattern is a PL homeomorphism of pairs $(M_1,P_1)\to (M_2,P_2)$.
\end{dfn}

Of course if we ignore orientations, then $(X_\Gamma^\circ,P_\Gamma^\circ)$ as defined above is an example of a manifold with boundary pattern.

In the following two paragraphs we describe the idea behind the upgraded marking, and afterwards we give a precise definition. Note that in order to avoid restricting the spatial graph isomorphisms that can be detected by comparing marked exteriors, we need to ensure that all modifications to the oriented marked exterior are independent of artificial choices.

First, we have to encode which regions of $\partial X_\Gamma^\circ$ are vertex regions, and the orientations of $X_\Gamma^\circ$ and $P_\Gamma^\circ$. The orientation of $X_\Gamma^\circ$ can be recovered by the orientation of the junctures and knowing which regions of $\partial X_\Gamma^\circ$ are the vertex regions. To encode the orientation of a juncture $\gamma$, choose three points on it. An orientation of $\gamma$ is then the same data as a cyclic ordering of these three points. From one of the points extend one arc into the vertex region. From the next point in the cyclic ordering extend two arcs into the vertex region, and three arcs from the third point. This new marking encodes which regions are the vertex regions as well as the orientations of the junctures, and hence also the orientation of~$X_\Gamma^\circ$.

To also account for decorations of the spatial graph, we have to further modify these markings. To encode the vertex coloring $f\colon V\to \N$, rather then extending three arcs from the third point of each juncture, extend $f(v)+3$ arcs instead. Similarly, to encode the edge coloring $g\colon E\to \N$, extend $g(e)$ arcs from the same third point into the edge region. We are left to encode the orientations of the edges. Note that there are two junctures around each edge~$e$, corresponding to the positive and negative ends of~$e$ (with respect to any PL embedding $[-1,1]\to e\setminus V$ orienting~$e$). At the juncture corresponding to the positive end of~$e$, we extend a single arc into the edge region, starting from the second point of the cyclic ordering. After these modifications we are still able to tell apart vertex and edge regions locally around the junctures, as there are three points from which arcs extend into the vertex region, and from at most two of them there are arcs extending into the edge region.

To make this marking rigorous, we define $(k,l,m)$-model discs, where $k,l\in \N$ and $m\in\{0,1\}$, as seen in the left hand side of Figure~\ref{fig.modeldisc}.
A model disc is the standard oriented ball pair $(D,I):=([-1,1]^2,[-1,1]\times \{0\})$ together with a few line segments in $D$ emanating from points in $I$, as we now describe. Consider the points $p_1:=(-\frac{1}{2},0)$, $p_2:=(0,0)$ and $p_3:=(\frac{1}{2},0)$ of $I$. From $p_1$ we extend one line segment towards the upper region of $D$ (which induces the orientation of $I$). Starting at $p_2$ extend two line segments towards the same region and $m$ line segments into the lower region. From~$p_3$ extend $k+3$ line segments towards the upper region and $l$ line segments into the lower region of $D$.

To extend the marking on $X_\Gamma^\circ$, we first choose a regular neighborhood of the boundary pattern $P_\Gamma^\circ$ in the boundary $\partial X_\Gamma^\circ$. This regular neighborhood is comprised of an annulus $A_\gamma$ for each juncture $\gamma$. If the juncture $\gamma$ bounds the vertex region $R_v$ and the edge region $R_e$, choose an orientation-preserving PL embedding of the $(f(v),g(e),m)$-model disc $\Phi\colon(D,I)\to (A_\gamma,\gamma)$, where $f\colon V\to \N$ and $g\colon E\to \N$ are the coloring functions described in Section~\ref{sec.decorations} and $m=1$ if it is the juncture corresponding to the positive end of the incident edge, else $m=0$. Then, add the images of the line segments of the previous paragraph to the boundary pattern $P_\Gamma^\circ$. An example of this can be seen in Figure~\ref{fig.markedexterior}.

Additionally, we have to take care of isolated vertices. These are the only vertices whose regions of $\partial X_\Gamma$ have no marking yet. We will be using as a local model the marking seen in the right hand side of Figure~\ref{fig.modeldiscisolated}, again with 1, 2 and $f(v)+3$ lines extending from the triangle in the middle. We push forward this marking to each region $R_v$ corresponding to an isolated vertex $v$ via an orientation-preserving PL embedding of the disc. This also encodes the orientation of the boundary, which is important in case the spatial graph consists of only isolated vertices. 

Altogether,~$P_\Gamma^\circ$ together with the added markings near the junctures and at vertex regions corresponding to isolated vertices makes up a new marking~$P_\Gamma$.

\begin{dfn}\label{dfn.decext}
  A \textbf{marked exterior} of a decorated spatial graph~$\Gamma$ is a manifold with boundary pattern $(X_\Gamma,P_\Gamma)$, where $X_\Gamma=X_\Gamma^\circ$ as unoriented PL manifolds and $P_\Gamma$ is obtained from $P_\Gamma^\circ$ as described above.
\end{dfn}

We stress that whereas for an oriented marked exterior $(X_\Gamma^\circ,P_\Gamma^\circ)$, both $X_\Gamma^\circ$ and $P_\Gamma^\circ$ are oriented, the manifold $X_\Gamma$ is not explicitly oriented and $P_\Gamma$ is not even a manifold. Our construction was designed to allow for recovering the orientation data of $(X_\Gamma^\circ, P_\Gamma^\circ)$ from the unoriented object $(X_\Gamma,P_\Gamma)$.

\begin{figure}[h]
  \centering
  \def \svgwidth{0.35\linewidth}
  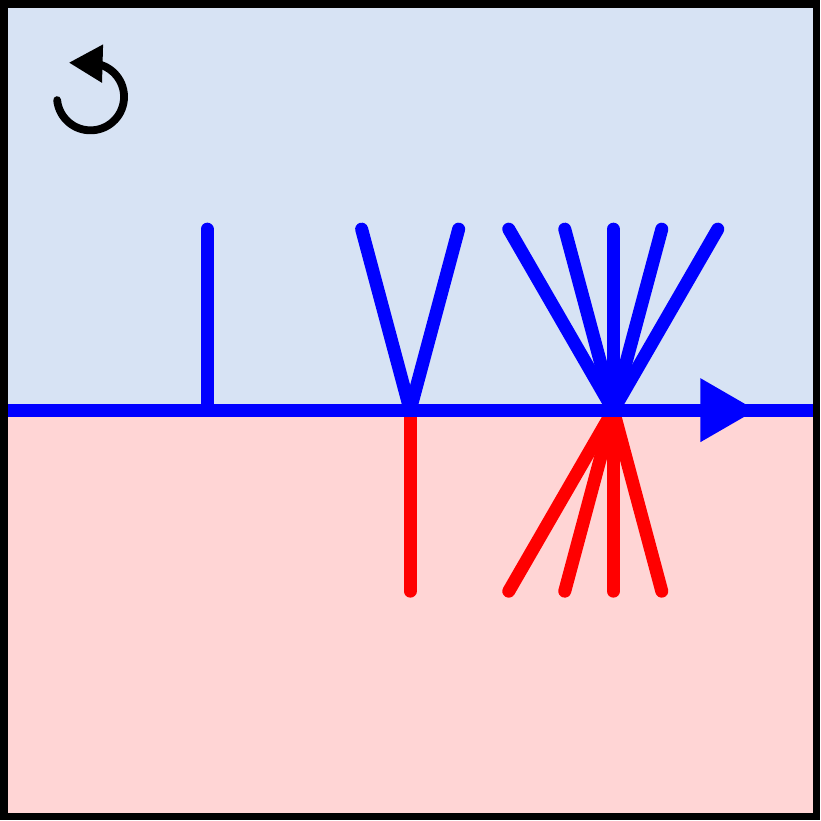 \hspace{4em}
  \def \svgwidth{0.35\linewidth}
  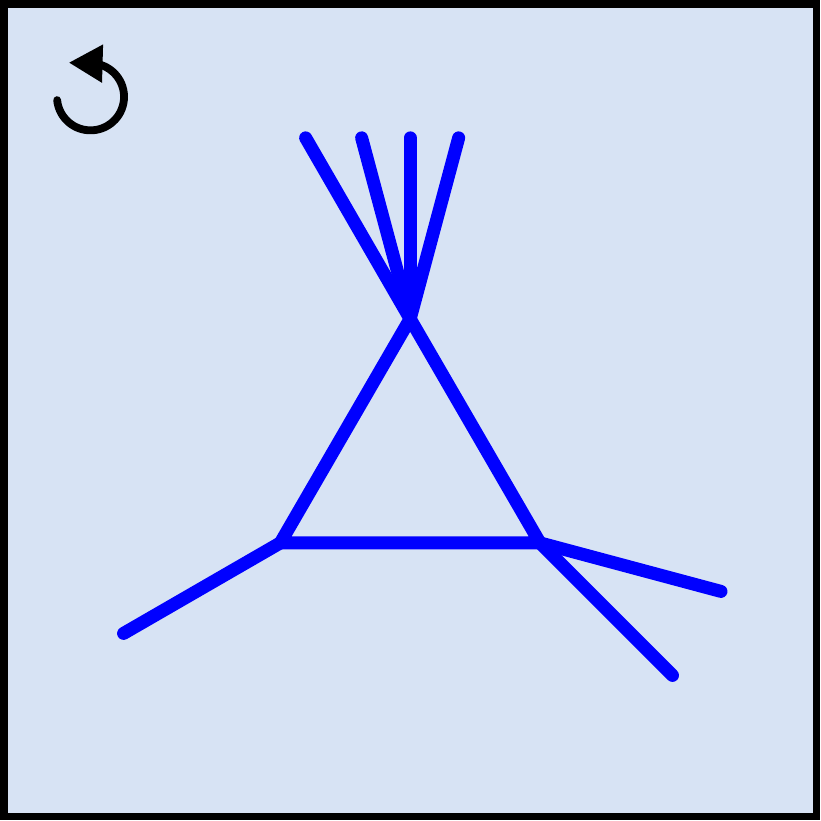
  \caption{Left: A $(2,4,1)$-model disc used in modifying $P_\Gamma^\circ$ into $P_\Gamma$. Right: Marking for an isolated vertex $v$ with vertex color $f(v)=1$. Arrows indicate orientations.}
  \label{fig.modeldisc}
  \label{fig.modeldiscisolated}
\end{figure}
\begin{figure}[h]
  \centering
  \def \svgwidth{0.6\linewidth}
\begingroup%
  \makeatletter%
  \providecommand\color[2][]{%
    \errmessage{(Inkscape) Color is used for the text in Inkscape, but the package 'color.sty' is not loaded}%
    \renewcommand\color[2][]{}%
  }%
  \providecommand\transparent[1]{%
    \errmessage{(Inkscape) Transparency is used (non-zero) for the text in Inkscape, but the package 'transparent.sty' is not loaded}%
    \renewcommand\transparent[1]{}%
  }%
  \providecommand\rotatebox[2]{#2}%
  \newcommand*\fsize{\dimexpr\f@size pt\relax}%
  \newcommand*\lineheight[1]{\fontsize{\fsize}{#1\fsize}\selectfont}%
  \ifx\svgwidth\undefined%
    \setlength{\unitlength}{708.60735075bp}%
    \ifx\svgscale\undefined%
      \relax%
    \else%
      \setlength{\unitlength}{\unitlength * \real{\svgscale}}%
    \fi%
  \else%
    \setlength{\unitlength}{\svgwidth}%
  \fi%
  \global\let\svgwidth\undefined%
  \global\let\svgscale\undefined%
  \makeatother%
  \begin{picture}(1,0.51053282)%
    \lineheight{1}%
    \setlength\tabcolsep{0pt}%
    \put(0,0){\includegraphics[width=\unitlength,page=1]{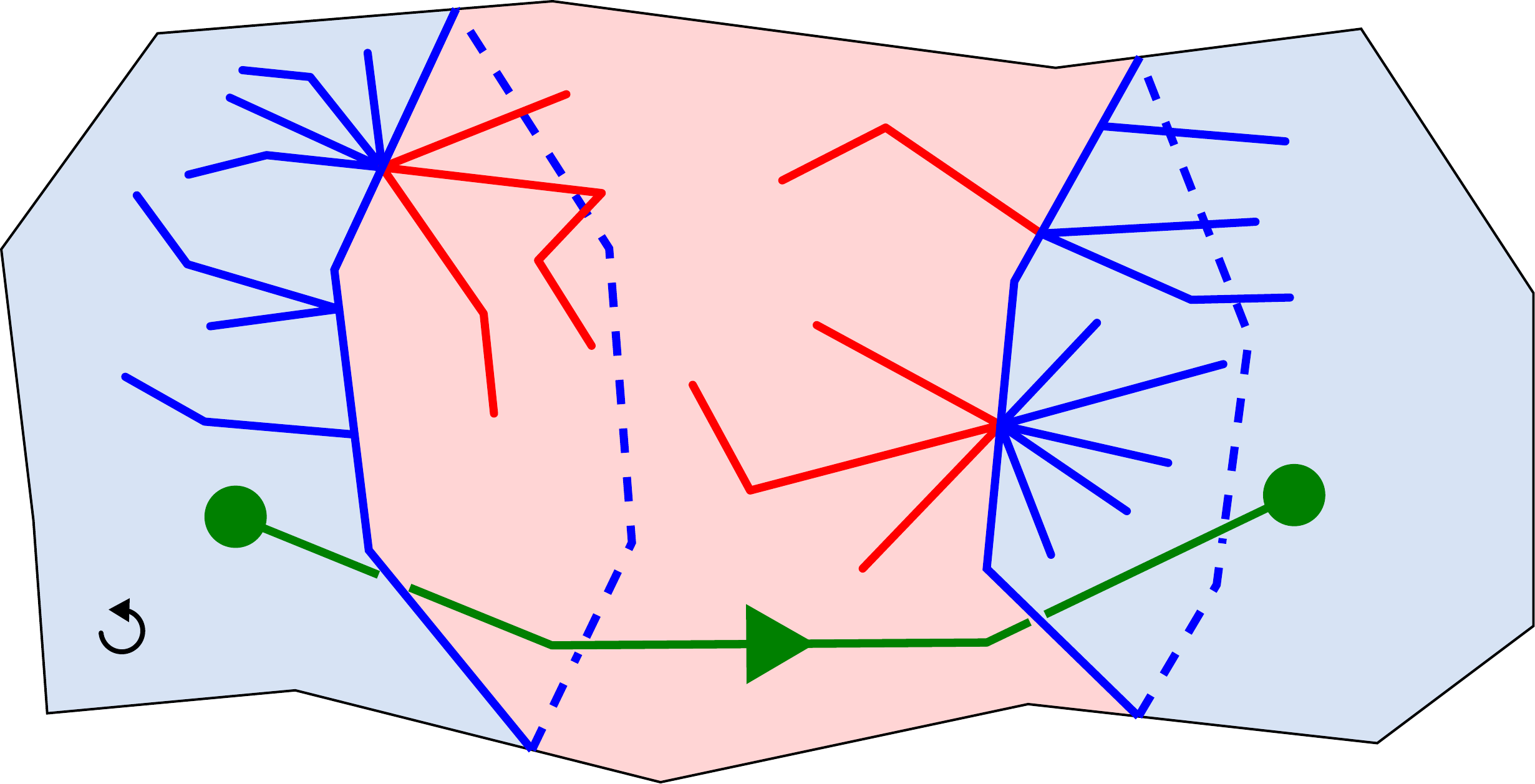}}%
    \put(0.10658515,0.15835437){\color[rgb]{0,0,0}\makebox(0,0)[lt]{\lineheight{1.25}\smash{\begin{tabular}[t]{l}1\end{tabular}}}}%
    \put(0.4579623,0.04924619){\color[rgb]{0,0,0}\makebox(0,0)[lt]{\lineheight{1.25}\smash{\begin{tabular}[t]{l}3\end{tabular}}}}%
    \put(0.86450968,0.17220406){\color[rgb]{0,0,0}\makebox(0,0)[lt]{\lineheight{1.25}\smash{\begin{tabular}[t]{l}2\end{tabular}}}}%
  \end{picture}%
\endgroup%

  \caption{The marked exterior of a decorated spatial graph. The colors of the vertices and the edge are indicated by the numbers.}
  \label{fig.markedexterior}
\end{figure}

In the next two propositions we will show the well-definedness and faithfulness of the marked exterior.

\begin{prop}[Well-definedness of marked exteriors]\label{prop.decext_welldef}
  Let $\Gamma$ be a decorated spatial graph and consider two marked exteriors $(X_k,P_k)$ for $k\in\{1,2\}$. Then there exists a homeomorphism of manifolds with boundary pattern $\Phi\colon (X_1,P_1)\to (X_2,P_2)$ such that for each vertex $v$ of $\Gamma$, $\Phi$ sends the corresponding vertex regions $R_{v,1}$ to $R_{v,2}$ and similarly for edge regions.
\end{prop}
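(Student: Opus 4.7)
The plan is to reduce to the already-established well-definedness of the oriented marked exterior. By Proposition~\ref{prop.orext_welldef} there is an orientation-preserving PL homeomorphism of pairs $\Phi^\circ\colon (X_1^\circ,P_1^\circ)\to (X_2^\circ,P_2^\circ)$ sending each vertex region $R_{v,1}$ to $R_{v,2}$ and each edge region $R_{e,1}$ to $R_{e,2}$. Pulling back along $\Phi^\circ$, I may assume $X_1=X_2=:X$ as oriented manifolds and $P_1^\circ=P_2^\circ=:P^\circ$, with the vertex/edge region decomposition already matching. The task then reduces to producing a PL self-homeomorphism of $X$ that carries the additional markings of $P_1$ onto those of $P_2$ while preserving $P^\circ$ and the vertex/edge regions.

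Since $P_1\setminus P^\circ$ and $P_2\setminus P^\circ$ lie in a disjoint union of pieces indexed by junctures and by isolated-vertex regions, I work locally on each piece. For a juncture $\gamma$ bounding $R_v$ and $R_e$, both markings are built from (i) an annular regular neighborhood $A_{\gamma,k}$ of $\gamma$ in $\partial X$, and (ii) an orientation-preserving PL embedding of the $(f(v),g(e),m)$-model disc $(D,I)$ into $(A_{\gamma,k},\gamma)$. Applying the Regular Neighborhood Theorem in the pair $(\partial X, P^\circ)$ produces a PL ambient isotopy of $\partial X$, rel~$P^\circ$, carrying $A_{\gamma,1}$ onto $A_{\gamma,2}$; so I may further assume $A_{\gamma,1}=A_{\gamma,2}=:A_\gamma$. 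I am then left with two orientation-preserving PL embeddings of the unknotted ball pair $(D,I)$ into the connected oriented PL manifold pair $(A_\gamma,\gamma)$. After a small collar adjustment pushing these embeddings into the interior of the pair, the Disc Theorem for pairs (Theorem~\ref{thm.discpair}) yields a PL ambient isotopy of $(A_\gamma,\gamma)$, relative to $\partial A_\gamma$, carrying one embedding to the other, and in particular identifying the corresponding arcs added to the markings.

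For an isolated vertex $v$ the region $R_v$ is a $2$-sphere carrying a prescribed marking pushed in by an orientation-preserving PL embedding of a single disc, and two such embeddings of a ball into $\intr(R_v)$ are PL ambient isotopic by the Disc Theorem (Theorem~\ref{thm.discthm}). Because the annular neighborhoods $A_\gamma$ for distinct junctures and the isolated-vertex regions are pairwise disjoint (and disjoint from $P^\circ$ outside of $\gamma$ itself), all these local isotopies have disjoint support and can be concatenated into a single PL ambient isotopy of $\partial X$, rel $P^\circ$, that carries $P_1$ onto $P_2$. Finally, using a collar of $\partial X$ in $X$, this boundary isotopy extends to a PL isotopy of $X$ whose terminal homeomorphism is the desired map.

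The main obstacle is the compatibility of the local adjustments: the regular-neighborhood isotopies used to align the $A_{\gamma,k}$ must not disturb $P^\circ$, and the subsequent disc-pair isotopies inside each $A_\gamma$ must fix $\partial A_\gamma$ so that the alignment of the $A_{\gamma,k}$ is preserved. Both properties are built into the references invoked (regular neighborhoods of pairs, and the ``rel $\partial M$'' clause in Theorem~\ref{thm.discpair}), so the assembly step is purely bookkeeping; there is no further geometric subtlety.
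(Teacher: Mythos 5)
Your proof is correct and follows essentially the same route as the paper's: reduce to Proposition~\ref{prop.orext_welldef}, align the annular neighborhoods of the junctures via the Regular Neighborhood Theorem rel~$P^\circ$, match the model-disc embeddings with the Disc Theorem for pairs rel the annulus boundaries, and extend the resulting boundary isotopy into the interior. The only (harmless) difference is that you explicitly treat the isolated-vertex regions via the ordinary Disc Theorem, a case the paper's write-up passes over in silence.
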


\begin{proof}
  By Proposition~\ref{prop.orext_welldef} we know that the oriented marked exteriors $(X_k^\circ,P_k^\circ)$ used to construct the $(X_k,P_k)$ are homeomorphic as manifolds with boundary pattern via a homeomorphism $\Phi\colon (X_1^\circ,P_1^\circ)\to (X_2^\circ, P_2^\circ)$ respecting vertex and edge regions of the boundary, as well as the orientation of each factor.
  By the Regular Neighborhood Theorem \cite[Theorem~3.24]{RourkeSanderson} there is a PL isotopy $H_A$ of $\partial X_2$ fixing $P_2^\circ$ and pushing the $\Phi$-image of the chosen regular neighborhood of $P_1^\circ$ onto the chosen regular neighborhood of $P_2^\circ$. Denote the final homeomorphism of this isotopy by $\Psi$.
  
  Then, for each juncture $\gamma\subset P_1^\circ$, using the Disc Theorem for pairs (Theorem~\ref{thm.discpair}), we get a PL isotopy $H_\gamma$ of $A_{\Phi(\gamma)}$ that fixes $\partial A_{\Phi(\gamma)}$, preserves the juncture $\Phi(\gamma)$, and isotopes the postcomposition with $\Psi\circ \Phi$ of the embedding of the model disc at $\gamma$ to the embedding of the model disc at $\Phi(\gamma)$. By using all the PL isotopies $H_\gamma$ and extending to the whole boundary $\partial X_2$ as the identity, we obtain a PL isotopy $H_D$ of $\partial X_2$, carrying $\Psi\circ\Phi(P_1)$ to $P_2$.
  
  The isotopy of $\partial X_2$ obtained by the concatenation of $H_A$ and $H_D$ can be extended to a PL isotopy of $X_2$ \cite[Proposition 3.22(ii)]{RourkeSanderson}, whose final homeomorphism, when precomposed with $\Phi$, yields a homeomorphism of manifolds with boundary pattern $(X_1,P_1)\to (X_2,P_2)$ respecting the vertex and edge regions.
\end{proof}

\begin{prop}[Faithfulness of marked exteriors]\label{prop.decext_uniqueness}
  Let $(X_1,P_1)$ and $(X_2,P_2)$ be marked exteriors for two non-empty decorated spatial graphs $\Gamma_1$ and $\Gamma_2$, and $\Phi_X\colon (X_1,P_1)\to (X_2,P_2)$ a PL homeomorphism between them. Then $\Phi_X$~extends to an isomorphism $\Phi\colon \Gamma_1\to\Gamma_2$ of decorated spatial graphs, where $\langle \Phi\rangle\colon \langle\Gamma_1\rangle \to \langle\Gamma_2\rangle$ is the induced map of underlying graphs.
\end{prop}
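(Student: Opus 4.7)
The plan is to bootstrap from the undecorated oriented statement of Proposition~\ref{prop.orext_uniqueness}. Concretely, I would show that from the subpolyhedron $P_\Gamma$ alone (no orientations given) one can recover (i)~the juncture locus $P_\Gamma^\circ \subset P_\Gamma$ together with its canonical orientation, (ii)~the decomposition of $\partial X_\Gamma \setminus P_\Gamma^\circ$ into vertex regions and edge regions, (iii)~the orientation of $X_\Gamma$, and (iv)~the decoration data. Once that is done, any PL homeomorphism $\Phi_X\colon (X_1,P_1)\to(X_2,P_2)$ automatically qualifies as an orientation-preserving homeomorphism of oriented marked exteriors that sends vertex (resp.\ edge) regions to vertex (resp.\ edge) regions of the same color and with matching edge orientations, so Proposition~\ref{prop.orext_uniqueness} produces the desired extension $\Phi\colon \Gamma_1 \to \Gamma_2$.

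For the recovery of the juncture locus, I would analyze the local structure of $P_\Gamma$. By construction, $P_\Gamma$ is a union of: (a)~the juncture circles of $P_\Gamma^\circ$; (b)~for each juncture, finitely many arcs emanating into the two adjacent regions from three distinguished attaching points; and (c)~for each isolated vertex, a triangular ``isolated-vertex marker'' with arcs emanating from its corners. Generic points of $P_\Gamma$ have a local neighborhood that is a 1-manifold; singular points (where $P_\Gamma$ is not locally Euclidean) are precisely the attaching points on junctures and the corners of isolated-vertex markers. From this, each juncture is recovered as a circle component of the closure of the set of regular points with nonsingular neighborhood, and each isolated-vertex marker is recovered as a triangular component (whose three singular corners bound a disc region of $\partial X_\Gamma$ disjoint from any juncture).

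Having pinned down $P_\Gamma^\circ$, I would unpack each juncture~$\gamma$. The three attaching points inherit a cyclic order from $\gamma$ and are distinguished by the number of arcs emanating into each adjacent region: on the vertex side the triples are $(1,2,f(v)+3)$ with $f(v)+3\ge 3$, while on the edge side they are $(0,m,g(e))$ with $m\in\{0,1\}$. This simultaneously: identifies which of the two adjacent regions is the vertex region (the only one receiving arcs from all three attaching points); pins down the cyclic order $p_1\to p_2\to p_3$ and hence the orientation of $\gamma$; reads off the vertex color $f(v)$, the edge color $g(e)$, and the flag $m$ that marks the positive end of the incident edge. The isolated-vertex markers recover the color of the corresponding isolated vertex and the orientation of $\partial X_\Gamma$ nearby in the same way. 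All of these features are invariant under PL homeomorphism of pairs, so $\Phi_X$ respects them.

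With these structural properties of $\Phi_X$ in hand, Proposition~\ref{prop.orext_uniqueness} yields an extension to an isomorphism $\Phi\colon \Gamma_1\to\Gamma_2$ of the underlying spatial graphs; the coning construction in that proposition guarantees that the induced bijections of vertices and edges are the ones already prescribed on the boundary regions by $\Phi_X$, so vertex and edge colorings are preserved. Finally, the flag~$m$ at each juncture tells us which juncture of an edge is the ``positive'' end, so the restriction of $\Phi$ to each edge is orientation-preserving. I expect the main technical point to be the second paragraph above: carefully arguing that the combinatorial anatomy of $P_\Gamma$ (juncture circles, attaching points, isolated-vertex markers, and arc counts) is locally rigid and so must be preserved by any homeomorphism of the pair, rather than just by those arising from spatial-graph isomorphisms.
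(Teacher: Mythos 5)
Your proposal is correct and follows essentially the same route as the paper: intrinsically recover the juncture circles (distinguishing them from the isolated-vertex markers), use the arc counts at the three attaching points to identify vertex versus edge regions and to reconstruct the orientations of the junctures and hence of $X_\Gamma$ together with the decoration data, and then invoke Proposition~\ref{prop.orext_uniqueness} to extend $\Phi_X$, checking finally that decorations are respected. The paper characterizes the junctures slightly more directly (as the embedded circles of $P_k$ not lying in isolated-vertex regions) rather than via the singular-point analysis you sketch, but this is only a cosmetic difference.
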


\begin{proof}
  For $k\in\{1,2\}$ consider the boundary patterns $P_k^\circ\subset P_k$ consisting only of the junctures. Clearly, $\Phi_X$ has to map $P_1^\circ$ to $P_2^\circ$, because each $P_k^\circ$ can be intrinsically characterized as the union of all embedded circles in $P_k$ that are not contained in boundary regions corresponding to isolated vertices.
  As the components of $\partial X_k$ containing only a single component of $P_k$ are exactly the regions corresponding to isolated vertices, it is possible to intrinsically distinguish the two types of circles.
  
  Observe that $\Phi_X$ preserves vertex regions and edge regions: indeed, as we just saw, components of $\partial X_k$ with a single component of $P_k$ correspond to isolated vertices, and on all other connected components, vertex regions are those receiving arcs from 3 distinct points of each juncture, while edge regions only receive arcs from at most 2 distinct points of each juncture. As the number of arcs extended into the vertex region encodes the orientation of each juncture, $\Phi_X$ has to preserve the orientation of $P_k^\circ$, as well as the orientation of the triangles at isolated vertices. As the orientation of $X_k^\circ$ is determined by the orientation of the junctures and which side of those the vertex region is on, $\Phi_X$ also preserves the orientation of the $X_k^\circ$.
  
  Overall, we conclude that $\Phi_X$ is a PL homeomorphism of the oriented marked exteriors $(X_1^\circ,P_1^\circ)\to (X_2^\circ,P_2^\circ)$ preserving the orientation of each factor. Thus,  Proposition~\ref{prop.orext_uniqueness} can be applied to conclude that, up to decorations, $\Phi_X$ extends to an isomorphism $\Phi\colon \Gamma_1\to\Gamma_2$, where $\langle \Phi \rangle\colon \langle\Gamma_1\rangle \to \langle\Gamma_2\rangle$ is the induced map of underlying graphs. However, from the way the decorations got encoded into the boundary patterns, it is clear that $\Phi$ has to respect decorations.
\end{proof}
\subsection{Properties of marked exteriors}\label{sec.graphextproperties}

Having introduced marked exteriors, we explain how the indecomposability properties of a spatial graph~$\Gamma$ studied in Section~\ref{sec.preliminaries} (namely, being non-separable and having no cut vertices) translate into features of the marked exterior~$(X_\Gamma, P_\Gamma)$. 

\begin{dfn}Let $(M, P)$ be a manifold with boundary pattern.
\begin{itemize}
\item Let $S \subset M$ be a properly embedded PL $2$-sphere. If $S$~does not bound a PL $3$-ball in~$M$, then $S$~is called a \textbf{reducing sphere} for~$M$. We call~$M$ \textbf{reducible} if it admits a reducing sphere; otherwise it is \textbf{irreducible}. We also apply the same terminology to~$(M, P)$.
\item A subspace $X\subseteq M$~is called \textbf{clean} if $X \cap P = \emptyset$. 
 Let $D \subset M$~be a clean properly embedded PL disc. If $\partial D$~does not bound a clean disc in $\partial M$, then $D$~is called a \textbf{reducing disc} for~$(M, P)$. We call~$(M,P)$ \textbf{boundary-reducible} if it has a reducing disc; otherwise it is \textbf{boundary-irreducible}.\footnote{The definition of boundary-irreducibility for manifolds with boundary pattern is given in Matveev's book simply as the ``straightforward generalization'' of the notion for $3$-manifolds without boundary pattern \cite[p.~126]{Matveev}, leaving unclear whether the definition of a reducing disc~$D$ for~$(M, P)$ allows $\partial D$~to bound a non-clean disc in~$\partial M$. It is stated on p.~127 that if $M$~is a solid torus, then $(M, P)$~is boundary-irreducible if and only if $\partial M \setminus P$~contains a meridian of~$M$. This is only true if $\partial D$~is never allowed to bound a disc on $\partial M$, even when that disc intersects~$P$ (contrary to the definition we gave). We suspect this example is due to an oversight. Indeed, Matveev's usage of the notion (namely, in the proofs of Lemmas 4.1.33~and~4.1.35) relies on the boundary of a clean properly embedded disc in a boundary-irreducible~$(M, P)$ bounding a clean disc in~$\partial M$. This is compatible with the definition we present, and with the usage elsewhere in the literature~\cite[p.~16]{KauffmanManturov}.}
\end{itemize}
\end{dfn}

\begin{prop}[Separability and reducibility]\label{prop.sep<=>red}
Let $\Gamma$ be a decorated spatial graph and $(X_\Gamma, P_\Gamma)$ its marked exterior. Then $\Gamma$~is separable if and only if $X_\Gamma$~is reducible.
\end{prop}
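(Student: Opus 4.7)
The plan is to establish both implications by relating a separating sphere for $\Gamma$ in $\SPH$ to a reducing sphere for $X_\Gamma$, using the well-definedness of the marked exterior (Proposition~\ref{prop.decext_welldef}) to choose regular neighborhoods in a favorable position.

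For the forward direction, suppose $\Gamma = \Gamma_1 \sqcup_f \Gamma_2$ with both summands non-empty. By construction of the disjoint union, there are enclosing balls $B_1, B_2$ and an attaching map $f$ such that the ambient sphere is $B_1 \cup_f B_2$, with $|\Gamma_k| \subset \intr(B_k)$. The sphere $S := \partial B_1 = \partial B_2$ is disjoint from $|\Gamma|$, so I can choose regular neighborhoods $N_V, N_E$ for the construction of $X_\Gamma$ that are disjoint from $S$. Then $S \subset X_\Gamma$ is a properly embedded 2-sphere, and I need to show it is reducing. Each side $B_k \cap X_\Gamma$ is $B_k$ with a non-empty disjoint union of handlebodies removed from its interior (non-empty because $\Gamma_k$ has at least one vertex). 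Consequently, each $B_k \cap X_\Gamma$ has at least two boundary components, so is not a 3-ball. As $\SPH \setminus S$ splits as $\intr(B_1) \sqcup \intr(B_2)$, these are the only two regions $S$ could bound in $X_\Gamma$, and since neither is a ball, $S$ is a reducing sphere.

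For the converse, suppose $S \subset X_\Gamma$ is a reducing sphere. Since $X_\Gamma \subseteq \SPH$ and $\SPH$ is an oriented PL $3$-sphere, Alexander's PL Schoenflies theorem yields two $3$-balls $B_1, B_2 \subset \SPH$ with $\partial B_1 = \partial B_2 = S$ and $\intr(B_1) \sqcup \intr(B_2) = \SPH \setminus S$. Since $S \cap |\Gamma| = \emptyset$, each vertex and each edge of $\Gamma$ lies in exactly one $\intr(B_k)$. The key claim is that both $B_1$ and $B_2$ contain vertices or edges of $\Gamma$: if, say, $|\Gamma| \subseteq \intr(B_1)$, then by well-definedness of the marked exterior (Proposition~\ref{prop.decext_welldef}) I may choose regular neighborhoods $N_V, N_E$ contained in $\intr(B_1)$, giving an alternative marked exterior inside which $B_2$ sits entirely and is bounded by $S$. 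This would contradict $S$ being reducing. Hence $|\Gamma|$ has non-empty intersection with both $B_k$, and applying Lemma~\ref{lem.spheresdontlie} to $S \subset \SPH \setminus |\Gamma|$ expresses $\Gamma$ as a disjoint union of the sub-graphs $\Gamma_k$ consisting of the vertices and edges in $B_k$, both non-empty. Thus $\Gamma$ is separable.

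The main obstacle is the seemingly-innocent step of moving the regular neighborhoods to a preferred side of $S$ in the converse direction; it is what forces appeal to the well-definedness of the marked exterior, rather than allowing a purely intrinsic argument inside a single fixed $X_\Gamma$. The rest is a routine translation between Schoenflies balls in $\SPH$ and components of $X_\Gamma \setminus S$.
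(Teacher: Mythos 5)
Your proof follows the paper's own argument almost step for step: the forward direction builds the exterior from regular neighborhoods avoiding the separating sphere and observes that neither side of $S$ in $X_\Gamma$ is a $3$-ball, and the converse passes through the Schoenflies decomposition $\SPH = B_1\cup_S B_2$ and Lemma~\ref{lem.spheresdontlie}. The forward direction is fine.

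The one step that does not quite close as written is the key claim in the converse. You assume $|\Gamma|\subseteq\intr(B_1)$ and then pass to an \emph{alternative} marked exterior $X'_\Gamma$, built from neighborhoods inside $\intr(B_1)$, in which $B_2$ is visibly a ball bounded by $S$. But $S$ is only hypothesized to be a reducing sphere for the \emph{given} exterior $X_\Gamma$; the homeomorphism $X_\Gamma\cong X'_\Gamma$ supplied by Proposition~\ref{prop.decext_welldef} is not the identity on $\SPH$ and need not carry $S$ to $S$, so the fact that $S$ bounds a ball in $X'_\Gamma$ does not by itself contradict $S$ being reducing for $X_\Gamma$. Fortunately the detour is unnecessary: since $S$ is properly embedded in $X_\Gamma$ and closed, it is disjoint from $N_V\cup N_E$; each component of $N_V\cup N_E$ is connected, meets $|\Gamma|\subseteq\intr(B_1)$, and misses $S$, hence lies entirely in $\intr(B_1)$. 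Therefore $B_2\subseteq X_\Gamma$ for the \emph{original} neighborhoods already, and $S=\partial B_2$ bounds the ball $B_2$ in $X_\Gamma$ itself --- the desired contradiction. This is exactly how the paper argues, so the repair costs you one sentence.
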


Note that this statement concerns only the $3$-manifold~$X_\Gamma$, and is independent of the boundary pattern~$P_\Gamma$.

\begin{proof}
  $(\Rightarrow)$ Suppose $\Gamma$~is separable with $S$~a separating sphere. Then if we build~$X_\Gamma$ out of small enough regular neighborhoods $N_V$~and~$N_E$ to avoid~$S$, we see that $S$~is a reducing $2$-sphere for~$X_\Gamma$: indeed, no component of~$X_\Gamma \setminus S$ is an open $3$-ball, as both have non-empty boundary.

  $(\Leftarrow)$
  Denote by~$\SPH$ the ambient $3$-sphere of~$\Gamma$, assume $S$~is a reducing sphere for a marked exterior~$X_\Gamma \subset \SPH$, and let $B_1, B_2 \subset \SPH$ be the $3$-balls into which $S$~splits~$\SPH$. If for some $i\in \{1,2\}$ the intersection $|\Gamma| \cap B_i$ were empty, then we would have $B_i \subset X_\Gamma$, in contradiction with $S$~being a reducing sphere. Hence, if $S$~decomposes~$\Gamma$ as $\Gamma_1 \sqcup \Gamma_2$, then none of the~$\Gamma_i$ is empty.
\end{proof}

Note that the second part of the proof actually shows a finer statement:

\begin{cor}[Reducing spheres are separating]\label{cor.reducingsphereseparates}
If $S$~is a reducing sphere for a marked exterior~$(X_\Gamma, P_\Gamma)$, then $S$~is a separating sphere for~$\Gamma$.
\end{cor}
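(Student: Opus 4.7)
The plan is short: the $(\Leftarrow)$ direction in the proof of Proposition~\ref{prop.sep<=>red} already contains this corollary almost verbatim, so I would simply isolate and slightly strengthen the relevant lines.

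First, I would verify that any reducing sphere $S$ for $(X_\Gamma,P_\Gamma)$ lies in $\SPH\setminus |\Gamma|$. Since $S$ is a properly embedded $2$-sphere in $X_\Gamma$ and $\partial S=\emptyset$, the properness forces $S\subset \intr(X_\Gamma) = \SPH \setminus (N_V\cup N_E)$, where $N_V$ and $N_E$ are the regular neighborhoods used to build the marked exterior. In particular $S$ is disjoint from $|\Gamma|$, so by Lemma~\ref{lem.spheresdontlie}, $S$ decomposes $\Gamma$ as a disjoint union $\Gamma = \Gamma_1 \sqcup \Gamma_2$.

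Next, I would show both $\Gamma_i$ are non-empty, which is exactly the separating condition from Definition~\ref{dfn.septhings}. Let $B_1,B_2$ be the two closed $3$-balls with common boundary $S$ into which $\SPH$ is split. Suppose for contradiction that $|\Gamma|\cap B_i = \emptyset$ for some $i$. Each connected component of $N_V\cup N_E$ contains at least one point of $|\Gamma|$ (being a regular neighborhood), so no such component is contained in $\intr(B_i)$; together with $S\cap (N_V\cup N_E)=\emptyset$, this implies that each component of $N_V\cup N_E$ lies entirely outside $\intr(B_i)$. Hence $B_i\subset X_\Gamma$, which contradicts the fact that $S$ does not bound a $3$-ball in $X_\Gamma$.

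There is no substantial obstacle here — the only point that needs a touch of care beyond the proof of Proposition~\ref{prop.sep<=>red} is spelling out why $|\Gamma|\cap B_i=\emptyset$ entails $B_i\subset X_\Gamma$ (the component-by-component argument above), and why $S$ avoids $|\Gamma|$ in the first place (proper embedding plus $\partial S=\emptyset$).
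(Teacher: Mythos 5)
Your proof is correct and is essentially the paper's argument: the paper proves this corollary simply by observing that the $(\Leftarrow)$ direction of Proposition~\ref{prop.sep<=>red} already establishes it. The two details you spell out (properness forcing $S\subset\SPH\setminus|\Gamma|$, and the component-by-component argument for $B_i\subset X_\Gamma$) are left implicit there but are exactly the right justifications.
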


The relationship between cut vertices of a graph and boundary-reducibility of its marked exterior is more subtle, so we study each direction of the correspondence separately. 

\begin{prop}[Boundary reducibility from cut vertices]\label{prop.cut=>bred}
Let $\Gamma$~be a non-separable decorated spatial graph. If $\Gamma$~has a cut vertex, then the marked exterior~$(X_\Gamma, P_\Gamma)$ is boundary-reducible.
\end{prop}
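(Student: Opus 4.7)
The plan is to build a reducing disc for $(X_\Gamma, P_\Gamma)$ from a cut sphere. Let $v$ be a cut vertex of $\Gamma$ and let $S \subset \SPH$ be a cut sphere, so that $S$ decomposes $\Gamma$ as $\Gamma_1 \vsum{v}{v} \Gamma_2$ with $\Gamma_1, \Gamma_2 \not\cong \one$. By Lemma~\ref{lem.sepvsum}, each $\Gamma_i$ is non-separable; since $\Gamma_i \not\cong \one$, the vertex $v$ cannot be isolated in $\Gamma_i$ (otherwise $\Gamma_i$ would be separable). Therefore, in each $\Gamma_i$ the vertex $v$ has at least one incident edge, which translates to: each of the two components of $\SPH \setminus S$ contains at least one edge-end of $\Gamma$ incident to $v$.

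By Proposition~\ref{prop.decext_welldef}, the marked exterior $(X_\Gamma, P_\Gamma)$ is well-defined up to homeomorphism of manifolds with boundary pattern, so we are free to choose convenient regular neighborhoods in its construction. I would first choose the vertex neighborhood $N_v$ to be a star neighborhood of $v$ in the pair $(\SPH, |\Gamma| \cup S)$, so that $S \cap N_v$ is a cone with base $S \cap \partial N_v$ -- a PL disc whose boundary is a circle in $\partial N_v$. Next, choose the edge neighborhood $N_E$ small enough to be disjoint from $S$, and finally choose the regular neighborhoods of the junctures used to define the additional markings of $P_\Gamma$ small enough that none of them meets the circle $S \cap \partial N_v \subset R_v$. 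Define
\[ D := S \cap X_\Gamma = S \setminus \intr(N_v), \]
which is a properly embedded PL disc in $X_\Gamma$ with $\partial D \subset R_v$ a circle disjoint from all junctures and markings, i.e., $D$ is clean.

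It remains to show that $\partial D$ does not bound a clean disc in $\partial X_\Gamma$. The key observation is that any clean disc in $\partial X_\Gamma$ bounded by $\partial D$ must lie entirely within the vertex region $R_v$: indeed, $\partial D \subset R_v$ and a clean disc is, by definition, disjoint from $P_\Gamma \supset P_\Gamma^\circ$, so it cannot cross a juncture. Now $R_v = \partial N_v \cap X_\Gamma$ is topologically a $2$-sphere with a finite number of open discs removed -- one removed disc for each edge-end of $\Gamma$ at $v$, with boundary a juncture circle. The simple closed curve $\partial D \subset R_v$ cuts $\partial N_v$ into two discs; intersecting with $R_v$, it separates $R_v$ into two planar subsurfaces, each of which inherits from one side of $S$ the edge-end junctures of the corresponding sub-graph $\Gamma_i$. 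By the first paragraph, each side receives at least one juncture circle, so neither side of $\partial D$ in $R_v$ is a disc.

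Hence $D$ is a reducing disc for $(X_\Gamma, P_\Gamma)$, proving boundary-reducibility. I expect the main (but mild) obstacle to be the careful coordination of the regular neighborhood choices in step two to guarantee simultaneously that $D$ is properly embedded, that $\partial D$ sits in $R_v$, and that the markings added around the junctures do not meet $\partial D$; once this is in place, the topological argument in the last paragraph is clean, because the planar surface $R_v$ together with the non-isolatedness of $v$ in both $\Gamma_i$ makes it immediate that $\partial D$ is essential in $\partial X_\Gamma \setminus P_\Gamma$.
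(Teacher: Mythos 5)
Your proposal is correct and follows essentially the same route as the paper: build the reducing disc as $D = S \cap X_\Gamma$ from a cut sphere $S$ through $v$, after choosing the vertex neighbourhood compatibly with $S$ and the edge neighbourhoods and marking discs disjoint from $S$, and then rule out a clean disc bounded by $\partial D$ by noting that both sides of $\partial D$ meet junctures coming from edges of $\Gamma_1$ and $\Gamma_2$ incident to $v$. The only (harmless) differences are that you spell out via Lemma~\ref{lem.sepvsum} why each side of $S$ carries an edge at $v$, and that you localize the clean-disc argument to $R_v$ rather than to the whole boundary component containing it.
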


\begin{proof}
Let $\Gamma = (\SPH, V, E)$, let $v$~be a cut vertex for~$\Gamma$, and let $S$~be a cut sphere through~$v$. We construct a marked exterior~$(X_\Gamma, P_\Gamma)$ using a small enough regular neighborhood~$N_V$ of the vertex set so that $N_V$~is in fact a regular neighborhood of~$V$ in~$(\SPH, |\Gamma| \cup S)$, and also, we use $N_E$~small enough to be disjoint from~$S$. Additionally, we ensure that $P_\Gamma$~is built from disc embeddings with image small enough to be disjoint from~$S$, so $S \cap P_\Gamma = \emptyset$. As $S \cap N_v$~is a regular neighborhood of~$\{v\}$ in~$S$, it is a disc. The other side $D := S\cap X_\Gamma$ is thus a clean properly embedded disc in~$(X_\Gamma, P_\Gamma)$.

We will show that $D$~is a reducing disc for~$(X_\Gamma, P_\Gamma)$. To this end, we consider the two balls~$B_1, B_2$ into which $S$~separates~$\SPH$. The curve $\partial D$ separates the component~$C$ of $\partial X_\Gamma$ containing the vertex region~$R_v$ into the two regions~$C_i:= C \cap B_i$, for $i \in \{1,2\}$. We need to show that none of the~$C_i$ is a clean disc. But since $S$~is a cut sphere, there is at least one edge~$e_i$ incident to~$v$ on each~$B_i$. Since the corresponding component~$N_{e_i}$ of~$N_E$ is disjoint from~$S$, we have~$N_{e_i} \subset B_i$, and in particular~$R_{e_i} \subset C_i$. The juncture between $R_{e_i}$~and~$R_v$ is thus contained in~$C_i$, whence $C_i$~is not clean, and certainly not a clean disc.
\end{proof}

We will see that a converse statement also holds, except for one particular (isomorphism type of) spatial graph, which we first define.

\begin{dfn}
A spatial graph is called a \textbf{one-edge graph} if it has exactly two vertices and one edge, with the edge being incident to both vertices. 
\end{dfn}

Note that the property of a spatial graph~$\Lambda$ being a one-edge graph is determined by the isomorphism type of~$\langle \Lambda \rangle$. Moreover, the following statement shows that two decorated one-edge graphs are isomorphic precisely if their underlying abstract graphs are isomorphic. In particular, any two undecorated one-edge graphs are isomorphic.

\begin{lem}[Uniqueness of one-edge graphs]\label{lem.one-edge}
Let~$\Lambda_1, \Lambda_2$ be decorated one-edge graphs, and let $F\colon \langle\Lambda_1\rangle \to \langle \Lambda_2 \rangle$ be an isomorphism of their underlying decorated abstract graphs. Then there is an isomorphism $\Phi \colon \Lambda_1 \to \Lambda_2$ with~$\langle \Phi \rangle = F$.
\end{lem}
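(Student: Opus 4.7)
\medskip

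\noindent\textbf{Plan.} Since $\Lambda_i$ has two distinct vertices joined by a single edge, the support $|\Lambda_i|$ is a PL arc in $\SPH_i$ with endpoints $V_i = \{v_i, w_i\}$. The data of $F$ amounts to a labeling of the endpoints of~$|\Lambda_2|$ by those of~$|\Lambda_1|$, together with the requirement that these labels agree on decorations. Because there is only one edge, preservation of edge colorings is automatic; and because $F$ maps source to source and target to target, preserving the direction of the edge (if present) will follow as soon as $\Phi$ is set up to map endpoints as $F$ prescribes. Thus the task reduces to building an orientation-preserving PL homeomorphism $\Phi\colon\SPH_1 \to \SPH_2$ that sends $|\Lambda_1|$ onto $|\Lambda_2|$ taking $v_1 \mapsto F(v_1)$ and $w_1 \mapsto F(w_1)$.

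\medskip

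I would build $\Phi$ in two pieces using the standard decomposition of~$\SPH_i$ into a regular neighborhood of~$|\Lambda_i|$ and its complementary ball. Pick a regular neighborhood $N_i$ of~$|\Lambda_i|$ in~$\SPH_i$. Since $|\Lambda_i|$ is a compact PL $1$-ball, the pair $(N_i,|\Lambda_i|)$ is an unknotted $3$-ball pair \cite[Corollary~3.27]{RourkeSanderson}, and the complement $\overline{N_i} := \SPH_i \setminus \intr(N_i)$ is a $3$-ball \cite[Corollary~3.13]{RourkeSanderson}. Both $N_i$ and $\overline{N_i}$ inherit canonical orientations from~$\SPH_i$. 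By the uniqueness of unknotted ball pairs, together with the fact that the standard model $([-1,1]^3,[-1,1]\times\{0\}^2)$ admits an orientation-preserving PL self-homeomorphism (namely $(x,y,z)\mapsto(-x,-y,z)$) swapping the two endpoints of the arc, there is an orientation-preserving PL homeomorphism $\Phi_N\colon (N_1,|\Lambda_1|)\to(N_2,|\Lambda_2|)$ sending $v_1 \mapsto F(v_1)$ and $w_1\mapsto F(w_1)$.

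\medskip

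The restriction $\Phi_N|_{\partial N_1}\colon \partial N_1 \to \partial N_2$ is then a PL homeomorphism of $2$-spheres. It extends to a PL homeomorphism $\Phi_{\overline N}\colon\overline{N_1}\to\overline{N_2}$ between the complementary $3$-balls by \cite[Lemma~1.10]{RourkeSanderson}. Moreover, since reversing the orientations on both sides of an oriented $3$-ball boundary identification is inconsequential, $\Phi_N|_{\partial N_1}$ is orientation-preserving when $\partial N_i$ is oriented as the boundary of~$\overline{N_i}$; because orientability of a PL $3$-ball is determined by its boundary, the extension $\Phi_{\overline N}$ is automatically orientation-preserving as well. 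Gluing $\Phi_N$ and $\Phi_{\overline N}$ along~$\partial N_1 = \partial \overline N_1$ yields the desired orientation-preserving PL homeomorphism $\Phi\colon\SPH_1\to\SPH_2$, which by construction satisfies $\langle\Phi\rangle = F$ and respects all decorations.

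\medskip

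I do not expect genuine obstacles here: every step invokes a standard tool from PL topology, and the bookkeeping of decorations is trivial in view of Lemma~\ref{lem.undrlyingdecor} (applicable since a one-edge graph has no loops). The only point deserving mild care is ensuring that the orientation-preserving hypothesis survives the gluing, which is handled by the observation in the previous paragraph.
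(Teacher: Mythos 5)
Your outline (regular neighborhood of the arc, complementary ball, cone extension, orientation bookkeeping) is a reasonable alternative to the paper's route, and the second half of your argument — extending over $\overline{N_i}$ via \cite[Lemma~1.10]{RourkeSanderson} and checking that orientation-preservation survives the gluing, with decorations handled by Lemma~\ref{lem.undrlyingdecor} — is fine. But there is a genuine gap at the first step, and it sits exactly where the real content of the lemma lives.

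The claim that $(N_i,|\Lambda_i|)$ is an unknotted ball pair by \cite[Corollary~3.27]{RourkeSanderson} does not go through. That corollary concerns regular neighborhoods in \emph{proper} manifold pairs, i.e.\ arcs properly embedded with their endpoints on the boundary of the ambient manifold (this is how the paper uses it for $e\cap X_V\subset X_V$). Here $|\Lambda_i|$ lies in the interior of $\SPH_i$, so it lies in the interior of its regular neighborhood $N_i$; the pair $(N_i,|\Lambda_i|)$ is not a proper ball pair at all, hence not an ``unknotted ball pair'' in the defined sense, and the corollary is silent about it. More importantly, the substantive fact you need — that this pair is standard, equivalently that every PL arc in the interior of $\SPH_i$ is unknotted — cannot be waved through by ``uniqueness of unknotted ball pairs'': this is a codimension-$2$ situation, where properly embedded $(3,1)$ ball pairs genuinely can be knotted, so no general unknotting theorem applies. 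Establishing it is precisely the content of the Proposition on arcs in the interior of connected manifolds that the paper proves (via an open–closed argument along the arc combined with homogeneity of manifolds) and from which Lemma~\ref{lem.one-edge} is then an immediate consequence. As written, your proof assumes this key fact rather than proving it; if you supply an argument for the unknottedness of the interior arc (or quote the paper's Proposition), the rest of your construction closes the proof.
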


This is a straightforward consequence of the following general fact (with $M$~a $3$-sphere).

\begin{prop}[Arcs in the interior of connected manifolds]
Let $M$~be a connected PL manifold of dimension at least~$2$. For each $k \in \{1,2\}$, let $I_k$~be a PL-embedded arc in~$\intr(M)$ with endpoints $v_k, u_k$. Then there is a PL isotopy of~$M$ carrying $I_1$~onto~$I_2$, $v_1$~to~$v_2$, and $u_1$~to~$u_2$.
\end{prop}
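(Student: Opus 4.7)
The plan is to reduce the claim to the Disc Theorem (Theorem~\ref{thm.discthm}) applied inside a common PL $n$-ball in $\intr(M)$. The key observation is that any PL arc $I \subset \intr(M)$ with endpoints $v, u$ fits into a PL embedding of pairs $\iota : (B^n, [-1,1] \times \{0\}^{n-1}) \to (\intr(M), I)$ (with $B^n = [-1,1]^n$), taking the standard diameter onto $I$ and sending $(\mp 1, 0, \ldots, 0)$ to $v, u$ respectively. This follows from regular neighborhood theory, since a regular neighborhood of a PL arc in $\intr(M)$ is a PL $n$-ball forming an unknotted ball pair with the arc \cite[Corollary~3.27]{RourkeSanderson}, after a slight adjustment so that the endpoints of $I$ lie on the boundary of the ball.

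First I would find a PL $n$-ball $B \subset \intr(M)$ containing both $I_1$ and $I_2$. After a preliminary small PL ambient isotopy (valid by general position, since the arcs are $1$-dimensional and $\dim M \geq 2$), one may assume $I_1 \cap I_2 = \emptyset$. Using connectedness of $\intr(M)$ together with general position, choose a PL arc $\gamma \subset \intr(M)$ joining an endpoint of $I_1$ to one of $I_2$, with $\gamma \cap (I_1 \cup I_2)$ consisting only of its two endpoints. Then $I_1 \cup \gamma \cup I_2$ is a PL arc, and its regular neighborhood in $\intr(M)$ is a PL $n$-ball $B$ containing $I_1 \cup I_2$. Within $B$, choose embeddings of pairs $\iota_k : (B^n, [-1,1]\times\{0\}^{n-1}) \to (B, I_k)$ as above.

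Next, orient $B$ (it is orientable, being a ball). By possibly post-composing one $\iota_k$ with the PL involution of $B^n$ reflecting the last coordinate -- which fixes the standard diameter pointwise, using $n \geq 2$ -- one arranges both $\iota_1, \iota_2$ to be orientation-preserving. Theorem~\ref{thm.discthm} then produces a PL ambient isotopy of $B$ relative $\partial B$ that carries $\iota_1$ to $\iota_2$, and in particular takes $I_1$ to $I_2$ with the prescribed endpoint correspondence. Extending by the identity on $M \setminus B$ yields the desired PL ambient isotopy of $M$.

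The hard part will be the general-position arrangements in the first step: making $I_1$ and $I_2$ disjoint and constructing $\gamma$ disjoint from $I_1 \cup I_2$ away from its endpoints. This detour through an orientable sub-ball $B$ also bypasses any issues with non-orientability of $M$, since the Disc Theorem (which requires an oriented ambient manifold) is applied only inside $B$.
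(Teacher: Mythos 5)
Your strategy---funnel both arcs into a common PL ball and apply the Disc Theorem there---is genuinely different from the paper's, and for $\dim M \geq 3$ it works: once the ambient isotopy of~$B$ satisfies $H_1 \circ \iota_1 = \iota_2$ \emph{as maps}, the arc and both endpoints are automatically carried correctly, and working inside the orientable ball~$B$ neatly sidesteps the orientability hypothesis in Theorem~\ref{thm.discthm}. However, there is a genuine gap at the very first step when $\dim M = 2$. General position puts two $1$-dimensional polyhedra in an $n$-manifold into an intersection of dimension at most $1+1-n$, which for $n=2$ is a finite set of points that need not be empty: transverse crossings of two arcs in a surface cannot be removed by a small perturbation, so ``one may assume $I_1 \cap I_2 = \emptyset$'' is not justified by general position there (and the same issue affects choosing $\gamma$ to meet $I_1 \cup I_2$ only at its endpoints). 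The statement is still true for surfaces, but establishing the disjointness in that case essentially forces you to first shrink~$I_1$ into a small disc near one of its endpoints---which is precisely the shrinking claim at the heart of the paper's own proof. So as written, your argument only covers $\dim M \geq 3$.

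Two further remarks. First, the existence of an embedding of pairs $\iota_k \colon (B^n, [-1,1]\times\{0\}^{n-1}) \to (\intr(M), I_k)$ carrying the diameter onto~$I_k$ is the assertion that a neighborhood of an arc is an \emph{unknotted} proper ball pair; Corollary~3.27 of \cite{RourkeSanderson} only gives that the regular neighborhood is a ball, and since $I_k$ lies in the interior of~$M$ the pair is not even proper without first extending the arc slightly. This is true (arcs are locally unknotted, as their links are standard pairs $(S^{n-1}, S^0)$ or $(S^{n-1},\mathrm{pt})$), and it is the same kind of inspection the paper performs in its footnoted use of that corollary, but ``after a slight adjustment'' needs to be spelled out. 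Second, for comparison: the paper's proof uses homogeneity to arrange $v_1 = v_2 = v$, proves by an open--closed argument that any arc can be isotoped, fixing~$v$, onto an arbitrarily short initial sub-arc of itself, and then matches the two shrunken arcs by coning an isotopy of the connected sphere $\partial N_v$. That route avoids the Disc Theorem entirely and works uniformly for all $\dim M \geq 2$, which is exactly the regime where your reduction breaks down.
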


Note that it is not true in general that any two PL-embedded $n$-balls  in the interior of a PL manifold of dimension $\ge n+1$ are ambient-isotopic. For example, if one considers the cone~$D$ of a trefoil knot in $\partial([-1,1]^4)$, with the origin as cone point, then $D$~cannot be ambiently isotoped in~$\R^4$ onto the disc $[-1,1]^2 \times \{0\}^2$. This is a consequence of the fact that links of pairs of polyhedra are PL invariants \cite[pp.~50-51]{RourkeSanderson}.

\begin{proof}
We will need the following fact:
\begin{claim}
For every PL embedded arc $I\subset \intr(M)$ with endpoints $v,u$, and for every $u' \in I\setminus \{v\}$, there is a PL isotopy of~$M$ fixing~$v$ and carrying~$I$ to the sub-arc of~$I$ with endpoints~$v, u'$.
\end{claim}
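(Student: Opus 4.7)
The strategy is to construct the isotopy with support in a small PL ball around $u$ that misses $v$. I may assume $u' \ne u$, since otherwise the identity isotopy suffices.

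I would first extend $I$ slightly past $u$: since $u$ lies in the interior of $M$, a short PL arc $\tau \subset \intr(M)$ starting at $u$ and disjoint from $I \setminus \{u\}$ exists by general position in dimension $n \ge 2$. Let $I^+ := I \cup \tau$ be the resulting longer PL arc. Picking a point $w$ on $I$ strictly between $v$ and $u'$, let $J^+$ be the sub-arc of $I^+$ from $w$ to the far endpoint of $\tau$. I then take a regular neighborhood $N$ of $J^+$ in $\intr(M)$, chosen thin enough to avoid $v$. Then $N$ is a PL ball, $J^+$ is a properly embedded arc in $N$, and the pair $(N, J^+)$ is unknotted \cite[Corollary~3.27]{RourkeSanderson}. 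A PL homeomorphism $(N, J^+) \cong ([-1, 1]^n, [-1, 1] \times \{0\}^{n-1})$ can be chosen so that $w \mapsto (-1, 0, \ldots, 0)$ and $u \mapsto 0$, after which $K := N \cap I$ corresponds to $[-1, 0] \times \{0\}^{n-1}$ and $u'$ to some $(-t, 0, \ldots, 0)$ with $t \in (0, 1)$.

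In the standard model, the required isotopy is straightforward: construct a PL isotopy $H_s$ of $[-1, 1]^n$ that fixes $\partial [-1, 1]^n$, with $H_1(0, \ldots, 0) = (-t, 0, \ldots, 0)$ and $H_1$ carrying $[-1, 0] \times \{0\}^{n-1}$ onto $[-1, -t] \times \{0\}^{n-1}$; this can be obtained, for instance, by damping a linear shear of the first coordinate with a PL bump function supported in the interior of the cube. Pulling $H_s$ back to $N$ and extending by the identity on $M \setminus N$ yields the desired PL ambient isotopy of $M$. It fixes $v$ because $v \notin N$, and at time $s = 1$ it carries $I$ onto the sub-arc $[v, u']$, as required.

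The main technical point will be arranging the extension $\tau$ and the regular neighborhood $N$ simultaneously so that $(N, J^+)$ is an unknotted ball pair and $v \notin N$. This combines the existence of $\tau$ (via general position around $u$), the PL regular neighborhood theorem applied to $J^+$, and a choice of the thickness of $N$ small enough to exclude $v$ while large enough that $J^+$ sits as a proper arc; all of these can be achieved in dimension $n \ge 2$ using standard PL techniques, after which the remainder of the argument is mechanical.
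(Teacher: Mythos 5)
Your overall strategy -- localize everything to a single standardly embedded ball containing the whole sub-arc from $u'$ to $u$, do the shortening there by an isotopy fixing the ball's boundary, and extend by the identity -- is genuinely different from the paper's argument, which only ever works in a small star neighborhood of a single point and then propagates the conclusion along $I$ by an open-and-closed connectedness argument on the set of admissible $u'$. Your route is in principle slicker, but the pivotal step is not correctly justified as written. A regular neighborhood $N$ of $J^+$ \emph{in} $\intr(M)$ contains $J^+$ in its topological interior, so the endpoints of $J^+$ lie in $\intr(N)$ and $J^+$ is \emph{not} properly embedded in $N$; consequently the appeal to Rourke--Sanderson's Corollary~3.27 fails twice over. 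That corollary concerns regular neighborhoods of \emph{proper} (boundary-meeting) collapsible subpolyhedra and, as this paper itself points out in Section~\ref{sec.defmarkedexterior}, it only asserts that $N$ is a ball -- the unknottedness of the \emph{pair} has to be extracted separately. Moreover, without taking the neighborhood in the pair $(M,I^+)$ you have no control over $N\cap I^+$: nothing in your setup guarantees it is the single sub-arc you draw in the model, so the final assertion that the damped shear carries $I$ onto $[v,u']$ does not follow.

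The fix is to take $N$ to be a regular neighborhood of the sub-arc $J:=[w,u]\subset I$ \emph{in the pair} $(M,I)$, chosen (via a fine triangulation with $v$ a vertex) to miss $v$; the auxiliary extension $\tau$ then becomes unnecessary. Since $J$ collapses to the point $u$, such an $N$ is also a regular neighborhood of $\{u\}$ in $(M,I)$, i.e.\ a star of $u$, and hence the pair $(N,N\cap I)$ is PL-homeomorphic to the cone on $\bigl(S^{n-1},\{p_0\}\bigr)$, namely the standard pair $(B^n,[0,p_0])$ with $u$ at the cone point -- and, crucially, with $u'$ corresponding to an interior point $q_0$ of the radius, because $N\supseteq J\ni u'$. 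The re-coning map $tp\mapsto (1-t)q_0+tp$ (or your damped shear) then fixes $\partial N$, carries $[0,p_0]$ to $[q_0,p_0]$, and is isotopic to the identity rel $\partial N$ by the Alexander trick; extending by the identity gives the claim in one step. Note that this corrected version uses exactly the same local model as the paper's proof of openness at $u$ -- the only real content you are adding is the observation that collapse-invariance of regular neighborhoods lets that model swallow the entire sub-arc $[u',u]$ at once, which is what eliminates the paper's open-and-closed bookkeeping.
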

Before justifying this claim, let us use it to prove the proposition.

By homogeneity of manifolds \cite[Lemma~3.33]{RourkeSanderson}, there is a PL isotopy of $M$~carrying~$v_1$ to~$v_2$, so we may assume $v_1=v_2=:v$.

Choose a star neighborhood~$N_v$ of~$v$ in the pair $(M, I)$, and denote by~$u_1', u_2'$, respectively, the points of intersection of the $(n-1)$-sphere~$\partial N_v$ with each arc~$I_1, I_2$. Using the above claim on both arcs reduces the problem to showing that there is a PL isotopy of~$M$ carrying the straight line segment~$[v,u_1']$ onto~$[v, u_2']$, with $v$~being carried to itself.

Since $\partial N_v$~is connected, again by homogeneity of manifolds, there is a PL isotopy of~$\partial N_v$ carrying~$u_1'$ to~$u_2'$ (this is the only point of the proof where we use the assumption that $\dim(M)\ne 1$). By coning at~$v$, this isotopy extends to~$N$, taking~$[v,u_1']$ to~$[v, u_2']$ as required. To extend it to all of~$M$, we use the general fact every PL isotopy of the boundary of a manifold (in this case~$M \setminus \intr(N_v)$) extends to the interior \cite[Proposition~3.22(ii)]{RourkeSanderson}.

All that is left is to prove the above claim:

\begin{proof}[Proof of the Claim]
It suffices to show that the subspace~$Q\subset I\setminus\{v\}$ of points~$u'$ for which the claim holds is non-empty, open, and closed in~$I\setminus\{v\}$. Clearly we have $u \in Q$.

Let us verify the openness condition, beginning with the point~$u$. A regular neighborhood~$N_u$ of~$\{u\}$ in the pair~$(M, I)$ is PL-homeomorphic to the standard $n$-ball $[-1,1]^n$, with $N_u \cap I$~corresponding to the straight line segment from~$0$ to a point~$p_0$ in~$\partial([-1,1]^n)$. Let $q_0$ be~in the interior of this line segment. Since $[-1,1]^n$~is a cone with base $\partial([-1,1]^n)$ over \emph{any} of its interior points, the formula  $tp \mapsto (1-t) q_0 + tp$ with $p \in \partial([-1,1]^n)$ can be used to define a PL homeomorphism of~$[-1,1]^n$ fixing the boundary and taking $[0, p_0]$ to $[q_0,p_0]$. Such a map is PL-isotopic to the identity on~$[-1,1]^n$ keeping the boundary fixed, by the Alexander trick \cite[Proposition~3.22(i)]{RourkeSanderson}. This isotopy can then be transferred to a PL isotopy of~$N_u$, and extended as the constant isotopy in all of~$M$. This shows that the point $q \in N$ corresponding to~$q_0$ is in~$Q$, and so $Q$~contains the half-open interval~$\intr(N_u) \cap I$.

To verify the openness condition at points~$u'$ of~$Q$ other than~$u$, one proceeds similarly, choosing a regular neighborhood~$N_{u'}$ of~$u'$ in~$(M, I)$, and modeling~$(N_{u'}, N_{u'}\cap I)$ as the standard ball pair $([-1,1]^n, [-1,1]\times \{0\}^{n-1})$. The construction from before shows the intersection~$\intr(N_{u'})\cap I$ is in~$Q$. The same argument also shows that $Q$ is closed in~$I \setminus \{v\}$. \phantom{\qedhere}
\end{proof}
With the claim established, the proof is complete.
\end{proof}

\begin{prop}[Cut vertices from boundary-reducibility]\label{prop.bred=>cut}
Let $\Gamma$~be a non-separable decorated spatial graph that is not a one-edge graph. If its marked exterior~$(X_\Gamma, P_\Gamma)$ is boundary-reducible, then $\Gamma$~has a cut vertex. Moreover, there is an algorithm to produce a cut sphere for~$\Gamma$ from any reducing disc for~$(X_\Gamma, P_\Gamma)$.
\end{prop}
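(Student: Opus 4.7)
The plan is to convert the reducing disc $D$ into a cut sphere by explicit cap-off and cone constructions. Since $D$ is clean, $\partial D$ is disjoint from all junctures of $P_\Gamma$, so $\partial D$ lies entirely inside a single vertex region $R_v$ or edge region $R_e$ of $\partial X_\Gamma$.

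First case, $\partial D \subset R_v$. Cap off $D$ with $D' := v \cdot \partial D \subset N_v$, the PL cone at $v$. The cone structure of the regular neighborhood $N_v$ ensures $D' \cap |\Gamma| = \{v\}$, so $S := D \cup D'$ is a $2$-sphere meeting $|\Gamma|$ only at $v$ and, by Lemma~\ref{lem.vspheresdontlie}, decomposes $\Gamma$ as $\Gamma_1 \vsum{v}{v} \Gamma_2$. To see that $v$ is a cut vertex, suppose toward contradiction that $\Gamma_1 \cong \one$. Then no edge-intersection disc on $\partial N_v$ lies on the $\Gamma_1$-side of $\partial D$, so the disc $F_1 \subset \partial N_v$ bounded by $\partial D$ contains no edge-intersection disc; a short combinatorial argument (examining which side of a juncture $\gamma \subset F_1$ could contain the corresponding edge-intersection disc $D_{e'}$, and using $\partial D \ne \gamma$) shows $F_1$ contains no juncture either. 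Hence $F_1 \subset \intr(R_v)$, and since the arcs of $P_\Gamma$ emanate from $\partial R_v$ and cannot cross $\partial D$, none of them enters $F_1$. So $F_1$ is a clean disc with $\partial F_1 = \partial D$, contradicting $D$ being a reducing disc. The isolated-vertex sub-case is even easier: $P_\Gamma \cap R_v$ is a contractible subset of the sphere $R_v$, so it fits inside one of the two discs cut by $\partial D$, leaving a clean disc on the other side.

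Second case, $\partial D \subset R_e$. The annulus $R_e$ forces $\partial D$ to be either contractible or core-parallel. The same no-arc-crosses-$\partial D$ reasoning shows that a contractible $\partial D$ bounds a clean disc in the interior of $R_e$, contradicting reducibility. So $\partial D$ is core-parallel; cap off $D$ with a meridian disc $D''$ of $N_e$ to obtain a sphere $S := D \cup D''$ meeting $|\Gamma|$ transversely at a single interior point of $e$. A transversality/parity count forces $e$ to have two distinct endpoints $u \ne v$, necessarily lying on opposite sides of $S$.

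The main obstacle is that $S$ is not yet a cut sphere in this second case. The plan is to \emph{push $S$ to a vertex}: isotope $S$ through a thin tube around $e$ so its intersection with $e$ slides into $N_u$, then replace the disc $S \cap N_u$ by the cone $\Delta_u := u \cdot (S \cap \partial N_u)$. The cone structure guarantees $\Delta_u \cap |\Gamma| = \{u\}$, so the resulting sphere $S'$ meets $|\Gamma|$ only at $u$ and decomposes $\Gamma$ as $\Gamma_1' \vsum{u}{u} \Gamma_2'$, with $\Gamma_1'$ inheriting the edge $e$ (hence $\not\cong \one$). Inspecting the two sides of the original $S$, we find $\Gamma_2' \cong \one$ precisely when the $u$-side of $S$ contains only $\{u\}$ and the half-edge of $e$ ending at $u$; symmetrically, the analogous push-to-$v$ construction fails exactly when the matching condition holds at $v$. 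Both failures together would force $V = \{u, v\}$ and $E = \{e\}$, making $\Gamma$ a one-edge graph --- excluded by hypothesis. Hence at least one of the two push constructions yields a cut sphere, and deciding which one succeeds reduces to inspecting the vertices and edges on each side of $S$. All steps --- locating $\partial D$ in its region, forming cones and meridian caps, performing the push-isotopies, and counting the sub-graph data on each side --- are explicit in the PL category, so the procedure translates into an algorithm producing a cut sphere from a reducing disc.
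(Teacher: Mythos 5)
Your proof is correct. In the case $\partial D \subset R_v$ you run essentially the paper's argument in contrapositive form: if all edges at $v$ emanated into one side of $\partial D$, the other side would be a clean disc, contradicting the reducing-disc hypothesis (the paper phrases the same content as ``neither complementary region of $\partial D$ is a clean disc, hence there are edges incident to~$v$ on both sides of~$S$''); your explicit handling of the isolated-vertex sub-case is something the paper leaves implicit. The case $\partial D \subset R_e$ is where you genuinely diverge. The paper first observes that some endpoint $v$ of $e$ has degree at least $2$ (using non-separability and the exclusion of one-edge graphs), then enlarges $D$ by the sub-annulus of $R_e$ between $\partial D$ and the juncture at the $v$-end, so that the enlarged disc can be coned at $v$; the resulting sphere separates $e$ from the other edges at $v$ in a single step. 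You instead cap $D$ with a meridian disc of $N_e$, obtain a sphere meeting $e$ transversely in one interior point, rule out loops by parity, and then slide the intersection point into a vertex neighborhood and cone there, arguing that at least one of the two endpoints yields a non-trivial vertex sum because both failing would force $\Gamma$ to be a one-edge graph. Both routes are sound: the paper's is more economical (no isotopy, no case split, and the relevant vertex is identified before the sphere is built), while yours defers the use of the ``not a one-edge graph'' hypothesis to the very last step. The one place you should add detail in a full write-up is the push-to-a-vertex move: the finger isotopy must be supported in a tube around the terminal sub-arc of $e$, thin enough to miss the rest of $S$ and of $|\Gamma|$, and one should check that replacing the finger-tip by the cone $u\cdot(S\cap\partial N_u)$ transfers exactly the short terminal segment of $e$ (and nothing else) to the other side; these are routine PL verifications and do not affect correctness.
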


\begin{proof}
Let $D$~be a reducing disc for a marked exterior~$(X_\Gamma, P_\Gamma)$, and denote by~$\SPH$ the ambient sphere of~$\Gamma$. Since~$\partial D$ is disjoint from~$P_\Gamma$, it is contained in a vertex region~$R_v$ or an edge region~$R_e$ of~$\partial X_\Gamma$.

Let us first treat the case where $\partial D \subset \partial R_v$ for some vertex~$v$ of~$\Gamma$. 
Consider the $3$-ball~$N_v$ containing~$v$, which is a component of the vertex set neighborhood~$N_V$ used in constructing~$(X_\Gamma, P_\Gamma)$. Since $N_v$~is a regular neighborhood of~$\{v\}$ in $(\SPH, |\Gamma|)$, the pair~$(N_v, N_v \cap |\Gamma|)$ is PL-homeomorphic to a cone of $(\partial N_v, \partial N_v\cap|\Gamma|)$, with $v$~corresponding to the cone point. Let $D_v\subset N_v$~be the disc corresponding to the cone of~$\partial D$, and consider the $2$-sphere $S := D \cup D_v$, which intersects~$|\Gamma|$ precisely at~$v$. We will see $S$~induces a non-trivial vertex sum decomposition of~$\Gamma$, and so is a cut sphere.

Let $B_1, B_2 \subset \SPH$ be the $3$-balls into which $S$~separates~$\SPH$, let $C$~be the component of~$\partial X_\Gamma$ containing~$\partial D$, and consider the two surfaces $C_i = B_i \cap C$ into which $\partial D$~cuts~$C$. Since $D$~is a reducing disc, none of the~$C_i$ is a clean disc.
This implies that there are edges of~$\Gamma$ incident to~$v$ on both sides of~$S$, and so none of the summands in the decomposition $\Gamma = \Gamma_1 \vsum{v}{v} \Gamma_2$ induced by~$S$ is a one-point graph. Hence $S$~is a cut sphere for~$\Gamma$, and $v$~a cut vertex.

Now we treat the case where $\partial D\subset R_e$ for some edge $e$~of~$\Gamma$. First, observe that one of the vertices incident to~$e$ has degree at least~$2$. Indeed, if both had degree~$1$, then the component of~$\partial X_\Gamma$ containing~$R_e$ would be a $2$-sphere with only the edge~$e$ in one of its sides, and no other vertices besides its endpoints. Since $\Gamma$~is non-separable, this would be all of~$\Gamma$. Hence $\Gamma$~would be a one-edge graph, contrary to assumption. So let $v$~be a vertex incident to~$e$ of degree at least~$2$.

Since no component of~$P_\Gamma$ is contained in~$R_e$ and $\partial D$, being a reducing disc, is not allowed to bound a clean disc in~$R_e$, we conclude $\partial D$~does not bound a disc in~$R_e$. It is therefore parallel to one of the two boundary components of~$R_e$; in other words, it cuts~$R_e$ into two annuli. Let $R_e'$~be one such annulus having one of its boundary components in~$R_v$, and consider the enlarged disc $D':= D \cup R_e'$. The boundary~$\partial D'$ of this disc is contained in the boundary pattern~$P_\Gamma$, being the juncture between~$R_e$ and~$R_v$. As before, let $D_v$~be the disk obtained by coning~$\partial D'$ at~$v$, and define $S:=D' \cup D_v$.

We now show $S$~is a cut sphere for~$\Gamma$. Clearly, $S \cap |\Gamma| = \{v\}$. From the description of~$S\cap N_v$ as a cone of the juncture between $R_e$~and~$R_v$, we see that one side of~$S$ contains~$e$, and the other side contains all other edges of~$\Gamma$ that are incident to~$v$. Since $v$~has degree at least two, it follows that there are edges incident to~$v$ on both sides of~$S$. Hence $S$~induces a non-trivial vertex sum decomposition of~$\Gamma$.
\end{proof}

We finish this section by discussing the relation between degree-$1$ vertices in a spatial graph, and one-edge graphs. Using uniqueness of one-edge graphs (Lemma~\ref{lem.one-edge}), we then deduce a uniqueness result about spatial graphs whose underlying graphs are trees (Theorem~\ref{thm.trivialtrees}).

\begin{lem}[One-edge graph summands from leaves]\label{lem.leafsummand}
Let $\Gamma = (\SPH, V, E)$ be a decorated spatial graph. Let $u$~be a leaf of~$\Gamma$, let $e$~be the edge incident to~$u$, and let $v$~be the other vertex incident to~$e$. For the sub-graph $\Gamma_0 := (\SPH, V\setminus \{u\}, E \setminus \{e\})$ and the one-edge sub-graph $\Lambda := (\SPH, \{u,v\}, \{e\})$, we have~$\Gamma = \Gamma_0 \vsum{v}{v} \Lambda$.
\end{lem}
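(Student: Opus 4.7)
The plan is to apply Lemma~\ref{lem.vspheresdontlie} by exhibiting a PL-embedded $2$-sphere $S \subset \SPH$ with $S \cap |\Gamma| = \{v\}$ that decomposes $\SPH$ into two $3$-balls $B_\Lambda, B_0$ satisfying $B_\Lambda \cap |\Gamma| = |\Lambda|$ and $B_0 \cap |\Gamma| = |\Gamma_0|$. The lemma then yields a vertex sum decomposition $\Gamma = \Lambda \vsum{v}{v} \Gamma_0$, and the commutativity in Proposition~\ref{prop.propvsum} gives the claimed equality. The decoration data takes care of itself: the basepoints on both sides are literally the vertex~$v$, so the colors automatically match.

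To build $S$, I would first choose a star neighborhood $N_v$ of $v$ in $(\SPH, |\Gamma|)$ small enough to avoid every vertex other than $v$, and let $p$ be the unique point of $e \cap \partial N_v$. Next, pick a small PL disc $D \subset \partial N_v$ with $p \in \intr(D)$ and $D$ disjoint from $(|\Gamma| \cap \partial N_v) \setminus \{p\}$. Set $Y := \overline{\SPH \setminus \intr(N_v)}$ and $\alpha := e \cap Y$, a compact arc from $p$ to $u$. The crucial point is that because $u$ is a leaf of $\Gamma$, no other edges meet $\alpha$ at $u$, and $\alpha$ is therefore a connected component of $|\Gamma| \cap Y$. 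Hence a sufficiently small regular neighborhood $M$ of~$\alpha$ in $(Y, |\Gamma| \cap Y)$ satisfies $M \cap |\Gamma| = \alpha$ and $M \cap \partial N_v =: D_0 \subset \intr(D)$. Finally, let $C := v \cdot D_0 \subset N_v$ be the PL cone at~$v$ over~$D_0$, and put $B_\Lambda := C \cup_{D_0} M$.

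The verification is then largely formal. The ball $B_\Lambda$ is a $3$-ball, being two $3$-balls glued along the disc $D_0$; its boundary is the $2$-sphere $S = (v \cdot \partial D_0) \cup (\partial M \setminus \intr(D_0))$. The cone portion $v \cdot \partial D_0$ meets $|\Gamma| \cap N_v = v \cdot (\partial N_v \cap |\Gamma|)$ only at $v$, since $\partial D_0 \subset \intr(D)$ is disjoint from $|\Gamma|$; and $\partial M$ meets $|\Gamma| \cap Y = \alpha \cup (\text{other components})$ only at the endpoint $p \in \intr(D_0)$, which lies inside $B_\Lambda$. Thus $S \cap |\Gamma| = \{v\}$. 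Moreover $B_\Lambda \cap |\Gamma| = [v,p] \cup \alpha = e = |\Lambda|$, and therefore $B_0 := \overline{\SPH \setminus B_\Lambda}$ satisfies $B_0 \cap |\Gamma| = |\Gamma_0|$, using that no edge of $\Gamma_0$ can reach $u$.

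The main technical obstacle is guaranteeing that $B_\Lambda$ is genuinely a $3$-ball whose boundary sphere crosses $|\Gamma|$ only at the single point $v$. This is handled by choosing $D_0$ strictly inside $\intr(D)$ so that the gluing between $C$ and $M$ happens along a disc (making $B_\Lambda$ manifestly a ball), and by exploiting the leaf hypothesis on $u$, which is precisely what ensures $\alpha$ is isolated in $|\Gamma| \cap Y$ and hence can be separated from the rest of $|\Gamma_0|$ by a regular neighborhood in~$Y$.
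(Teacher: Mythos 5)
Your proof is correct and follows essentially the same route as the paper's: both reduce to Lemma~\ref{lem.vspheresdontlie} by exhibiting a $2$-sphere meeting $|\Gamma|$ only at~$v$, obtained by capping off a tubular neighborhood of $e$ away from~$v$ with a cone at~$v$ inside a star neighborhood. The only difference is packaging --- the paper builds $S = R_u \cup R_e \cup D$ from the already-constructed pieces of the oriented marked exterior, while you construct the same sphere directly from a regular neighborhood of $e \cap Y$ --- and your use of the leaf hypothesis (to make $e\cap Y$ an isolated component of $|\Gamma|\cap Y$) matches the paper's observation that $R_u$ is a disc.
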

\begin{proof}
By Lemma~\ref{lem.vspheresdontlie}, we need only find a $2$-sphere~$S$ intersecting~$|\Gamma|$ exactly at~$v$, such that $|\Gamma_0|$~is in one side of~$S$, and~$|\Lambda|$ is in the other.

Let $(X_\Gamma^\circ, P_\Gamma^\circ)$~be an oriented marked exterior for~$\Gamma$, and denote by~$\gamma$ the juncture between the regions~$R_e, R_v$ of~$\partial X_\Gamma^\circ$. The component~$N_v$ of the vertex set neighborhood used in constructing~$X_\Gamma^\circ$ is PL-homeomorphic to a cone of the pair~$(\partial N_v, \partial N_v \cap |\Gamma|)$, with $v$~corresponding to the cone point. We denote by~$D$ the disc properly embedded in~$N_v$ that corresponds to the cone of~$\gamma$, and by~$C$ the $3$-ball that corresponds to the cone over the disc~$N_v \cap N_e$.

Recall that $R_e$~is a cylinder, and that since~$u$ is a leaf, the region~$R_u$ of~$\partial X_\Gamma^\circ$ corresponding to~$u$ is a disc. The $2$-sphere $S := R_u \cup R_e \cup D$ now decomposes $\Gamma$ as~$\Gamma_0 \vsum{v}{v}\Lambda$: First, it is clear that $S \cap |\Gamma| = \{v\}$. Moreover, one of the sides of~$S$ is the $3$-ball $N_u \cup N_e \cup C$, which intersects~$|\Gamma|$ precisely at~$|\Lambda|$. The other side must then contain~$|\Gamma_0|$.
\end{proof}

\begin{dfn} A \textbf{spatial tree} is a spatial graph~$\Gamma$ whose underlying graph~$\langle \Gamma \rangle$~is a tree.
\end{dfn}

\begin{thm}[Uniqueness of spatial trees]\label{thm.trivialtrees}
Let $\Gamma, \Gamma'$~be decorated spatial trees, and let $F\colon \langle \Gamma\rangle \to \langle \Gamma'\rangle$ be an isomorphism of their underlying decorated graphs. Then there is an isomorphism $\Phi \colon \Gamma \to \Gamma'$ with $\langle \Phi \rangle = F$.
\end{thm}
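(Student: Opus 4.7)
The plan is to prove the theorem by induction on the number of edges of $\Gamma$, leveraging the machinery of vertex sums and the uniqueness of one-edge graphs.

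For the base case (zero edges), $\langle\Gamma\rangle$ being a tree forces it to consist of at most one vertex, and likewise for $\langle\Gamma'\rangle$ since $F$ is a bijection. If both are empty, then $\Gamma=\Gamma'=\zero$ and there is nothing to show. If both have a single vertex, then (in the vertex-colored setting) $F$ ensures they share the same color $c$, so both $\Gamma,\Gamma'$ are isomorphic to $\one_c$ by transitivity of PL self-homeomorphisms on points of $\SPH$; any isomorphism $\Phi\colon\Gamma\to\Gamma'$ realizes $F$.

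For the inductive step, assume $\Gamma$ has at least one edge. A finite tree with an edge has a leaf $u$; let $e$ be its incident edge and $v$ the other endpoint. By Lemma~\ref{lem.leafsummand}, $\Gamma = \Gamma_0 \vsum{v}{v} \Lambda$ where $\Lambda$ is the one-edge sub-graph on $\{u,v\}$ and $\Gamma_0$ is obtained by deleting $u$ and $e$. Since $F$ preserves degrees (and decorations), $u':=F(u)$ is a leaf of $\langle\Gamma'\rangle$ incident to $v':=F(v)$ via $e':=F(e)$, and the same lemma gives $\Gamma' = \Gamma_0' \vsum{v'}{v'}\Lambda'$ for the analogous $\Gamma_0',\Lambda'$. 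The restriction of $F$ to $\langle\Gamma_0\rangle \to \langle\Gamma_0'\rangle$ is an isomorphism of underlying decorated graphs of two spatial trees with strictly fewer edges (removing a leaf from a tree yields a tree), so the induction hypothesis yields $\Phi_0\colon\Gamma_0\to\Gamma_0'$ with $\langle\Phi_0\rangle = F|_{\langle\Gamma_0\rangle}$, in particular $\Phi_0(v) = v'$. Simultaneously, Lemma~\ref{lem.one-edge} applied to the restriction $F|_{\langle\Lambda\rangle}$ yields $\Phi_\Lambda\colon\Lambda\to\Lambda'$ with $\langle\Phi_\Lambda\rangle = F|_{\langle\Lambda\rangle}$, so in particular $\Phi_\Lambda(v)=v'$.

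Finally, Lemma~\ref{lem.isovsum} assembles $\Phi_0$ and $\Phi_\Lambda$ into an isomorphism
\[\Phi := \Phi_0 \vsum{v}{v} \Phi_\Lambda \;\colon\; \Gamma \longrightarrow \Gamma',\]
and by construction $\langle\Phi\rangle$ restricts to $\langle\Phi_0\rangle$ on $\langle\Gamma_0\rangle$ and to $\langle\Phi_\Lambda\rangle$ on $\langle\Lambda\rangle$, so $\langle\Phi\rangle = F$. I do not anticipate a significant obstacle: all the delicate technical work (well-definedness of vertex sums, realization of isomorphisms along sums, uniqueness of one-edge graphs, and the leaf-peeling decomposition) has already been carried out, so the argument is a clean induction. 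The only care needed is to verify that the chosen decomposition respects any decorations, which is automatic because both Lemma~\ref{lem.leafsummand} and Lemma~\ref{lem.one-edge} are formulated in the decorated setting.
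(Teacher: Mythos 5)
Your proof is correct and follows essentially the same route as the paper: peel off a leaf via Lemma~\ref{lem.leafsummand}, handle the one-edge summand with Lemma~\ref{lem.one-edge}, apply induction to the rest, and reassemble with Lemma~\ref{lem.isovsum}. The only cosmetic difference is that you induct on the number of edges rather than vertices, which changes nothing of substance.
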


This result gives an algorithm for testing whether two decorated spatial trees are isomorphic, without appeal to the sophisticated machinery of Matveev. Indeed, it tells us that decorated spatial trees are isomorphic if and only if their underlying graphs are, reducing the problem to a (finite) search for an isomorphism of abstract trees.

\begin{proof}
We proceed by induction on the number of vertices of~$\Gamma$, with the cases $\Gamma \cong \zero$ and $\Gamma \cong \one$ being trivial.

Assume now that $\Gamma$~has at least two vertices. By a standard result in graph theory \cite[Exercise~1.2.5]{Jun05}, finite tress with at least two vertices always have leaves, so $\langle \Gamma \rangle$, and thus also~$\Gamma$, has a leaf~$u$. Denote by~$e$ the only edge of~$\Gamma$ incident to~$u$, and by~$v$ the other vertex incident to~$e$. Denote by~$\Lambda$ the one-edge sub-graph of~$\Gamma$ comprised of the vertices $u,v$ and the edge~$e$, and by~$\Gamma_0$ the sub-graph of~$\Gamma$ obtained by excluding $e$~and~$u$. By Lemma~\ref{lem.leafsummand}, we have $\Gamma = \Gamma_0 \vsum{v}{v} \Lambda$.

Similarly, let $\Gamma_0'$~be the sub-graph of~$\Gamma'$ obtained by excluding the edge~$F(e)$ and the leaf~$F(u)$, and let $\Lambda'$~be the one-edge sub-graph of~$\Gamma'$ comprised of $F(u), F(v)$~and~$F(e)$. As before, we have $\Gamma' = \Gamma_0' \vsum{F(v)}{F(v)} \Lambda'$.

Now by induction, the isomorphism of decorated trees $F|_{\Gamma_0} \colon \langle \Gamma_0\rangle \to \langle \Gamma_0'\rangle$ is induced by an isomorphism of decorated spatial trees $\Phi_0 \colon \Gamma_0 \to \Gamma_0'$. On the other hand, Lemma~\ref{lem.one-edge} gives an isomorphism $\Phi_\Lambda \colon \Lambda \to \Lambda'$ inducing $F|_{\Lambda}\colon \langle \Lambda \rangle \to \langle \Lambda' \rangle$. Assembling these two isomorphisms (as in Lemma~\ref{lem.isovsum}), we obtain the desired $\Phi := \Phi_0 \vsum{v}{v} \Phi_\Lambda$.
\end{proof}

\section{Algorithmic theory of spatial graphs}\label{sec.algorithms}

In this section, we import the final pieces of terminology that will allow us to state Matveev's Recognition Theorem, and assemble the theory developed so far into a proof of Theorem~\ref{thm.algdetection}.

\begin{dfn}Let $M$~be a compact PL $3$-manifold and $\Sigma \subset M$ a properly PL-embedded surface.
\begin{itemize}
\item The surface~$\Sigma$ is \textbf{incompressible} if for every PL-embedded disc~$D \subset M$ such that $D \cap \Sigma = \partial D$, the boundary~$\partial D$ bounds a disk in~$\Sigma$. 
\item We say $S$~is \textbf{two-sided} if its normal bundle is trivial. 
\end{itemize}
\end{dfn}

\begin{dfn}[{\cite[Definition~4.1.20]{Matveev}}]
A compact PL $3$-manifold~$M$ is \textbf{sufficiently large} if there exists a PL-embedded closed connected surface~$\Sigma\subset M$, such that $\Sigma$~is incompressible, two-sided, and not a $2$-sphere or a real projective plane.
\end{dfn}

\begin{dfn}[{\cite[Definition~6.1.5]{Matveev}}]\label{dfn.Haken}
A manifold with boundary pattern $(M,P)$ with $M,P$~compact is called \textbf{Haken} if it is irreducible, boundary-irreducible, and either:\begin{itemize}
\item $M$~is sufficiently large, or
\item $P\neq\emptyset$ and $M$ is a handlebody of positive genus.
\end{itemize}
\end{dfn}

The following proposition, which we also outsource to Matveev's book \cite[Corollary~4.1.27]{Matveev}, tells us that when $M$~has non-empty boundary, we are very close to satisfying one of the conditions in the bullet-points: 

\begin{prop}[Non-triviality of the boundary]\label{prop.nontrivboundary}
Every irreducible PL $3$-manifold with non-empty boundary is either a handlebody or sufficiently large.
\end{prop}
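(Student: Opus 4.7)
The plan is to argue by iterated boundary compression, showing that the process terminates either at a disjoint union of balls (so $M$ is built from a $3$-ball by attaching $1$-handles, hence is a handlebody) or at some earlier stage produces an incompressible, two-sided, non-spherical closed surface witnessing that $M$ is sufficiently large. First I would dispose of the case where some component of $\partial M$ is a $2$-sphere: irreducibility forces such a sphere to bound a $3$-ball in $M$, and a short connectedness argument then identifies $M$ itself with that ball, which is a genus-$0$ handlebody. From now on I assume every boundary component has positive genus.

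Next I examine whether $\partial M$ is incompressible in $M$. If it is, any boundary component $\Sigma$ is closed, two-sided (as the boundary of a regular neighborhood of itself), incompressible by assumption, and by the previous paragraph not a $2$-sphere; if one additionally needs to rule out $\mathbb{RP}^2$, this follows from a standard argument using irreducibility, since an embedded $\mathbb{RP}^2$ would yield an embedded $2$-sided $\mathbb{RP}^2$ only in non-orientable pieces, and in any case a regular neighborhood bounds an $\mathbb{RP}^2$-bundle whose double cover leads to a reducing sphere. Thus $\Sigma$ certifies that $M$ is sufficiently large.

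If $\partial M$ is compressible, pick a compressing disc $D\subset M$, that is, a properly embedded disc whose boundary is an essential simple closed curve in $\partial M$. Cut $M$ along $D$ to produce $M_1$. I would verify that $M_1$ inherits irreducibility: any reducing $2$-sphere $S\subset M_1$, after gluing back the two copies of $D$, would sit in $M$; using standard innermost-disc arguments on $S\cap D$, irreducibility of $M$ shows $S$ must bound a $3$-ball in $M_1$ as well. Meanwhile, $-\chi(\partial M_1)=-\chi(\partial M)-2$, so a simple complexity $-\chi(\partial\cdot)\ge 0$ strictly decreases. Iterating, I obtain a finite sequence $M=M_0,M_1,\ldots,M_n$ where at the last stage either $\partial M_n$ is incompressible or is a union of spheres (forcing all components to be balls by irreducibility).

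In the terminal case of balls, reversing the compressions amounts to attaching $1$-handles to a disjoint union of balls, exhibiting $M$ as a handlebody. In the terminal case of incompressible boundary, an essential boundary component of $M_n$ can be pushed slightly into the interior and then traced back through the reverse compressions to a closed incompressible two-sided surface in $M$ of the same genus, so $M$ is sufficiently large. The main obstacle is justifying rigorously that compression preserves irreducibility and that essential surfaces created late in the procedure can be transported back through the intervening $1$-handle attachments without becoming compressible; both steps are classical innermost-disc / outermost-arc arguments, done carefully in Matveev's Corollary~4.1.27 and in Jaco's \emph{Lectures on Three-Manifold Topology}. A secondary subtlety is the $\mathbb{RP}^2$ exclusion in the non-orientable setting, which is handled by the standard fact that an irreducible $3$-manifold containing an embedded two-sided $\mathbb{RP}^2$ must itself be $\mathbb{RP}^2\times[0,1]$, a case that can be treated separately.
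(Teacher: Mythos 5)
First, note that the paper does not actually prove this proposition: it is imported verbatim from Matveev's book (Corollary~4.1.27), so there is no in-paper argument to compare yours against. What you have written is a sketch of the standard proof of that classical fact, and in outline it is the right argument: dispose of spherical boundary components via irreducibility, then either the boundary is incompressible (push a component inward to get a sufficiently-large witness) or compress it repeatedly along essential discs until only balls remain, in which case $M$ is a union of $0$- and $1$-handles, i.e.\ a handlebody. The individual reductions you cite are correct: cutting an irreducible manifold along a properly embedded disc yields irreducible pieces, and a boundary-parallel copy of an incompressible boundary component is two-sided and incompressible. One small repair: your termination measure ``$-\chi(\partial\,\cdot)\ge 0$ strictly decreases'' is only valid once sphere components are excluded (a sphere contributes $-2$), and spheres \emph{can} appear during the process (a non-separating compression of a torus component produces one); you need to split off the resulting ball components as they arise before re-applying the bound.

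The two places where your argument is genuinely incomplete are exactly the two you flag. The persistence of incompressibility when the $1$-handles are reglued is the load-bearing technical step and is only asserted, with a pointer to the literature; since the paper itself outsources the entire proposition, this is defensible, but it means your write-up is a roadmap rather than a proof. More seriously, your treatment of the $\mathbb{RP}^2$ exclusion does not close: the claim that an irreducible $3$-manifold containing a two-sided $\mathbb{RP}^2$ must be $\mathbb{RP}^2\times[0,1]$ is false as stated ($\mathbb{RP}^2\times S^1$ is another such manifold), and in any case $\mathbb{RP}^2\times[0,1]$ itself is irreducible, has non-empty boundary, is not a handlebody, and contains no closed incompressible two-sided surface other than spheres and projective planes --- so ``treating it separately'' cannot rescue the statement; it is a genuine exceptional case for the proposition as literally formulated in the non-orientable setting. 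This does not affect the paper, whose marked exteriors are orientable submanifolds of $S^3$ and therefore contain no two-sided $\mathbb{RP}^2$ at all, but your proof should either restrict to the orientable case (which suffices here) or explicitly carve out the $\mathbb{RP}^2$-bundle exceptions.
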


Having the definition of a Haken manifold with boundary pattern, we can state Matveev's Theorem \cite[Theorem~6.1.6]{Matveev}:

\begin{thm}[Matveev's Recognition Theorem]\label{thm.matveev}
  There is an algorithm to decide whether or not any two given Haken 3-manifolds with boundary pattern are PL-homeomorphic (as manifolds with boundary pattern).
\end{thm}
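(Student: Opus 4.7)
The plan is to follow the Haken--Matveev paradigm of hierarchical decomposition, adapted to manifolds with boundary pattern. The first ingredient is algorithmic normal surface theory: given a triangulation of $M$, the fundamental normal surfaces with respect to the triangulation can be enumerated in finite time, and one can algorithmically test whether a given normal surface is incompressible (and whether it respects the boundary pattern $P$ in a suitable sense, meaning it is clean or meets $P$ transversally). Using this, one shows by a finiteness argument on Kneser--Haken complexity that if $(M,P)$ is Haken, a non-trivial two-sided incompressible surface $\Sigma \subset M$ (which, depending on the clause in the definition of Haken that $(M,P)$ satisfies, is either a closed surface or a properly embedded disc guaranteed by the handlebody case together with $P \ne \emptyset$) can be found algorithmically among the fundamental normal surfaces.

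Given such a $\Sigma$, one forms the cut manifold $(M_\Sigma, P_\Sigma)$, where $M_\Sigma$ is obtained by splitting $M$ along $\Sigma$ and $P_\Sigma$ is $P$ augmented with the two new boundary curves arising from $\partial \Sigma$. A key lemma, essentially due to Haken, asserts that $(M_\Sigma, P_\Sigma)$ is again Haken, and the iterative application of this cutting terminates after finitely many steps in a disjoint union of $3$-balls with a prescribed boundary pattern. This sequence of cuts is a \emph{hierarchy} for $(M,P)$, and one obtains an algorithm that produces one such hierarchy of bounded length from the input data.

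To decide whether two Haken manifolds with boundary pattern $(M_1,P_1),(M_2,P_2)$ are PL-homeomorphic, one enumerates hierarchies for $(M_1,P_1)$ and, for each hierarchy, all compatible hierarchies for $(M_2,P_2)$ (there are only finitely many up to the combinatorics of the cuts, since normal-surface theory bounds the options). A PL-homeomorphism between the terminal $3$-balls-with-pattern is a finite combinatorial object that can be checked, and one then works backwards up the hierarchy, extending the homeomorphism across each cut using the Disc Theorem for pairs (as in our Theorem~\ref{thm.discpair}) to recognize when a matching of the split pieces assembles to a homeomorphism of the original manifolds. Since hierarchies exist, conversely any PL-homeomorphism $(M_1,P_1)\to (M_2,P_2)$ carries a hierarchy of the source to one of the target, so the search succeeds precisely when the two are homeomorphic.

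The main obstacle is the bookkeeping around the boundary pattern: one must ensure that the incompressibility notion used to drive the hierarchy is the correct one relative to $P$, that cutting along surfaces meeting $P$ transversally produces a new admissible pattern, and that the Haken condition (in particular boundary-irreducibility and the sufficiently-large/handlebody dichotomy) is preserved at each step so that the recursion can continue. This is precisely the content of Matveev's refinement of Haken's hierarchy machinery, and it is the step we import wholesale from \cite[Ch.~6]{Matveev} rather than reprove.
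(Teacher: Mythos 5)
This statement is not proved in the paper at all: it is quoted verbatim as Theorem~6.1.6 of Matveev's book and imported as a black box, so there is no in-paper argument to measure your proposal against. Your sketch ultimately does the same thing --- you explicitly defer the hierarchy machinery for manifolds with boundary pattern to Matveev's Chapter~6 --- so as a \emph{proof} it is circular: the part you import is precisely the content of the theorem. As a summary of the strategy behind Matveev's proof it is broadly on target, but that is a different deliverable from a proof.

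Two of your intermediate assertions would also not survive being made precise. First, it is \emph{not} a routine ``key lemma, essentially due to Haken'' that cutting a Haken manifold with boundary pattern along an incompressible surface yields again a Haken manifold with boundary pattern; boundary-irreducibility of the cut pieces with the augmented pattern fails for carelessly chosen surfaces, and arranging a hierarchy in which the Haken condition persists at every stage is exactly the delicate part of Matveev's refinement of Haken's theory. Second, the reassembly step going back up the hierarchy is not an application of the Disc Theorem for pairs (Theorem~\ref{thm.discpair}): that theorem concerns uniqueness of embeddings of unknotted ball pairs and plays no role in matching the identification data of the cuts; what actually makes the search finite is Haken's finiteness theorem bounding the isotopy classes of incompressible surfaces of bounded complexity, which your sketch asserts without justification. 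If you need this theorem, cite it as the paper does rather than attempting to reconstitute its proof.
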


Our goal is to apply Matveev's Recognition Theorem to the marked exterior of blocks, and in order to ensure the ``Haken'' condition is met, we intend to apply~Proposition~\ref{prop.nontrivboundary}. However, that proposition leaves room for the exterior of a block to be a genus-$0$ handlebody, that is, a $3$-ball. To control that case, we will use the ``only if'' direction in the following lemma:

\begin{lem}[Blocks with $3$-ball exteriors]\label{lem.3ballexterior}
Let $\Lambda$~be a block with marked exterior~$(X_\Lambda, P_\Lambda)$. Then $X_\Lambda$~is a $3$-ball if and only if $\Lambda$~is a one-edge graph.
\end{lem}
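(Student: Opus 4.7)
The plan is to address the two directions separately. For the $(\Leftarrow)$ direction, I would note that if $\Lambda$ is a one-edge graph, then $|\Lambda|$ is a PL arc in $\SPH$. The combined regular neighborhood $N_V \cup N_E$ is then a regular neighborhood of this arc, hence PL-homeomorphic to a $3$-ball (the interval is collared in $\R^3$), so $X_\Lambda = \SPH \setminus \intr(N_V \cup N_E)$ is the closure of the complement of a PL $3$-ball in a $3$-sphere, which is itself a $3$-ball by the PL Schoenflies result (\cite[Corollary~3.13]{RourkeSanderson}, already invoked in the proof of Lemma~\ref{lem.enclosingball}).

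For the harder $(\Rightarrow)$ direction, suppose $X_\Lambda$ is a $3$-ball, so $\partial X_\Lambda$ is a $2$-sphere. I would first argue that $\partial X_\Lambda$ has one connected component per connected component of $|\Lambda|$: indeed $\partial X_\Lambda = \partial (N_V \cup N_E)$, where $N_V \cup N_E$ is a regular neighborhood of $|\Lambda|$, so its components correspond to those of $|\Lambda|$, and each such component is a handlebody with connected boundary. Hence $|\Lambda|$ (and so $\langle \Lambda \rangle$) is connected. Next I would compute $\chi(\partial X_\Lambda)$ via its decomposition into vertex regions $R_v$ (spheres with $\deg(v)$ open discs removed, $\chi(R_v) = 2 - \deg(v)$), edge regions $R_e$ (annuli, $\chi = 0$), and junctures (circles, $\chi = 0$). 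Inclusion-exclusion yields $2 = \chi(\partial X_\Lambda) = 2|V| - 2|E|$, so $|V| - |E| = 1$, forcing the connected graph $\langle \Lambda \rangle$ to be a tree.

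To conclude, I would invoke Lemma~\ref{lem.leafsummand}: since $\Lambda$ is a block, it has at least one edge (a non-separable spatial graph with vertices but no edges must be $\one$, which is excluded). If $\langle \Lambda \rangle$ had two or more edges, I could pick a leaf $u$ of this tree with incident edge $e$ and other endpoint $v$, and decompose $\Lambda = \Gamma_0 \vsum{v}{v} \Lambda'$, where $\Lambda'$ is the one-edge sub-graph on $\{u,v,e\}$ and $\Gamma_0$ retains at least one edge. Neither summand is isomorphic to $\one$, so $v$ would be a cut vertex, contradicting the block hypothesis. Hence $\langle \Lambda \rangle$ has exactly one edge, so $\Lambda$ is a one-edge graph. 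The main subtlety to beware of is that a vertex which is cut in the abstract graph need not be cut in the spatial graph (see Figure~\ref{fig.linkedhandcuffs}); the role of Lemma~\ref{lem.leafsummand} is precisely to bypass this by directly producing the required spatial vertex-sum decomposition out of an abstract leaf.
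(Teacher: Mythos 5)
Your proof is correct. The $(\Leftarrow)$ direction is essentially the paper's argument (the paper just describes $N_V\cup N_E$ explicitly as two balls joined to a third along discs rather than quoting that regular neighborhoods of arcs are balls). For $(\Rightarrow)$, you and the paper both reduce to the same endgame — produce a leaf, invoke Lemma~\ref{lem.leafsummand} to split off a one-edge vertex summand, and let the block hypothesis kill the other summand — but you locate the leaf differently. The paper argues locally on the $2$-sphere $\partial X_\Lambda^\circ$: the boundary pattern is a non-empty disjoint union of circles, an innermost one bounds a disc region, edge regions are annuli, so that disc is a vertex region $R_u$ with $\deg(u)=1$. You instead run a global Euler characteristic count over the region decomposition to get $|V|-|E|=1$, combine it with connectivity of $|\Lambda|$ (read off from connectivity of $\partial(N_V\cup N_E)$), and conclude that $\langle\Lambda\rangle$ is a tree, hence has a leaf. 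Your route proves a stronger intermediate fact (the whole underlying graph is a tree) at the cost of the extra connectivity step and the bookkeeping that $\chi(R_v)=2-\deg(v)$ with loops counted twice; the paper's innermost-circle argument is shorter and needs neither. Your closing remark correctly identifies why Lemma~\ref{lem.leafsummand} is indispensable: an abstract leaf does not automatically yield a spatial vertex-sum decomposition, and that lemma is exactly what bridges the gap.
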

\begin{proof}
$(\Leftarrow)$
If $\Lambda$~is a one-edge graph in~$\SPH$, then the vertex set neighborhood~$N_V$ used in constructing~$X_\Lambda$ is a pair of~$3$-balls. The edge-set neighborhood~$N_E$ is then a single $3$-ball connected to the previous two by a pair of discs on its boundary. Hence the union~$N_V \cup N_E$~is a $3$-ball, and so the exterior $X_\Lambda = \SPH \setminus \intr(N_V \cup N_E)$ is a $3$-ball as well.

$(\Rightarrow)$
Suppose~$X_\Lambda$~is a $3$-ball and let us consider the oriented marked exterior~$(X_\Lambda^\circ, P_\Lambda^\circ)$. Since $X_\Lambda$~is recovered from~$X_\Lambda^\circ$ merely by forgetting the orientation, $X_\Lambda^\circ$ is a $3$-ball. Since $P_\Lambda^\circ$~is a $1$-submanifold of the $2$-sphere~$\partial X_\Lambda^\circ$, it is a collection of circles (non-empty, since $\Gamma$, being a block, has at least one edge). The disk bounded by an innermost such circle is then the region~$R_u$ corresponding to some leaf~$u$. By Lemma~\ref{lem.leafsummand}, we obtain a vertex sum decomposition $\Lambda = \Lambda_0 \vsum{v}{v} \Lambda'$, where $\Lambda'$~is the one-edge sub-graph comprised of~$u$, its incident edge~$e$, and the other vertex~$v$ incident to~$e$. Since $\Lambda$~is a block, it follows that~$\Lambda_0 \cong \one$ and so~$\Lambda = \Lambda'$.
\end{proof}

\begin{prop}[Algorithmic detection of blocks]\label{prop.blockrecognition}
There is an algorithm to detect whether any two blocks with decorations of the same type are isomorphic. 
\end{prop}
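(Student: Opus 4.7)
The plan is to reduce the problem to Matveev's Recognition Theorem (Theorem~\ref{thm.matveev}) applied to marked exteriors, with a separate easy handling of the ``one-edge graph'' edge case. More precisely, given two blocks $\Lambda_1, \Lambda_2$ with decorations of the same type, the algorithm will first inspect whether each $\langle\Lambda_k\rangle$ is a one-edge graph (a finite combinatorial check). If both are, then by Lemma~\ref{lem.one-edge} they are isomorphic if and only if their underlying decorated graphs are, which is decidable by exhaustive search among the finitely many bijections of vertex/edge sets. If exactly one is a one-edge graph, output ``not isomorphic'' (since the isomorphism type of the underlying decorated graph is an invariant by Proposition~\ref{prop.decext_uniqueness}, or simply because the number of edges is invariant). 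If neither is a one-edge graph, construct simplicial representatives of the marked exteriors $(X_{\Lambda_k}, P_{\Lambda_k})$ from the combinatorial input and feed them into Matveev's algorithm; by Proposition~\ref{prop.decext_uniqueness}, the answer produced matches the isomorphism type of the decorated spatial graphs.

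The crux is verifying that, when neither $\Lambda_k$ is a one-edge graph, each $(X_{\Lambda_k}, P_{\Lambda_k})$ is Haken, so that Matveev's Theorem applies. I will check the three clauses of Definition~\ref{dfn.Haken}: irreducibility follows from Proposition~\ref{prop.sep<=>red} because a block is non-separable; boundary-irreducibility follows from the contrapositive of Proposition~\ref{prop.bred=>cut}, using that $\Lambda_k$ is non-separable, has no cut vertex, and is not a one-edge graph; and for the ``sufficiently large or positive-genus handlebody'' alternative, I will invoke Proposition~\ref{prop.nontrivboundary}: irreducibility with non-empty boundary forces $X_{\Lambda_k}$ to be either sufficiently large (done) or a handlebody, and in the handlebody case Lemma~\ref{lem.3ballexterior} excludes genus zero since $\Lambda_k$ is not a one-edge graph, while $P_{\Lambda_k}\neq\emptyset$ because a block always has at least one edge (since $\Lambda_k\not\cong\one$ and is non-separable, so it has no isolated vertices), producing junctures on $\partial X_{\Lambda_k}$.

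Two minor points I would spell out along the way: first, that a one-edge graph is genuinely a block (it is non-separable since its support is connected, and any purported cut vertex would force a summand with no edges hence isomorphic to $\one$), so Case~2 of the algorithm is sensibly stated; second, that the combinatorial/PL input of a spatial graph suffices to build simplicial complexes representing $(X_{\Lambda_k}, P_{\Lambda_k})$, e.g. by subdividing once and removing open stars of vertices and then of edge-interiors, followed by the explicit local insertion of the model-disc markings from Definition~\ref{dfn.decext}. Since all choices made in the construction of the marked exterior are unique up to PL homeomorphism by Proposition~\ref{prop.decext_welldef}, the specific representative chosen is immaterial for the output of Matveev's algorithm.

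The main obstacle, essentially already resolved by the preparatory work of Sections~\ref{sec.vertexsum} and~\ref{sec.markedexterior}, is this verification of the Haken hypothesis; the only remaining subtlety is the exclusion of the one-edge case, which is why that exception appears explicitly in both Proposition~\ref{prop.bred=>cut} and Lemma~\ref{lem.3ballexterior}. With the Haken property established, Matveev's Theorem does the rest, and Proposition~\ref{prop.decext_uniqueness} translates its output back to an isomorphism test for decorated blocks.
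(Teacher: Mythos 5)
Your proposal is correct and follows essentially the same route as the paper: dispose of the one-edge case via Lemma~\ref{lem.one-edge}, then verify the Haken hypothesis for the marked exteriors (irreducibility from Proposition~\ref{prop.sep<=>red}, boundary-irreducibility from the contrapositive of Proposition~\ref{prop.bred=>cut}, and the sufficiently-large/positive-genus-handlebody alternative from Proposition~\ref{prop.nontrivboundary} together with Lemma~\ref{lem.3ballexterior}), and finish with Matveev's Recognition Theorem and the faithfulness of marked exteriors. The extra remarks you flag (that one-edge graphs really are blocks, and that simplicial representatives of the marked exterior are computable from the input) are sensible elaborations but do not change the argument.
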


\begin{proof}
For $k \in \{1,2\}$, let $\Lambda_k$~be decorated blocks whose isomorphism types we wish to compare. We first consult the underlying graphs~$\langle \Lambda_k\rangle$ to check whether they are both one-edge graphs. Of course if exactly one among the~$\Lambda_k$ is a one-edge graph, then they are not isomorphic. In case both~$\Lambda_k$ are one-edge graphs, Lemma~\ref{lem.one-edge} reduces the problem to testing whether the abstract graphs~$\langle \Lambda_k\rangle$ are isomorphic, which is a straightforward verification.

We now consider the case where none of the~$\Lambda_k$ is a one-edge graph. Construct marked exteriors $(X_k, P_k)$ for the~$\Lambda_k$, and note that they are Haken:
\begin{itemize}
\item The $X_k$~are irreducible by the ``if'' direction in Proposition~\ref{prop.sep<=>red}.
\item The $(X_k, P_k)$~are boundary-irreducible by Proposition~\ref{prop.bred=>cut}.
\item Since the~$X_k$ have non-empty boundary, Proposition~\ref{prop.nontrivboundary} tells us they are either sufficiently large or handlebodies. In the handlebody case, we exclude the genus-$0$ case by the ``only if'' direction in Lemma~\ref{lem.3ballexterior}. Since blocks have edges, the condition $P_k \ne \emptyset$ is certainly satisfied.
\end{itemize}

We are thus allowed to apply the algorithm in Matveev's Recognition Theorem (Theorem~\ref{thm.matveev}) to test whether the $(X_k, P_k)$~are homeomorphic. Propositions \ref{prop.decext_welldef}~and~\ref{prop.decext_uniqueness} tell us this is equivalent to the~$\Lambda_k$ being isomorphic.
\end{proof}

We will need a slightly refined version of this proposition:

\begin{lem}[Algorithmic detection of multi-pointed blocks]\label{lem.multipointedblocs}
There is an algorithm that takes as input:
\begin{itemize}
\item two blocks $\Lambda_1, \Lambda_2$ with decorations of the same type,
\item a tuple $(v_1^1, \ldots, v_1^r)$ of distinct vertices of~$\Lambda_1$, and
\item a same-sized tuple $(v_2^1, \dots, v_2^r)$ of distinct vertices of~$\Lambda_2$,
\end{itemize}
and decides whether there is an isomorphism $\Phi \colon \Lambda_1 \to \Lambda_2$ satisfying, for each $l \in \{1, \ldots , r\}$, $\Phi(v_1^l) = v_2^l$. 
\end{lem}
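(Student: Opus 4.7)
The plan is to reduce this to Proposition~\ref{prop.blockrecognition} by absorbing the data of the distinguished vertices into a vertex coloring decoration. The key observation is that being a block is a purely topological property of the underlying (undecorated) spatial graph, so enriching the decoration does not disturb the ``block'' hypothesis.

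First, I would fix a collection of $r$ distinct positive integers $c_1, \ldots, c_r$, chosen larger than any color value appearing in the decorations of $\Lambda_1$ or $\Lambda_2$ (if either of them carries a vertex coloring; otherwise, any distinct positive integers will do). From each $\Lambda_k$, I would then produce a new decorated spatial graph $\Lambda_k'$ whose decoration type is that of $\Lambda_k$ enriched by a vertex coloring: if $\Lambda_k$ already has a vertex coloring $f_k$, then $\Lambda_k'$ uses the coloring $f_k'$ agreeing with $f_k$ on $V_k \setminus \{v_k^1, \ldots, v_k^r\}$ and satisfying $f_k'(v_k^l) = c_l$ for each $l$; otherwise we equip $\Lambda_k$ with the coloring sending $v_k^l$ to $c_l$ and all other vertices to $0$. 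The resulting $\Lambda_1'$ and $\Lambda_2'$ carry decorations of the same (possibly enriched) type.

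Next, I would verify two things. First, that $\Lambda_k'$ is still a block: the notions of separability, cut vertex, and being isomorphic to a one-point graph are all invariant under enriching decorations (cf.\ Section~\ref{sec.decorations}), so the block condition on $\Lambda_k$ transfers to $\Lambda_k'$. Second, that there exists an isomorphism of decorated spatial graphs $\Phi \colon \Lambda_1 \to \Lambda_2$ with $\Phi(v_1^l) = v_2^l$ for every $l$ if and only if there exists an isomorphism $\Phi' \colon \Lambda_1' \to \Lambda_2'$ of the enriched decorated spatial graphs. The forward direction is immediate: the same map $\Phi$ respects the new vertex coloring by construction. The backward direction uses that the colors $c_l$ are pairwise distinct and do not appear on any other vertex of $\Lambda_1'$ or $\Lambda_2'$, which forces any color-preserving map to send $v_1^l$ to $v_2^l$.

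Finally, I would invoke Proposition~\ref{prop.blockrecognition} on the pair $\Lambda_1', \Lambda_2'$ of decorated blocks; that proposition provides an algorithm deciding whether they are isomorphic, and by the equivalence in the previous paragraph this decides the original problem. No step here appears to be a serious obstacle: the only subtlety is making sure that the colors used on the distinguished vertices are fresh with respect to any pre-existing vertex coloring, which is purely a matter of bookkeeping, and that the enriched graphs remain blocks, which is a direct consequence of the fact that the block condition ignores decorations.
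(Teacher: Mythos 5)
Your overall strategy is exactly the one the paper uses: absorb the data of the distinguished vertices into a vertex coloring and then invoke Proposition~\ref{prop.blockrecognition}. However, the specific encoding you chose has a gap. By \emph{overwriting} the color of each $v_k^l$ with the fresh value $c_l$, you discard the original colors of the distinguished vertices, and with them the constraint that a color-preserving isomorphism $\Lambda_1 \to \Lambda_2$ must satisfy $f_1(v_1^l) = f_2(v_2^l)$. Concretely: take two blocks that are identical except that $v_1^1$ carries color $5$ in $\Lambda_1$ while $v_2^1$ carries color $7$ in $\Lambda_2$. No isomorphism of the original decorated graphs can send $v_1^1$ to $v_2^1$, yet after your recoloring both vertices receive the same fresh color $c_1$ and the enriched blocks $\Lambda_1', \Lambda_2'$ may well be isomorphic, so your algorithm would answer ``yes'' incorrectly. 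Thus the backward direction of your claimed equivalence fails whenever a vertex coloring is part of the decoration and the distinguished vertices' colors disagree.

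The repair is small. Either add the preliminary check that $f_1(v_1^l) = f_2(v_2^l)$ for every $l$ (answering ``no'' immediately if it fails) before recoloring, or use an encoding that retains the old color: the paper picks $n$ bounding all colors occurring in either block and sets $f_k^+(v_k^l) := nl + f_k(v_k^l)$, leaving all other vertices untouched. This way the original color is recoverable as the residue modulo $n$ and the index $l$ as the quotient, so an isomorphism of the recolored blocks is \emph{exactly} an isomorphism of the original decorated blocks matching the two vertex tuples. With either patch your argument goes through; the remaining verifications (that the enriched graph is still a block, and the forward implication) are correct as you state them.
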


\begin{proof}
Since having no vertex coloring is the same as having a vertex coloring where all vertices are $0$-colored, we may assume from now on that vertex colorings~$f_1, f_2$ are part of the decoration on~$\Lambda_1, \Lambda_2$, respectively.

Let $n \in \N$ be such that both $f_1$~and~$f_2$ have range contained in~$\{0,\ldots, n-1\}$. For each $k \in \{1,2\}$, let $\Lambda_k^+$~be the same block as~$\Lambda_k$, except that its vertex coloring~$f_k^+$ is now defined as
\[f_k^+(v) = \begin{cases}
 f_k(v) & \text{if $v$ is not one of the~$v_k^l$,}\\
 nl + f_k(v) & \text{if $v = v_k^l$.}
\end{cases}\]
The coloring~$f_k^+$ encodes, for each vertex~$v$, the original coloring~$f_k(v)$ as the mod-$n$ residue. Moreover, the division with remainder of~$f_k^+(v)$ by~$n$ returns~$l$ if $v$~is one of the~$v_k^l$, and otherwise it returns~$0$.

An isomorphism~$\Lambda_1 \to \Lambda_2$ as in the statement of the lemma is therefore the same as an isomorphism $\Lambda_1^+ \to \Lambda_2^+$, whose existence can be algorithmically determined by Proposition~\ref{prop.blockrecognition}.
\end{proof}

With Lemma~\ref{lem.multipointedblocs} at our disposal, we can bootstrap our algorithm for recognition of spatial graphs. We develop it first for pieces (Proposition~\ref{prop.piecerecognition}), and then finally for spatial graphs in full generality (Theorem~\ref{thm.spatialgraphrecognition}). Note that we make no further explicit usage of Matveev's Recognition Theorem.

\begin{lem}[Algorithmic decomposition into a tree of blocks]\label{lem.findtree}
There is an algorithm that, given a non-separable spatial graph~$\Gamma$ other than a one-point graph, produces a tree of blocks~$\Tree$ such that $\Gamma = [\Tree]$.
\end{lem}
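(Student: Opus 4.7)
The plan is to make the recursive existence argument of Proposition~\ref{prop.treeexists} effective by supplying each existential step with a computable instance. The recursion is on the number of edges of~$\Gamma$. Base cases: if $\Gamma = \zero$ we return the empty tree; inspecting $\langle \Gamma \rangle$ we can also recognize whether $\Gamma$ is a one-edge graph, in which case it is already a block and we return the singleton tree whose unique $I$-vertex carries~$\Gamma$. Otherwise we build a marked exterior $(X_\Gamma, P_\Gamma)$ as in Definition~\ref{dfn.decext}. This is a purely combinatorial operation: starting from the simplicial input, the regular neighborhoods $N_V, N_E$ can be taken as second derived neighborhoods in a sufficiently fine subdivision, and the boundary pattern is then attached via explicit model discs.

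The crux is then an algorithm that decides whether $(X_\Gamma, P_\Gamma)$ is boundary-reducible, and in the positive case outputs an explicit reducing disc. Both tasks are standard applications of Haken's normal surface theory and are explicitly developed in Matveev's book~\cite{Matveev}: reducing discs, if present, may be taken to be normal, and the finitely many fundamental normal surfaces in a triangulation can be enumerated and checked. If $(X_\Gamma, P_\Gamma)$ is boundary-irreducible, then Proposition~\ref{prop.cut=>bred} --- combined with the fact that $\Gamma$ is non-separable, not~$\one$, and not a one-edge graph --- tells us that $\Gamma$ is a block, so we return the singleton tree.

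If a reducing disc~$D$ is returned, then the proof of Proposition~\ref{prop.bred=>cut} converts~$D$ into a cut sphere~$S$ by an explicit finite construction (a cone of $\partial D$ at the appropriate vertex, possibly after enlarging $D$ by an annular piece of an edge region). This yields a non-trivial decomposition $\Gamma = \Gamma_1 \vsum{v}{v} \Gamma_2$; by Lemma~\ref{lem.sepvsum} both $\Gamma_k$ are non-separable, neither is isomorphic to~$\one$, and each has strictly fewer edges than~$\Gamma$. We apply the algorithm recursively to produce trees of blocks $\Tree_1, \Tree_2$ for the~$\Gamma_k$, and then assemble them into~$\Tree$ following the gluing recipe of the inductive step of Proposition~\ref{prop.treeexists}: for each $k \in \{1,2\}$, if $v$ is already a cut vertex of~$\Gamma_k$ we use the corresponding node $j_k \in J_k$; otherwise we attach a new pendant vertex $j_k$ by a new edge to the $I$-vertex of $\Tree_k$ whose block contains~$v$. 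The two resulting trees are then identified along the $j_k$.

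The principal obstacle is the invocation of Matveev's algorithm for detecting boundary-reducibility and producing a reducing disc; this is the only non-elementary ingredient, and all other steps (constructing marked exteriors, coning to a cut sphere, forming a sub-graph decomposition, and gluing trees) are finite combinatorial manipulations. Termination follows from the strict decrease in the edge count at each recursive call, and correctness of the output is immediate from the correctness proofs of Propositions~\ref{prop.cut=>bred}~and~\ref{prop.bred=>cut} together with the inductive argument of Proposition~\ref{prop.treeexists}.
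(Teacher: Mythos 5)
Your proof is correct and follows essentially the same route as the paper's: construct the marked exterior, run Matveev's boundary-reducibility algorithm (Theorem~\ref{thm.bred}) to either certify that $\Gamma$ is a block or extract a reducing disc, convert that disc into a cut sphere via Proposition~\ref{prop.bred=>cut}, and recurse on the two vertex summands before reassembling the trees as in Proposition~\ref{prop.treeexists}. Two small remarks: Theorem~\ref{thm.bred} requires $X_\Gamma$ to be irreducible, which you should note holds by Proposition~\ref{prop.sep<=>red} because $\Gamma$ is non-separable; and your explicit base case for one-edge graphs is a sensible (indeed necessary) precaution, since Proposition~\ref{prop.bred=>cut} excludes exactly that case.
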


Before proving Lemma~\ref{lem.findtree}, we import the following result from Matveev's book \cite[Theorem~4.1.13]{Matveev}.

\begin{thm}[Algorithmic detection of boundary-reducibility]\label{thm.bred}
  There exists an algorithm to decide whether or not a given irreducible 3-manifold with boundary pattern~$(M,P)$ is boundary-irreducible. In case it is boundary reducible, the algorithm constructs a reducing disc.
\end{thm}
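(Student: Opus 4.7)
The plan is to reduce the problem to an enumeration over a finite, effectively computable list of candidate discs, via Haken's theory of normal surfaces. First, I would refine the given triangulation of $M$ so that $P$ becomes a subcomplex of the $1$-skeleton of $\partial M$. Within such a triangulation, a \emph{normal surface} meets each tetrahedron in disjoint copies of the seven standard triangle and quadrilateral types; the set of admissible normal-coordinate vectors (counts of each piece type in each tetrahedron, subject to the matching and quadrilateral constraints) forms the integer points of a rational polyhedral cone, whose Hilbert basis — the \emph{fundamental} normal surfaces — is a finite set that is computable from the combinatorial data of the triangulation.

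The algorithm would then enumerate the fundamental normal surfaces and, for each one, test whether it is a reducing disc for $(M,P)$. The tests are straightforward from the coordinate vector: compute the Euler characteristic and number of boundary components to detect discs, check disjointness from~$P$ by inspecting how the boundary curve meets the $1$-subcomplex~$P$, and test whether that boundary circle bounds a clean disc in~$\partial M$ by a standard planar subroutine on $\partial M \setminus P$ (compute the two components of $\partial M$ cut along the circle and ask whether one of them is an open disc disjoint from~$P$). Return the first fundamental surface passing all three tests as a reducing disc; if the list is exhausted, declare $(M,P)$ boundary-irreducible.

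Correctness of the positive answer is immediate from the definition of a reducing disc. The substantive content is correctness of the negative answer, which amounts to proving: \emph{if $(M,P)$ admits any reducing disc, then it admits a fundamental normal reducing disc.} This goes in two classical steps. First, any reducing disc~$D$ can be isotoped to a normal surface through the standard normalization moves (pushing across disc-bounded regions in tetrahedra and faces). Here the hypothesis that $M$ is irreducible is essential: without it the normalization could be obstructed by $2$-spheres cobounding $3$-balls with~$D$. Moreover, the normalization can be arranged within a regular neighborhood of~$D$ kept disjoint from~$P$, so cleanness of~$\partial D$ is preserved. Second, one writes the resulting normal reducing disc as a Haken sum $D = F_1 + \cdots + F_k$ of fundamental normal surfaces; by a Kneser–Haken exchange argument, some summand~$F_i$ is itself a reducing disc. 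Indeed, if every summand were either a non-disc, a non-clean disc, or a clean disc whose boundary bounded a clean disc in~$\partial M$, one could recombine the cobounding discs on~$\partial M$ to produce a clean disc bounded by~$\partial D$, contradicting the choice of~$D$.

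The hard part will be the Kneser–Haken exchange step in the presence of the boundary pattern~$P$: the classical argument relies only on irreducibility, but here one must additionally track the arcs $F_i \cap P$ (which the cleanness requirement forces to be empty) and verify that the Haken summation can be performed so that no intersection with~$P$ is created when $D$ is decomposed. This is exactly the delicate combinatorial bookkeeping — controlling how the quadrilateral types of the~$F_i$ fit together along the boundary curves of their patches — that is carried out in detail in Matveev's book, and from which the statement of Theorem~\ref{thm.bred} is imported.
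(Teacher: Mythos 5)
The paper does not prove this statement at all --- it is imported verbatim from Matveev's book (Theorem~4.1.13) --- so there is no in-paper argument to compare against. Your outline is precisely the standard normal-surface proof from that source (triangulate with $P$ in the $1$-skeleton, enumerate fundamental surfaces, and establish completeness via normalization of a reducing disc followed by a Haken-sum exchange argument); it correctly identifies where irreducibility is used and where the genuine difficulty lies (tracking the boundary pattern through the exchange step), though, as you acknowledge yourself, the proposal defers that decisive bookkeeping to Matveev rather than carrying it out, so it is a faithful sketch rather than a self-contained proof.
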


\begin{proof}[Proof of Lemma~{\ref{lem.findtree}}]
The proof is essentially the same as the one given when showing that every non-separable spatial graph (other than~$\one$) can be obtained as the realization of a tree of blocks (Proposition~\ref{prop.treeexists}). This time however, when constructing the tree of blocks $\Tree = (T, I, J, L,  (\Lambda_i)_{i \in I}, (v(l))_{l \in L})$,  we must be careful that all steps can be carried out algorithmically.

We induct on the number of edges of~$\Gamma$. The case $\Gamma = \zero$ is trivial and the case where $\Gamma$~is a one-point graph is excluded by assumption. Otherwise, we need to determine whether $\Gamma$~is a block, and in case it is not, we need to find a cut sphere for~$\Gamma$.

To that end, we construct a marked exterior~$(X_\Gamma, P_\Gamma)$ for~$\Gamma$. Since $\Gamma$~is non-separable, Proposition~\ref{prop.sep<=>red} tells us~$X_\Gamma$ is irreducible and so we can use the algorithm of Theorem~\ref{thm.bred} to determine whether~$(X_\Gamma, P_\Gamma)$ is boundary-reducible.

If $(X_\Gamma, P_\Gamma)$~is boundary-irreducible, then it follows from Proposition~\ref{prop.cut=>bred} that $\Gamma$~has no cut vertices, and hence $\Gamma$~is a block. We thus take $T$~to have a single vertex~$i_0 \in I$, and $\Lambda_{i_0}:= \Gamma$.

If on the other hand $(X_\Gamma, P_\Gamma)$~is boundary-reducible, then the algorithm of Theorem~\ref{thm.bred} constructs a reducing disk, which Proposition~\ref{prop.bred=>cut} converts into a cut sphere~$S$ for~$\Gamma$. The two vertex-summands in the induced decomposition~$\Gamma = \Gamma_1 \vsum{v}{v} \Gamma_2$ are then the sub-graphs of~$\Gamma$ supported on each side of~$S$. Each of them has strictly fewer edges than~$\Gamma$, so by induction we can algorithmically produce trees of blocks~$\Tree_1, \Tree_2$ for them, which are then assembled into~$\Tree$ as explained in the proof of Proposition~\ref{prop.treeexists} (with a quick overview given in Figure~\ref{fig.findtree}).
\end{proof}

\begin{prop}[Algorithmic detection of pieces]\label{prop.piecerecognition}
There is an algorithm to detect whether any two pieces with decorations of the same type are isomorphic.
\end{prop}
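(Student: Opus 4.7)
The plan is to combine the canonical decomposition of pieces into trees of blocks (Propositions \ref{prop.treeexists}~and~\ref{prop.uniquetree}, made effective by Lemma~\ref{lem.findtree}) with the algorithmic recognition of pointed blocks (Lemma~\ref{lem.multipointedblocs}). First I would handle the edge cases by inspecting the underlying graphs $\langle \Gamma_1\rangle, \langle \Gamma_2\rangle$: if exactly one of $\Gamma_1, \Gamma_2$ is a one-point graph, then the pieces are not isomorphic and the algorithm halts; if both are one-point graphs, the pieces are isomorphic precisely when their vertex colors agree (or unconditionally, if no vertex coloring is part of the decoration). In the remaining case neither piece is a one-point graph, and Lemma~\ref{lem.findtree} algorithmically produces trees of blocks $\Tree_k = (T_k, I_k, J_k, L_k, (\Lambda_i)_{i \in I_k}, (v(l))_{l \in L_k})$ with $[\Tree_k] = \Gamma_k$ for each $k \in \{1,2\}$.

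By Proposition~\ref{prop.uniquetree}, any isomorphism $\Gamma_1 \to \Gamma_2$ induces a tree isomorphism $f \colon T_1 \to T_2$ with $f(I_1) = I_2$, together with isomorphisms of decorated blocks $\Phi_i \colon \Lambda_i \to \Lambda_{f(i)}$ such that $\Phi_{i(l)}(v(l)) = v(f(l))$ for every $l \in L_1$. Conversely, Lemma~\ref{lem.treeofisos} assembles any such data into an isomorphism $[\Tree_1] \to [\Tree_2]$. So to decide whether $\Gamma_1 \cong \Gamma_2$, I would enumerate the (finite) set of tree isomorphisms $f \colon T_1 \to T_2$ satisfying $f(I_1) = I_2$ and, for each candidate, test whether compatible block isomorphisms exist.

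The per-candidate test proceeds as follows. For each $i \in I_1$, the edges of $T_1$ incident to $i$ determine a tuple $(v(l))_{l \in L_1,\, i(l)=i}$ of pairwise distinct vertices of $\Lambda_i$ (distinctness being built into Definition~\ref{dfn.tree}), and the chosen $f$ produces a corresponding tuple $(v(f(l)))_{l \in L_1,\, i(l)=i}$ of pairwise distinct vertices of $\Lambda_{f(i)}$. Lemma~\ref{lem.multipointedblocs} then decides algorithmically whether an isomorphism $\Lambda_i \to \Lambda_{f(i)}$ exists matching these tuples vertex-by-vertex. The algorithm accepts iff some candidate $f$ passes this check at every $i \in I_1$, and rejects otherwise; correctness is immediate from the biconditional of Propositions \ref{prop.uniquetree} and Lemma~\ref{lem.treeofisos}.

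The hard part was already carried out earlier in the paper: Lemma~\ref{lem.multipointedblocs} is what actually bridges combinatorics and three-dimensional topology via Matveev, and Lemma~\ref{lem.findtree} supplies the decomposition. What remains here is essentially finitary bookkeeping over the trees of blocks, and I anticipate no further obstacle beyond making sure that the ``matching the specified basepoint tuples'' hypothesis of Lemma~\ref{lem.multipointedblocs} lines up exactly with the condition $\Phi_{i(l)}(v(l)) = v(f(l))$ coming from Proposition~\ref{prop.uniquetree}.
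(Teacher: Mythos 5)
Your proposal is correct and follows essentially the same route as the paper's own proof: dispose of the one-point cases, decompose via Lemma~\ref{lem.findtree}, enumerate tree isomorphisms $f$ with $f(I_1)=I_2$, and test each candidate blockwise with Lemma~\ref{lem.multipointedblocs}, with correctness coming from Proposition~\ref{prop.uniquetree} and Lemma~\ref{lem.treeofisos}. No gaps.
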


\begin{proof}
Let $\Gamma_1, \Gamma_2$~be the decorated pieces we wish to compare. If at least one of the~$\Gamma_k$ is a one-point graph, it is trivial to determine whether they are isomorphic. From now on we thus assume none of the~$\Gamma_k$ is a one-point graph.

For each~$k \in \{1,2\}$, use the algorithm from Lemma~\ref{lem.findtree} to decompose $\Gamma_k$ as the realization of a tree of blocks $\Tree_k = (T_k, I_k, J_k, L_k, (\Lambda_i)_{i \in I_k}, (v(l))_{l \in L_k})$. We then list the isomorphisms of abstract trees $f\colon T_1 \to T_2$ satisfying $f(I_1)=I_2$, which is a finite combinatorial problem. By Proposition~\ref{prop.uniquetree}, if no such isomorphism exists, then the $\Gamma_k$ are non-isomorphic.

What is more, that proposition tells us that if the~$\Gamma_k$ are isomorphic, then for some such~$f$, there is a family of isomorphisms $(\Phi_i \colon \Lambda_i \to \Lambda_{f(i)})_{i \in I_1}$, and this family of isomorphisms is compatible with the assignments $l \mapsto v(l)$. For each $f$~in our list, we thus use the algorithm of Lemma~\ref{lem.multipointedblocs} at every $i \in I_1$ to determine whether there is an isomorphism~$\Phi_i \colon \Lambda_i \to \Lambda_{f(i)}$ mapping the vertex tuple $(v(l))_{l}$ to~$(v(f(l))_{l}$, where $l$~ranges over the edges in~$L_1$ incident to~$i$.

If for some~$f\colon T_1 \to T_2$ we find a collection of isomorphisms~$\Phi_i$ as above, then they can be assembled into an isomorphism $\Phi \colon \Gamma_1 \to \Gamma_2$ (see Lemma~\ref{lem.treeofisos}). If this does not happen for any~$f$, we conclude by Proposition~\ref{prop.uniquetree} that the pieces $\Gamma_1, \Gamma_2$ are not isomorphic.
\end{proof}

Extending our algorithm detection result from pieces to general spatial graphs is now comparatively easy. We begin by establishing an algorithmic decomposition as a disjoint union of pieces.

\begin{lem}[Algorithmic decomposition as the disjoint union of pieces]\label{lem.findpieces}
There is an algorithm that, given a decorated spatial graph~$\Gamma$, produces a finite collection of pieces $(\Lambda_i)_{i \in I}$ such that $\Gamma = \bigsqcup_{i \in I} \Lambda_i$.
\end{lem}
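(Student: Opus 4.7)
The plan is to induct on the number of vertices of~$\Gamma$, reducing the problem to algorithmically detecting whether~$\Gamma$ is separable and, if so, producing a separating sphere.

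If $\Gamma = \zero$, we output the empty collection. Otherwise, construct a marked exterior $(X_\Gamma, P_\Gamma)$ as in Section~\ref{sec.markedexterior}. By Proposition~\ref{prop.sep<=>red}, the spatial graph~$\Gamma$ is separable if and only if $X_\Gamma$ is reducible. Thus if $X_\Gamma$ is irreducible, then $\Gamma$ is a piece and we output the single-element collection~$(\Gamma)$. If $X_\Gamma$ is reducible, we find a reducing $2$-sphere~$S \subset X_\Gamma$; by Corollary~\ref{cor.reducingsphereseparates}, $S$~is a separating sphere for~$\Gamma$, decomposing it as $\Gamma = \Gamma_1 \sqcup \Gamma_2$ where each~$\Gamma_k$ is the non-empty sub-graph of~$\Gamma$ whose vertices and edges lie on one side of~$S$. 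Each~$\Gamma_k$ has strictly fewer vertices than~$\Gamma$, and inherits a decoration of the same type as~$\Gamma$ from its description as a sub-graph. Applying the algorithm recursively to each~$\Gamma_k$ and concatenating the resulting collections of pieces produces the desired decomposition, whose correctness is guaranteed by the associativity of disjoint union (Proposition~\ref{prop.disjunprop}).

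The main obstacle is the algorithmic detection of reducibility of a compact PL $3$-manifold, together with the effective production of a reducing sphere when one exists. This is however a classical result in algorithmic $3$-manifold topology, going back to Haken's normal surface theory, and is available as Algorithm~4.1.12 (or the analogous statement) in Matveev's book~\cite{Matveev}, which can be applied directly to the underlying $3$-manifold~$X_\Gamma$ of the marked exterior (the boundary pattern~$P_\Gamma$ plays no role here, since both separability of~$\Gamma$ and the existence of a reducing sphere depend only on~$X_\Gamma$). Once this is quoted, the recursion terminates after at most $|V|$ steps, where $V$~is the vertex set of~$\Gamma$, since every non-trivial disjoint union decomposition strictly distributes the vertices among the summands.
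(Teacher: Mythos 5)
Your proposal is correct and follows essentially the same route as the paper: build the marked exterior, apply Matveev's reducibility algorithm (the paper's Theorem~\ref{thm.red}) to~$X_\Gamma$, invoke Proposition~\ref{prop.sep<=>red} and Corollary~\ref{cor.reducingsphereseparates} to turn a reducing sphere into a separating sphere, and recurse on the strictly smaller summands. The only cosmetic difference is that you dispatch the case $\Gamma = \zero$ before constructing the exterior, whereas the paper folds it into the irreducible branch.
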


Proving this lemma requires importing an algorithm to test for reducibility of $3$-manifolds \cite[pp.~161-162]{Matveev}.

\begin{thm}[Algorithmic detection of reducibility]\label{thm.red}
  There exists an algorithm to decide whether or not a given PL 3-manifold is irreducible. In case it is reducible, the algorithm constructs a reducing sphere.
\end{thm}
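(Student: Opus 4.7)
The plan is to use Haken's theory of normal surfaces, which is the standard framework for algorithmic questions about essential surfaces in PL 3-manifolds. First I would fix a PL triangulation $\mathcal T$ of the input manifold $M$ and recall that a \emph{normal surface} relative to $\mathcal T$ is a properly embedded PL surface meeting each tetrahedron in a disjoint union of elementary pieces of seven types (four triangles and three quadrilaterals, one per ``slope''). Each normal surface is encoded by a vector of non-negative integers (its \emph{normal coordinates}) recording how many pieces of each type it contains; these vectors are exactly the integer points in a rational polyhedral cone cut out by the \emph{matching equations} at the interior triangles of $\mathcal T$ and the \emph{quadrilateral compatibility conditions}. By a standard argument in rational geometry (Hilbert basis of a pointed polyhedral cone), this set of integer points is generated additively by a finite, effectively computable set of \emph{fundamental normal surfaces}.

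The heart of the argument is the assertion that if $M$ is reducible, then some fundamental normal surface is a reducing sphere. This follows by Haken's classical result: any essential sphere can be isotoped into normal position, and a least-weight normal essential sphere $S$ cannot be expressed as a non-trivial \emph{Haken sum} (geometric sum along the intersection curves with other normal surfaces) without one of the summands already being an essential sphere of smaller weight. Iterating, one reaches a fundamental normal surface which is itself an essential sphere. Thus the algorithm proceeds in two passes: enumerate the (finitely many) fundamental normal surfaces, discard those whose Euler characteristic is not~$2$, and for each remaining $2$-sphere~$S$ test whether $S$ is a reducing sphere.

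To test a candidate $S$, I would cut $M$ along $S$ (a PL operation that can be performed directly from the normal coordinates of~$S$) and ask whether the resulting manifold contains a $3$-ball component with boundary~$S$. Equivalently, one asks whether $S$ bounds a PL $3$-ball on some side in $M$. This reduces the problem to the \emph{$3$-ball recognition problem}: given a compact PL $3$-manifold with $2$-sphere boundary, decide whether it is PL-homeomorphic to a $3$-ball. This is also known to be algorithmically solvable, for instance via Rubinstein--Thompson's theory of almost normal surfaces, and Matveev treats it in the same chapter. If none of the candidate $2$-spheres is a reducing sphere, report that $M$ is irreducible; otherwise output the first reducing sphere found.

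The main obstacle, and what sets this result apart from a purely formal consequence of normal surface enumeration, is the least-weight/Haken sum argument showing that reducibility forces a \emph{fundamental} normal reducing sphere. The subtlety is that the geometric sum of two normal surfaces is again normal but may fail to be connected or essential, so one must track how a smallest essential sphere would decompose and rule out every degenerate case (e.g. both summands being copies of $S$ itself, or one summand being inessential but glued to $S$ in a way that reconstructs an essential sphere). The $3$-ball recognition step is also non-trivial but can legitimately be taken as a further black box from the same reference.
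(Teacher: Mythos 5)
The paper does not actually prove Theorem~\ref{thm.red}: it is imported as a black box from Matveev's book (the reference to pp.~161--162 given just before the statement), so there is no in-paper argument to compare against. Your sketch is the standard normal-surface-theory proof --- enumerate the finitely many fundamental normal surfaces, invoke the least-weight/Haken-sum argument to guarantee that a reducible manifold contains a fundamental essential sphere, and test each candidate sphere by cutting and running $3$-ball recognition --- which is essentially the argument of the cited source, so your proposal reconstructs the intended proof correctly, with the genuinely hard steps (the Haken-sum analysis and ball recognition) accurately identified rather than glossed over.
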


\begin{proof}[Proof of Lemma~\ref{lem.findpieces}]
As with the proof of Lemma~\ref{lem.findtree}, this amounts to refining our argument establishing the existence of a decomposition as a finite disjoint union, to show that the construction can be carried out algorithmically.

We begin by constructing a marked exterior~$(X_\Gamma, P_\Gamma)$, and use the algorithm of Theorem~\ref{thm.red} to test whether $X_\Gamma$~is reducible. If $X_\Gamma$~is irreducible, then by Proposition~\ref{prop.sep<=>red} we conclude $\Gamma$~is non-separable. In this case, either $\Gamma = \zero$, in which case we take $I = \emptyset$, or $\Gamma$~is a piece, so it is a disjoint union of pieces indexed by a single element.

On the other hand, if $X_\Gamma$~is reducible, then the algorithm of Theorem~\ref{thm.red} produces a reducing sphere~$S$ for~$X_\Gamma$. What is more, Corollary~\ref{cor.reducingsphereseparates} ensures that $S$~is a separating sphere for~$\Gamma$. Hence, one can write $\Gamma = \Gamma_1 \sqcup \Gamma_2$, where the disjoint union summands are the non-empty graphs supported on each side of~$S$. Since $\Gamma_1, \Gamma_2$ both have strictly fewer vertices than~$\Gamma$, we may assume by induction that $\Gamma_1, \Gamma_2$ are algorithmically decomposable into pieces, and these decompositions assemble into one for~$\Gamma$.
\end{proof}

Finally, we obtain our main result in full generality:

\begin{thm}[Algorithmic detection of spatial graphs]\label{thm.spatialgraphrecognition}
There is an algorithm to detect whether any two spatial graphs with decorations of the same type are isomorphic.
\end{thm}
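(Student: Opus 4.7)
The plan is to reduce the isomorphism problem for general decorated spatial graphs to the one for pieces, which we have already solved in Proposition~\ref{prop.piecerecognition}. Given two decorated spatial graphs~$\Gamma_1, \Gamma_2$ with decorations of the same type, the first step is to apply the algorithm of Lemma~\ref{lem.findpieces} to each~$\Gamma_k$, producing finite collections of decorated pieces $(\Lambda_i)_{i \in I_k}$ with $\Gamma_k = \bigsqcup_{i \in I_k} \Lambda_i$.

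By Proposition~\ref{prop.uniquepieces} (and the converse direction provided by iterating Lemma~\ref{lem.isodisjunion}), the spatial graphs~$\Gamma_1, \Gamma_2$ are isomorphic if and only if there exists a bijection $f \colon I_1 \to I_2$ such that for each $i \in I_1$, the piece~$\Lambda_i$ is isomorphic to~$\Lambda_{f(i)}$. Since $I_1, I_2$~are finite sets, the existence of such a bijection is a purely combinatorial question that can be decided algorithmically: we may first use Proposition~\ref{prop.piecerecognition} to compute, for each pair $(i_1, i_2) \in I_1 \times I_2$, whether $\Lambda_{i_1} \cong \Lambda_{i_2}$; this produces a bipartite graph on~$I_1 \sqcup I_2$, and the isomorphism question becomes one of deciding whether this bipartite graph admits a perfect matching, which is well-known to be decidable (indeed, if $|I_1| \ne |I_2|$ we immediately conclude non-isomorphism, and otherwise one may brute-force over the finitely many bijections~$f$).

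This completes the proof. I do not anticipate any serious obstacle at this stage, since all the substantial work has been packaged into the three ingredients invoked (Lemma~\ref{lem.findpieces}, Proposition~\ref{prop.piecerecognition}, and Proposition~\ref{prop.uniquepieces}); the final assembly is merely combinatorial bookkeeping together with the observation that an isomorphism of disjoint unions must respect the partition into pieces up to relabeling.
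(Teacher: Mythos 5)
Your proposal is correct and follows essentially the same route as the paper: decompose each graph into pieces via Lemma~\ref{lem.findpieces}, invoke Proposition~\ref{prop.uniquepieces} together with iterated applications of Lemma~\ref{lem.isodisjunion} to reduce to a pairwise comparison of pieces, and decide the resulting finite matching problem using Proposition~\ref{prop.piecerecognition}. The only cosmetic difference is your mention of perfect matchings, whereas the paper simply brute-forces over all bijections $I_1 \to I_2$; both are valid.
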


\begin{proof}
Let $\Gamma_1, \Gamma_2$~be the decorated graphs we wish to compare. For each $k \in \{1,2\}$, we use Lemma~\ref{lem.findpieces}, to decompose~$\Gamma_k$ as a disjoint union of pieces $\bigsqcup_{i \in I_k} \Lambda_i$.
By Proposition~\ref{prop.uniquepieces}, if the indexing sets~$I_k$ have different cardinality, then the~$\Gamma_k$ are not isomorphic. Moreover, the same proposition guarantees that if the $\Gamma_k$~are isomorphic, then there is a bijection $f\colon I_1 \to I_2$ such that for every $i \in I_1$, there is an isomorphism $\Phi \colon \Lambda_i \to \Lambda_{f(i)}$.

We thus run through all possible bijections~$f \colon I_1 \to I_2$, and for each one we use the algorithm of Proposition~\ref{prop.piecerecognition} to test whether every $\Lambda_i$~is isomorphic to~$\Lambda_{f(i)}$. If no~$f$ has such a compatible family of isomorphisms, then the $\Gamma_k$ are non-isomorphic. On the other hand, if for some~$f$ there is a suitable family~$(\Phi_i \colon \Lambda_i \to \Lambda_{f(i)})_{i \in I_1}$, then an iterated application of Lemma~\ref{lem.isodisjunion} allows us to assemble the $\Phi_i$ into an isomorphism $\Phi \colon \Gamma_1 \to \Gamma_2$.
\end{proof}

\printbibliography

\bigskip

S. Friedl, \textsc{Universit\"at Regensburg, Fakult\"at f\"ur Mathematik, 
93053 Regensburg, Deutschland}, \par\nopagebreak
  \textit{E-mail address:} \texttt{stefan.friedl@ur.de}

  \medskip
L. Munser, \textsc{Universit\"at Regensburg, Fakult\"at f\"ur Mathematik, 
93053 Regensburg, Deutschland}, \par\nopagebreak
  \textit{E-mail address:} \texttt{lars.munser@ur.de}
  \medskip
  
J.P. Quintanilha, \textsc{Universit\"at Regensburg, Fakult\"at f\"ur Mathematik, 
  93053 Regensburg, Deutschland}, \par\nopagebreak
    \textit{E-mail address:} \texttt{jose-pedro.quintanilha@ur.de}
      \medskip
      
Y. Santos Rego, \textsc{Otto-von-Guericke-Universit\"at Magdeburg, Fakult\"at f\"ur Mathematik -- Institut f\"ur Algebra und Geometrie, Postfach 4120, 39016 Magdeburg, Deutschland}, \par\nopagebreak
        \textit{E-mail address:} \texttt{yuri.santos@ovgu.de}
\end{document}